\begin{document}

\title{A Tannakian Reconstruction Theorem for IndBanach Spaces}
\author{Kobi Kremnizer \& Craig Smith}

\maketitle

\begin{abstract}
Classically, Tannaka-Krein duality allows us to reconstruct a (co)algebra from its category of representation. In this paper we present an approach that allows us to generalise this theory to the setting of Banach spaces. This leads to several interesting applications in the directions of analytic quantum groups, bounded cohomology and galois cohomology. A large portion of this paper is dedicated to such examples.
\end{abstract}

\tableofcontents

\newtheorem{theorem}{Theorem}[section]
\newtheorem{corollary}[theorem]{Corollary}
\newtheorem{example}[theorem]{Example}
\newtheorem{lem}[theorem]{Lemma}
\newtheorem{obs}[theorem]{Observation}
\newtheorem{ass}[theorem]{Assumptions}
\newtheorem{prop}[theorem]{Proposition}
\theoremstyle{definition}
\newtheorem{defn}[theorem]{Definition}

\newtheorem{rem}[theorem]{Remark}
\numberwithin{equation}{section}

\newenvironment{definition}[1][Definition]{\begin{trivlist}
\item[\hskip \labelsep {\bfseries #1}]}{\end{trivlist}}

\newenvironment{remark}[1][Remark]{\begin{trivlist}
\item[\hskip \labelsep {\bfseries #1}]}{\end{trivlist}}

\newenvironment{notation}[1][Notation]{\begin{trivlist}
\item[\hskip \labelsep {\bfseries #1}]}{\end{trivlist}}

\setcounter{section}{-1}

\section{Introduction}
Classically, Tannaka-Krein duality answers the questions of whether a compact topological group (or affine group scheme as in \cite{TC}, \cite{CT}) can be recovered from its category of linear representations, and of when a category (with an appropriate fibre functor) is equivalent to representations of such a group.  The answer to these questions can be seen as an application of the Barr-Beck theorem, along with the fact that a cocontinuous linear functor on the category of vector spaces must be of the form $V \otimes -$ for some space $V$. This second point follows from the fact that any vector space is a colimit of copies of the base field.\\

Unfortunately, the above is not true for the category of Banach spaces. However, the contracting category of Banach spaces does have an analogous property, and so a brief investigation of contracting colimits in Section 1 allows us to proceed as before. We also note that the category of Banach spaces is neither complete nor cocomplete, and so we instead work in its Ind completion.  Using this, we deduce an analogue of Tannaka duality for IndBanach spaces in Section 2.\\

In Section 3 we demonstrate some examples of applications of this theory. These include a short exploration of different analytic gradings, which the authors hope will be their first steps towards defining analytic quantum groups, and conclude with the example of Galois descent for categories of IndBanach spaces. Perhaps the most fruitful example, however, involves representations of topological groups. In \cite{AFoBC}, Bühler shows that continuous bounded cohomology of a group $G$ comes from the derived invariants functor on a quasi-ableian category which we denote $G\text{-Mod}^{\text{iso}}$. In Section \ref{TopologicalGroupsExample} we show that this is a category of coalgebras over a comonadic functor (or comodules of an IndBanach bialgebra when the group is compact).  We may therefore rephrase bounded cohomology in terms of cohomology of a monoidal comonadic functor (or an IndBanach bialgebra).

\subsection*{Funding}
This work was supported by the Engineering and Physical Sciences Research Council [EP/M024830/1 to KK, EP/M50659X/1 to CS].

\section{Preliminaries and notation}
We begin with some preliminaries on category theory. For more details see Borceaux's \emph{Handbook of Categorical Algebra 2} \cite[p.~189-197]{HCA2}.

\begin{defn}
A \emph{monad} on a category $\mathcal{C}$ is a triple $\mathbb{T} = (T, \eta, \mu)$ where $T: \mathcal{C} \rightarrow \mathcal{C}$ is a functor and $\eta : \text{id}_{\mathcal{C}} \Rightarrow T$, $\mu : T \circ T \Rightarrow T$ are natural transformations satisfying the usual associativity and unit constraints as for an algebra. An \emph{algebra} on this monad is a pair $(C, \xi)$ where $C$ is an object in the category and $\xi : T(C) \rightarrow C$ is a morphism in the category satisfying appropriate compatibility requirements. A \emph{morphism of algebras} $f:(C, \xi) \rightarrow (C', \xi')$ is a morphism $f:C \rightarrow C'$ in the category such that $f \circ \xi = \xi' \circ T(f)$. These algebras in $\mathcal{C}$ over a monad $\mathbb{T}$ form a category, denoted $\mathcal{C}^{\mathbb{T}}$, known as the \emph{Eilenberg-Moore category} of the monad. Dually, we define \emph{comonads} $\mathbb{U}$ and their analogous Eilenberg-Moore categories of \emph{coalgebras} $\mathcal{C}_{\mathbb{U}}$.
\end{defn}

\begin{prop}[\cite{HCA2}]
Suppose we have a pair of adjoint functors
$$F: \mathcal{C} \longleftrightarrow \mathcal{D}:G.$$
with unit $\eta : \text{id}_{\mathcal{C}} \Rightarrow G\circ F$ and counit $\varepsilon : F \circ G \Rightarrow \text{id}_{\mathcal{D}}$. Then $\mathbb{T} = (T := G\circ F, \eta, \mu)$ defines a monad where $\mu$ is the horisontal composition $\mu = \text{id}_{G} \ast \varepsilon \ast \text{id}_{F}:GFGF \Rightarrow  G \circ \text{id}_{\mathcal{D}} \circ F = GF $. Similarly, $\mathbb{U} = (U := F\circ G, \varepsilon, \Delta)$ forms a comonad where $\Delta := \text{id}_{F} \ast \eta \ast \text{id}_{G}$. Furthermore, we have \emph{comparison functors} $K^{\mathbb{T}} : \mathcal{D} \rightarrow \mathcal{C}^{\mathbb{T}}$, $J_{\mathbb{U}} : \mathcal{C} \rightarrow \mathcal{D}_{\mathbb{U}}$ defined respectively by
$$\begin{array}{rl}
K^{\mathbb{T}}(A)  =  (G(A),G(\varepsilon_{A})), & K^{\mathbb{T}}(f)  =  G(f),  \\
J_{\mathbb{U}}(B)  =  (F(B),F(\eta_{B})), & J_{\mathbb{U}}(g)  =  F(g),  \\ \end{array}$$
for all objects $A$ in $\mathcal{D}$ and $B$ in $\mathcal{C}$ and for all morphisms $f$ in $\mathcal{D}$ and $g$ in $\mathcal{C}$.
\end{prop}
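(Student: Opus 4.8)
The plan is to verify each of the claimed structures directly from the adjunction data, checking the monad and comonad axioms before addressing the comparison functors. This is a foundational result whose proof is essentially a sequence of diagram chases using the triangle identities of the adjunction, so I expect no deep obstacle — the real work is bookkeeping with horizontal and vertical composition of natural transformations.

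First I would establish that $\mathbb{T} = (T, \eta, \mu)$ is a monad. The associativity constraint $\mu \circ (T \ast \mu) = \mu \circ (\mu \ast T)$ as natural transformations $T^3 \Rightarrow T$ unwinds, after substituting $\mu = \mathrm{id}_G \ast \varepsilon \ast \mathrm{id}_F$, into a statement about $GFGFGF \Rightarrow GF$; applying the interchange law for horizontal composition reduces both sides to $\mathrm{id}_G \ast (\varepsilon \circ (F G \ast \varepsilon)) \ast \mathrm{id}_F$ versus the analogous expression, and these agree because $\varepsilon$ is a natural transformation (naturality of $\varepsilon$ applied to the component $\varepsilon$ itself). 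The unit constraints $\mu \circ (\eta \ast T) = \mathrm{id}_T = \mu \circ (T \ast \eta)$ then follow from the two triangle identities $\varepsilon F \circ F \eta = \mathrm{id}_F$ and $G \varepsilon \circ \eta G = \mathrm{id}_G$, whiskered appropriately by $G$ on the left and $F$ on the right. The comonad $\mathbb{U} = (U, \varepsilon, \Delta)$ is handled by the exact formal dual of this argument, so I would simply remark that it follows by duality rather than repeating the chase.

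Next I would check that the comparison functor $K^{\mathbb{T}}$ is well defined, i.e. that $K^{\mathbb{T}}(A) = (G(A), G(\varepsilon_A))$ is genuinely a $\mathbb{T}$-algebra. This requires verifying the two algebra axioms: the unit axiom $G(\varepsilon_A) \circ \eta_{G(A)} = \mathrm{id}_{G(A)}$, which is precisely the triangle identity $G \varepsilon \circ \eta G = \mathrm{id}_G$ evaluated at $A$; and the associativity axiom $G(\varepsilon_A) \circ \mu_{G(A)} = G(\varepsilon_A) \circ T(G(\varepsilon_A))$, where after expanding $\mu$ the two sides are identified using naturality of $\varepsilon$ at the morphism $\varepsilon_A$ together with the fact that $G$ is a functor. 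That $K^{\mathbb{T}}$ respects morphisms — that $G(f)$ is a morphism of algebras whenever $f$ is a morphism in $\mathcal{D}$ — is immediate from naturality of $\varepsilon$ applied to $f$ and functoriality of $G$.

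The comparison functor $J_{\mathbb{U}}$ is treated by the dual verification: the coalgebra counit axiom for $(F(B), F(\eta_B))$ is the triangle identity $\varepsilon F \circ F \eta = \mathrm{id}_F$ at $B$, and coassociativity follows from naturality of $\eta$ together with functoriality of $F$. Since every step is a formal consequence of the triangle identities and the interchange law, the main (and only) obstacle is organising the horizontal/vertical composites cleanly so that each axiom reduces transparently to one of the two triangle identities or to naturality of $\eta$ or $\varepsilon$; I would lay out the relevant commuting squares explicitly to make each reduction visible rather than asserting it.
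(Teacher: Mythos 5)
Your proposal is correct: each reduction you describe (monad associativity via naturality of $\varepsilon$ applied at its own components, the unit laws and the (co)algebra axioms for the comparison functors via the two triangle identities, morphism compatibility via naturality of $\varepsilon$ at $f$) is exactly the standard verification. The paper gives no proof of this proposition at all, simply citing Borceux's \emph{Handbook of Categorical Algebra 2}, and the argument you outline is precisely the one found there, so there is nothing to reconcile.
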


\begin{defn}
A functor $G: \mathcal{D} \rightarrow \mathcal{C}$ is called \emph{monadic}  if there exists a monad $\mathbb{T} = (T, \eta, \mu)$ on $\mathcal{C}$ and an equivalence of categories $J: \mathcal{D} \rightarrow \mathcal{C}^{\mathbb{T}}$ such that $F \circ J$ is isomorphic as a functor to $G$, where $F: \mathcal{C}^{\mathbb{T}} \rightarrow \mathcal{C}$ is the forgetful functor. Equivalently, $G$ is monadic if it has a left adjoint $F: \mathcal{C} \rightarrow \mathcal{D}$, and so the pair form a monad $\mathbb{T} = (T := G\circ F, \eta, \mu)$ on $\mathcal{C}$, and if the comparison functor $K^{\mathbb{T}} : \mathcal{D} \rightarrow \mathcal{C}^{\mathbb{T}}$ is an equivalence of categories. Dually, a functor $F: \mathcal{C} \rightarrow \mathcal{D}$ is \emph{comonadic} if it has a right adjoint $G: \mathcal{D} \rightarrow \mathcal{C}$, and so form a comonad $\mathbb{U} = (U := F\circ G, \varepsilon, \Delta)$ on $\mathcal{D}$, and if the comparison functor $J_{\mathbb{U}} : \mathcal{C} \rightarrow \mathcal{D}_{\mathbb{U}}$ is an equivalence of categories.
\end{defn}

The following result, sometimes known as \emph{Beck's Monadicity Theorem}, gives criterion for when a functor is monadic (or comonadic). 

\begin{theorem}
\emph{(The Barr-Beck Theorem \cite[p.~212]{HCA2})}
\label{BarrBeck}
A functor $G: \mathcal{D} \rightarrow \mathcal{C}$ is monadic if and only if
\begin{itemize}
\item[i)]$G$ has a left adjoint $F$;
\item[ii)]$G$ reflects isomorphisms. That is, if $G(f)$ is an isomorphism then $f$ is an isomorphism for all morphisms $f$; and
\item[iii)]given a pair $f,g : A \rightarrow B$ of morphisms in $\mathcal{D}$ such that $G(f), G(g)$ have a split coequaliser $d: G(B) \rightarrow D$ in $\mathcal{C}$ then $f,g$ have a coequaliser $c: B \rightarrow C$ in $\mathcal{D}$ such that $G(c)=d, G(C) = D$.
\end{itemize}
\end{theorem}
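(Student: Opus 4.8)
The plan is to show that the comparison functor $K := K^{\mathbb{T}} : \mathcal{D} \to \mathcal{C}^{\mathbb{T}}$ is an equivalence precisely when (i)--(iii) hold. The forward implication is the easier one: if $G$ is monadic then, transporting along the defining equivalence, I may assume $\mathcal{D} = \mathcal{C}^{\mathbb{T}}$ with $G$ the forgetful functor $U : \mathcal{C}^{\mathbb{T}} \to \mathcal{C}$, so it suffices to verify (i)--(iii) for $U$. Its left adjoint is the free-algebra functor $C \mapsto (T(C), \mu_C)$, giving (i). For (ii), if the underlying morphism of an algebra map is invertible then its inverse automatically commutes with the structure maps, so $U$ reflects isomorphisms. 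For (iii), given a $U$-split pair one transports the splitting data to equip the coequaliser $D$ in $\mathcal{C}$ with a canonical $\mathbb{T}$-algebra structure and checks this is the coequaliser in $\mathcal{C}^{\mathbb{T}}$; since all three properties are stable under equivalence, they pass back to the original $G$.

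For the substantive converse I would build a quasi-inverse $L : \mathcal{C}^{\mathbb{T}} \to \mathcal{D}$ to $K$. The engine is that every algebra $(C,\xi)$ has a canonical presentation as a split coequaliser in $\mathcal{C}$:
$$ T^2(C) \underset{\mu_C}{\overset{T(\xi)}{\rightrightarrows}} T(C) \overset{\xi}{\longrightarrow} C, $$
where the splitting is given by $\eta_C : C \to T(C)$ and $\eta_{T(C)} : T(C) \to T^2(C)$; the defining identities $\xi\,\eta_C = \mathrm{id}$, $\mu_C\,\eta_{T(C)} = \mathrm{id}$ and $T(\xi)\,\eta_{T(C)} = \eta_C\,\xi$ are exactly the algebra-unit axiom, a monad-unit axiom, and naturality of $\eta$. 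Crucially this split coequaliser is the image under $G$ of the pair
$$ FGF(C) \underset{\varepsilon_{F(C)}}{\overset{F(\xi)}{\rightrightarrows}} F(C) $$
in $\mathcal{D}$, since $GF(\xi) = T(\xi)$ and $G(\varepsilon_{F(C)}) = \mu_C$ by the definition of $\mu$ in the earlier Proposition.

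Condition (iii) now applies to this pair: it furnishes a coequaliser $c : F(C) \to L(C,\xi)$ in $\mathcal{D}$ with $G(c) = \xi$ and $G(L(C,\xi)) = C$. I would take this as the definition of $L$ on objects, extending it to morphisms by the universal property of coequalisers. This makes $K \circ L \cong \mathrm{id}_{\mathcal{C}^{\mathbb{T}}}$ essentially tautological, since by construction $K(L(C,\xi)) = (G\,L(C,\xi),\, G(\varepsilon_{L(C,\xi)}))$ recovers $(C,\xi)$. For the other composite I would show that for every $A \in \mathcal{D}$ the counit $\varepsilon_A : FG(A) \to A$ is the coequaliser of $\varepsilon_{FG(A)}, FG(\varepsilon_A) : FGFG(A) \rightrightarrows FG(A)$; applying $G$ to this pair yields precisely the canonical split presentation of the algebra $K(A)$, so uniqueness of coequalisers gives $L(K(A)) \cong A$, assembling into $L \circ K \cong \mathrm{id}_{\mathcal{D}}$.

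The main obstacle is this last step, that $\varepsilon_A$ genuinely is the coequaliser of the displayed pair in $\mathcal{D}$. Its image under $G$ is a split, hence absolute, coequaliser in $\mathcal{C}$, but this must be promoted to a coequaliser in $\mathcal{D}$: here (iii) produces a coequaliser in $\mathcal{D}$ lying over the split one, and (ii) forces the canonical comparison between it and $A$ to be an isomorphism, so both hypotheses are genuinely used. Equal care is needed to confirm that every construction is natural — that the object-wise isomorphisms $KL \cong \mathrm{id}$ and $LK \cong \mathrm{id}$ cohere into natural transformations — which again rests on the universal property pinning the coequalisers down uniquely up to canonical isomorphism.
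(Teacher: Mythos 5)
Your proposal is correct and follows the standard proof of Beck's monadicity theorem (constructing the quasi-inverse $L$ from the canonical split presentation $T^2(C) \rightrightarrows T(C) \rightarrow C$, using (iii) to build $L$ and (ii) to show the counit $\varepsilon_A$ is a coequaliser); the paper itself states this theorem without proof, citing \cite[p.~212]{HCA2}, and your argument is essentially the one given in that reference. The only step you gloss is the verification that $G(\varepsilon_{L(C,\xi)}) = \xi$ in showing $K \circ L \cong \mathrm{id}$, which follows routinely since $T(\xi)$ is a split epimorphism via $T(\eta_C)$.
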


A dual version of the Barr-Beck theorem then characterises comonadic functors as follows.

\begin{theorem}
\label{DualBarrBeck}
A functor $F: \mathcal{C} \rightarrow \mathcal{D}$ is comonadic if and only if
\begin{itemize}
\item[i)]$F$ has a right adjoint $G$;
\item[ii)]$F$ reflects isomorphisms; and
\item[iii)]given a pair $f,g : A \rightarrow B$ are morphisms in $\mathcal{C}$ such that $F(f), F(g)$ have a split equaliser $h: H \rightarrow F(A)$ in $\mathcal{D}$ then $f,g$ have an equaliser $e: E \rightarrow A$ in $\mathcal{C}$ such that $F(e)=h, F(E) = H$.
\end{itemize}
\end{theorem}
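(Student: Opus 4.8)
The plan is to deduce this statement directly from the Barr--Beck theorem (Theorem \ref{BarrBeck}) by passing to opposite categories, exploiting the fact that comonadicity of a functor is exactly monadicity of its opposite. First I would record the elementary dualities that drive the argument. If $F: \mathcal{C} \to \mathcal{D}$ has a right adjoint $G$, so that $F \dashv G$, then applying $(-)^{op}$ reverses the adjunction to give $G^{op} \dashv F^{op}$; hence $F^{op}: \mathcal{C}^{op} \to \mathcal{D}^{op}$ is a right adjoint and is a legitimate candidate for Theorem \ref{BarrBeck}, with $\mathcal{C}^{op}$ in the role of the source category $\mathcal{D}$ and $\mathcal{D}^{op}$ in the role of the target $\mathcal{C}$. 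The comonad $\mathbb{U} = (FG, \varepsilon, \Delta)$ on $\mathcal{D}$ dualises to the monad $\mathbb{U}^{op} = ((FG)^{op}, \varepsilon^{op}, \Delta^{op}) = F^{op}G^{op}$ on $\mathcal{D}^{op}$, and I would verify the identification $(\mathcal{D}_{\mathbb{U}})^{op} \cong (\mathcal{D}^{op})^{\mathbb{U}^{op}}$: a coalgebra structure $\xi: C \to UC$ in $\mathcal{D}$ is literally an algebra structure in $\mathcal{D}^{op}$, and a morphism of coalgebras becomes a morphism of algebras once all arrows are reversed.

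Next I would match the three hypotheses term by term. Condition (i), that $F$ has a right adjoint, is precisely condition (i) of Theorem \ref{BarrBeck} applied to $F^{op}$ (that $F^{op}$ has a left adjoint), by the adjunction reversal above. Condition (ii) is self-dual, since a morphism is invertible in $\mathcal{C}$ if and only if the corresponding morphism is invertible in $\mathcal{C}^{op}$, so $F$ reflects isomorphisms if and only if $F^{op}$ does. For condition (iii) the crucial point is that an equaliser in $\mathcal{D}$ is a coequaliser in $\mathcal{D}^{op}$ and, more to the point, that a split equaliser in $\mathcal{D}$ is exactly a split coequaliser in $\mathcal{D}^{op}$ (the defining diagram of a split equaliser is the formal dual of that of a split coequaliser). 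Thus the hypothesis that $F(f), F(g)$ admit a split equaliser translates into the hypothesis that $F^{op}(f), F^{op}(g)$ admit a split coequaliser, and the asserted conclusion (an equaliser in $\mathcal{C}$ preserved by $F$) translates into the existence of a coequaliser in $\mathcal{C}^{op}$ preserved by $F^{op}$.

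With these translations in hand the proof concludes formally. By Theorem \ref{BarrBeck}, the functor $F^{op}$ is monadic if and only if conditions (i)--(iii) hold for $F^{op}$, which we have just shown is equivalent to conditions (i)--(iii) of the present statement. It then remains to check that ``$F^{op}$ monadic'' is equivalent to ``$F$ comonadic''. For this I would identify the comparison functors: the comparison functor for the monadic data $F^{op}$ is a functor $\mathcal{C}^{op} \to (\mathcal{D}^{op})^{\mathbb{U}^{op}}$, and under the identification $(\mathcal{D}^{op})^{\mathbb{U}^{op}} \cong (\mathcal{D}_{\mathbb{U}})^{op}$ its opposite is exactly the comonadic comparison functor $J_{\mathbb{U}}: \mathcal{C} \to \mathcal{D}_{\mathbb{U}}$ of the Proposition (the formulae $K^{\mathbb{T}}(A) = (G(A), G(\varepsilon_A))$ and $J_{\mathbb{U}}(B) = (F(B), F(\eta_B))$ are dual to one another). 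Since a functor is an equivalence if and only if its opposite is, $J_{\mathbb{U}}$ is an equivalence precisely when the comparison functor for $F^{op}$ is, giving the desired equivalence between comonadicity of $F$ and monadicity of $F^{op}$.

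I expect the only real obstacle to be the bookkeeping in the middle step, namely the careful verification that the Eilenberg--Moore construction and the comparison functor are compatible with $(-)^{op}$ in the precise form $(\mathcal{D}_{\mathbb{U}})^{op} \cong (\mathcal{D}^{op})^{\mathbb{U}^{op}}$ and $J_{\mathbb{U}} \cong (K^{\mathbb{U}^{op}})^{op}$. The subtlety here is purely one of variance: one must keep track of which functor in the dualised adjunction is the left adjoint and in which direction coalgebra structure maps point, so that the monad on $\mathcal{D}^{op}$ really is $\mathbb{U}^{op}$ and not its formal transpose. Everything else is a mechanical transcription of Theorem \ref{BarrBeck} into the opposite categories, so no genuinely new categorical input is required.
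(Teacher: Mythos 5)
Your proposal is correct and takes essentially the same approach the paper intends: the paper states Theorem \ref{DualBarrBeck} with no proof at all, presenting it simply as the formal dual of Theorem \ref{BarrBeck}, and your argument --- passing to $F^{op}$, matching conditions (i)--(iii) under duality, and verifying $(\mathcal{D}_{\mathbb{U}})^{op} \cong (\mathcal{D}^{op})^{\mathbb{U}^{op}}$ together with $J_{\mathbb{U}} \cong (K^{\mathbb{U}^{op}})^{op}$ --- is exactly the standard dualisation the paper leaves implicit. The bookkeeping you flag (which adjoint becomes which, and the direction of the structure maps) is handled correctly, so there is no gap.
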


\section{Contracting (co)products}
Fix a complete valued field $k$ with non-trivial valuation, either Archimedean or non-Archimedean.

\begin{defn}
\label{BanachCategory}
Let $\text{Ban}_{k}$ denote the category of $k$-Banach spaces, each equipped with a specific norm, and bounded linear transformations between them. Let $\text{Ban}_{k}^{\leq 1}$ denote the wide subcategory whose morphisms are bounded linear transformations of norm at most 1. By \emph{wide} we mean that $\text{Ban}_{k}^{\leq 1}$ contains all objects of $\text{Ban}_{k}$. If our field is non-Archimedean then Banach spaces may be defined in two ways, depending on whether we require norms to satisfy the usual triangle inequality or the strong triangle inequality. For most of this paper we will be able to treat both of these definitions uniformly, and will refer to them as the Archimedean and non-Archimedean cases respectively when they differ.
\end{defn}

\begin{defn}
\label{Contracting(Co)Products}
Let $(V_{i})_{i \in I}$ be a family of Banach spaces. Let us define the \emph{contracting product} of this family as the Banach space
$$\prod\nolimits_{i \in I}^{\leq 1}V_{i}=\{(v_{i})_{i \in I} \in \times_{i \in I} V_{i} \mid \text{Sup}_{i \in I} \|v_{i}\| \leq \infty\}$$
with norm $\|(v_{i})\|=\text{Sup}_{i \in I} \|v_{i}\|$ in both the Archimedean and non-Archimedean cases, and the \emph{contracting coproduct} as the Banach space
$$\coprod\nolimits_{i \in I}^{\leq 1}V_{i} = \{(v_{i})_{i \in I} \in \times_{i \in I} V_{i} \mid \sum_{i \in I} \|v_{i}\| \leq \infty\}$$
with norm $\|(v_{i})\|=\sum_{i \in I} \|v_{i}\|$ in the Archimedean case and
$$\coprod\nolimits_{i \in I}^{\leq 1}V_{i} = \{(v_{i})_{i \in I} \in \times_{i \in I} V_{i} \mid \text{lim}_{i \in I} \|v_{i}\| =0\}$$
with norm $\|(v_{i})\|=\text{Sup}_{i \in I} \|v_{i}\|$ in the non-Archimedean case.
\end{defn}

\begin{prop}
The category $\text{Ban}_{k}^{\leq 1}$ has small limits and colimits.
\end{prop}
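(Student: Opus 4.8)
The plan is to invoke the standard categorical fact that a category has all small limits as soon as it possesses all small products together with equalizers of all parallel pairs, and dually that it has all small colimits once it has all small coproducts together with coequalizers; the limit of a diagram $D : J \to \mathcal{C}$ is then realised as the equalizer of the two evident maps $\prod_{j} D(j) \rightrightarrows \prod_{(u : j \to j')} D(j')$, and dually for colimits. Since Definition \ref{Contracting(Co)Products} already furnishes the candidate objects $\prod^{\leq 1}$ and $\coprod^{\leq 1}$, the task reduces to (a) checking that these satisfy the relevant universal properties inside $\text{Ban}_{k}^{\leq 1}$, and (b) constructing equalizers and coequalizers.

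For the contracting product $P = \prod_{i}^{\leq 1} V_{i}$, the projections $\pi_{i} : P \to V_{i}$ are contractions, and given any family of contractions $f_{i} : W \to V_{i}$ the assignment $w \mapsto (f_{i}(w))_{i}$ lands in $P$ because $\sup_{i} \|f_{i}(w)\| \leq \|w\|$, and this is the unique contraction commuting with the projections. Here the key point --- and the reason the construction works in $\text{Ban}_{k}^{\leq 1}$ but fails for $\text{Ban}_{k}$ --- is that the supremum norm on $P$ is exactly calibrated to the contracting maps. Dually, for the contracting coproduct $C = \coprod_{i}^{\leq 1} V_{i}$ the coprojections are contractions, and any family of contractions $g_{i} : V_{i} \to W$ induces $(v_{i})_{i} \mapsto \sum_{i} g_{i}(v_{i})$, which converges and satisfies $\|\sum_{i} g_{i}(v_{i})\| \leq \sum_{i} \|v_{i}\| = \|(v_{i})_{i}\|$ in the Archimedean case (with the analogous sup-norm estimate in the non-Archimedean case), again a unique contraction. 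First I would verify completeness of these normed spaces, which is routine.

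Next I would build equalizers and coequalizers directly. The equalizer of $f,g : V \to W$ should be the subspace $E = \{v \in V : f(v) = g(v)\}$ with the restricted norm; it is Banach because $f - g$ is continuous so $E$ is closed, the inclusion is an isometry hence a contraction, and any contraction $h$ with $fh = gh$ factors through $E$ without changing its norm. The coequalizer of $f, g$ should be the quotient $W / \overline{(f-g)(V)}$ by the closure of the image, equipped with the quotient norm; the quotient map is a contraction, and any contraction $h$ with $hf = hg$ kills $(f-g)(V)$, hence its closure by continuity, and so factors through the quotient with unchanged norm by the universal property of the quotient norm.

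The main obstacle is not any single step but the need to keep the completeness and contraction conditions simultaneously in play: one must pass to the closure $\overline{(f-g)(V)}$ precisely so that the coequalizer is again a Banach space, and one must check that the quotient norm (rather than merely some equivalent norm) is what makes the induced map a contraction. A secondary point requiring care is the non-Archimedean coproduct, where the defining condition $\lim_{i} \|v_{i}\| = 0$ and the sup norm replace the $\ell^{1}$-type sum; I would confirm that the series $\sum_{i} g_{i}(v_{i})$ still converges and that the contraction estimate holds in the strong-triangle-inequality setting.
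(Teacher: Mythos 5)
Your proposal is correct and follows essentially the same route as the paper's (much terser) proof: the paper likewise reduces the claim to the existence of kernels and cokernels inherited from $\text{Ban}_{k}$ together with the verification that the contracting (co)products of Definition \ref{Contracting(Co)Products} are genuine (co)products in $\text{Ban}_{k}^{\leq 1}$, and then invokes the standard products-plus-equalizers construction implicitly. Your write-up simply supplies the details the paper leaves as ``straightforward to check,'' including the correct care with the quotient by the \emph{closure} of the image for coequalizers and with the non-Archimedean convergence condition.
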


\begin{proof}
Indeed, it has kernels and cokernels inhereted from $\text{Ban}_{k}$, and it is straightforward to check that Definition \ref{Contracting(Co)Products} describes products and coproducts in this category.
\end{proof}

\begin{defn}
These limits and colimits give objects in $\text{Ban}_{k}$. We shall refer to them as \emph{contracting limits and colimits} respectively, and denotate them by $\text{lim}_{I}^{\leq 1}$ and $\text{colim}_{I}^{\leq 1}$.
\end{defn}

\begin{remark}
Note that filtered contracting colimits are not left exact. For example, the maps $k \rightarrow k_{\frac{1}{n}}$ are all isomorphisms in $\text{Ban}_{k}$ (and bimorphisms in $\text{Ban}_{k}^{\leq 1}$) but taking contracting colimits over $n \geq 1$ we obtain the morphism $k \rightarrow \{0\}$.
\end{remark}

Contracting (co)products have the following universal property in $\text{Ban}_{k}$.

\begin{lem}
\label{ContractingUniversalProperty}
For all collections of morphisms $\{f_{i}:U \rightarrow V_{i}\}_{i \in I}$ (respectively $\{g_{i}:V_{i} \rightarrow W\}_{i \in I}$) such that $\{ \|f_{i}\|\}_{i \in I}$ is bounded (respectively $\{ \|g_{i}\|\}_{i \in I}$ is bounded) by some $M>0$, there exists a unique map $U \rightarrow \prod_{i \in I}^{\leq 1}V_{i}$ (respectively $\coprod_{i \in I}^{\leq 1}V_{i} \rightarrow W$) of norm as most $M$ such that $f_{i}$ is the composite $U \rightarrow \prod_{j \in I}^{\leq 1}V_{j} \rightarrow V_{i}$ (respectively $g_{i}$ is the composite $V_{i} \rightarrow \prod_{j \in I}^{\leq 1}V_{j} \rightarrow U$). That is, $\underline{\text{Hom}}(U,\prod_{i \in I}^{\leq 1}V_{i}) \cong \prod_{i \in I}^{\leq 1}\underline{\text{Hom}}(U,V_{i})$ and $\underline{\text{Hom}}(\coprod_{i \in I}^{\leq 1}V_{i},W) \cong \prod_{i \in I}^{\leq 1}\underline{\text{Hom}}(V_{i},W)$.
\end{lem}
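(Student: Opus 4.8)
The plan is to prove the two isomorphisms separately, since they are dual, and to establish the universal property directly from the explicit descriptions in Definition \ref{Contracting(Co)Products}. In each case the strategy is the same: given the bounded family of morphisms, construct the candidate map explicitly, verify that it is bounded with norm at most $M$, check that it is the unique such map making the relevant triangles commute, and then observe that this assignment is natural and bijective, which yields the claimed $\underline{\text{Hom}}$-space identification.

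First I would treat the product case. Given $\{f_i : U \to V_i\}_{i \in I}$ with $\|f_i\| \leq M$ for all $i$, I would define $f : U \to \prod_{i \in I}^{\leq 1} V_i$ by $f(u) = (f_i(u))_{i \in I}$. The key point is that this lands in the contracting product: for each $u$ we have $\sup_i \|f_i(u)\| \leq \sup_i \|f_i\|\,\|u\| \leq M\|u\| < \infty$, so the tuple is bounded and moreover $\|f(u)\| = \sup_i \|f_i(u)\| \leq M \|u\|$, giving $\|f\| \leq M$. Commutativity with the projections $\pi_i : \prod_{j}^{\leq 1} V_j \to V_i$ is immediate since $\pi_i \circ f = f_i$ by construction, and uniqueness follows because any map into a product is determined by its components. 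To upgrade this to the stated natural isomorphism of $\underline{\text{Hom}}$ spaces, I would note that $\underline{\text{Hom}}(U, -)$ of the product is precisely the set of bounded families with the supremum norm, which is by definition $\prod_{i}^{\leq 1}\underline{\text{Hom}}(U, V_i)$; the map $f \mapsto (\pi_i \circ f)_i$ is the required isometric isomorphism.

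For the coproduct case, the construction is dual but requires a little more care because of the difference between the Archimedean and non-Archimedean norms. Given $\{g_i : V_i \to W\}$ with $\|g_i\| \leq M$, I would define $g : \coprod_i^{\leq 1} V_i \to W$ by $g((v_i)_i) = \sum_{i} g_i(v_i)$. Here the main thing to verify is that this sum converges in $W$ and that $g$ is bounded. In the Archimedean case the coproduct consists of summable families, so $\sum_i \|g_i(v_i)\| \leq M \sum_i \|v_i\| = M\|(v_i)\|$ shows both absolute convergence (hence convergence, as $W$ is complete) and the norm bound $\|g\| \leq M$. In the non-Archimedean case the elements are families with $\|v_i\| \to 0$, so $\|g_i(v_i)\| \to 0$ and the series converges by the non-Archimedean summability criterion, while the strong triangle inequality gives $\|g((v_i))\| \leq \sup_i \|g_i(v_i)\| \leq M \sup_i \|v_i\| = M\|(v_i)\|$. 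Commutativity with the coprojections and uniqueness (the coprojected images span a dense subspace, so a bounded map is determined on them) complete the argument, and the bijection $g \mapsto (g \circ \iota_i)_i$ gives the isometric identification of $\underline{\text{Hom}}$ spaces.

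The step I expect to be the main obstacle, or at least the one requiring the most attention, is the convergence and norm estimate for the coproduct map in the non-Archimedean case, where the interplay between the $\ell^1$-style boundedness hypothesis on the family $\{g_i\}$ and the $c_0$-style description of the coproduct must be handled correctly so that the supremum norm rather than a sum governs the estimate. Everything else is a routine check of boundedness and uniqueness, and the final reformulation in terms of $\underline{\text{Hom}}$ spaces is essentially a restatement of the contracting (co)product definitions applied to the Banach spaces $\underline{\text{Hom}}(U, V_i)$ and $\underline{\text{Hom}}(V_i, W)$.
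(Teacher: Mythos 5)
Your proof is correct, but it takes a genuinely different route from the paper's. The paper constructs nothing by hand: having already recorded, in the Proposition preceding the Lemma, that the formulas of Definition \ref{Contracting(Co)Products} give genuine products and coproducts in $\text{Ban}_{k}^{\leq 1}$, it reduces the bounded-by-$M$ case to the contracting case by a rescaling trick --- non-triviality of the valuation lets one assume $M \in |k^{\times}|$, pick $\lambda \in k^{\times}$ with $|\lambda| = M$, apply the norm-$\leq 1$ universal property to the rescaled family $\{f_{i}/\lambda\}_{i \in I}$, and multiply the resulting map $\phi$ by $\lambda$. You instead verify everything analytically from the explicit descriptions: the pointwise tuple map for the product, and the summed map $(v_{i}) \mapsto \sum_{i} g_{i}(v_{i})$ for the coproduct, with the convergence checks correctly split between the Archimedean ($\ell^{1}$-summability) and non-Archimedean ($c_{0}$-decay plus the strong triangle inequality) cases, and uniqueness via density of the finitely supported families. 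Comparing what each buys: the paper's proof is two lines and purely categorical, but it leans on the valuation being non-trivial (and, strictly speaking, on a small extra step when $M \notin |k^{\times}|$, where one only gets norm $\leq M'$ for each $M' > M$ in $|k^{\times}|$ and must pass to the infimum to recover the bound $M$), as well as on the ``straightforward to check'' assertion of the preceding Proposition, which is never written out. Your proof is longer but self-contained, needs no hypothesis on the value group, supplies exactly that omitted verification as the special case $M = 1$, and makes explicit that the bijections $f \mapsto (\pi_{i} \circ f)_{i}$ and $g \mapsto (g \circ \iota_{i})_{i}$ are isometric (via $\|f\| = \text{Sup}_{i}\|f_{i}\|$ and $\|g\| = \text{Sup}_{i}\|g_{i}\|$), which is the precise content of the displayed identifications $\underline{\text{Hom}}(U,\prod_{i \in I}^{\leq 1}V_{i}) \cong \prod_{i \in I}^{\leq 1}\underline{\text{Hom}}(U,V_{i})$ and $\underline{\text{Hom}}(\coprod_{i \in I}^{\leq 1}V_{i},W) \cong \prod_{i \in I}^{\leq 1}\underline{\text{Hom}}(V_{i},W)$ rather than mere bijectivity.
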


\begin{proof}
As the valuation on our field is assumed to be non-trivial, we may take $M \in |k^{\times}|$ without loss of generality, so there is $\lambda \in k^{\times}$ with $|\lambda|=M$. Then we may rescale our family of morphisms to $\{\frac{f_{i}}{\lambda}\}_{i \in I}$ in $\text{Ban}_{k}^{\leq 1}$. By the universal property we get a map $\phi:U \rightarrow \prod_{i \in I}^{\leq 1}V_{i}$ of modulus at most $1$, and scaling by $\lambda$ gives our desired map, $\lambda \cdot \phi$. The proof for contracting coproducts is similar.
\end{proof}

\begin{defn}
For a set $I$, let $\text{Ban}_{k}^{I,\text{bd}}$ be the category whose objects are collections $(V_{i})_{i \in I}$ of Banach spaces $V_{i}$ indexed by $i \in I$ and whose morphisms are uniformly bounded,
$$\text{Hom}((V_{i})_{i \in I},(V'_{i})_{i \in I}):=\prod\nolimits_{i \in I}^{\leq 1}\underline{\text{Hom}}(V_{i},V'_{i}).$$
\end{defn}

It follows from Lemma \ref{ContractingUniversalProperty} that $\prod_{i \in I}^{\leq 1}$ and $\coprod_{i \in I}^{\leq 1}$ define functors from $\text{Ban}_{k}^{I,\text{bd}}$ to $\text{Ban}_{k}$. Furthermore, contracting products are right adjoints to the diagonal functors
$$\Delta^{I}:\text{Ban}_{k} \rightarrow \text{Ban}_{k}^{I,\text{bd}}, \quad V \mapsto (V)_{i \in I},$$
and likewise contracting coproducts are left adjoints to $\Delta^{I}$.

\begin{remark}
Note that contracting products and contracting coproducts do not necessarily commute. For example the natural map
$$\coprod_{i \in \mathbb{Z}}^{\leq 1} \prod_{j \in \mathbb{Z}}^{\leq 1} k \rightarrow \prod_{j \in \mathbb{Z}}^{\leq 1} \coprod_{i \in \mathbb{Z}}^{\leq 1} k$$
is not surjective, as $(\delta_{i,j})_{i,j \in \mathbb{Z}}$ is not in the image.
\end{remark}

\begin{defn}
Let $\text{IndBan}_{k}$ be the Ind completion of $\text{Ban}_{k}$. That is, $\text{IndBan}_{k}$ is the category whose objects are filtered diagrams $X:I \rightarrow \text{Ban}_{k}$ of Banach spaces, with morphisms
$$\text{Hom}(X,Y)= \text{lim}_{i \in I}\text{colim}_{j \in J} \text{Hom}(X(i),Y(j)).$$
We think of these diagrams as formal colimits, and hence use the notation $\text{"colim"}_{i \in I} X(i)$ for the diagram $X$. For a Banach space $V$ we will often denote by $\text{"}V\text{"}$ the object in $\text{IndBan}_{k}$ represented by the constant singleton diagram at $V$, and often just as $V$ when there is no ambiguity.
\end{defn}

\begin{defn}
We will say that a category $\mathcal{C}$ is \emph{locally presentable} if it is cocomplete and has a small full subcategory $\mathcal{C}_{0}$ of compact objects such that every object in $\mathcal{C}$ is canonically a colimit of objects in $\mathcal{C}_{0}$.
\end{defn}

\begin{prop}
The category $\text{IndBan}_{k}$ is a complete and cocomplete, locally presentable, quasi-abelian category, and can be given a closed monoidal structure extending that of $\text{Ban}_{k}$ by defining
$$(\text{"colim"}_{i \in I} X_{i})\hat{\otimes}(\text{"colim"}_{j \in J} Y_{j}):= \text{"colim"}_{\substack{i \in I \\ j \in J}} X_{i} \hat{\otimes} Y_{j},$$
$$\underline{\text{Hom}}(\text{"colim"}_{i \in I} X_{i},\text{"colim"}_{j \in J} Y_{j}):=  \text{lim}_{i \in I}\text{colim}_{j \in J} \underline{Hom}(X_{i},Y_{j}).$$
\end{prop}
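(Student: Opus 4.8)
The plan is to establish the four asserted properties in turn, leaning throughout on the universal property of the Ind-completion as the free cocompletion of $\text{Ban}_{k}$ under filtered colimits. I would begin by recording that $\text{Ban}_{k}$ has all finite limits and finite colimits: it has kernels and cokernels inherited from the usual Banach-space structure, and finite products and coproducts, which for a finite index set coincide with the contracting (co)products of Definition \ref{Contracting(Co)Products}. With this in hand the standard structure theory of Ind-objects (Kashiwara--Schapira, or SGA 4) applies verbatim: $\text{Ind}(\mathcal{C})$ always admits small filtered colimits, and when $\mathcal{C}$ has finite colimits (respectively finite limits) the Ind-completion is cocomplete (respectively complete), with finite colimits computed object-wise and arbitrary (co)limits assembled from finite ones and filtered ones. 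This yields completeness and cocompleteness simultaneously. For local presentability, the key observation is that each representable $\text{"}V\text{"}$ is compact, since by the very definition of morphisms in $\text{IndBan}_{k}$ we have $\text{Hom}(\text{"}V\text{"},\text{"colim"}_{j}Y_{j}) = \text{colim}_{j}\text{Hom}(V,Y_{j})$, so that $\text{Hom}(\text{"}V\text{"},-)$ commutes with filtered colimits; and every object $\text{"colim"}_{i}X_{i}$ is by construction the canonical filtered colimit of the compact objects $\text{"}X_{i}\text{"}$. The subcategory of compact objects is thus (a skeleton of) $\text{Ban}_{k}$, which we regard as small relative to a fixed Grothendieck universe in the usual way.

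Next I would invoke that $\text{Ban}_{k}$ is itself quasi-abelian: it is additive with kernels and cokernels, and its strict epimorphisms (respectively monomorphisms) are stable under pullback (respectively pushout). The passage to the Ind-completion preserves this structure, the Ind-completion of a quasi-abelian category being again quasi-abelian (indeed an elementary quasi-abelian category), by Schneiders' theory of quasi-abelian categories and the work of Prosmans--Schneiders. I would cite this directly rather than reprove it, checking only that the hypotheses (essential smallness modulo universe, and existence of the relevant kernels and cokernels) are met, which follows from the previous paragraph.

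For the monoidal structure I would first verify the displayed formula defines a functor. Since a product of filtered categories is filtered, $\text{"colim"}_{i,j}X_{i}\hat{\otimes}Y_{j}$ is a genuine Ind-object; functoriality and independence of the chosen presentations follow from the universal property of Ind-completion, which guarantees that the bifunctor $\hat{\otimes}:\text{Ban}_{k}\times\text{Ban}_{k}\rightarrow\text{Ban}_{k}$ extends uniquely, up to coherent isomorphism, to a bifunctor on $\text{IndBan}_{k}$ preserving filtered colimits separately in each variable. The associativity, unit and symmetry constraints of $\hat{\otimes}$ on $\text{Ban}_{k}$ then extend by this same uniqueness, so the coherence axioms hold on all of $\text{IndBan}_{k}$ because they already hold on the dense subcategory of representables.

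The crux, and the step I expect to be the main obstacle, is the closed structure: I must show that $\underline{\text{Hom}}$, defined by the given limit--colimit formula, is right adjoint to $\hat{\otimes}$. Writing $A = \text{"colim"}_{i}A_{i}$, $B = \text{"colim"}_{j}B_{j}$, $C = \text{"colim"}_{l}C_{l}$, I would compute both sides of $\text{Hom}(A\hat{\otimes}B,C)\cong\text{Hom}(A,\underline{\text{Hom}}(B,C))$ and match them. The left side unwinds, via the definitions of $\hat{\otimes}$ and of morphisms in $\text{IndBan}_{k}$, to $\text{lim}_{i,j}\text{colim}_{l}\text{Hom}(A_{i}\hat{\otimes}B_{j},C_{l})$. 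On the right side, using that $\text{Hom}(A,-)$ sends limits of Ind-objects to limits of sets, that each $A_{i}$ is compact, and the Banach-level adjunction $\text{Hom}(A_{i}\hat{\otimes}B_{j},C_{l})\cong\text{Hom}(A_{i},\underline{\text{Hom}}(B_{j},C_{l}))$ (the defining universal property of the projective tensor product), one again arrives at $\text{lim}_{j}\text{lim}_{i}\text{colim}_{l}\text{Hom}(A_{i}\hat{\otimes}B_{j},C_{l}) = \text{lim}_{i,j}\text{colim}_{l}\text{Hom}(A_{i}\hat{\otimes}B_{j},C_{l})$. The only place genuine care is needed is justifying the interchange of the filtered colimit over $l$ with the limits over $i$ and $j$ and with the Hom functor; these interchanges are precisely licensed by the compactness of the Banach spaces $A_{i}$ and $A_{i}\hat{\otimes}B_{j}$ established above, so that $\text{Hom}(A_{i},-)$ commutes with the relevant filtered colimits, together with the fact that hom-sets send formal limits in either variable of the Ind-category to limits of sets. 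Naturality of the resulting bijection in $A$, $B$ and $C$ is then routine, completing the proof that $(\text{IndBan}_{k},\hat{\otimes},\underline{\text{Hom}})$ is a closed monoidal category extending $\text{Ban}_{k}$.
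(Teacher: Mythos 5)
Most of your proposal is sound, and indeed far more detailed than the paper's own proof, which consists entirely of a citation to Section 1.4.1 of Meyer's \emph{Local and analytic cyclic homology} for the explicit construction of limits, plus the remark that local presentability holds by construction with $\text{Ban}_{k}$ as the compact objects. Your cocompleteness argument, the compactness of representables, the quasi-abelian step by citation, the extension of $\hat{\otimes}$, and the $\lim$--$\operatorname{colim}$ computation of the adjunction are all correct in outline. But there is one genuine gap, and it sits exactly where the paper leans on Meyer: your claim that when $\mathcal{C}$ has finite limits the Ind-completion is complete, ``with arbitrary (co)limits assembled from finite ones and filtered ones,'' is false on the limit side. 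The decomposition of arbitrary colimits into finite colimits plus filtered colimits has no dual of the form you need: arbitrary limits are assembled from finite limits and \emph{small products}, not from finite limits and filtered \emph{co}limits. Kashiwara--Schapira give only that $\text{Ind}(\mathcal{C})$ has finite limits when $\mathcal{C}$ does; small limits in $\text{Ind}(\mathcal{C})$ follow only when $\mathcal{C}$ itself has small limits, which $\text{Ban}_{k}$ does not (an infinite product of Banach spaces is not Banach). So your argument proves cocompleteness but leaves completeness unproven, and this is not a pedantic point: completeness of $\text{IndBan}_{k}$ is a genuinely Banach-specific fact, proved by constructing infinite products as filtered colimits of \emph{contracting} products over rescalings of the norms, $\prod_{i \in I} V^{i} \cong \text{"colim"}_{(j_{i}),(r_{i})} \prod^{\leq 1}_{i \in I}(V^{i}_{j_{i}})_{r_{i}}$, which is precisely what the paper's citation supplies and what the paper itself later uses in the proof of Lemma \ref{DescriptionOfLUFunctions}.

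The gap propagates into your closed-structure step: the displayed internal Hom is by definition a limit in $\text{IndBan}_{k}$ (the outer $\text{lim}_{i}$ is a limit of ind-objects, not a formal filtered colimit), so even writing it down presupposes the limits whose existence is at issue. Your computation of $\text{Hom}(A\hat{\otimes}B,C) \cong \text{Hom}(A,\underline{\text{Hom}}(B,C))$ is fine once those limits exist. There are two honest repairs. Either cite the explicit construction of limits (Meyer, as the paper does), after which everything you wrote goes through; or argue abstractly that cocompleteness together with your compact-generation observation makes $\text{IndBan}_{k}$ locally presentable relative to the fixed universe, and locally presentable categories are complete by Ad\'amek--Rosick\'y --- noting that this second route genuinely depends on the universe convention you flag, since $\text{Ban}_{k}$ has a proper class of isomorphism classes and the paper's own definition of ``locally presentable'' (requiring a \emph{small} subcategory of compact objects) is being stretched in the same way. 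As written, however, the assembly claim for limits is a step that fails, not merely an omitted detail.
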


\begin{proof}
Explicit construction of limits can be found in Section 1.4.1 of \cite{LaACH}. By construction, $\text{IndBan}_{k}$ is locally presentable with compact objects $\text{Ban}_{k}$.
\end{proof}

\begin{remark}
For an account of Ind completions see \cite{CaS}, and more on $\text{IndBan}_{k}$ can be found in \cite{TRTfDM}, \cite{SDiBAG}, \cite{NAAGaRAG} and \cite{LaACH} and numerous other excellent sources. A thorough exposition of quasi-abelian categories can be found in \cite{QACS}. Results about locally presentable categories, including the Adjoint Functor Theorem (from which Theorem \ref{AdjointFunctorTheorem} in the following is adapted), can be found in \cite{LPaAC}.
\end{remark}

\begin{defn}
We extend the definition of contracting (co)products to $\text{IndBan}_{k}$ as follows. The contracting product and coproduct functors
$$\prod\nolimits^{\leq 1}_{I},\coprod\nolimits^{\leq 1}_{I}:\text{Ban}_{k}^{I,\text{bd}} \rightarrow \text{Ban}_{k}$$
induce functors from the Ind completion of $\text{Ban}_{k}^{I,\text{bd}}$,
$$\text{IndBan}_{k}^{I,\text{bd}}:=\text{Ind}(\text{Ban}_{k}^{I,\text{bd}}),$$
to $\text{IndBan}_{k}$, which we will continue to denote as $\prod^{\leq 1}_{I}$ and $\coprod^{\leq 1}_{I}$ respectively. There is a faithful diagonal embedding functor $\Delta^{I}:\text{IndBan}_{k} \rightarrow \text{IndBan}_{k}^{I,\text{bd}}$ induced by $\Delta^{I}:\text{Ban}_{k} \rightarrow \text{Ban}_{k}^{I,\text{bd}}$.
\end{defn}

\begin{remark}
The embedding $\text{Ban}_{k}^{I, \text{bd}} \hookrightarrow \text{Ban}_{k}^{I}$ induces a faithful embedding $\text{IndBan}_{k}^{I,\text{bd}} \rightarrow \text{Ind}(\text{Ban}_{k}^{I}) \cong \text{IndBan}_{k}^{I}$. This allows us to think of objects of $\text{IndBan}_{k}^{I,\text{bd}}$ as collections of IndBanach spaces indexed over $I$ which can be expressed as formal colimits in a uniformly bounded way, and morphisms being uniformly bounded.
\end{remark}

\begin{prop}
With the above definitions, there are adjunctions
$$\text{Hom}(\coprod\nolimits_{I} ^{\leq 1}X_{I},Y) \cong \prod\nolimits_{i \in I} ^{\leq 1}\text{Hom}(X_{I},\Delta^{I}Y)$$
and
$$\text{Hom}(Y,\prod\nolimits_{I} ^{\leq 1}X_{I}) \cong \prod\nolimits_{i \in I} ^{\leq 1}\text{Hom}(\Delta^{I}Y,X_{I})$$
for $X_{I} \in \text{IndBan}_{k}^{I,\text{bd}}$, $Y \in \text{IndBan}_{k}$.
\end{prop}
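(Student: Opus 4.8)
The plan is to reduce both adjunctions to their Banach-level counterparts — Lemma \ref{ContractingUniversalProperty} together with the adjunction $\coprod^{\leq 1}_I \dashv \Delta^I \dashv \prod^{\leq 1}_I$ already recorded for $\text{Ban}_k$ — by running everything through the defining limit–colimit formula for morphisms in the Ind completion. I read the $\text{Hom}$'s here as the internal (IndBanach-valued) homs, so that the contracting product on the right-hand side makes sense: the symbol $\prod^{\leq 1}_{i \in I}\text{Hom}(X_I,\Delta^I Y)$ denotes the enriched hom $\underline{\text{Hom}}_{\text{IndBan}_k^{I,\text{bd}}}(X_I,\Delta^I Y)$, which by the very definition of $\text{Ban}_k^{I,\text{bd}}$ is a formal-colimitwise contracting product of component internal homs. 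First I would fix presentations $Y = \text{"colim"}_{j \in J} Y_j$ and $X_I = \text{"colim"}_{l \in L} X_I^{(l)}$ as formal filtered colimits, where each $X_I^{(l)} = (X_i^{(l)})_{i \in I}$ is a uniformly bounded family in $\text{Ban}_k^{I,\text{bd}}$. The crucial structural input is that the induced functors $\coprod^{\leq 1}_I$, $\prod^{\leq 1}_I$ and $\Delta^I$ are all computed levelwise on such formal colimits, so that $\coprod^{\leq 1}_I X_I = \text{"colim"}_{l} \coprod^{\leq 1}_I X_I^{(l)}$ and $\Delta^I Y = \text{"colim"}_j \Delta^I Y_j$.

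For the coproduct adjunction I would expand the left-hand side with the internal-hom formula of the Ind completion,
\[
\underline{\text{Hom}}(\coprod\nolimits^{\leq 1}_I X_I,\, Y) = \lim_{l}\,\text{colim}_{j}\,\underline{\text{Hom}}(\coprod\nolimits^{\leq 1}_I X_I^{(l)},\, Y_j),
\]
and then apply, inside the limit–colimit, the Banach-level isomorphism $\underline{\text{Hom}}(\coprod^{\leq 1}_I X_I^{(l)}, Y_j) \cong \prod^{\leq 1}_{i} \underline{\text{Hom}}(X_i^{(l)}, Y_j)$ from Lemma \ref{ContractingUniversalProperty}. Recognising the right-hand side as the morphism formula computing $\underline{\text{Hom}}_{\text{IndBan}_k^{I,\text{bd}}}(X_I, \Delta^I Y)$ yields the first adjunction. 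The product adjunction is entirely parallel, the only change being the order of limit and colimit: writing $\underline{\text{Hom}}(Y, \prod^{\leq 1}_I X_I) = \lim_j \text{colim}_l \underline{\text{Hom}}(Y_j, \prod^{\leq 1}_I X_I^{(l)})$ and applying $\underline{\text{Hom}}(Y_j, \prod^{\leq 1}_I X_I^{(l)}) \cong \prod^{\leq 1}_{i} \underline{\text{Hom}}(Y_j, X_i^{(l)})$ reproduces the formula for $\underline{\text{Hom}}_{\text{IndBan}_k^{I,\text{bd}}}(\Delta^I Y, X_I)$.

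The step requiring care — and the main point where one could go wrong — is the interaction between the contracting product $\prod^{\leq 1}_{i}$ appearing in the indexed hom and the filtered colimit in the morphism formula. One must keep $\prod^{\leq 1}_{i}$ in the innermost position, inside the filtered colimit over $j$ (respectively $l$), throughout the argument: since contracting products do not commute with filtered colimits (as recorded in the earlier remark exhibiting the non-surjective comparison map), any attempt to pull $\prod^{\leq 1}_{i}$ outside the colimit and reinterpret the right-hand side as $\prod^{\leq 1}_{i} \underline{\text{Hom}}_{\text{IndBan}_k}(X_i, Y)$ would destroy the isomorphism. The adjunction holds precisely because both sides are literally the same expression $\lim\,\text{colim}\,\prod^{\leq 1}_{i} \underline{\text{Hom}}(\,\cdots)$ with the product innermost. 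What then remains is routine: one checks that the Banach-level isomorphisms of Lemma \ref{ContractingUniversalProperty} are natural in both $l$ and $j$, so that they assemble into an isomorphism of the limit–colimit diagrams, which is immediate from the naturality already present in that lemma.

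Abstractly, this is the familiar fact that passing to Ind completions is a pseudofunctor which preserves adjunctions, so that the chain $\coprod^{\leq 1}_I \dashv \Delta^I \dashv \prod^{\leq 1}_I$ at the level of $\text{Ban}_k$ automatically descends to the Ind completions; I nonetheless prefer the direct computation above, as it is self-contained and makes transparent exactly why the innermost placement of the contracting product is essential.
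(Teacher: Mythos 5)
Your proof is correct and takes essentially the same route as the paper, whose entire proof reads that the result ``follows from the adjunction given in Lemma \ref{ContractingUniversalProperty} by taking filtered colimits''---precisely your reduction through the $\text{lim}\,\text{colim}$ morphism formula of the Ind completion applied levelwise to the Banach-level adjunction. Your added caution that $\prod^{\leq 1}_{i}$ must stay innermost, inside the filtered colimit, is a correct elaboration of a point the paper leaves implicit.
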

\begin{proof}
This follows from the adjunction given in Lemma \ref{ContractingUniversalProperty} by taking filtered colimits.
\end{proof}

\begin{defn}
We will say that a functor $\mathcal{F}:\text{IndBan}_{k} \rightarrow \text{IndBan}_{k}$ \emph{commutes with contracting coproducts} if the functors $\mathcal{F}^{I}:\text{IndBan}_{k}^{I} \rightarrow \text{IndBan}_{k}^{I}$ restrict to functors $\mathcal{F}^{I}:\text{IndBan}_{k}^{I,\text{bd}} \rightarrow \text{IndBan}_{k}^{I,\text{bd}}$ under the embedding $\text{IndBan}_{k}^{I,\text{bd}} \hookrightarrow \text{IndBan}_{k}^{I}$ such that the diagram of functors
\begin{center}
\begin{tikzpicture}[node distance=6cm, auto]
  \node (A) {$\text{IndBan}_{k}^{I,\text{bd}}$};
  \node (B) [below=1cm of A] {$\text{IndBan}_{k}$};
  \node (C) [right=1.3cm of A] {$\text{IndBan}_{k}^{I,\text{bd}}$};
  \node (D) [below=1cm of C] {$\text{IndBan}_{k}$};
  \draw[->] (A) to node [swap]{$\coprod\nolimits^{\leq 1}_{I}$} (B);
  \draw[->] (A) to node {$\mathcal{F}^{I}$} (C);
  \draw[->] (B) to node {$\mathcal{F}$} (D);
  \draw[->] (C) to node {$\coprod\nolimits^{\leq 1}_{I}$} (D);
  \draw[double,double equal sign distance,-implies,shorten >=10pt,shorten <=10pt] 
  (C) to node {} (B);
\end{tikzpicture}
\end{center}
commutes up to a natural isomorphism.
\end{defn}

\begin{remark}
It is important to note that, since contracting coproducts are not functorial on $\text{IndBan}_{k}^{I}$, or even on the full subcategory on the essential image of $\text{IndBan}_{k}^{I,\text{bd}}$, the statement of whether or not a functor commutes with contracting coproducts is not invariant under isomorphism. However, the following weaker notion is invariant under isomorphism of functors.
\end{remark}

\begin{defn}
\label{CommutingWithl1}
For a set $S$ we will denote by $l^{1}(S)$ the contracting coproduct $l^{1}(S):= \coprod^{\leq 1}_{S}k$. We will say that a functor  \emph{commutes with $l^{1}$} if the natural map
$$\coprod\nolimits^{\leq 1}_{S} F(k) \xrightarrow{\sim} F(l^{1}(S))$$
is an isomorphism. This map is the image of the identity under the composition
$$\begin{array}{rcl}
\text{Hom}(\coprod_{S}^{\leq 1}k, \coprod_{S}^{\leq 1}k)&\cong& \prod_{S}^{\leq 1}\text{Hom}(k, \coprod_{S}^{\leq 1}k)\\
&\rightarrow& \prod_{S}^{\leq 1}\text{Hom}(F(k), F(\coprod_{S}^{\leq 1}k))\\
&\cong& \text{Hom}(\coprod_{S}^{\leq 1}F(k), F(\coprod_{S}^{\leq 1}k)).
\end{array}$$
\end{defn}

\section{Categories of IndBanach (co)modules}

\subsection{IndBanach modules of IndBanach algebras}

\begin{defn}
Let $\mathcal{C}$ be a locally presentable, quasi-abelian category enriched over $\text{IndBan}_{k}$ and let $F:\mathcal{C} \rightarrow \text{IndBan}_{k}$ be an enriched functor. We say that $F$ is a \emph{fibre functor} over $\text{IndBan}_{k}$ if $F$ is bicontinuous, strongly exact, faithful and reflects strict morphisms.
\end{defn}

The following adaptation of the Adjoint Functor Theorem for locally presentable categories (see \cite{LPaAC}) tells us when an enriched adjoint functor exists.

\begin{theorem}[Enriched Adjoint Functor Theorem \cite{BCoECT}]
\label{AdjointFunctorTheorem}
Let $\mathscr{F}:\mathscr{C} \rightarrow \mathscr{D}$ be a functor between locally presentable categories, enriched over $\text{IndBan}_{k}$. Then $\mathscr{F}$ has an enriched right adjoint if and only if it preserves all small colimits. If $\mathscr{C}$ is complete and $\mathscr{F}$ also preserves all small limits then $\mathscr{F}$ has an enriched left adjoint.
\end{theorem}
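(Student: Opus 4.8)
The plan is to deduce both assertions from the (enriched) Special Adjoint Functor Theorem, exploiting the fact that a locally presentable $\mathcal{V}$-category with $\mathcal{V} = \text{IndBan}_{k}$ automatically inherits the (co)completeness and (co)wellpoweredness hypotheses that such a theorem requires. The forward implication is the purely formal one. Suppose $\mathscr{F}$ admits an enriched right adjoint $\mathscr{G}$. Then for any small diagram $(X_{i})$ in $\mathscr{C}$ and any $D \in \mathscr{D}$, the enriched adjunction together with the fact that $\underline{\text{Hom}}_{\mathscr{C}}(-,\mathscr{G}(D))$ carries colimits to limits yields natural isomorphisms
\begin{align*}
\underline{\text{Hom}}_{\mathscr{D}}(\mathscr{F}(\text{colim}_{i} X_{i}), D)
&\cong \underline{\text{Hom}}_{\mathscr{C}}(\text{colim}_{i} X_{i}, \mathscr{G}(D)) \\
&\cong \text{lim}_{i}\, \underline{\text{Hom}}_{\mathscr{C}}(X_{i}, \mathscr{G}(D)) \\
&\cong \text{lim}_{i}\, \underline{\text{Hom}}_{\mathscr{D}}(\mathscr{F}(X_{i}), D).
\end{align*}
Since these are natural in $D$, the enriched Yoneda lemma forces $\mathscr{F}(\text{colim}_{i} X_{i}) \cong \text{colim}_{i} \mathscr{F}(X_{i})$, so $\mathscr{F}$ preserves all small colimits.

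For the converse I would reduce the existence of a right adjoint to an enriched representability problem. Fix $D \in \mathscr{D}$ and consider the enriched presheaf $M_{D} : \mathscr{C}^{\mathrm{op}} \to \text{IndBan}_{k}$ given by $c \mapsto \underline{\text{Hom}}_{\mathscr{D}}(\mathscr{F}(c), D)$. Because $\mathscr{F}$ preserves colimits and $\underline{\text{Hom}}$ sends colimits in its contravariant slot to limits, $M_{D}$ carries colimits in $\mathscr{C}$ to limits in $\text{IndBan}_{k}$. The task is then to show that such a limit-preserving enriched presheaf on a locally presentable $\mathcal{V}$-category is representable, the representing object being taken as $\mathscr{G}(D)$. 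This is precisely the content of the enriched Adjoint Functor Theorem over the locally presentable base $\text{IndBan}_{k}$ (see \cite{BCoECT}): since $\mathscr{C}$ is $\mathcal{V}$-cocomplete, co-wellpowered and carries a strong $\mathcal{V}$-generator (all of which follow from local presentability, cf. \cite{LPaAC}), a $\mathcal{V}$-cocontinuous functor out of it has a $\mathcal{V}$-right adjoint. Functoriality of $D \mapsto \mathscr{G}(D)$ and the enrichment of the resulting adjunction then follow formally by applying the enriched Yoneda lemma to the representing isomorphisms.

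The second assertion I would handle dually. A locally presentable category is complete, well-powered and admits a cogenerator, so when $\mathscr{C}$ is moreover assumed complete and $\mathscr{F}$ preserves all small limits, the hypotheses of the dual enriched Special Adjoint Functor Theorem are satisfied and produce an enriched left adjoint; concretely one represents, for each $D$, the enriched presheaf $c \mapsto \underline{\text{Hom}}_{\mathscr{D}}(D, \mathscr{F}(c))$.

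I expect the main obstacle to lie not in the formal bookkeeping but in checking that the representability criterion holds at the \emph{enriched} level rather than merely for the underlying ordinary categories. An enriched adjunction demands an isomorphism of hom-objects in $\text{IndBan}_{k}$ that is $\mathcal{V}$-natural, which is strictly stronger than a natural bijection of underlying hom-sets; securing this requires the co-Yoneda presentation of each object of $\mathscr{C}$ as a canonical colimit of compact objects together with the exactness of $\hat{\otimes}$ and $\underline{\text{Hom}}$ on $\text{IndBan}_{k}$ recorded above. One must also confirm that the solution-set and (co)generator hypotheses genuinely transfer to the $\text{IndBan}_{k}$-enriched setting, which ultimately rests on $\text{IndBan}_{k}$ being a suitable base, namely a complete, cocomplete, locally presentable closed symmetric monoidal category, as established earlier in this section.
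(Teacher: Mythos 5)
Your proposal is correct and follows essentially the same route as the paper: the paper's entire proof is a citation of Theorems 5.32 and 5.33 of Kelly \cite{BCoECT}, which are precisely the enriched representability/adjoint functor theorems you invoke, with local presentability supplying the cocompleteness, co-wellpoweredness and strong generator hypotheses. Your write-up simply unpacks that citation (formal forward direction, reduction to representability of $c \mapsto \underline{\text{Hom}}_{\mathscr{D}}(\mathscr{F}(c),D)$, and the dual argument), so there is nothing substantive to compare.
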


\begin{proof}
This follows directly from Theorem 5.32 and Theorem 5.33 in \cite{BCoECT}.
\end{proof}

This gives us the following Lemma.

\begin{lem}
\label{FibreFunctorsMonadic}
Let $\mathcal{C}$ be a locally presentable, quasi-abelian category, and let $F:\mathcal{C} \rightarrow \text{IndBan}_{k}$ be a fibre functor over $\text{IndBan}_{k}$. Then $F$ satisfies the conditions of Barr-Beck (Theorem \ref{BarrBeck}), so $\mathcal{C}$ is equivalent to the category of algebras of a monadic functor $T$ on $\text{IndBan}_{k}$.
\end{lem}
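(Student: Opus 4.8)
The plan is to verify directly the three hypotheses of the Barr-Beck theorem (Theorem \ref{BarrBeck}) for $F$, regarded as the functor ``$G$'' of that theorem with domain $\mathcal{D} = \mathcal{C}$ and codomain $\text{IndBan}_{k}$. Each hypothesis should be supplied by one of the defining properties of a fibre functor, and the only genuinely delicate point is the reflection of isomorphisms, where the quasi-abelian structure of $\mathcal{C}$ must be exploited.

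First I would produce the left adjoint required by condition (i). Since $\mathcal{C}$ is locally presentable it is in particular complete, and since $F$ is bicontinuous it preserves all small limits; the second form of the Enriched Adjoint Functor Theorem (Theorem \ref{AdjointFunctorTheorem}) then yields an enriched left adjoint $L : \text{IndBan}_{k} \to \mathcal{C}$ of $F$. Condition (iii) I expect to be essentially formal: given $f,g : A \to B$ in $\mathcal{C}$ whose images under $F$ admit a split coequaliser, I would form the coequaliser $c : B \to C$ of $f,g$ in $\mathcal{C}$, which exists because $\mathcal{C}$ is cocomplete. Bicontinuity of $F$ then makes $F(c)$ a coequaliser of $F(f),F(g)$, and since a split coequaliser is in particular a coequaliser, uniqueness of coequalisers up to canonical isomorphism identifies $F(c)$ with the given split coequaliser $d$ and $F(C)$ with its target $D$.

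The main obstacle is condition (ii), that $F$ reflects isomorphisms, and here I would combine the remaining properties with the structure of a quasi-abelian category. Suppose $F(\phi)$ is an isomorphism. A faithful functor reflects both monomorphisms and epimorphisms, so $\phi$ is simultaneously monic and epic; in an abelian category this would already force invertibility, but in a merely quasi-abelian category a bimorphism need not be an isomorphism, so I must also control strictness. Since isomorphisms in $\text{IndBan}_{k}$ are strict and $F$ reflects strict morphisms, $\phi$ is strict, so its canonical factorisation $A \twoheadrightarrow \mathrm{coim}(\phi) \xrightarrow{\sim} \mathrm{im}(\phi) \hookrightarrow B$ has invertible middle arrow. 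Because $\phi$ is monic its kernel vanishes, whence the strict epimorphism $A \to \mathrm{coim}(\phi)$, being the cokernel of $\ker\phi = 0$, is an isomorphism; dually, because $\phi$ is epic its cokernel vanishes and the strict monomorphism $\mathrm{im}(\phi) \to B$ is an isomorphism. Thus all three arrows are invertible and $\phi$ is an isomorphism.

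With the three conditions in hand, Theorem \ref{BarrBeck} shows that $F$ is monadic: the comparison functor $K^{\mathbb{T}} : \mathcal{C} \to \text{IndBan}_{k}^{\mathbb{T}}$ is an equivalence of categories, where $\mathbb{T} = (T := F \circ L, \eta, \mu)$ is the monad on $\text{IndBan}_{k}$ determined by $F$ and its left adjoint $L$. This realises $\mathcal{C}$ as the Eilenberg-Moore category of the monad $T$ on $\text{IndBan}_{k}$, as claimed. I note that bicontinuity is used twice (limit-preservation for (i), colimit-preservation for (iii)), while faithfulness and reflection of strict morphisms are exactly what upgrade ``monic and epic'' to ``isomorphism'' in (ii); strong exactness plays no role in this particular argument.
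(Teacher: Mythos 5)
Your proposal is correct and follows the same skeleton as the paper's proof: obtain the left adjoint from the Enriched Adjoint Functor Theorem (Theorem \ref{AdjointFunctorTheorem}), reduce reflection of isomorphisms to the fact that a strict bimorphism in a quasi-abelian category is invertible, and dispatch the coequaliser condition of Theorem \ref{BarrBeck} formally. Where you diverge is in which hypotheses carry the load. For condition (ii) the paper argues via strong exactness: $F$ applied to the strictly coexact sequences $A \xrightarrow{f} B \rightarrow \mathrm{Coker}(f)$ and $\mathrm{Ker}(f) \rightarrow A \xrightarrow{f} B$ gives $F(\mathrm{Coker}(f)) = 0 = F(\mathrm{Ker}(f))$, and only then does faithfulness force $\mathrm{Ker}(f)$ and $\mathrm{Coker}(f)$ to vanish; you instead use the more elementary fact that a faithful functor reflects monomorphisms and epimorphisms, reaching ``bimorphism'' in one step, after which both arguments coincide (reflection of strictness plus the coimage--image factorisation). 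For condition (iii) the paper invokes strong exactness again (and speaks of equalisers, apparently a slip, since condition (iii) of Theorem \ref{BarrBeck} concerns coequalisers), whereas you use cocontinuity of $F$ directly: the coequaliser exists by cocompleteness, $F$ preserves it, and a split coequaliser is in particular a coequaliser. Your closing observation that strong exactness is never needed is therefore accurate as a critique of the paper's own proof, and your version is arguably cleaner for it; what the paper's formulation buys instead is a template that dualises verbatim to the co-fibre case (Lemma \ref{CoFibreFunctorsComonadic}), where cocontinuity alone would not give preservation of the equalisers required by Theorem \ref{DualBarrBeck} and strong exactness (or continuity of the right-adjoint situation) genuinely matters.
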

\begin{proof}
By Theorem \ref{AdjointFunctorTheorem}, since $\text{IndBan}_{k}$ is locally presentable and a fibre functor $F$ is both continuous and cocontinuous it has a left adjoint, $G$. Hence property (i) of Theorem \ref{BarrBeck} is satisfied. For property (ii), if $f:A \rightarrow B$ is a morphism in $\mathcal{C}$ such that $Ff$ is an isomorphism then it fits into a strictly coexact sequence $A \overset{f}{\rightarrow} B \rightarrow \text{Coker}(f)$, the image of which under $F$ is then also strictly coexact, so $F(\text{Coker}(f))=0$. A similar argument shows $F(\text{Ker}(f))=0$. Since $F$ is faithful, this means that $f$ has trivial kernel and cokernel. It then follows from the fact that $F$ reflects strictness that $f$ is also an isomorphism. $\mathcal{C}$ is quasi-abelian and hence has equalisers, and so (iii) follows from the strong exactness of $F$. Thus, by Theorem \ref{BarrBeck}, $F$ is monadic and hence $\mathcal{C}$ is equivalent to the category of algebras of $T=FG$.
\end{proof}

\begin{lem}
\label{FibreFunctorMonadCocontinuous}
With conditions as in the previous lemma, the monad $T$ is cocontinuous.
\end{lem}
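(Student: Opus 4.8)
The plan is to exhibit the monad $T = FG$ as a composite of two functors that are each already known to preserve colimits, and then to use the fact that a composite of cocontinuous functors is cocontinuous.

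First I would recall how $G$ arose: in the proof of Lemma \ref{FibreFunctorsMonadic}, the Enriched Adjoint Functor Theorem \ref{AdjointFunctorTheorem} produces $G$ as the \emph{left} adjoint of the fibre functor $F$, so the relevant adjunction is $G \dashv F$ with $G:\text{IndBan}_{k} \to \mathcal{C}$ and $F:\mathcal{C} \to \text{IndBan}_{k}$. Being a left adjoint, $G$ preserves all small colimits: for any small diagram $X:J \to \text{IndBan}_{k}$ and any object $A$ of $\mathcal{C}$, the adjunction together with the fact that $F$ is a right adjoint gives natural isomorphisms $\text{Hom}_{\mathcal{C}}(G(\text{colim}_{j} X_{j}), A) \cong \text{Hom}(\text{colim}_{j} X_{j}, FA) \cong \text{lim}_{j}\text{Hom}(X_{j}, FA) \cong \text{lim}_{j}\text{Hom}_{\mathcal{C}}(GX_{j}, A)$, so by the Yoneda lemma $G(\text{colim}_{j} X_{j}) \cong \text{colim}_{j} GX_{j}$.

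Second, the definition of a fibre functor requires $F$ to be bicontinuous, and in particular cocontinuous, so $F$ also preserves all small colimits. Composing the two, $T = F \circ G$ preserves all small colimits and is therefore cocontinuous, as required.

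I do not anticipate any genuine obstacle here: the content of the statement is simply that both halves of the defining adjunction transport colimits correctly, $G$ because it is a left adjoint and $F$ because bicontinuity is built into the notion of a fibre functor. The only points worth keeping track of are that the adjunction was set up with $G$ on the left, so that the monad is indeed $FG$ rather than $GF$, and that \emph{bicontinuous} is read as including preservation of small colimits and not merely of limits.
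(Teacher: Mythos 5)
Your proposal is correct and takes exactly the same route as the paper: the paper's proof is the one-line observation that $F$ is cocontinuous by assumption and $G$, being a left adjoint, is also cocontinuous, so $T=FG$ preserves all small colimits. Your spelled-out Yoneda argument for why the left adjoint $G$ preserves colimits, and your care about the direction of the adjunction $G \dashv F$, are just explicit versions of what the paper leaves implicit.
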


\begin{proof}
This follows from the fact that $F$ is assumed to be cocontinuous and $G$ is a left adjoint, hence also cocontinuous.
\end{proof}

\begin{lem}
\label{TensorFunctorClassification}
A functor $\mathscr{V}:\text{IndBan}_{k} \rightarrow \text{IndBan}_{k}$ is naturally isomorphic to one of the form $V \hat{\otimes} -$ for an IndBanach space $V$ if and only if $\mathscr{V}$ is enriched over $\text{IndBan}_{k}$, cocontinuous and commutes with $l^{1}$.
\end{lem}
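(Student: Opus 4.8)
The plan is to prove both directions separately, with the bulk of the work in the "if" direction. The "only if" direction is essentially routine: if $\mathscr{V} = V \hat\otimes -$, then since the closed monoidal structure on $\text{IndBan}_k$ makes $V \hat\otimes -$ a left adjoint (to $\underline{\text{Hom}}(V,-)$), it is cocontinuous, and it is enriched by the very definition of the monoidal structure. That it commutes with $l^1$ follows because $V \hat\otimes \coprod^{\leq 1}_S k \cong \coprod^{\leq 1}_S (V \hat\otimes k) \cong \coprod^{\leq 1}_S V$, which one checks is the natural map of Definition \ref{CommutingWithl1}; I would verify this compatibility by unwinding the definition of the monoidal structure on formal colimits and using that $V \hat\otimes k \cong V$.

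For the "if" direction, the natural candidate is $V := \mathscr{V}(k)$, and the goal is to build a natural isomorphism $V \hat\otimes - \xrightarrow{\sim} \mathscr{V}$. First I would establish the isomorphism on the generating objects. Since $\mathscr{V}$ commutes with $l^1$, we have $\mathscr{V}(l^1(S)) \cong \coprod^{\leq 1}_S \mathscr{V}(k) = \coprod^{\leq 1}_S V$, and on the other side $V \hat\otimes l^1(S) = V \hat\otimes \coprod^{\leq 1}_S k \cong \coprod^{\leq 1}_S V$, so both functors agree on the free objects $l^1(S)$, naturally in $S$. The key structural input is that every Banach space, and hence (by taking formal colimits) every IndBanach space, is a cokernel of a map between objects of the form $l^1(S)$: this is the contracting analogue of the fact used classically that every vector space is a colimit of copies of the base field, and it is exactly the property flagged in the introduction. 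I would make this precise by exhibiting, for a Banach space $W$, a strict epimorphism $l^1(B_W) \twoheadrightarrow W$ from the unit ball $B_W$ (sending a basis element to the corresponding vector), and then resolving the kernel similarly to obtain a presentation $l^1(S_1) \rightrightarrows l^1(S_0) \to W$ as a coequaliser.

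The main step is then to transport the isomorphism along such presentations. Both $\mathscr{V}$ and $V \hat\otimes -$ are cocontinuous (the latter by the monoidal structure, the former by hypothesis), so both preserve the coequaliser presenting $W$. Applying the two functors to $l^1(S_1) \rightrightarrows l^1(S_0) \to W$ and using the already-established natural isomorphism on the $l^1$-objects, the universal property of the coequaliser produces a canonical isomorphism $V \hat\otimes W \xrightarrow{\sim} \mathscr{V}(W)$. Enrichment of $\mathscr{V}$ guarantees that the comparison map is built from bounded linear data and hence lands in the right category, and it also forces naturality: a morphism $W \to W'$ lifts to a morphism of presentations, and functoriality of the coequaliser construction shows the comparison isomorphisms commute with it. Passing to formal colimits extends the isomorphism from $\text{Ban}_k$ to all of $\text{IndBan}_k$ by cocontinuity once more.

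The hard part will be verifying that the comparison map is genuinely independent of the chosen presentation and is natural, since presentations by $l^1$-objects are not canonical. I expect to handle this by the standard argument that any two presentations admit a common refinement, together with cocontinuity, so that the induced map is forced to agree; the enrichment hypothesis is what makes these comparisons morphisms in $\text{IndBan}_k$ rather than mere set maps, and I anticipate that keeping track of the bounded (norm $\leq 1$) structure in the non-Archimedean versus Archimedean cases is where the technical care is concentrated.
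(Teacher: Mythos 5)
Your overall route is the same as the paper's: take $V=\mathscr{V}(k)$, present every Banach space $W$ as a cokernel (equivalently a coequaliser) of a map between objects of the form $l^{1}(S)$ — the paper uses $P(X)=l^{1}(\{x \in X \mid \|x\|=1\})$ and Lemma A.39 of \cite{NAAGaRAG} — use cocontinuity of both $\mathscr{V}$ and $\mathscr{V}(k)\hat{\otimes}-$ to pass the comparison through the presentation, and extend to $\text{IndBan}_{k}$ by formal colimits. However, there is a genuine gap at precisely the step you defer. To invoke the universal property of the co(equaliser) you must first know that the square with horizontal arrows $\mathrm{id}\otimes f:\mathscr{V}(k)\hat{\otimes}l^{1}(S_{1})\rightarrow \mathscr{V}(k)\hat{\otimes}l^{1}(S_{0})$ and $\mathscr{V}(f):\mathscr{V}(l^{1}(S_{1}))\rightarrow\mathscr{V}(l^{1}(S_{0}))$, and vertical arrows the isomorphisms coming from commutation with $l^{1}$, commutes. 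You justify this by naturality ``in $S$,'' but the maps $f$ occurring in a presentation are arbitrary bounded linear maps between $l^{1}$-objects, not maps induced by functions between the index sets, so the functoriality implicit in Definition \ref{CommutingWithl1} says nothing about them. Your fallback for presentation-independence — common refinements plus cocontinuity — does not repair this: comparing two presentations requires commutativity of squares of exactly the same kind, so the argument is circular.

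The missing idea, which is where the paper spends the bulk of its proof, is to use the enrichment quantitatively. A morphism $f:P(W')\rightarrow P(W)$ is determined by uniformly bounded components $f_{x}:k\rightarrow P(W)$, each an infinite \emph{convergent} sum $f_{x}=\sum_{y}a_{x,y}\iota_{y}$ of coordinate inclusions. Enrichment of $\mathscr{V}$ means the action $\text{Hom}(k,P(W))\rightarrow\text{Hom}(\mathscr{V}(k),\mathscr{V}(P(W)))$ is given levelwise by bounded linear maps of Banach spaces, hence commutes with these convergent sums: $\mathscr{V}_{i}(f_{x})=\sum_{y}a_{x,y}\mathscr{V}_{i}(\iota_{y})$. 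Since, by the construction of the map in Definition \ref{CommutingWithl1}, the isomorphism $\mathscr{V}(k)\hat{\otimes}P(W)\cong\mathscr{V}(P(W))$ matches $\mathrm{id}\otimes\iota_{y}$ with $\mathscr{V}(\iota_{y})$, this identity is exactly the commutativity of the needed square, and applied to arbitrary maps between $l^{1}$-objects it delivers the naturality and presentation-independence you flagged. Note that your gloss — ``enrichment makes these comparisons morphisms in $\text{IndBan}_{k}$ rather than mere set maps'' — misidentifies its role: the comparisons are morphisms for free because $\mathscr{V}$ is a functor, and cocontinuity already gives additivity (finite sums); what enrichment buys, and what nothing else in your hypotheses supplies, is continuity and linearity of $\mathscr{V}$ on Hom-spaces, so that $\mathscr{V}$ of an infinite convergent sum of morphisms is the convergent sum of the images.
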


\begin{proof}
For a Banach space $V$, $V \hat{\otimes} -$ is a left adjoint on both $\text{Ban}_{k}$ and $\text{Ban}_{k}^{\leq 1}$ hence is cocontinuous and commutes with contracting coproducts. Since contracting coproducts commute with colimits, this is also true for any IndBanach space $V$. Hence $V\hat{\otimes}-$ commutes with $l^{1}$.\\

Conversely, suppose $\mathscr{V}:\text{IndBan}_{k} \rightarrow \text{IndBan}_{k}$ is enriched, cocontinuous and commutes with $l^{1}$. Let $W$ be a Banach space which, by Lemma A.39 of \cite{NAAGaRAG}, can be written as the cokernel of a morphism
$$f:P(W') \rightarrow P(W)$$
where
$$P(X):= \coprod\nolimits^{\leq 1}_{\substack{x \in X \\ \|x\|=1}}k=l^{1}(\{x \in X \mid \|x\|=1\})$$
for any Banach space $X$, and $W'$ is the kernel of the natural map $I(W) \twoheadrightarrow W$. But, since $\mathscr{V}$ commutes with $l^{1}$,
$$\mathscr{V}(P(X)) \cong \coprod\nolimits^{\leq 1}_{\substack{x \in X \\ \|x\|=1}}\mathscr{V}(k) \cong \mathscr{V}(k) \hat{\otimes} P(X)$$
for all sets $X$. The map $f$ is induced by uniformly bounded maps ${f_{x}:k \rightarrow P(W)}$ indexed over $x \in W'$ with $\|x\|=1$. Each $f_{x}$ is a convergent sum $\sum_{y}a_{x,y}\iota_{y}$, $a_{x,y} \in k$, indexed over $y \in W$ with $\|y\|=1$, where $\iota_{y}$ injects the copy of $k$ indexed by $y$ into $P(W)$. Since $\mathscr{V}$ is enriched, if $\mathscr{V}(k)=\text{"colim"}_{i \in I}X_{i}$ and $\mathscr{V}(P(W))=\text{"colim"}_{j \in J} Y_{j}$, then the map
$$\text{Hom}(k,P(W)) \xrightarrow{\mathscr{V}} \text{Hom}(\mathscr{V}(k),\mathscr{V}(P(W)))$$
is given by a compatible collection of continuous maps of Banach spaces
$$\text{Hom}(k,P(W)) \xrightarrow{\mathscr{V}_{i}} \text{Hom}(X_{i},Y_{j_{i}})$$
for each $i \in I$ and for some corresponding $j_{i} \in J$. Then
$$\mathscr{V}_{i}(f_{x}) = \mathscr{V}_{i}(\sum_{y}a_{x,y}\iota_{y})=\sum_{y}a_{x,y}\mathscr{V}_{i}(\iota_{y})$$
as maps $X_{i} \rightarrow Y_{j_{i}}$ for each $i \in I$. By construction of the morphism in Definition \ref{CommutingWithl1}, the map $\mathscr{V}(\iota_{y})$ is equal to the composition
$$\mathscr{V}(k) \cong \mathscr{V}(k) \hat{\otimes} k \xrightarrow{\text{Id} \otimes \iota_{y}} \mathscr{V}(k) \hat{\otimes} P(W) \cong \mathscr{V}(P(W)).$$
By potentially replacing each $j_{i}$ with a larger element in the filtered set $J$, we may assume that the isomorphism $\mathscr{V}(k) \hat{\otimes} P(W) \xrightarrow{\sim} \mathscr{V}(P(W))$ is given by a collection of maps $X_{i} \hat{\otimes} P(W) \rightarrow Y_{j_{i}},$ where the composition
$$X_{i} \cong X_{i} \hat{\otimes} k \xrightarrow{\text{Id} \otimes \iota_{y}} X_{i} \hat{\otimes} P(W) \rightarrow Y_{j_{i}}$$
 is equal to $\mathscr{V}_{i}(\iota_{y})$. Thus the diagram
\begin{center}
\begin{tikzpicture}[node distance=6cm, auto]
  \node (A) {$X_{i} \hat{\otimes} k$};
  \node (B) [below=0.5cm of A] {$X_{i} \hat{\otimes} P(W)$};
  \node (C) [right=2cm of A] {$X_{i}$};
  \node (D) [below=0.5cm of C] {$Y_{j_{i}}$};
  \draw[->] (A) to node [swap]{$\text{Id} \otimes f_{x}$} (B);
  \draw[->] (A) to node {$\sim$} (C);
  \draw[->] (B) to node {} (D);
  \draw[->] (C) to node {$\mathscr{V}_{i}(f_{x})$} (D);
\end{tikzpicture}
\end{center}
commutes, and hence so does the diagram
\begin{center}
\begin{tikzpicture}[node distance=6cm, auto]
  \node (A) {$\mathcal{V}(k) \hat{\otimes} k$};
  \node (B) [below=0.5cm of A] {$\mathcal{V}(k) \hat{\otimes} P(W)$};
  \node (C) [right=2cm of A] {$\mathcal{V}(k)$};
  \node (D) [below=0.5cm of C] {$\mathcal{V}(P(W))$.};
  \draw[->] (A) to node [swap]{$\text{Id} \otimes f_{x}$} (B);
  \draw[->] (A) to node {$\sim$} (C);
  \draw[->] (B) to node {$\sim$} (D);
  \draw[->] (C) to node {$\mathscr{V}(f_{x})$} (D);
\end{tikzpicture}
\end{center}
Thus the diagram
\begin{center}
\begin{tikzpicture}[node distance=6cm, auto]
  \node (A) {$\mathcal{V}(k) \hat{\otimes} P(W')$};
  \node (B) [below=0.5cm of A] {$\mathcal{V}(k) \hat{\otimes} P(W)$};
  \node (C) [right=1cm of A] {$\mathcal{V}(P(W'))$};
  \node (D) [below=0.5cm of C] {$\mathcal{V}(P(W))$};
  \draw[->] (A) to node [swap]{$\text{Id} \otimes f$} (B);
  \draw[->] (A) to node {$\sim$} (C);
  \draw[->] (B) to node {$\sim$} (D);
  \draw[->] (C) to node {$\mathscr{V}(f)$} (D);
\end{tikzpicture}
\end{center}
must also commute. From this we have that
$$\begin{array}{rcl}
\mathscr{V}(W) &\cong& \mathscr{V}(\text{Coker}(P(W') \xrightarrow{f} P(W)))\\
&\cong& \text{Coker}(\mathscr{V}(P(W')) \xrightarrow{\mathscr{V}(f)} \mathscr{V}(P(W)))\\
&\cong& \text{Coker}(\mathscr{V}(k) \hat{\otimes}P(W') \xrightarrow{\text{Id} \otimes f} \mathscr{V}(k) \hat{\otimes}P(W))\\
&\cong& \mathscr{V}(k) \hat{\otimes}\text{Coker}(P(W') \xrightarrow{f} P(W))\\
&\cong& \mathscr{V}(k) \hat{\otimes} W\\
\end{array}$$
for any Banach space $W$. Since any IndBanach space can be written as a colimit of Banach spaces, and since both $\mathscr{V}$ and $\mathscr{V}(k)\hat{\otimes}-$ are cocontinuous, $\mathscr{V}$ is isomorphic to the functor $V \hat{\otimes} -$ for $V=\mathscr{V}(k)$.
\end{proof}

\begin{theorem}
\label{IndBanachTannaka}
Let $\mathcal{C}$ be a locally presentable, quasi-abelian category, enriched over $\text{IndBan}_{k}$, equipped with a fibre functor $F:\mathcal{C} \rightarrow \text{IndBan}_{k}$ as in Definition 2.1. Assume further that $T=FG$ commutes with $l^{1}$, as in Definition \ref{CommutingWithl1}, for some left adjoint $G$ to $F$. Then there exists an algebra $\mathscr{A}$ in $\text{IndBan}_{k}$ such that $\mathcal{C}$ is equivalent to the category of left $\mathscr{A}$ modules in $\text{IndBan}_{k}$. 
\end{theorem}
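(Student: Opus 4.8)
The plan is to assemble the three preceding lemmas into a single monoidal statement: the monad attached to the fibre functor is a tensoring functor, and a tensoring monad is the same data as an algebra. First I would invoke Lemma \ref{FibreFunctorsMonadic}, which shows that $F$ is monadic, so that $\mathcal{C}$ is equivalent to the Eilenberg--Moore category $\text{IndBan}_{k}^{\mathbb{T}}$ of the monad $\mathbb{T} = (T, \eta, \mu)$ with $T = FG$. By Lemma \ref{FibreFunctorMonadCocontinuous} the functor $T$ is cocontinuous, and since $F$ is enriched and its left adjoint $G$ is an enriched functor produced by Theorem \ref{AdjointFunctorTheorem}, the composite $T$ is enriched over $\text{IndBan}_{k}$. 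Together with the standing hypothesis that $T$ commutes with $l^{1}$, this places $T$ exactly in the scope of Lemma \ref{TensorFunctorClassification}. Hence there is a natural isomorphism $T \cong \mathscr{A}\hat{\otimes}-$, where $\mathscr{A} := T(k) = FG(k)$.

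The substance of the proof is to promote this isomorphism of functors to an isomorphism of monads, and thereby equip $\mathscr{A}$ with the structure of an algebra. I would transport the unit $\eta$ and multiplication $\mu$ across the isomorphism and then evaluate at the monoidal unit $k$. Using the canonical isomorphisms $\mathscr{A}\hat{\otimes}k \cong \mathscr{A}$ and $(\mathscr{A}\hat{\otimes}\mathscr{A})\hat{\otimes}k \cong \mathscr{A}\hat{\otimes}\mathscr{A}$, this yields a unit $u := \eta_{k} : k \to \mathscr{A}$ and a product $m := \mu_{k} : \mathscr{A}\hat{\otimes}\mathscr{A} \to \mathscr{A}$. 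The key point that makes this reversible is that an enriched natural transformation between two tensoring functors $V\hat{\otimes}-$ and $W\hat{\otimes}-$ is determined by its component at $k$: by cocontinuity and the $l^{1}$-commuting property every such transformation is of the form $g\hat{\otimes}-$ for $g$ its value at $k$, exactly as in the argument establishing Lemma \ref{TensorFunctorClassification}. Consequently, after composing with the associator identifying $T^{2}$ with $(\mathscr{A}\hat{\otimes}\mathscr{A})\hat{\otimes}-$, we have $\mu = m\hat{\otimes}-$ and $\eta = u\hat{\otimes}-$, and the monad associativity and unit axioms translate termwise into the associativity and unit diagrams for the triple $(\mathscr{A}, m, u)$. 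This exhibits $\mathscr{A}$ as an algebra in $\text{IndBan}_{k}$.

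Finally I would unwind the definition of the Eilenberg--Moore category for this particular monad. An object of $\text{IndBan}_{k}^{\mathbb{T}}$ is a pair $(C, \xi)$ with $\xi : T(C) \to C$, that is, under the identification a morphism $\xi : \mathscr{A}\hat{\otimes}C \to C$, and the two compatibility conditions with $\mu$ and $\eta$ become precisely the associativity and unit axioms of a left $\mathscr{A}$-module; morphisms of algebras become $\mathscr{A}$-linear maps. Hence $\text{IndBan}_{k}^{\mathbb{T}}$ is the category of left $\mathscr{A}$-modules in $\text{IndBan}_{k}$, and composing with the equivalence of the first step gives the desired equivalence $\mathcal{C} \simeq \mathscr{A}\text{-Mod}$.

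I expect the main obstacle to be the middle step: upgrading the functorial isomorphism $T \cong \mathscr{A}\hat{\otimes}-$ to an isomorphism of monads, so that the monad structure descends to a genuine associative unital algebra structure on $\mathscr{A}$ rather than merely to abstract natural transformations. Concretely, one must check that $T^{2}$ is identified with $(\mathscr{A}\hat{\otimes}\mathscr{A})\hat{\otimes}-$ compatibly with the associativity constraint of $\hat{\otimes}$, so that $\mu$ descends to a single well-defined product on $\mathscr{A}$ and the monad-associativity square matches the algebra pentagon. This coherence check is the monoidal enhancement of Lemma \ref{TensorFunctorClassification}; everything surrounding it is formal bookkeeping.
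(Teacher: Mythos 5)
Your proposal is correct and follows essentially the same route as the paper: monadicity via Lemma \ref{FibreFunctorsMonadic}, cocontinuity via Lemma \ref{FibreFunctorMonadCocontinuous}, the identification $T \cong \mathscr{A}\hat{\otimes}-$ via Lemma \ref{TensorFunctorClassification}, and then transporting the monad structure to an algebra structure on $\mathscr{A} = T(k)$. The coherence step you flag as the main obstacle is exactly what the paper compresses into the single sentence that ``the fact that $T$ is a monad is equivalent to $\mathscr{A}$ being an algebra,'' so your elaboration (enriched transformations between tensoring functors being of the form $g\hat{\otimes}-$) is a faithful expansion of the intended argument, not a deviation from it.
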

\begin{proof}
By Lemma \ref{FibreFunctorsMonadic}, $\mathcal{C}$ is equivalent to the category of alegbras of $T$ in $\text{IndBan}_{k}$. By Lemma \ref{FibreFunctorMonadCocontinuous}, Lemma \ref{TensorFunctorClassification} and our assumption that $T$ commutes with $l^{1}$, $T$ is isomorphic to $\mathscr{A} \hat{\otimes} -$ for $\mathscr{A} = T(k)$. Then the fact that $T$ is a monad is equivalent to $\mathscr{A}$ being an algebra, and the category of $T$ algebras in $\text{IndBan}_{k}$ is then just the category of $\mathscr{A}$ modules. 
\end{proof}

\begin{defn}
Let $\mathcal{C}$ be a category enriched over $\text{IndBan}_{k}$. We will say that $\mathcal{C}$ has \emph{constant contracting coproducts} if, for each set $S$, there is a functor
$$\coprod\nolimits^{\leq 1}_{S}:\mathcal{C} \rightarrow \mathcal{C}$$
and, for each map of sets $S' \rightarrow S$, there is a natural transformation
$$\coprod\nolimits^{\leq 1}_{S} \Rightarrow \coprod\nolimits^{\leq 1}_{S'}$$
such that
\begin{itemize}
\item[i)]$\underline{\text{Hom}}_{\mathcal{C}}(\coprod^{\leq 1}_{S} X,Y) \cong \prod^{\leq 1}_{S} \underline{\text{Hom}}_{\mathcal{C}}(X,Y)$ for all $X$ and $Y$ in $\mathcal{C}$;
\item[ii)]the assignment $S \mapsto \coprod\nolimits^{\leq 1}_{S}$ is contravariantly functorial.
\end{itemize}
By property (i), if such functors exist then they exist uniquely. We will say that a functor $F:\mathcal{C}\rightarrow \mathcal{C}'$ between categories with constant contracting coproducts commutes with constant contracting coproducts if we have a collection of natural isomorphisms $F \circ \coprod\nolimits^{\leq 1}_{S} \cong \coprod\nolimits^{\leq 1}_{S} \circ F$ compatible with the functoriality in $S$.
\end{defn}

\begin{remark}
In the case where $\mathcal{C}=\text{IndBan}_{k}$, the functor $\coprod\nolimits^{\leq 1}_{S}$ is the composition
$$\text{IndBan}_{k} \xrightarrow{\Delta^{S}} \text{IndBan}_{k}^{S,\text{bd}} \xrightarrow{\coprod\nolimits^{\leq 1}_{S}} \text{IndBan}_{k}.$$
\end{remark}

\begin{corollary}
\label{IndBanachTannakaCorollary}
Suppose we have a category $\mathcal{C}$ with constant contracting coproducts that is fibred over $\text{IndBan}_{k}$ as defined above. Suppose further that the fiber functor $F$ commutes with constant contracting coproducts. Then there exists an algebra $\mathscr{A}$ in $\text{IndBan}_{k}$ such that $\mathcal{C}$ is equivalent to the category of left $\mathscr{A}$ modules in $\text{IndBan}_{k}$.\\
\end{corollary}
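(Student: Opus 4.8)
The plan is to reduce the Corollary to Theorem \ref{IndBanachTannaka} by showing that the present hypotheses force the monad $T = FG$ to commute with $l^{1}$. First I would invoke Lemma \ref{FibreFunctorsMonadic}: since $F$ is a fibre functor it is enriched, continuous and cocontinuous, so by the Enriched Adjoint Functor Theorem (Theorem \ref{AdjointFunctorTheorem}) it admits an enriched left adjoint $G$, and $\mathcal{C}$ is equivalent to the Eilenberg--Moore category of $T = FG$. It therefore suffices to produce, for each set $S$, a natural isomorphism $\coprod^{\leq 1}_{S} T(k) \xrightarrow{\sim} T(l^{1}(S))$ agreeing with the canonical map of Definition \ref{CommutingWithl1}; in fact I would establish the stronger statement that $T$ commutes with constant contracting coproducts in the sense of the preceding definition.

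The key step is to show that the left adjoint $G$ commutes with constant contracting coproducts. This is an enriched Yoneda argument built on property (i) of the definition of constant contracting coproducts together with the enriched adjunction $\underline{\text{Hom}}_{\mathcal{C}}(G(X),Y) \cong \underline{\text{Hom}}_{\text{IndBan}_{k}}(X,F(Y))$. Concretely, for objects $X \in \text{IndBan}_{k}$ and $Y \in \mathcal{C}$ I would compute
$$\underline{\text{Hom}}_{\mathcal{C}}\Big(\coprod\nolimits^{\leq 1}_{S} G(X), Y\Big) \cong \prod\nolimits^{\leq 1}_{S} \underline{\text{Hom}}_{\mathcal{C}}(G(X), Y) \cong \prod\nolimits^{\leq 1}_{S} \underline{\text{Hom}}_{\text{IndBan}_{k}}(X, F(Y))$$
using property (i) in $\mathcal{C}$ followed by the adjunction, and on the other hand
$$\underline{\text{Hom}}_{\mathcal{C}}\Big(G\big(\coprod\nolimits^{\leq 1}_{S} X\big), Y\Big) \cong \underline{\text{Hom}}_{\text{IndBan}_{k}}\Big(\coprod\nolimits^{\leq 1}_{S} X, F(Y)\Big) \cong \prod\nolimits^{\leq 1}_{S} \underline{\text{Hom}}_{\text{IndBan}_{k}}(X, F(Y))$$
using the adjunction followed by property (i) in $\text{IndBan}_{k}$. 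These internal-Hom objects are naturally isomorphic in $Y$, so Yoneda yields $\coprod^{\leq 1}_{S} G(X) \cong G(\coprod^{\leq 1}_{S} X)$ naturally, with compatibility in $S$ being routine. Since $F$ commutes with constant contracting coproducts by hypothesis, composing gives
$$T\Big(\coprod\nolimits^{\leq 1}_{S} X\Big) = FG\Big(\coprod\nolimits^{\leq 1}_{S} X\Big) \cong F\Big(\coprod\nolimits^{\leq 1}_{S} G(X)\Big) \cong \coprod\nolimits^{\leq 1}_{S} FG(X) = \coprod\nolimits^{\leq 1}_{S} T(X),$$
so $T$ commutes with constant contracting coproducts; specialising to $X = k$ and recalling $\coprod^{\leq 1}_{S} k = l^{1}(S)$ exhibits $T$ as commuting with $l^{1}$.

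The one point requiring care, and the step I expect to be the main obstacle, is verifying that the isomorphism produced by the Yoneda argument coincides with the \emph{canonical} natural map of Definition \ref{CommutingWithl1}, rather than being merely some abstract isomorphism. I would address this by tracking the image of the identity morphism through the adjunction and the universal property (i), exactly as the canonical map is assembled in Definition \ref{CommutingWithl1}, so that the two constructions are identified by naturality. With $T$ shown to commute with $l^{1}$, the hypotheses of Theorem \ref{IndBanachTannaka} are satisfied, and that theorem produces an algebra $\mathscr{A} = T(k)$ in $\text{IndBan}_{k}$ with $\mathcal{C}$ equivalent to the category of left $\mathscr{A}$-modules, completing the proof.
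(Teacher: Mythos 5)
Your proposal is correct and follows essentially the same route as the paper: the paper's proof is precisely your chain of internal-Hom isomorphisms (property (i), the enriched adjunction, property (i) again) yielding $G(\coprod_{S}^{\leq 1}X) \cong \coprod_{S}^{\leq 1}G(X)$ naturally, followed by an appeal to Theorem \ref{IndBanachTannaka}. Your additional care in checking that the Yoneda-produced isomorphism agrees with the canonical map of Definition \ref{CommutingWithl1} is a point the paper leaves implicit, but it does not change the argument.
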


\begin{proof}
Let $G$ denote the left adjoint to $F$, which exists by Lemma \ref{FibreFunctorsMonadic}. We have that
$$\begin{array}{rcl}
\underline{\text{Hom}}(\coprod_{S}^{\leq 1} G(X),Y) &\cong& \prod_{S}^{\leq 1} \underline{\text{Hom}}(G(X),Y)\\
&\cong& \prod_{S}^{\leq 1} \underline{\text{Hom}}(X,F(Y))\\
&\cong& \underline{\text{Hom}}(\coprod_{S}^{\leq 1}X,F(Y))\\
&\cong& \underline{\text{Hom}}(G(\coprod_{S}^{\leq 1}X),Y)
\end{array}$$
for all $X$ in $\text{IndBan}_{k}$ and $Y$ in $\mathcal{C}$, hence $G(\coprod_{S}^{\leq 1}X) \cong \coprod_{S}^{\leq 1}G(X)$ naturally for all $X$ in $\text{IndBan}_{k}$. The result then follows from Theorem \ref{IndBanachTannaka}.
\end{proof}

We may, in fact, give an alternate and perhaps more explicit description of the algebra $\mathscr{A}$ from Theorem \ref{IndBanachTannaka} and Corollary \ref{IndBanachTannakaCorollary}.

\begin{defn}
Let $\mathscr{F}:\mathscr{C} \rightarrow \text{IndBan}_{k}$ be a functor. As $\text{IndBan}_{k}$ is closed, we may define the \emph{internal natural transformations} $\underline{\text{Hom}}(\mathscr{F},\mathscr{F})$ from $\mathscr{F}$ to itself as the end
$$\int_{V \in \mathcal{C}} \underline{\text{Hom}}(\mathscr{F}V,\mathscr{F}V) = \text{eq}\left(\prod_{V \in \mathcal{C}} \underline{\text{Hom}}(\mathscr{F}V,\mathscr{F}V) \rightrightarrows \prod_{V\rightarrow V'} \underline{\text{Hom}}(\mathscr{F}V,\mathscr{F}V') \right).$$
The maps $k \rightarrow \underline{\text{Hom}}(\mathscr{F}V,\mathscr{F}V)$ picking out the identity in $\underline{\text{Hom}}(\mathscr{F}V,\mathscr{F}V)$ induce a unit map
$$k \rightarrow \int_{V \in \mathcal{C}} \underline{\text{Hom}}(\mathscr{F}V,\mathscr{F}V) = \underline{\text{Hom}}(\mathscr{F},\mathscr{F})$$
and the compositions $\underline{\text{Hom}}(\mathscr{F}V,\mathscr{F}V) \hat{\otimes} \underline{\text{Hom}}(\mathscr{F}V,\mathscr{F}V) \rightarrow \underline{\text{Hom}}(\mathscr{F}V,\mathscr{F}V)$ induce a multiplication
$$
\left( \int_{V \in \mathcal{C}} \underline{\text{Hom}}(\mathscr{F}V,\mathscr{F}V) \right) \hat{\otimes} \left( \int_{V \in \mathcal{C}} \underline{\text{Hom}}(\mathscr{F}V,\mathscr{F}V) \right) \rightarrow \int_{V \in \mathcal{C}} \underline{\text{Hom}}(\mathscr{F}V,\mathscr{F}V)
$$
from which we give $\underline{\text{Hom}}(\mathscr{F},\mathscr{F})$ the expected IndBanach algebra structure.
\end{defn}

\begin{prop}
Let $\mathscr{A}$ be an IndBanach algebra, let $\mathcal{C}$ be the category of its IndBanach modules and let $F$ be the forgetful functor to IndBanach spaces. Then $\mathscr{A} \cong \underline{\text{Hom}}(F,F)$ as IndBanach algebras.
\end{prop}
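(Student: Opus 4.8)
The plan is to exhibit a canonical morphism $\alpha:\mathscr{A}\to\underline{\text{Hom}}(F,F)$ coming from the module actions, verify it is a homomorphism of IndBanach algebras, and then prove it is an isomorphism by identifying $\underline{\text{Hom}}(F,F)$ with the value of $F$ on the regular module. First I would construct $\alpha$. For each module $(V,\rho_V)$ the action $\rho_V:\mathscr{A}\hat{\otimes}V\to V$ curries, using that $\text{IndBan}_{k}$ is closed, to a morphism $\mathscr{A}\to\underline{\text{Hom}}(V,V)=\underline{\text{Hom}}(FV,FV)$. Naturality of this family in $V$ is precisely the $\mathscr{A}$-linearity of the morphisms of $\mathcal{C}$, so the universal property of the end assembles these into a single morphism $\alpha:\mathscr{A}\to\int_{V}\underline{\text{Hom}}(FV,FV)=\underline{\text{Hom}}(F,F)$. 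Since $\rho_V(1)=\text{id}_V$ and $\rho_V(ab)=\rho_V(a)\circ\rho_V(b)$, the map $\alpha$ carries the unit of $\mathscr{A}$ to the identity endotransformation and intertwines multiplication in $\mathscr{A}$ with the composition product defining the algebra structure on $\underline{\text{Hom}}(F,F)$; hence $\alpha$ is a homomorphism of IndBanach algebras.

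The key structural observation is that $F$ is internally corepresented by the regular module. Regarding $\mathscr{A}$ as a left module over itself, evaluation at the unit gives a morphism $\underline{\text{Hom}}_{\mathscr{A}}(\mathscr{A},V)\to V$ in $\text{IndBan}_{k}$ whose inverse sends $v$ to the module map $a\mapsto a\cdot v$; I would check these are mutually inverse, so that $F\cong\underline{\text{Hom}}_{\mathscr{A}}(\mathscr{A},-)$ as $\text{IndBan}_{k}$-enriched functors. With this in hand the enriched Yoneda lemma, valid because $\text{IndBan}_{k}$ is closed symmetric monoidal and complete so that the defining ends exist, yields
$$\underline{\text{Hom}}(F,F)=\int_{V}\underline{\text{Hom}}(FV,FV)\cong\int_{V}\underline{\text{Hom}}(\underline{\text{Hom}}_{\mathscr{A}}(\mathscr{A},V),FV)\cong F(\mathscr{A})=\mathscr{A}.$$
Tracing this isomorphism shows it is inverse to $\alpha$: an internal endotransformation $\theta$ is determined by its component $\theta_{\mathscr{A}}$, which by naturality against the right-multiplication endomorphisms $a\mapsto ab$ of the regular module satisfies $\theta_{\mathscr{A}}(b)=\theta_{\mathscr{A}}(1)\cdot b$, i.e. is left multiplication by $c:=\theta_{\mathscr{A}}(1)$; and naturality against the module maps $\lambda_v:a\mapsto a\cdot v$ then forces $\theta_V$ to be the action of $c$ on every $V$, so $\theta=\alpha(c)$.

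I expect the main obstacle to be carrying the argument out internally rather than on underlying sets. The element-level computation above is routine, but I must ensure every step is a genuine morphism in $\text{IndBan}_{k}$: that the defining ends exist (using completeness of $\text{IndBan}_{k}$); that the corepresentability isomorphism $\underline{\text{Hom}}_{\mathscr{A}}(\mathscr{A},V)\cong V$ holds at the level of IndBanach objects and not merely as a bijection (the evaluation-at-unit and action maps are bounded in both directions, hence furnish an isomorphism in $\text{IndBan}_{k}$, even if not an isometry); and, above all, that the Yoneda identification is compatible with the composition/algebra structures, so that it genuinely inverts the homomorphism $\alpha$ of the first paragraph. Reconciling the conceptual Yoneda isomorphism with the concrete action homomorphism $\alpha$ is where the real care is needed.
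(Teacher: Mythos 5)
Your proposal is correct and follows essentially the same route as the paper: regard $\mathscr{A}$ as the regular module so that $F \cong \underline{\text{Hom}}_{\mathscr{A}}(\mathscr{A},-)$, and apply the enriched Yoneda lemma to conclude $\mathscr{A} \cong \underline{\text{Hom}}(F,F)$. The paper simply asserts that the resulting isomorphism respects the algebra structures (``clear from construction''), whereas you supply the verification by constructing $\alpha$ explicitly and tracing the Yoneda isomorphism against right multiplications and the maps $\lambda_{v}$ --- a welcome elaboration, not a different argument.
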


\begin{proof}
$\mathscr{A}$ naturally gives an object of $\mathcal{C}$, and $F \cong \underline{\text{Hom}}(\mathscr{A},-)$. So, by the enriched Yoneda Lemma (see Section 2.4 of \cite{BCoECT}), $\mathscr{A} \cong \underline{\text{Hom}}(F,F)$ cannonically.  It is clear from construction that this is an isomorphism of IndBanach algebras.
\end{proof}

\begin{remark}
\label{IndBanachMonoidalFunctors}
Suppose $\mathcal{C}$ is the category of IndBanach modules over an IndBanach algebra $\mathscr{A}$. Let $F$ denote the forgetful functor to $\text{IndBan}_{k}$, $G$ its left adjoint, and $T=FG \cong \mathscr{A} \hat{\otimes} -$. Moerdijk proves in \cite{MoTC} that monoidal structures on $\mathcal{C}$ for which $F$ is strong monoidal correspond to comonoidal structures on $T$, which in turn correspond to coalgebra structures on $\mathscr{A}$. For any given monoidal structure on $\mathcal{C}$ with $F$ strong monoidal, the counit of the adjunction gives us a morphism $T(k) \rightarrow k$. The image of $\eta_{V} \hat{\otimes} \eta_{W}$ under
$$\begin{array}{rcl}
\text{Hom}(A \hat{\otimes} B, FGA \hat{\otimes} FGB) &\cong& \text{Hom}(A \hat{\otimes} B, F(GA \hat{\otimes} GB))\\
&\cong& \text{Hom}(G(A \hat{\otimes} B), GA \hat{\otimes} GB).
\end{array}$$
gives a natural transfromation $G(- \hat{\otimes} -) \Rightarrow G(-) \hat{\otimes} G(-)$. Then the composite $T(- \hat{\otimes} -) \Rightarrow F(G(-) \hat{\otimes} G(-)) \cong T(-) \hat{\otimes} T(-)$ makes $T$ comonoidal. This gives $\mathscr{A}$ a comultiplication compatible with its multiplication, from which the monoidal structure of $\mathcal{C}$ comes.
\end{remark}

\subsection{IndBanach comodules of IndBanach coalgebras}
\

Classical Tannaka-Krein duality asks when a category $\mathcal{C}$ is a category of comodules over a coalgebra, which we aim to provide an analytic analogue of here.

\begin{defn}
Let $\mathcal{C}$ be a locally presentable, quasi-abelian category, enriched over $\text{IndBan}_{k}$, and let $F:\mathcal{C} \rightarrow \text{IndBan}_{k}$ be an enriched functor. We say that $F$ is a \emph{co-fibre functor} if it is cocontinuous, strongly exact, faithful and reflects strict morphisms.
\end{defn}

\begin{lem}
\label{CoFibreFunctorsComonadic}
Let $\mathcal{C}$ be a locally presentable, quasi-abelian category, enriched over $\text{IndBan}_{k}$, and let $F:\mathcal{C} \rightarrow \text{IndBan}_{k}$ be a co-fibre functor over $\text{IndBan}_{k}$. Then $F$ satisfies the dual conditions of Barr-Beck (Theorem \ref{DualBarrBeck}), so $\mathcal{C}$ is equivalent to the category of coalgebras of a comonadic functor $U$ in $\text{IndBan}_{k}$.
\end{lem}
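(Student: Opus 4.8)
The plan is to verify the three hypotheses of the dual Barr--Beck theorem (Theorem \ref{DualBarrBeck}) for the co-fibre functor $F:\mathcal{C} \rightarrow \text{IndBan}_{k}$, running the argument as the formal dual of Lemma \ref{FibreFunctorsMonadic}: everywhere that proof produced a left adjoint, a coequaliser, and condition (iii) for the monadic theorem, I would instead produce a right adjoint, an equaliser, and the split-equaliser condition of the comonadic theorem. Once all three conditions hold, Theorem \ref{DualBarrBeck} gives that $F$ is comonadic, so $\mathcal{C}$ is equivalent to the category $(\text{IndBan}_{k})_{\mathbb{U}}$ of coalgebras over the comonad $\mathbb{U}=(U:=FG,\varepsilon,\Delta)$.

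For condition (i), I would apply the Enriched Adjoint Functor Theorem (Theorem \ref{AdjointFunctorTheorem}). This is precisely the point at which the weaker hypotheses of a co-fibre functor are calibrated: a co-fibre functor is only assumed cocontinuous (not bicontinuous), and cocontinuity is exactly what Theorem \ref{AdjointFunctorTheorem} requires to produce an enriched \emph{right} adjoint $G:\text{IndBan}_{k} \rightarrow \mathcal{C}$, which is what Theorem \ref{DualBarrBeck}(i) asks for. For condition (ii), I would reproduce verbatim the reflecting-isomorphisms argument from Lemma \ref{FibreFunctorsMonadic}, which is self-dual: if $F(f)$ is an isomorphism, then fitting $f$ into the strictly (co)exact sequences $\text{Ker}(f) \rightarrow A \xrightarrow{f} B$ and $A \xrightarrow{f} B \rightarrow \text{Coker}(f)$ and using strong exactness of $F$ forces $F(\text{Ker}(f)) = F(\text{Coker}(f)) = 0$; faithfulness then gives $\text{Ker}(f)=\text{Coker}(f)=0$, and because $F$ reflects strict morphisms, $f$ is a strict morphism with trivial kernel and cokernel, hence an isomorphism.

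For condition (iii), the key observation is that $\mathcal{C}$, being quasi-abelian, has kernels and hence equalisers: the equaliser of $f,g:A \rightarrow B$ is $e:=\text{Ker}(f-g):E \rightarrow A$, automatically a strict monomorphism. Given a pair whose images admit a split equaliser $h:H \rightarrow F(A)$, strong exactness of $F$ identifies $F(e)=\text{Ker}(F(f)-F(g))$ with an equaliser of $F(f),F(g)$ in $\text{IndBan}_{k}$; since a split equaliser is in particular an equaliser and equalisers are unique up to canonical isomorphism, $F(e)\cong h$ and $F(E)\cong H$ compatibly, which is exactly what (iii) demands. I expect this step to be the most delicate, since the split-equaliser hypothesis interacts with the quasi-abelian structure only through the fact that the $\mathcal{C}$-equaliser is strict and therefore preserved by the strongly exact $F$; the genuinely non-formal input, as in the monadic case, is that in a non-abelian quasi-abelian category one cannot conclude an isomorphism from vanishing kernel and cokernel alone, which is why the reflection of strictness in step (ii) is essential rather than cosmetic.
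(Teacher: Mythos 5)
Your proposal is correct and is exactly the paper's intended argument: the paper's proof of this lemma is the single line ``entirely similar to that of Lemma \ref{FibreFunctorsMonadic},'' and your write-up is precisely that dualization carried out, with the right calibration at step (i) (cocontinuity alone feeding the Enriched Adjoint Functor Theorem to produce a right adjoint), the self-dual reflection-of-isomorphisms argument via strictness at step (ii), and strong exactness preserving the kernel-equaliser $\text{Ker}(f-g)$ at step (iii). No gaps to report.
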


\begin{proof}
The proof is entirely similar to that of Lemma \ref{FibreFunctorsMonadic}.
\end{proof}

\begin{remark}
Since $G$ is a right adjoint, it is not necessarily true that $G$ or $U$ is cocontinuous.
\end{remark}

\begin{theorem}
\label{CoIndBanachTannaka}
Let $\mathcal{C}$ be a locally presentable, $k$-linear, quasi-abelian category, equipped with a co-fibre functor $F: \mathcal{C} \rightarrow \text{IndBan}_{k}$. Assume further that $U=FG$ is cocontinuous and commutes with $l^{1}$, where $G$ is some right adjoint to $F$. Then there exists a coalgebra $\mathscr{B}$ in $\text{IndBan}_{k}$ such that $\mathcal{C}$ is equivalent to the category of left $\mathscr{B}$ comodules in $\text{IndBan}_{k}$. 
\end{theorem}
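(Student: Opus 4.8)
The plan is to dualise the proof of Theorem \ref{IndBanachTannaka} step by step, since the statement is precisely the comonadic mirror of the monadic reconstruction theorem. First I would invoke Lemma \ref{CoFibreFunctorsComonadic}: because $F$ is a co-fibre functor, it satisfies the dual Barr--Beck conditions, so $\mathcal{C}$ is equivalent to the category $\text{IndBan}_{k,\mathbb{U}}$ of coalgebras over the comonad $\mathbb{U} = (U = FG, \varepsilon, \Delta)$ on $\text{IndBan}_{k}$, where $G$ is the chosen right adjoint. This reduces the problem to identifying the comonad $U$ explicitly.

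The crux is to apply Lemma \ref{TensorFunctorClassification} to the endofunctor $U$. By hypothesis $U$ is cocontinuous and commutes with $l^{1}$, and it is enriched over $\text{IndBan}_{k}$ since it is a composite of enriched functors; hence Lemma \ref{TensorFunctorClassification} gives a natural isomorphism $U \cong \mathscr{B} \hat{\otimes} -$ where $\mathscr{B} := U(k)$. The final step is to transport the comonad structure across this isomorphism: the comultiplication $\Delta: U \Rightarrow U \circ U$ and the counit $\varepsilon: U \Rightarrow \text{id}$, evaluated at $k$ and using $U(U(k)) \cong \mathscr{B} \hat{\otimes} \mathscr{B}$, endow $\mathscr{B}$ with a comultiplication $\mathscr{B} \to \mathscr{B} \hat{\otimes} \mathscr{B}$ and a counit $\mathscr{B} \to k$. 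The comonad axioms for $\mathbb{U}$ translate exactly into the coassociativity and counit axioms for $\mathscr{B}$, making $\mathscr{B}$ an IndBanach coalgebra. A coalgebra for the comonad $\mathscr{B} \hat{\otimes} -$ is by definition a pair $(C, \xi)$ with $\xi: C \to \mathscr{B} \hat{\otimes} C$ satisfying the coassociativity and counit constraints, which is precisely a left $\mathscr{B}$-comodule, so $\text{IndBan}_{k,\mathbb{U}}$ is the category of left $\mathscr{B}$-comodules, and the equivalence of Lemma \ref{CoFibreFunctorsComonadic} completes the argument.

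The main obstacle I anticipate is verifying that the comonoidal structure on $U$ really does correspond, under $U \cong \mathscr{B} \hat{\otimes} -$, to a genuine coalgebra structure on $\mathscr{B}$ in a way that matches the comonad data, rather than merely producing compatible maps abstractly. This is the dual of the passage in Remark \ref{IndBanachMonoidalFunctors}, where monoidal structures on the category correspond to comonoidal structures on the monad and thence to coalgebra structures on $\mathscr{A}$; here one runs the analogous identification for the comonad $U$ itself. One must check that the isomorphism $U(U(k)) \cong \mathscr{B} \hat{\otimes} \mathscr{B}$ is the correct one --- namely that applying the classification $U(-) \cong \mathscr{B} \hat{\otimes} -$ to the object $U(k) = \mathscr{B}$ yields $\mathscr{B} \hat{\otimes} \mathscr{B}$ compatibly with the natural transformation $\Delta_k$ --- so that the resulting comultiplication is coassociative. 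This bookkeeping is routine but must be handled with care, since, unlike the monadic case where $T$ is automatically cocontinuous by Lemma \ref{FibreFunctorMonadCocontinuous}, here cocontinuity of $U$ is an extra hypothesis because $G$ is only a right adjoint; the whole argument therefore genuinely depends on the assumption that $U$ is cocontinuous and commutes with $l^{1}$.
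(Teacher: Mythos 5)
Your proposal is correct and is precisely the argument the paper intends: the paper's proof of Theorem \ref{CoIndBanachTannaka} simply says it is analogous to that of Theorem \ref{IndBanachTannaka}, i.e.\ one applies Lemma \ref{CoFibreFunctorsComonadic} in place of Lemma \ref{FibreFunctorsMonadic}, then Lemma \ref{TensorFunctorClassification} to $U$ (whose cocontinuity, as you rightly note, must be assumed rather than deduced as in Lemma \ref{FibreFunctorMonadCocontinuous}), and transports the comonad structure to a coalgebra structure on $\mathscr{B}=U(k)$, identifying comonad coalgebras with left $\mathscr{B}$-comodules.
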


\begin{proof}
The proof is analogous to that of Theorem \ref{IndBanachTannaka}.
\end{proof}

\begin{remark}
At first sight this result is less satisfying than Theorem \ref{IndBanachTannaka} or Corollary \ref{IndBanachTannakaCorollary}, as $U$ is not automatically cocontinuous and the assumption that it commutes with $l^{1}$ is not automatic from a good notion of contracting coproducts. However, in applications, the adjoint $G$, and hence the comonad $U$, can often be described explicitly and checked for (contracting) cocontinuity.
\end{remark}

\begin{remark}
Let $\mathcal{C}$ be the category of IndBanach comodules over an IndBanach coalgebra $\mathscr{B}$, and let us denote by $F$ the forgetful functor to $\text{IndBan}_{k}$, $G$ its right adjoint, and $U=FG \cong \mathscr{B} \hat{\otimes} -$. As before, monoidal structures on $\mathcal{C}$ for which $F$ is strong monoidal were shown by Moerdijk in \cite{MoTC} to correspond directly to monoidal structures on $U$, which in turn correspond to algebra structures on $\mathscr{B}$. This correspondence is dual to the one outlined in the final Remark of Subsection \ref{IndBanachMonoidalFunctors}.
\end{remark}

\subsection{Simultaneous modules and comodules}
\

In the case where we have both left and right adjoints $G$ and $G'$ as described in Theorems \ref{IndBanachTannaka} and \ref{CoIndBanachTannaka}, we relate $\mathscr{A}=G(k)$ and $\mathscr{B}=G'(k)$ as follows.

\begin{prop}
$\mathscr{A}$ is dualisable with dual $\mathscr{B}$ in $\text{IndBan}_{k}$.
\end{prop}
\begin{proof}
The adjunction gives an adjunction between $T=GF$ and $U=FG'$
$$\text{Hom}(TV,W) \cong \text{Hom}(GV,G'W) \cong  \text{Hom}(V,UW).$$
Then the unit and counit of this adjunction give the duality.
\end{proof}

\begin{remark}
Conversely, suppose that $\mathscr{A}$ is a dualisable IndBanach algebra with dual $\mathscr{B}$. Then $\mathscr{B}$ forms an IndBanach coalgebra, and there is an adjunction as above between the functors $T=\mathscr{A} \hat{\otimes} -$ and $U=\mathscr{B} \hat{\otimes} -$. It then follows that the category of IndBanach $\mathscr{A}$ modules and IndBanach $\mathscr{B}$ comodules are equivalent in a way compatible with the forgetful functor.
\end{remark}

\section{Examples}

We now present some examples to highlight the possible applications of this theory.

\subsection{Comodules of a Banach coalgebra}
\begin{prop}
\label{LocallyBanachComodules}
Let $B$ be a Banach coalgebra, viewed as an IndBanach space, and let $M$ be an IndBanach $B$-comodule. Then $M$ is isomorphic to a colimit of Banach comodules of $B$.
\end{prop}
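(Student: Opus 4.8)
The plan is to establish an analytic analogue of the classical fundamental theorem of comodules: I will exhibit $M$ as the filtered colimit of its Banach subcomodules. Write $M = \text{"colim"}_{i \in I} M_{i}$ and recall that the coaction $\rho\colon M\to B\hat\otimes M$ exhibits $M$ as a coalgebra over the comonad $U = B\hat\otimes-$. Since $B$ is a Banach space it is a compact object of $\text{IndBan}_{k}$, so $U$ commutes with filtered colimits and the forgetful functor to $\text{IndBan}_{k}$ creates them. It therefore suffices to produce, for every structure morphism $M_{i}\to M$, a factorisation $M_{i}\to N_{i}\hookrightarrow M$ through a Banach subcomodule $N_{i}$, and then to check that these subcomodules form a directed system with colimit $M$.

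First I would construct the subcomodule generated by a Banach subspace. Since $M_{i}$ is compact and $B\hat\otimes M = \text{"colim"}_{j} B\hat\otimes M_{j}$, the composite $M_{i}\xrightarrow{\rho} B\hat\otimes M$ factors through a single $B\hat\otimes M_{\theta(i)}$ with $\theta(i)\geq i$. I would then define $N_{i}$ to be the closed linear span inside $M_{\theta(i)}$ of the matrix coefficients $(f\hat\otimes\text{id})\rho(v)$, ranging over $f\in\underline{\text{Hom}}(B,k)$ and $v$ in the image of $M_{i}$. Being a closed subspace of the Banach space $M_{\theta(i)}$, $N_{i}$ is Banach. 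Taking $f=\epsilon$ the counit and using the counit axiom $(\epsilon\hat\otimes\text{id})\rho=\text{id}$ shows $N_{i}$ contains the image of $M_{i}$. Finally, coassociativity of $\rho$ rewrites $\rho\big((f\hat\otimes\text{id})\rho(v)\big)$ as an element whose second tensor factor is again a matrix coefficient of $v$, so that $\rho(N_{i})\subseteq B\hat\otimes N_{i}$ and $N_{i}$ is a subcomodule.

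To conclude, I would observe that the Banach subcomodules of $M$ are directed under inclusion: given two of them, their images lie in a common level $M_{j}$, and the subcomodule generated (by the same matrix-coefficient construction) by the Banach subspace they span contains both. Each $M_{i}$ factors through the corresponding $N_{i}$, so the induced comodule morphism $\text{"colim"}_{i} N_{i}\to M$ is an isomorphism on underlying IndBanach spaces, and hence an isomorphism of comodules since the forgetful functor reflects isomorphisms. This exhibits $M$ as a filtered colimit of Banach comodules of $B$.

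The main obstacle is analytic rather than formal, and is concentrated in the construction of $N_{i}$: one must ensure that the (in general infinite, convergent) Sweedler-type expansions of $\rho(v)$ are handled correctly, that all matrix coefficients genuinely lie in the single Banach level $M_{\theta(i)}$, and above all that the closed span $N_{i}$ is stable under $\rho$ in the strong sense $\rho(N_{i})\subseteq B\hat\otimes N_{i}$. This last point requires that $B\hat\otimes-$ carry the closed embedding $N_{i}\hookrightarrow M_{\theta(i)}$ to a subobject through which $\rho(N_{i})$ factors, i.e. a mild flatness and closedness property of the completed tensor product with the fixed Banach space $B$; this is where I would expect to check the Archimedean and non-Archimedean cases separately.
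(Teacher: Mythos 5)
Your strategy---a Banach analogue of the fundamental theorem of comodules, exhibiting $M$ as the filtered colimit of Banach subcomodules built from matrix coefficients---is genuinely different from the paper's proof, which is purely formal: the paper takes $\mathcal{C}$ to be the Ind-completion of the category of Banach $B$-comodules, checks that the induced forgetful functor $F:\mathcal{C}\rightarrow \text{IndBan}_{k}$ is a co-fibre functor whose right adjoint is induced by the cofree comodule functor $B\hat{\otimes}-$, and concludes via Lemma \ref{CoFibreFunctorsComonadic} that $\mathcal{C}$ is equivalent to the category of IndBanach $B$-comodules; since every object of an Ind-completion is by construction a formal colimit of objects of the original category, the proposition follows with no element-level analysis at all. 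Your route, by contrast, contains a genuine gap, located exactly at your closing caveat: the property you call a ``mild flatness and closedness property'' of $B\hat{\otimes}-$ is false in general. In the Archimedean case the completed projective tensor product with a Banach space is right exact but not left exact: for a closed embedding $N\hookrightarrow X$ of Banach spaces, the canonical map $B\hat{\otimes}N\rightarrow B\hat{\otimes}X$ may fail to be injective and may fail to have closed image. Consequently (i) an element $t\in B\hat{\otimes}M_{\theta(i)}$ all of whose slices $(f\hat{\otimes}\text{id})(t)$, $f\in\underline{\text{Hom}}(B,k)$, lie in $N_{i}$ need not lie in the image of $B\hat{\otimes}N_{i}$, so $\rho|_{N_{i}}$ need not factor; and (ii) even when it factors set-theoretically, a lift $N_{i}\rightarrow B\hat{\otimes}N_{i}$ is neither unique nor automatically bounded, so coassociativity and the counit axiom for the would-be subcomodule cannot be verified. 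The matrix-coefficient subcomodule may therefore simply fail to exist, and the directed system collapses with it. (In the non-Archimedean case Banach spaces are flat---Lemma 3.49 of \cite{SDiBAG}, which the paper invokes for Galois descent---so (ii) improves because $B\hat{\otimes}N\rightarrow B\hat{\otimes}X$ is a strict monomorphism; but (i), that slicewise membership in $N_{i}$ forces membership in the image of $B\hat{\otimes}N_{i}$, still requires proof and is of approximation-property type, not automatic.)

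Two smaller points are repairable but should be flagged. First, $U=B\hat{\otimes}-$ commutes with filtered colimits because the monoidal structure on $\text{IndBan}_{k}$ is defined levelwise on formal colimits, not because $B$ is a compact object: compactness of $B$ concerns $\text{Hom}(B,-)$, not $B\hat{\otimes}-$. Second, identities such as $(\epsilon\hat{\otimes}\text{id})\circ\rho=\text{id}$ hold in an Ind-category only after passing to a larger index of the filtered system, so your claim that $N_{i}$ contains the image of $M_{i}$ in $M_{\theta(i)}$ must be restated at some level $i'\geq\theta(i)$, and the cofinality bookkeeping for $\text{"colim"}_{i}N_{i}\cong M$ adjusted accordingly; this is routine. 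The substantive lesson is that in $\text{IndBan}_{k}$ one should not attempt to realise the colimit as an exhaustion by honest closed subcomodules---the paper's comonadic argument deliberately sidesteps subobjects, trading your analytic construction for the formal equivalence $\text{Ind}(\text{Ban-}B\text{-comodules})\simeq B\text{-comodules in }\text{IndBan}_{k}$, and that equivalence is what makes the statement essentially free.
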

\begin{proof}
Let $\mathcal{C}$ be the Ind completion of the category of Banach $B$-comodules. The forgetful functor from the category of Banach $B$-comodules to $\text{Ban}_{k}$ induces a cocontinuous, strongly exact, faithful functor $F:\mathcal{C} \rightarrow \text{IndBan}_{k}$ that reflects strict morphisms. Hence $F$ is a co-fibre functor and $\mathcal{C}$ is equivalent to the category of coalgebras over a comonad $U$. The right adjoint of the forgetful functor $B \hat{\otimes} -$ from $\text{Ban}_{k}$ to Banach $B$-comodules induces a right adjoint $G$ to $F$, and $FG$ is isomorphic to the functor $B \hat{\otimes} -$. So it follows that $\mathcal{C}$ is equivalent to the category of $B$-comodules in $\text{IndBan}_{k}$, from which the proposition follows.
\end{proof}

\subsection{Analytic gradings}
\label{AnalyticGradings}
\

This example is motivated by the prospect of defining analytic analogues of quantum groups. In constructing the positive part of the quantum group through Nicholls algebras, one works with graded vector spaces. The following gives an analytic analogue of such a grading.\\

\begin{defn}
Let $\text{Gr}_{\mathbb{Z}}\text{IndBan}_{k}$ be the category of IndBanach spaces of the form $\coprod_{n \in \mathbb{Z}}^{\leq 1} M(n)$ with morphisms that preserve this grading, that is
$$\underline{\text{Hom}}_{\text{Gr}_{\mathbb{Z}}}(\coprod\nolimits_{n \in \mathbb{Z}}^{\leq 1} M(n),\coprod\nolimits_{n' \in \mathbb{Z}}^{\leq 1} M'(n'))=\prod\nolimits_{n \in \mathbb{Z}}^{\leq 1} \underline{\text{Hom}}(M(n),M'(n)).$$
Let $F$ be the forgetful functor to $\text{IndBan}_{k}$ which maps morphisms via the usual map
$$
\begin{array}{rcl}
\prod\nolimits_{n \in \mathbb{Z}}^{\leq 1} \underline{\text{Hom}}(M(n),M'(n)) &\rightarrow& \prod\nolimits_{n \in \mathbb{Z}}^{\leq 1}\coprod\nolimits_{n' \in \mathbb{Z}}^{\leq 1} \underline{\text{Hom}}(M(n),M'(n'))\\
&=&\underline{\text{Hom}}(\coprod\nolimits_{n \in \mathbb{Z}}^{\leq 1} M(n),\coprod\nolimits_{n' \in \mathbb{Z}}^{\leq 1} M'(n')).
\end{array}$$
$\text{Gr}_{\mathbb{Z}}\text{IndBan}_{k}$ is monoidal, with tensor product
$$\left(\coprod\nolimits_{n \in \mathbb{Z}}^{\leq 1} M(n)\right) \hat{\otimes} \left(\coprod\nolimits_{n' \in \mathbb{Z}}^{\leq 1} M'(n')\right) = \coprod\nolimits_{N \in \mathbb{Z}}^{\leq 1} \left(\coprod\nolimits_{n+n'=N}^{\leq 1}M(n) \hat{\otimes} M'(n')\right).$$
\end{defn}

\begin{prop}
\label{ZGradedIndBanachSpaces}
$\text{Gr}_{\mathbb{Z}}\text{IndBan}_{k}$ is equivalent to the monoidal category of $\mathscr{B}$ comodules in $\text{IndBan}_{k}$, where $\mathscr{B}$ is the bialgebra $\coprod_{n \in \mathbb{Z}}^{\leq 1}k \cdot t^{n}$. Here, $\mathscr{B}$ has the comultiplication $t^{n}\mapsto t^{n} \otimes t^{n}$, with counit $t^{n} \mapsto 1$, and multiplication $t^{n} \cdot t^{n'}=t^{n+n'}$, with unit $t^{0}$.
\end{prop}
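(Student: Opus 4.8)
The plan is to realise $\text{Gr}_{\mathbb{Z}}\text{IndBan}_{k}$ as a comodule category by applying Theorem \ref{CoIndBanachTannaka} to the forgetful functor $F$, and then to identify the resulting coalgebra together with its bialgebra structure explicitly. First I would observe that, as a $k$-linear category, $\text{Gr}_{\mathbb{Z}}\text{IndBan}_{k}$ is nothing but $\text{IndBan}_{k}^{\mathbb{Z},\text{bd}}$: an object $\coprod^{\leq 1}_{n}M(n)$ is precisely the datum of a uniformly bounded family $(M(n))_{n \in \mathbb{Z}}$, and the $\underline{\text{Hom}}$ formula in the definition is exactly that of $\text{IndBan}_{k}^{\mathbb{Z},\text{bd}}$. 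In particular the category is locally presentable and quasi-abelian, with kernels and cokernels computed degreewise. Under this identification $F$ is precisely the contracting coproduct functor $\coprod^{\leq 1}_{\mathbb{Z}}:\text{IndBan}_{k}^{\mathbb{Z},\text{bd}} \to \text{IndBan}_{k}$, hence a left adjoint with right adjoint $G=\Delta^{\mathbb{Z}}$ the diagonal. Being a left adjoint it is cocontinuous; it is faithful since on morphisms it is the inclusion $\prod^{\leq 1}_{n}\underline{\text{Hom}}(M(n),M'(n)) \hookrightarrow \underline{\text{Hom}}(\coprod^{\leq 1}_{n}M(n),\coprod^{\leq 1}_{n'}M'(n'))$; and strong exactness together with the reflection of strictness follow from the fact that $\coprod^{\leq 1}_{\mathbb{Z}}$ preserves and detects degreewise (co)kernels. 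Thus $F$ is a co-fibre functor in the sense required by Lemma \ref{CoFibreFunctorsComonadic}.

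Next I would compute the comonad $U=FG$. Since $G=\Delta^{\mathbb{Z}}$ and $F=\coprod^{\leq 1}_{\mathbb{Z}}$, we get $U(V)=\coprod^{\leq 1}_{n \in \mathbb{Z}}V \cong \mathscr{B}\hat{\otimes}V$ with $\mathscr{B}=\coprod^{\leq 1}_{n \in \mathbb{Z}}k=U(k)$. Because $U$ has the form $\mathscr{B}\hat{\otimes}-$, the forward direction of Lemma \ref{TensorFunctorClassification} shows it is cocontinuous and commutes with $l^{1}$, so the remaining hypotheses of Theorem \ref{CoIndBanachTannaka} hold and $\text{Gr}_{\mathbb{Z}}\text{IndBan}_{k}$ is equivalent to the category of left $\mathscr{B}$-comodules, with the coalgebra structure on $\mathscr{B}$ induced by the comonad structure of $U$.

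The substance of the proof is then to check that this induced coalgebra structure is the stated one. The counit $\varepsilon_{U}:FG \Rightarrow \text{id}$ is the counit of $\coprod^{\leq 1}_{\mathbb{Z}} \dashv \Delta^{\mathbb{Z}}$, whose component at $k$ is the fold map $\coprod^{\leq 1}_{n}k \to k$ sending each summand identically onto $k$; this reads as $t^{n}\mapsto 1$. The comultiplication $\Delta_{U}=\text{id}_{F}\ast\eta\ast\text{id}_{G}$ has component $F(\eta_{G(k)})$ at $k$, where the unit $\eta$ inserts each degree of a family as the corresponding summand of its coproduct; tracing this through the identification $U(k)\hat{\otimes}U(k) \cong \coprod^{\leq 1}_{n,m}k$ sends the $n$-th generator into the $(n,n)$-summand, that is $t^{n}\mapsto t^{n}\otimes t^{n}$. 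This pins down $(\mathscr{B},\Delta,\varepsilon)$ as claimed.

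Finally, for the bialgebra and monoidal statement I would verify that the convolution tensor product makes $F$ strong monoidal, using that $\hat{\otimes}$ commutes with contracting coproducts to rewrite $F(M \hat{\otimes} M') = \coprod^{\leq 1}_{N}\coprod^{\leq 1}_{n+n'=N}M(n)\hat{\otimes}M'(n') \cong F(M)\hat{\otimes}F(M')$. By Moerdijk's correspondence \cite{MoTC}, recorded in the Remark following Theorem \ref{CoIndBanachTannaka}, a monoidal structure on $\mathcal{C}$ for which $F$ is strong monoidal endows $\mathscr{B}$ with a compatible algebra structure; tracking how the degree-$n$ and degree-$n'$ pieces multiply into degree $n+n'$ yields the multiplication $t^{n}\cdot t^{n'}=t^{n+n'}$ with unit $t^{0}$, the image of the monoidal unit $k$ concentrated in degree $0$. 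Compatibility of this multiplication with the comultiplication $t^{n}\mapsto t^{n}\otimes t^{n}$ is then immediate, so $\mathscr{B}$ is a bialgebra. I expect the main obstacle to be precisely this last bookkeeping: confirming that the abstract comonad reconstruction produces exactly these group-algebra-type structure maps, and that the induced algebra and coalgebra structures satisfy the bialgebra axioms, rather than any conceptual difficulty.
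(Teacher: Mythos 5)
Your proof is correct and takes essentially the same route as the paper's: exhibit the forgetful functor $F$ as a left adjoint to the diagonal $G:X\mapsto \coprod_{\mathbb{Z}}^{\leq 1}X$, identify the comonad $U=FG$ with $\mathscr{B}\hat{\otimes}-$ for $\mathscr{B}=\coprod_{n\in\mathbb{Z}}^{\leq 1}k$, and read off the bialgebra structure from the monoidal comonadic structure via Moerdijk's correspondence. The paper's proof is a terser version of exactly this argument, and your explicit tracing of the counit, the comultiplication, and the convolution product merely fills in what the paper declares ``clear.''
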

\begin{proof}
Since
$$\underline{\text{Hom}}(F(\coprod\nolimits_{n \in \mathbb{Z}}^{\leq 1}M(n)),X)=\prod\nolimits_{n \in \mathbb{Z}}^{\leq 1}\underline{\text{Hom}}(M(n),X),$$
we see that $F$ is left adjoint to the functor $G:\text{IndBan}_{k} \rightarrow \text{Gr}_{\mathbb{Z}}\text{IndBan}_{k}$ that takes $X$ to the contracting coproduct $\coprod_{\mathbb{Z}}^{\leq 1}X$. Then $U=FG$ is cocontinuous and commutes with contracting colimits, so is isomorphic to the functor $\mathscr{B} \hat{\otimes} -$. It is clear that the monoidal comonadic structure on $U$ induces the above bialgebra structure on $\mathscr{B}$.
\end{proof}

\begin{remark}
The bialgebra $\mathscr{B}$ can be thought of as a completion of $k[t,t^{-1}]$, the bialgebra of analytic functions on the unit circle in $k$, whose vector space comodules are $\mathbb{Z}$-graded vector spaces.
\end{remark}

\begin{remark}
In fact, if $\Gamma$ is any discrete group and $\text{Gr}_{\Gamma}\text{IndBan}_{k}$ is the category of IndBanach spaces with an analytic $\Gamma$ grading, $M=\coprod_{g \in \Gamma}^{\leq 1} M(g)$, then a similar argument to the above shows the following.
\end{remark}

\begin{prop}
\label{GammaGradedIndBanachSpaces}
The analogously defined category $\text{Gr}_{\Gamma}\text{IndBan}_{k}$ is equivalent to the monoidal category of comodules of the bialgebra $\coprod_{g \in \Gamma}^{\leq 1}k \cdot t^{g}$. Here we have comultiplication $t^{g} \mapsto t^{g} \otimes t^{g}$, counit $t^{g} \mapsto 1$, multiplication $t^{g} \cdot t^{h} = t^{gh}$ and unit $t^{e}$.
\end{prop}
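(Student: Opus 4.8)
The plan is to run the proof of Proposition \ref{ZGradedIndBanachSpaces} essentially unchanged, replacing the index set $\mathbb{Z}$ by the discrete group $\Gamma$ and the additive convolution $n+n'=N$ defining the tensor product by the group multiplication $gh=\gamma$. In particular I would first check, exactly as in Proposition \ref{LocallyBanachComodules}, that $\text{Gr}_{\Gamma}\text{IndBan}_{k}$ is a locally presentable, $k$-linear, quasi-abelian category and that the forgetful functor $F$ is a co-fibre functor, so that Theorem \ref{CoIndBanachTannaka} is applicable; these verifications are formal consequences of the construction of the category out of contracting coproducts of IndBanach spaces.

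The first substantive step is the adjunction. The defining formula for morphisms in $\text{Gr}_{\Gamma}\text{IndBan}_{k}$ together with the universal property of contracting coproducts gives
$$\underline{\text{Hom}}\!\left(F\!\left(\coprod\nolimits_{g \in \Gamma}^{\leq 1} M(g)\right),X\right)=\prod\nolimits_{g \in \Gamma}^{\leq 1}\underline{\text{Hom}}(M(g),X)=\underline{\text{Hom}}_{\text{Gr}_{\Gamma}}\!\left(\coprod\nolimits_{g \in \Gamma}^{\leq 1} M(g),\coprod\nolimits_{\Gamma}^{\leq 1}X\right),$$
which exhibits $F$ as left adjoint to the functor $G:\text{IndBan}_{k} \rightarrow \text{Gr}_{\Gamma}\text{IndBan}_{k}$ sending $X$ to the constant grading $\coprod_{\Gamma}^{\leq 1}X$. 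I would then set $U=FG$, so that $U(X)=\coprod_{\Gamma}^{\leq 1}X$ as an IndBanach space. As in the $\mathbb{Z}$-case, contracting coproducts are left adjoints and commute with colimits, so $U$ is cocontinuous and commutes with $l^{1}$; Theorem \ref{CoIndBanachTannaka} then yields an equivalence of $\text{Gr}_{\Gamma}\text{IndBan}_{k}$ with the category of comodules over the coalgebra $\mathscr{B}=U(k)=\coprod_{g \in \Gamma}^{\leq 1}k \cdot t^{g}$. Computing the counit $U(k)\to k$ and comultiplication $U(k)\to U(k)\hat{\otimes}U(k)$ summand by summand over $\Gamma$ recovers the group-like coalgebra structure $t^{g}\mapsto t^{g}\otimes t^{g}$, $t^{g}\mapsto 1$.

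Finally I would identify the algebra structure. The monoidal product on $\text{Gr}_{\Gamma}\text{IndBan}_{k}$ defined via $\coprod_{gh=\gamma}^{\leq 1}$ makes $F$ strong monoidal, so by the dual of Remark \ref{IndBanachMonoidalFunctors} (Moerdijk's correspondence in \cite{MoTC}) the comonad $U$, and hence $\mathscr{B}$, acquires a compatible algebra structure; unwinding the convolution shows the multiplication is $t^{g}\cdot t^{h}=t^{gh}$ with unit $t^{e}$, exhibiting $\mathscr{B}$ as the $l^{1}$-completion of the group algebra $k[\Gamma]$. The only place the argument genuinely departs from the abelian case is in the monoidal bookkeeping: I expect the main (though mild) obstacle to be verifying that replacing $n+n'=N$ by $gh=\gamma$ still yields an associative tensor product with strict unit concentrated in degree $e$. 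Associativity is inherited directly from associativity in $\Gamma$, and the bialgebra compatibility $\Delta(t^{g}t^{h})=\Delta(t^{g})\Delta(t^{h})$ holds since $t^{gh}\otimes t^{gh}=(t^{g}\otimes t^{g})(t^{h}\otimes t^{h})$; when $\Gamma$ is non-abelian the resulting monoidal category is merely monoidal rather than symmetric, but symmetry is never used in the comodule equivalence.
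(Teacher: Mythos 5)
Your proposal is correct and takes essentially the same route as the paper: the paper proves the $\mathbb{Z}$-graded case (Proposition \ref{ZGradedIndBanachSpaces}) via exactly your adjunction $F \dashv G$ with $G(X)=\coprod_{\Gamma}^{\leq 1}X$, cocontinuity of $U=FG$, Theorem \ref{CoIndBanachTannaka}, and Moerdijk's correspondence for the bialgebra structure, and then simply asserts that ``a similar argument'' yields the $\Gamma$-graded statement. Your additional remarks --- that associativity of the convolution grading is inherited from $\Gamma$ and that the loss of symmetry for non-abelian $\Gamma$ is harmless since symmetry is never used in the comodule equivalence --- make explicit precisely what the paper leaves implicit.
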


\begin{remark}
If we take $\Gamma=\mathbb{Z}^{n}$ we obtain a completion of $k[t_{1},t_{1}^{-1},\ldots,t_{n},t_{n}^{-1}]$, the coalgebra of analytic functions on the unit sphere in $k^{n}$.
\end{remark}

\begin{remark}
Note that, in the above, the forgetful functor is not continuous. The product of a collection $(\coprod_{n \in \mathbb{Z}}^{\leq 1} N(n,i))_{i \in I}$ in $\text{Gr}_{\mathbb{Z}}\text{IndBan}_{k}$ is $\coprod_{n \in \mathbb{Z}}^{\leq 1} \prod_{i \in I} N(n,i)$ since
\begin{align*}
&\text{Hom}_{\text{Gr}_{\mathbb{Z}}}(\coprod\nolimits_{m \in \mathbb{Z}}^{\leq 1} M(m),\coprod\nolimits_{n \in \mathbb{Z}}^{\leq 1} \prod_{i \in I} N(n,i))\\
& \qquad =\prod\nolimits_{n \in \mathbb{Z}}^{\leq 1} \prod_{i \in I}\text{Hom}(M(n),N(n,i))\\
& \qquad =\prod_{i \in I}\prod\nolimits_{n \in \mathbb{Z}}^{\leq 1} \text{Hom}(M(n),N(n,i))\\
& \qquad =\prod_{i \in I}\text{Hom}_{\text{Gr}_{\mathbb{Z}}}(\coprod\nolimits_{m \in \mathbb{Z}}^{\leq 1} M(m),\coprod\nolimits_{n \in \mathbb{Z}}^{\leq 1} N(n,i)),
\end{align*}
but it is not necessarily true that products commute with contracting coproducts in $\text{IndBan}_{k}$.
\end{remark}

\subsection{Gradings arising from strictly convergent and overconvergent powerseries on the unit polydisk}
\label{DaggerGradings}
\

In the previous example, we showed that analytically $\mathbb{Z}^{n}$-graded IndBanach spaces are comodules over the bialgebra of analytic functions on the unit sphere in $k^{n}$. There are, of course, other spaces of analytic functions, and these give rise to other analytic gradings.

\begin{defn}
Let $\text{Gr}_{\mathbb{N}^{N}}\text{IndBan}_{k}$ be the category whose objects are IndBanach spaces of the form $\coprod_{\underline{n} \in \mathbb{N}^{N}}^{\leq 1}M(\underline{n})$ with morphisms that respect the grading
$$\text{Hom}_{\text{Gr}_{\mathbb{N}^{n}}}(\coprod\nolimits_{\underline{n} \in \mathbb{N}^{N}}^{\leq 1}M(\underline{n}),\coprod\nolimits_{\underline{n}' \in \mathbb{N}^{N}}^{\leq 1}M'(\underline{n}'))= \prod\nolimits_{\underline{n} \in \mathbb{N}^{N}}^{\leq 1}\text{Hom}(M(\underline{n}),M'(\underline{n})).$$
Let us denote by $F$ the forgetful functor to $\text{IndBan}_{k}$. $\text{Gr}_{\underline{r}}\text{IndBan}_{k}$ is monoidal, with
$$\left( \coprod\nolimits_{\underline{n}}^{\leq 1}M(\underline{n}) \right) \hat{\otimes} \left( \coprod\nolimits_{\underline{n}' }^{\leq 1}M'(\underline{n}') \right)=\coprod\nolimits_{\underline{n} }^{\leq 1} \left( \coprod\nolimits_{\underline{m}+\underline{m}'=\underline{n}}^{\leq 1}M(\underline{m})\hat{\otimes} M'(\underline{m}')\right).$$
\end{defn}

\begin{prop}
The category $\text{Gr}_{\mathbb{N}^{N}}\text{IndBan}_{k}$ is equivalent to the category of $k\{\underline{t}\}=k\{t_{1},\ldots,t_{N}\}:=\coprod_{\underline{n} \in \mathbb{N}^{N}}^{\leq 1} k \cdot \underline{t}^{\underline{n}}$ comodules, where the comultiplication maps $\underline{t}^{\underline{n}} \mapsto \underline{t}^{\underline{n}} \otimes \underline{t}^{\underline{n}}$ and the counit is $\underline{t}^{\underline{n}} \mapsto 1$, and the multiplication maps $\underline{t}^{\underline{m}} \otimes \underline{t}^{\underline{n}} \mapsto  \underline{t}^{\underline{m}+ \underline{n}}$ with unit $\underline{t}^{\underline{0}}$.
\end{prop}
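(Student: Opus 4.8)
The plan is to run the argument of Proposition~\ref{ZGradedIndBanachSpaces} essentially verbatim, replacing the grading group $\mathbb{Z}$ by the commutative monoid $\mathbb{N}^N$ and appealing to Theorem~\ref{CoIndBanachTannaka}. The one conceptual difference is that $\mathbb{N}^N$ has no inverses, so the space $k\{\underline t\}$ acquires a bialgebra structure but no antipode; since the asserted equivalence of comodule categories uses only the coalgebra structure, this causes no difficulty. A minor simplification relative to the $\mathbb{Z}$ case is that for each $\underline n \in \mathbb{N}^N$ the set of decompositions $\underline m + \underline m' = \underline n$ is finite, so the inner contracting coproduct appearing in the tensor product is a finite one.

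First I would verify that the forgetful functor $F:\text{Gr}_{\mathbb{N}^N}\text{IndBan}_k \to \text{IndBan}_k$ is a co-fibre functor. Faithfulness, strong exactness and the reflection of strict morphisms follow at once from the componentwise formula for $\underline{\text{Hom}}_{\text{Gr}_{\mathbb{N}^N}}$, since limits, colimits and kernels/cokernels in $\text{Gr}_{\mathbb{N}^N}\text{IndBan}_k$ are all computed degree by degree; cocontinuity holds because $F$ sends a graded object to the contracting coproduct of its components and contracting coproducts commute with colimits. The identity
\[
\underline{\text{Hom}}\!\left(F\!\left(\coprod\nolimits_{\underline n \in \mathbb{N}^N}^{\leq 1} M(\underline n)\right),X\right) = \prod\nolimits_{\underline n \in \mathbb{N}^N}^{\leq 1} \underline{\text{Hom}}(M(\underline n),X)
\]
then exhibits $F$ as left adjoint to the functor $G$ sending $X$ to the constant contracting coproduct $\coprod_{\underline n \in \mathbb{N}^N}^{\leq 1} X$. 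Consequently $U = FG$ sends $X$ to $\coprod_{\underline n \in \mathbb{N}^N}^{\leq 1} X \cong k\{\underline t\}\hat\otimes X$, which is cocontinuous and commutes with contracting colimits, hence with $l^1$. Theorem~\ref{CoIndBanachTannaka} now yields a coalgebra $\mathscr{B} = U(k) = \coprod_{\underline n \in \mathbb{N}^N}^{\leq 1} k = k\{\underline t\}$ together with an equivalence of $\text{Gr}_{\mathbb{N}^N}\text{IndBan}_k$ with its category of comodules.

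It remains to match the algebraic structure. The comonad comultiplication $U(k) \to UU(k) = \coprod_{\underline m}^{\leq 1}\coprod_{\underline n}^{\leq 1} k$ is the diagonal induced by the unit $\eta$, so it produces the grouplike comultiplication $\underline t^{\underline n}\mapsto \underline t^{\underline n}\otimes \underline t^{\underline n}$ with counit $\underline t^{\underline n}\mapsto 1$. The monoidal product on $\text{Gr}_{\mathbb{N}^N}\text{IndBan}_k$ makes $F$ strong monoidal, and its tensor formula $\coprod_{\underline m+\underline m'=\underline n}^{\leq 1}M(\underline m)\hat\otimes M'(\underline m')$ is precisely the convolution coming from addition in $\mathbb{N}^N$; by the dual of Remark~\ref{IndBanachMonoidalFunctors} this transports to the algebra structure $\underline t^{\underline m}\otimes \underline t^{\underline n}\mapsto \underline t^{\underline m+\underline n}$, unit $\underline t^{\underline 0}$, giving the claimed bialgebra. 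The only step demanding genuine attention, though it is routine in this concrete setting, is confirming the cocontinuity of $F$ and the $l^1$-commutation of $U$ without inadvertently invoking continuity of $F$, which, as in the $\mathbb{Z}$-graded remark, does in fact fail.
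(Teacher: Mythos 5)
Your proposal is correct and takes essentially the same approach as the paper: the paper's proof is literally ``this is just a variation of Proposition \ref{ZGradedIndBanachSpaces},'' and you carry out precisely that variation --- the adjunction between $F$ and the constant contracting coproduct functor $G$, cocontinuity and $l^{1}$-commutation of $U=FG$, and the transport of the convolution monoidal structure to the bialgebra structure on $k\{\underline{t}\}$. Your added observations (finiteness of the decompositions $\underline{m}+\underline{m}'=\underline{n}$, absence of an antipode, and the caution about non-continuity of $F$) are accurate but not needed beyond what the paper's template already provides.
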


\begin{proof}
This is just a variation of Proposition \ref{ZGradedIndBanachSpaces}.
\end{proof}

\begin{remark}
This is the bialgebra of strictly convergent powerseries on the polydisk of radius $1$, $\{\underline{a}=(a_{1},..,a_{N}) \in k^{N} \mid |a_{i}| \leq 1\}$. Note that strictly convergent powerseries on a polydisk of polyradius $\underline{r}$ does not have a well defined comultiplication unless all $r_{i} \leq 1$, and the counit is only well defined if all $r_{i} \geq 1$, hence we are restricted to the unit polydisk.
\end{remark}

\begin{defn}
Let $\text{Gr}_{\mathbb{N}^{N}}^{\dagger,1}\text{IndBan}_{k}$ be the category whose objects are IndBanach spaces of the form $M=\text{"colim"}_{\underline{r} > 1}\coprod_{\underline{n} \in \mathbb{N}^{N}}^{\leq 1} M(\underline{n})_{\underline{r}^{n}}$, with morphisms
$$\text{Hom}_{\text{Gr}_{\mathbb{N}^{N}}^{\dagger,1}}(M,M')=\text{lim}_{\underline{r}<1}\prod\nolimits_{\underline{n} \in \mathbb{N}^{n}}^{\leq 1} \text{Hom}(M(\underline{n}),M'(\underline{n}))_{\underline{r}^{\underline{n}}},$$
for $M=\text{"colim"}_{\underline{r} > 1}\coprod\nolimits_{\underline{n} \in \mathbb{N}^{N}}^{\leq 1} M(\underline{n})_{\underline{r}^{n}}$ and $M'=\text{"colim"}_{\underline{r}' > 1}\coprod\nolimits_{\underline{n'} \in \mathbb{N}^{N}}^{\leq 1} M'(\underline{n'})_{\underline{r'}^{n'}}$. Here, colimits and limits are taken over polyradii $\underline{r}=(r_{1},..,r_{N})$ with $1 < r_{i}$ and $1 > r_{i}$ respectively for $i=1,..,N$, and for an IndBanach space $V=\text{"colim"}_{i \in I}V_{i}$ and for $\lambda \in \mathbb{R}_{>0}$, we use the notation $V_{\lambda}$ for the IndBanach space $V_{\lambda}=\text{"colim"}_{i \in I}(V_{i})_{\lambda}$, where $(V_{i})_{\lambda}$ is the Banach space whose underlying vector space is that of $V_{i}$ but with the norm scaled by $\lambda$. The category $\text{Gr}_{\mathbb{N}^{N}}^{\dagger,1}\text{IndBan}_{k}$ is monoidal, with
$$M \hat{\otimes} M' =\text{"colim"}_{\underline{r}>1}\coprod\nolimits_{\underline{n} }^{\leq 1} \left( \coprod\nolimits_{\underline{m}+\underline{m}'=\underline{n}}^{\leq 1}M(\underline{m})\hat{\otimes} M'(\underline{m}')\right)_{\underline{r}^{\underline{n}}}.$$
\end{defn}

\begin{prop}
\label{Dagger1Grading}
The category $\text{Gr}_{\mathbb{N}^{N}}^{\dagger,1}\text{IndBan}_{k}$ is equivalent to the monoidal category of $k\{\underline{t}\}^{\dagger}:=\text{"colim"}_{\underline{r} > 1} \coprod\nolimits_{\underline{n} \in \mathbb{N}^{n}}^{\leq 1}k_{\underline{r}^{\underline{n}}}$ comodules. The algebra structure comes from that of each $k\{\frac{\underline{t}}{\underline{r}}\}=\coprod\nolimits_{\underline{n} \in \mathbb{N}^{n}}^{\leq 1}k_{\underline{r}^{\underline{n}}}$, whilst the counit and comultiplication are induced by the maps
$$
\begin{array}{cccc}
k\{\frac{\underline{t}}{\underline{r}}\} \rightarrow k, & \underline{t}^{\underline{n}} \mapsto 1, & k\{\frac{\underline{t}}{\underline{r}^{2}}\} \rightarrow k\{\frac{\underline{t}}{\underline{r}}\} \hat{\otimes} k\{\frac{\underline{t}}{\underline{r}}\}, & \underline{t}^{\underline{n}} \mapsto \underline{t}^{\underline{n}} \otimes \underline{t}^{\underline{n}}.
\end{array}
$$
\end{prop}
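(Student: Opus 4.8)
The plan is to follow the template of the proof of Proposition \ref{ZGradedIndBanachSpaces}, exhibiting the forgetful functor $F:\text{Gr}_{\mathbb{N}^{N}}^{\dagger,1}\text{IndBan}_{k}\rightarrow\text{IndBan}_{k}$ as a co-fibre functor whose associated comonad $U=FG$ is isomorphic to $\mathscr{B}\hat{\otimes}-$ for $\mathscr{B}=k\{\underline{t}\}^{\dagger}$, and then invoking Theorem \ref{CoIndBanachTannaka}. First I would check that $F$ is a co-fibre functor, i.e.\ cocontinuous, strongly exact, faithful, and reflecting strict morphisms. This runs exactly as in the strictly convergent case, the only new ingredient being that the colimit over $\underline{r}>1$ defining each object is filtered and so interacts well with the exact structure.

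The heart of the argument is the identification of the right adjoint $G$ to $F$. For $M=\text{"colim"}_{\underline{r}>1}\coprod_{\underline{n}}^{\leq 1}M(\underline{n})_{\underline{r}^{\underline{n}}}$ and $X\in\text{IndBan}_{k}$, I would compute
$$\underline{\text{Hom}}(F(M),X)\cong\text{lim}_{\underline{r}>1}\prod\nolimits_{\underline{n}}^{\leq 1}\underline{\text{Hom}}(M(\underline{n}),X)_{\underline{r}^{-\underline{n}}},$$
using that $\underline{\text{Hom}}$ sends the filtered colimit in the first variable to a limit, that it distributes over contracting coproducts (Lemma \ref{ContractingUniversalProperty}), and that rescaling the source by $\underline{r}^{\underline{n}}$ rescales the internal Hom by $\underline{r}^{-\underline{n}}$. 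Substituting $\underline{r}\mapsto\underline{r}^{-1}$ converts this into the limit over $\underline{r}<1$ appearing in the definition of morphisms of $\text{Gr}_{\mathbb{N}^{N}}^{\dagger,1}\text{IndBan}_{k}$, and one reads off $\underline{\text{Hom}}(F(M),X)\cong\underline{\text{Hom}}_{\text{Gr}^{\dagger,1}}(M,G(X))$ for $G(X)=\text{"colim"}_{\underline{r}>1}\coprod_{\underline{n}}^{\leq 1}X_{\underline{r}^{\underline{n}}}$, the overconvergent graded object that is constant in the grading. Hence $F$ is left adjoint to $G$.

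Next I would identify $U=FG$. Since $k_{\underline{r}^{\underline{n}}}\hat{\otimes}X\cong X_{\underline{r}^{\underline{n}}}$ and $\hat{\otimes}$ commutes with both contracting coproducts and filtered colimits,
$$U(X)=\text{"colim"}_{\underline{r}>1}\coprod\nolimits_{\underline{n}}^{\leq 1}X_{\underline{r}^{\underline{n}}}\cong\left(\text{"colim"}_{\underline{r}>1}\coprod\nolimits_{\underline{n}}^{\leq 1}k_{\underline{r}^{\underline{n}}}\right)\hat{\otimes}X=\mathscr{B}\hat{\otimes}X,$$
so $U\cong\mathscr{B}\hat{\otimes}-$ is automatically cocontinuous and commutes with $l^{1}$ by Lemma \ref{TensorFunctorClassification}, with $\mathscr{B}=U(k)=k\{\underline{t}\}^{\dagger}$. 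Theorem \ref{CoIndBanachTannaka} then gives the equivalence with $\mathscr{B}$-comodules, the comonad structure on $U$ equipping $\mathscr{B}$ with the stated counit $\underline{t}^{\underline{n}}\mapsto1$ and comultiplication $\underline{t}^{\underline{n}}\mapsto\underline{t}^{\underline{n}}\otimes\underline{t}^{\underline{n}}$. To upgrade this to a monoidal equivalence I would invoke Moerdijk's correspondence \cite{MoTC}, as in the Remark following Theorem \ref{CoIndBanachTannaka}: the monoidal structure on $\text{Gr}_{\mathbb{N}^{N}}^{\dagger,1}\text{IndBan}_{k}$ with $F$ strong monoidal corresponds to the algebra structure $\underline{t}^{\underline{m}}\otimes\underline{t}^{\underline{n}}\mapsto\underline{t}^{\underline{m}+\underline{n}}$ on $\mathscr{B}$ assembled from the multiplications on each $k\{\frac{\underline{t}}{\underline{r}}\}$.

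The main obstacle I expect is purely the radius bookkeeping rather than anything conceptual. Two points need care: the reindexing $\underline{r}\leftrightarrow\underline{r}^{-1}$ matching the colimit over $\underline{r}>1$ that defines objects against the limit over $\underline{r}<1$ that defines morphisms, and the appearance of the squared polyradius in the comultiplication. The latter is forced because $\|\underline{t}^{\underline{n}}\|$ in $k\{\frac{\underline{t}}{\underline{r}}\}$ equals $\underline{r}^{\underline{n}}$, so $\underline{t}^{\underline{n}}\otimes\underline{t}^{\underline{n}}$ has norm $\underline{r}^{2\underline{n}}$ in $k\{\frac{\underline{t}}{\underline{r}}\}\hat{\otimes}k\{\frac{\underline{t}}{\underline{r}}\}$, so the diagonal comultiplication is a contraction only with source $k\{\frac{\underline{t}}{\underline{r}^{2}}\}$, where $\underline{t}^{\underline{n}}$ also has norm $\underline{r}^{2\underline{n}}$. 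Verifying that these maps are compatible as $\underline{r}$ ranges over $\underline{r}>1$, so that they assemble into a genuine comultiplication on $\mathscr{B}$, is exactly where the overconvergent $\dagger$-structure is essential, and is the step I would treat most carefully.
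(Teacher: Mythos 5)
Your proposal is correct and follows essentially the same route as the paper: the paper's proof consists of exactly your adjunction computation (with the same $\underline{r}\leftrightarrow 1/\underline{r}$ reindexing between the limit over $\underline{r}<1$ in the morphism spaces and the colimit over $\underline{r}>1$ defining objects), identifying $G(X)=\text{"colim"}_{\underline{r}>1}\coprod_{\underline{n}}^{\leq 1}X_{\underline{r}^{\underline{n}}}$ as right adjoint to $F$, recognising $FG\cong k\{\underline{t}\}^{\dagger}\hat{\otimes}-$, and letting the monoidal structure induce the bialgebra structure. Your additional remarks on the co-fibre verification and the squared-polyradius bookkeeping for the comultiplication are details the paper leaves implicit, and they are handled correctly.
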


\begin{proof}
For each IndBanach space $X$
\begin{align*}
&\text{Hom}_{\text{Gr}_{\mathbb{N}^{N}}^{\dagger}}(\text{"colim"}_{\underline{r} > 1}\coprod\nolimits_{\underline{n} \in \mathbb{N}^{N}}^{\leq 1}M(\underline{n})_{\underline{r}^{\underline{n}}},\text{"colim"}_{\underline{r}' > \underline{\rho}} \coprod\nolimits_{\underline{n} \in \mathbb{N}^{n}}^{\leq 1}X_{\underline{r}'^{\underline{n}}})\\
& ~ =\text{lim}_{\underline{r}<1}\prod\nolimits_{\underline{n} \in \mathbb{N}^{n}}^{\leq 1} \text{Hom}(M(\underline{n}),X)_{\underline{r}^{\underline{n}}}\\
& ~ =\text{lim}_{\underline{r}>1}\prod\nolimits_{\underline{n} \in \mathbb{N}^{n}}^{\leq 1} \text{Hom}(M(\underline{n}),X)_{(1/\underline{r})^{\underline{n}}}\\
& ~ =\text{lim}_{\underline{r}>1}\prod\nolimits_{\underline{n} \in \mathbb{N}^{n}}^{\leq 1} \text{Hom}(M(\underline{n})_{\underline{r}^{\underline{n}}},X)\\
& ~ =\text{Hom}(\text{"colim"}_{\underline{r}>1}\coprod\nolimits_{\underline{n} \in \mathbb{N}^{n}}^{\leq 1} M(\underline{n})_{\underline{r}^{\underline{n}}},X)
\end{align*}
and so $X \mapsto \text{"colim"}_{\underline{r} > 1} \coprod\nolimits_{\underline{n} \in \mathbb{N}^{n}}^{\leq 1}X_{\underline{r}^{\underline{n}}}$ is right adjoint to the forgetful functor. The associated comonad is the isomorphic to $k\{\underline{t}\}^{\dagger} \hat{\otimes} -$. The monoidal structure on $\text{Gr}_{\mathbb{N}^{N}}^{\dagger}\text{IndBan}_{k}$ gives $k\{\underline{t}\}^{\dagger}$ the described bialgebra structure.
\end{proof}

\begin{remark}
$k\{\underline{t}\}^{\dagger}$ is referred to as the bialgebra of \emph{overconvergent powerseries} on on the polydisk of radius $1$. For similar reasons to the case of strictly convergent powerseries, we are restricted on our choice of polyradius. Alongside the previous example of radius $1$, we also have the following at radius $0$, where we consider germs of analytic functions at $0$.
\end{remark}

\begin{defn}
Let $\text{Gr}_{\mathbb{N}^{N}}^{\dagger,0}\text{IndBan}_{k}$ be the category whose objects are IndBanach spaces of the form $M=\text{"colim"}_{\underline{r} > 0}\coprod_{\underline{n} \in \mathbb{N}^{N}}^{\leq 1} M(\underline{n})_{\underline{r}^{n}}$, with morphisms
$$\text{Hom}_{\text{Gr}_{\mathbb{N}^{N}}^{\dagger,0}}(M,M')=\text{lim}_{\underline{r}>0}\prod\nolimits_{\underline{n} \in \mathbb{N}^{n}}^{\leq 1} \text{Hom}(M(\underline{n}),M'(\underline{n}))_{\underline{r}^{\underline{n}}},$$
for $M=\text{"colim"}_{\underline{r} > 0}\coprod\nolimits_{\underline{n} \in \mathbb{N}^{N}}^{\leq 1} M(\underline{n})_{\underline{r}^{n}}$ and $M'=\text{"colim"}_{\underline{r}' > 0}\coprod\nolimits_{\underline{n'} \in \mathbb{N}^{N}}^{\leq 1} M'(\underline{n'})_{\underline{r'}^{n'}}$.  As before, the category $\text{Gr}_{\mathbb{N}^{N}}^{\dagger,0}\text{IndBan}_{k}$ is monoidal.
\end{defn}

\begin{prop}
The category $\text{Gr}_{\mathbb{N}^{N}}^{\dagger,0}\text{IndBan}_{k}$ is equivalent to the monoidal category of $k\{\frac{\underline{t}}{0}\}^{\dagger}:=\text{"colim"}_{\underline{r} > 0} \coprod\nolimits_{\underline{n} \in \mathbb{N}^{n}}^{\leq 1}k_{\underline{r}^{\underline{n}}}$ comodules.
\end{prop}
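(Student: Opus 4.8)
The plan is to follow the proof of Proposition \ref{Dagger1Grading} almost line for line, the only structural change being that the rescaling substitution $\underline{r}\mapsto\underline{r}^{-1}$ — which in the radius-$1$ case interchanges the two disjoint ranges $\underline{r}>1$ and $\underline{r}<1$ — is here a self-map of the single germ index $\underline{r}>0$. First I would take for the right adjoint to the forgetful functor $F$ the functor $G$ sending $X$ to $\text{"colim"}_{\underline{r}>0}\coprod\nolimits^{\leq1}_{\underline{n}\in\mathbb{N}^{N}}X_{\underline{r}^{\underline{n}}}$, with $X$ placed in every multidegree, and then establish $F\dashv G$ by the same chain of natural isomorphisms as before: beginning from the definition $\text{Hom}_{\text{Gr}^{\dagger,0}}(M,G(X))=\text{lim}_{\underline{r}>0}\prod\nolimits^{\leq1}_{\underline{n}}\text{Hom}(M(\underline{n}),X)_{\underline{r}^{\underline{n}}}$, substituting $\underline{r}\mapsto\underline{r}^{-1}$, applying the rescaling identity $\text{Hom}(V,X)_{\underline{r}^{-\underline{n}}}\cong\text{Hom}(V_{\underline{r}^{\underline{n}}},X)$ termwise, and finally recognising the result as $\text{Hom}(F(M),X)$ via the universal properties of the contracting coproduct and the filtered colimit.

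With the adjunction in hand the remainder is formal and matches Proposition \ref{Dagger1Grading}. The functor $F$ is cocontinuous, strongly exact, faithful and reflects strict morphisms, hence a co-fibre functor, so Lemma \ref{CoFibreFunctorsComonadic} makes the category comonadic. I would then observe that $U=FG$ is isomorphic to $\mathscr{B}\hat{\otimes}-$ for $\mathscr{B}=U(k)=k\{\frac{\underline{t}}{0}\}^{\dagger}$: since $G$ is assembled from contracting coproducts and a filtered colimit, and $\hat{\otimes}$ commutes with both, $U$ is cocontinuous and commutes with $l^{1}$ in the sense of Definition \ref{CommutingWithl1}. Theorem \ref{CoIndBanachTannaka} then yields the equivalence with $k\{\frac{\underline{t}}{0}\}^{\dagger}$-comodules, and transporting the convolution monoidal structure on $\text{Gr}_{\mathbb{N}^{N}}^{\dagger,0}\text{IndBan}_{k}$ across the equivalence equips $\mathscr{B}$ with the multiplication $\underline{t}^{\underline{m}}\cdot\underline{t}^{\underline{n}}=\underline{t}^{\underline{m}+\underline{n}}$, giving the asserted bialgebra.

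The step I expect to cause trouble is keeping track of the direction of the limits and colimits through the substitution $\underline{r}\mapsto\underline{r}^{-1}$. In the radius-$1$ setting this substitution sends $\underline{r}<1$ to $\underline{r}>1$, so the limit presenting morphisms and the colimit presenting $F(M)$ visibly sit over different index sets and the identification is unambiguous; here both sit over $\underline{r}>0$, and the substitution reverses the order of $(0,\infty)$, interchanging its two ends $0$ and $\infty$. I would therefore have to verify, by following the transition maps of the underlying directed systems, that the limit computing morphisms is genuinely cofinal with the colimit computing $F(M)$, so that the chain of isomorphisms above is an honest equivalence of diagrams rather than a purely symbolic manipulation. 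A second point meriting explicit treatment is the coalgebra structure itself: the augmentation $\underline{t}^{\underline{n}}\mapsto1$ that served as counit at radius $1$ is unbounded on germs at the origin, so the counit produced by the comonad is of a different nature — morally evaluation at $0$ — and pinning down the induced counit and comultiplication on $k\{\frac{\underline{t}}{0}\}^{\dagger}$ is where the radius-$0$ case genuinely departs from its radius-$1$ analogue.
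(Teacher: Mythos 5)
Your route is, in outline, exactly the paper's: the paper's entire proof of this proposition is the sentence ``this follows as in the proof of Proposition \ref{Dagger1Grading}'', and you reproduce that argument, with the candidate right adjoint $G(X)=\text{``colim''}_{\underline{r}>0}\coprod\nolimits^{\leq 1}_{\underline{n}}X_{\underline{r}^{\underline{n}}}$ and the same rescaling chain. But the second caveat you flag at the end is not a verification to be deferred --- it is a genuine gap, and it is precisely where the transplanted proof breaks. In the radius-$1$ case the counit of $F\dashv G$ is the sum-over-degrees map $FG(X)\rightarrow X$, given at level $\underline{r}$ by the components $\text{id}:X_{\underline{r}^{\underline{n}}}\rightarrow X$, which have norms $\underline{r}^{-\underline{n}}$ and are uniformly bounded precisely because $r_{i}>1$. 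At radius $0$ the colimit presenting $G(X)$ runs over levels with $r_{i}<1$, where $\sup_{\underline{n}}\underline{r}^{-\underline{n}}=\infty$, and a morphism out of a formal colimit must be bounded at every level; so this map simply does not exist. Your proposed repair --- taking the counit to be evaluation at $0$, with components $\delta_{\underline{n},\underline{0}}$ --- is bounded but fails the triangle identity $\varepsilon_{F(M)}\circ F(\eta_{M})=\text{id}_{F(M)}$: the unit places $M(\underline{n})$ in degree $\underline{n}$, and evaluation at $0$ then annihilates every piece of positive degree. One can check more generally that naturality forces any candidate counit to have scalar components $c_{\underline{n}}$ decaying superexponentially, while the triangle identity forces $c_{\underline{n}}=1$; hence no adjunction $F\dashv G$ exists for this $G$, and the chain of isomorphisms from Proposition \ref{Dagger1Grading} cannot be made honest. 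Relatedly, your first caveat is not benign either: the only reading of $\text{lim}_{\underline{r}>0}$ under which the symbolic chain is exact is the intersection over all $\underline{r}\in(0,\infty)^{N}$, which yields superexponentially decaying families and so excludes identity morphisms from $\text{Hom}_{\text{Gr}^{\dagger,0}}(M,M)$; any reading restoring identities destroys the adjunction. The radius-$1$ case escapes this because both the Hom-side limit ($\underline{r}\rightarrow 1^{-}$) and the object-side limit ($\underline{r}\rightarrow 1^{+}$) bind at $1$ from opposite sides, which is exactly what the inversion $\underline{r}\mapsto\underline{r}^{-1}$ exchanges.

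The same obstruction appears on the coalgebra side, independently of the adjunction: the group-like comultiplication $\underline{t}^{\underline{n}}\mapsto\underline{t}^{\underline{n}}\otimes\underline{t}^{\underline{n}}$ forces any counit to satisfy $\varepsilon(\underline{t}^{\underline{n}})=1$ for all $\underline{n}$, and this functional is unbounded on $k\{\frac{\underline{t}}{\underline{r}}\}$ whenever some $r_{i}<1$ --- indeed the paper's own remark after the strictly convergent case states that the counit is well defined only when all $r_{i}\geq 1$. So $k\{\frac{\underline{t}}{0}\}^{\dagger}$ carries multiplication and comultiplication but no bounded counit, $U=FG$ cannot be a comonad of the form $\mathscr{B}\hat{\otimes}-$ for a counital coalgebra $\mathscr{B}$, and Lemma \ref{CoFibreFunctorsComonadic} and Theorem \ref{CoIndBanachTannaka} do not apply as stated (note that dropping counitality also breaks the comparison with $\text{Gr}_{\mathbb{N}^{N}}^{\dagger,0}\text{IndBan}_{k}$, since for instance the zero coaction is then coassociative). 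In short, the ``formal remainder'' of your argument is not formal: your own closing sentence correctly locates where the radius-$0$ case departs from its radius-$1$ analogue, but neither your proposal nor the paper's one-line proof resolves it, and as far as I can see it cannot be resolved within the stated counital framework --- the statement requires either a non-counital reformulation or a genuinely different argument.
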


\begin{proof}
This follows as in the proof of Proposition \ref{Dagger1Grading}.
\end{proof}

\subsection{Non-example: Contracting products}
\

Let $\mathcal{C}$ be the category of IndBanach spaces of the form $\prod_{n \in \mathbb{Z}}^{\leq 1}M(n)$ with morphisms similar to $\text{Gr}_{\mathbb{Z}}\text{IndBan}_{K}$,
$$\text{Hom}_{\mathcal{C}}(\prod\nolimits_{n \in \mathbb{Z}}^{\leq 1} M(n),\prod\nolimits_{n' \in \mathbb{Z}}^{\leq 1} M'(n'))=\prod\nolimits_{n \in \mathbb{Z}}^{\leq 1} \text{Hom}(M(n),M'(n)),$$
and again let $F$ be the forgetful functor to $\text{IndBan}_{k}$. Then as
$$\text{Hom}(X,F(\prod\nolimits_{n \in \mathbb{Z}}^{\leq 1}M(n))) \cong \prod\nolimits_{n \in \mathbb{Z}}^{\leq 1} \text{Hom}(X,M(n))$$
we see that $F$ has as left adjoint the functor $G':X \mapsto \prod_{n \in \mathbb{Z}}^{\leq 1}X$. However $T=FG'$ does not commute with contracting coproducts, and so is not isomorphic to taking the tensor product with an IndBanach algebra.

\subsection{Representations of discrete groups}
\begin{defn}
Consider a discrete group $\Gamma$, and let $\Gamma\text{-}\text{IndBan}_{k}$ be the category of representations of $\Gamma$ on IndBanach spaces. This has the obvious forgetful functor $F$ to $\text{IndBan}_{k}$ forgetting the action of $\Gamma$. With the diagonal action of $\Gamma$, $\mathcal{C}$ is monoidal and $F$ is strong monoidal.
\end{defn}

\begin{lem}
\label{DiscreteGroupsAdjunction}
$F$ has a left adjoint $G:X \mapsto \coprod_{g \in \Gamma} X$ where $h \in \Gamma$ acts on $GX$ by mapping the copy of $X$ indexed by $g$ isomorphically to the copy indexed by $hg$.
\end{lem}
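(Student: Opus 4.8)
The plan is to verify the claimed adjunction $G \dashv F$ directly, by computing the equivariant morphisms out of $GX$, after first checking that $GX := \coprod_{g \in \Gamma} X$ genuinely carries a $\Gamma$-representation. To make the action precise, for each $h \in \Gamma$ left translation $g \mapsto hg$ is a bijection of the index set $\Gamma$, so by functoriality of the coproduct it induces an automorphism $\rho(h) : GX \to GX$ characterised by $\rho(h) \circ \iota_{g} = \iota_{hg}$, where $\iota_{g} : X \to GX$ is the structure map of the copy indexed by $g$. Since translation by $hh'$ is translation by $h$ after $h'$, and translation by $e$ is the identity, $h \mapsto \rho(h)$ is a homomorphism $\Gamma \to \text{Aut}_{\text{IndBan}_{k}}(GX)$, so $GX$ lies in $\Gamma\text{-}\text{IndBan}_{k}$. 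Functoriality of $G$ is then immediate: a map $f : X \to Y$ induces $\coprod_{g} f$, which commutes with the permutation actions and is therefore equivariant.

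Next I would establish the natural bijection $\text{Hom}_{\Gamma\text{-}\text{IndBan}_{k}}(GX, M) \cong \text{Hom}_{\text{IndBan}_{k}}(X, FM)$. By the universal property of the coproduct, an \emph{a priori} non-equivariant morphism $\phi : GX \to M$ is the same datum as the family $(\phi_{g})_{g \in \Gamma}$ with $\phi_{g} := \phi \circ \iota_{g}$. Writing $\rho_{M}$ for the action on $M$, the relation $\rho(h) \circ \iota_{g} = \iota_{hg}$ shows that $\phi$ is equivariant if and only if $\phi_{hg} = \rho_{M}(h) \circ \phi_{g}$ for all $g,h$; taking $g=e$ this reads $\phi_{h} = \rho_{M}(h) \circ \phi_{e}$, so the family is determined by $\psi := \phi_{e}$, and conversely any $\psi : X \to FM$ yields a consistent equivariant family via $\phi_{g} := \rho_{M}(g) \circ \psi$ (consistency following from $\rho_{M}(hg) = \rho_{M}(h)\rho_{M}(g)$). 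The assignments $\phi \mapsto \phi_{e}$ and $\psi \mapsto (\rho_{M}(g)\circ\psi)_{g}$ are mutually inverse and manifestly natural in $X$ and $M$, giving the adjunction.

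The point that genuinely requires care — and the reason the coproduct here must be the categorical coproduct in $\text{IndBan}_{k}$ rather than the contracting coproduct $\coprod^{\leq 1}$ of the previous sections — is that the reconstructed family $(\rho_{M}(g)\circ\psi)_{g \in \Gamma}$ need not be uniformly bounded: a representation of $\Gamma$ on an IndBanach space is merely a homomorphism into its automorphisms, so $\sup_{g}\|\rho_{M}(g)\|$ may be infinite. The contracting coproduct only corepresents \emph{uniformly bounded} families by Lemma \ref{ContractingUniversalProperty}, and would therefore fail to produce $\phi$ from $\psi$ for such $M$; the categorical coproduct, which exists because $\text{IndBan}_{k}$ is cocomplete and whose mapping-out property imposes no boundedness constraint, is exactly what is needed. (The existence of \emph{some} left adjoint is in any case guaranteed abstractly by the Adjoint Functor Theorem \ref{AdjointFunctorTheorem}, since $F$ creates colimits and is hence cocontinuous; the content of the lemma is the explicit identification above.) I expect the main obstacle to be precisely this distinction between the two coproducts, with the routine-but-technical verification that permutation of copies is an isomorphism of the underlying formal filtered colimits being the only other place where the Ind-structure must be handled with attention.
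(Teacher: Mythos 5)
Your proof is correct and follows essentially the same route as the paper's: the paper likewise defines the bijection $\text{Hom}(X,FY)\cong\text{Hom}_{\Gamma}(\coprod_{g\in\Gamma}X,Y)$ by sending $f$ to the map acting as $\rho_{Y}(g)\circ f$ on the copy indexed by $g$, with inverse given by restriction to the copy indexed by the identity, and your observation that the categorical (not contracting) coproduct is forced by unbounded families $(\rho_{M}(g)\circ\psi)_{g}$ is exactly the right reading of why this lemma differs from the later $\Gamma\text{-}\text{IndBan}_{k}^{\leq 1}$ case. One small caveat: your parenthetical appeal to Theorem \ref{AdjointFunctorTheorem} invokes the wrong half of it --- cocontinuity of $F$ would produce a \emph{right} adjoint, while a left adjoint requires $F$ to preserve limits --- but since your explicit construction carries the whole argument, this aside does not affect the proof.
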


\begin{proof}
The isomorphism $\text{Hom}(X, FY) \cong \text{Hom}_{\Gamma}(\coprod_{g \in \Gamma}X,Y)$, for $X$ in $\text{IndBan}_{k}$ and $Y$ in $\Gamma\text{-}\text{IndBan}_{k}$, that gives this adjunction takes $f:X \rightarrow Y$ to the morphism defined on the copy of $X$ indexed by $g$ as $X \overset{f}{\rightarrow} Y \overset{g \cdot}{\rightarrow} Y$. The inverse to this map just restricts a morphism $\coprod_{g \in \Gamma} X \rightarrow Y$ to the copy of $X$ indexed by the identity $1 \in G$.
\end{proof}

\begin{prop}
\label{RecoveringBornologicalGroupAlgebra}
$\Gamma\text{-}\text{IndBan}_{k}$ is equivalent to the monoidal category of $\mathscr{A} = \coprod_{g \in \Gamma}k$ modules in $\text{IndBan}_{k}$. Here, the multiplication on $\mathscr{A}$ is determined by mapping isomorphically the tensor product $k \hat{\otimes} k$ of the copies of $k$ indexed by $g$ and $g'$ to the $gg'$ copy of $k$ in $\mathscr{A}$, with the unit being the map from $k$ to the copy of $k$ indexed by $1$. The comultiplication on $\mathscr{A}$ maps the copy of $k$ indexed by $g$ isomorphically to the tensor product $k \hat{\otimes} k$ of the copies of $k$ indexed by $g$ in $\mathscr{A} \hat{\otimes} \mathscr{A}$.
\end{prop}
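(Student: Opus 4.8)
The plan is to obtain this as a direct application of Theorem \ref{IndBanachTannaka}, using the adjunction already established in Lemma \ref{DiscreteGroupsAdjunction}, and then to read off the stated algebra and coalgebra structures by tracing the (co)monad data explicitly.

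First I would verify that $\Gamma\text{-}\text{IndBan}_{k}$ together with $F$ satisfies the hypotheses of Theorem \ref{IndBanachTannaka}. Since a representation of the discrete group $\Gamma$ is simply an object of the functor category $[\Gamma,\text{IndBan}_{k}]$, all limits, colimits, kernels and cokernels are computed on underlying IndBanach spaces; hence $\Gamma\text{-}\text{IndBan}_{k}$ is again locally presentable, quasi-abelian and enriched over $\text{IndBan}_{k}$, and the forgetful functor $F$ is faithful, bicontinuous, strongly exact and reflects strict morphisms, i.e. $F$ is a fibre functor. Its left adjoint $G\colon X \mapsto \coprod_{g\in\Gamma}X$ is provided by Lemma \ref{DiscreteGroupsAdjunction}, so the induced monad is $T=FG$, whose underlying IndBanach-space-valued functor is $X \mapsto \coprod_{g\in\Gamma}X$.

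Next I would check that $T$ commutes with $l^{1}$. Because the contracting coproduct of a family is insensitive to the order of the index set, we have natural isomorphisms $T(l^{1}(S)) = \coprod_{g\in\Gamma}\coprod_{s\in S}k \cong \coprod_{s\in S}\coprod_{g\in\Gamma}k = \coprod_{s\in S}T(k)$, and one verifies that this agrees with the canonical comparison map of Definition \ref{CommutingWithl1}. Theorem \ref{IndBanachTannaka} then applies and yields an equivalence of $\Gamma\text{-}\text{IndBan}_{k}$ with the category of left $\mathscr{A}$-modules, where $\mathscr{A}=T(k)=\coprod_{g\in\Gamma}k$ as claimed.

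It remains to identify the algebra structure and, using that $F$ is strong monoidal, the coalgebra structure. The monad unit $\eta_{k}\colon k \to \coprod_{g}k$ is the inclusion of the summand indexed by $1\in\Gamma$, giving the stated unit, while the multiplication $\mu_{k} = F(\varepsilon_{Gk})$ is induced by the counit, which acts on the summand of $\coprod_{g}\coprod_{h} k$ indexed by $g$ through the action of $g$ on $Gk=\coprod_{h} k$; this sends the $(g,h)$ summand of $\mathscr{A}\hat{\otimes}\mathscr{A}$ to the $gh$ summand of $\mathscr{A}$, exactly the described multiplication. For the coalgebra structure, the diagonal $\Gamma$-action makes $F$ strong monoidal, so by Remark \ref{IndBanachMonoidalFunctors} the comonoidal structure on $T$ arising from $\eta_{V}\hat{\otimes}\eta_{W}$ equips $\mathscr{A}$ with a comultiplication; tracing the natural transformation $G(-\hat{\otimes}-)\Rightarrow G(-)\hat{\otimes}G(-)$ at $V=W=k$ shows the summand of $\mathscr{A}$ indexed by $g$ maps into the diagonal $(g,g)$ summand of $\mathscr{A}\hat{\otimes}\mathscr{A}$, i.e. $t^{g}\mapsto t^{g}\otimes t^{g}$, with counit $t^{g}\mapsto 1$. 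The main obstacle is this last bookkeeping step: one must carefully match the comonoidal structure coming from the diagonal action against the explicit comparison maps in order to pin down the comultiplication as the diagonal, rather than merely knowing that some compatible comultiplication exists.
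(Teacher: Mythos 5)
Your proposal is correct and takes essentially the same approach as the paper, whose entire proof is the one-line observation that the result follows from the Tannaka theorem (cited there as Theorem \ref{CoIndBanachTannaka}, evidently a slip for the module version, Theorem \ref{IndBanachTannaka}) together with $FG \cong \mathscr{A}\hat{\otimes}-$; you merely make explicit the fibre-functor hypotheses, the $l^{1}$-commutation via the interchange of the plain coproduct over $\Gamma$ with contracting coproducts, and the unit/multiplication/comultiplication bookkeeping through Remark \ref{IndBanachMonoidalFunctors}. Incidentally, your counit $t^{g}\mapsto 1$ is the one forced by the counit axiom given $\Delta(t^{g})=t^{g}\otimes t^{g}$, so the formula $t^{g}\mapsto\delta_{g,1}$ in the paper's remark following the proposition appears to be an error.
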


\begin{proof}
This follows from Theorem \ref{CoIndBanachTannaka}, noting that $FG \cong \mathscr{A} \hat{\otimes} -$.
\end{proof}

\begin{remark}
Since $\mathscr{A}$ is an essentially monomorphic object of $\text{IndBan}_{k}$, we may consider the underlying ring structure of $\mathscr{A}$. This is just $t^{g} \cdot t^{g'} = t^{gg'}$, for $t^{g}$ representing the unit in the copy of $k$ indexed by $g$. The comultiplication on $\mathscr{A}$  is  $t^{g} \mapsto t^{g} \hat{\otimes} t^{g}$, with counit $t^{g} \mapsto \delta_{g,1}$.
\end{remark}

\begin{defn}
Let $\Gamma\text{-}\text{IndBan}_{k}^{\leq 1}$ be the full subcategory of $\Gamma\text{-}\text{IndBan}_{k}$ consisting of IndBanach spaces with an isometric action of $\Gamma$. By this we mean that an object $V$ of $\Gamma\text{-}\text{IndBan}_{k}^{\leq 1}$ can be written as $V=\text{"colim"}_{i \in I}V_{i}$ where the action of $g \in \Gamma$ maps each $V_{i}$ isometrically into some other $V_{i'}$. We will continue to denote the restriction of $F$ to $\Gamma\text{-}\text{IndBan}_{k}^{\leq 1}$ as $F$. $\Gamma\text{-}\text{IndBan}_{k}^{\leq 1}$ is again monoidal, and $F$ is strong monoidal.
\end{defn}

\begin{lem}
With notation as above, asking for an action of $\Gamma$ on an IndBanach space $V$ to be isometric is equivalent to asking that the action of $\Gamma$ on $V$ be bounded. That is, $\{ \|g \cdot :V_{i} \rightarrow V_{i'} \| \mid g \in \Gamma \}$ is bounded.
\end{lem}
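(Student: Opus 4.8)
The plan is to prove the two implications separately; the substance lies entirely in the direction \emph{bounded $\Rightarrow$ isometric}, which I would obtain from the classical ``averaging'' renorming trick adapted to the Ind-category. Throughout, I record an action on a presentation $V = \text{"colim"}_{i \in I} V_{i}$ as a compatible family consisting, for each $g \in \Gamma$, of a choice of index $\sigma_{g}(i)$ and a bounded map $\rho_{g}^{i} : V_{i} \rightarrow V_{\sigma_{g}(i)}$ realising $g \cdot$. The easy direction is then immediate: if the $\rho_{g}^{i}$ are isometric embeddings then $\|\rho_{g}^{i}\| \leq 1$ for all $g,i$, so $\{\|g\cdot : V_{i} \rightarrow V_{i'}\| \mid g \in \Gamma\}$ is bounded by $1$, and an isometric action is automatically bounded.

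For the converse, suppose that for each $i$ we have $M_{i} := \sup_{g \in \Gamma} \|\rho_{g}^{i}\| < \infty$. On each $V_{i}$ I would introduce the new norm
$$\|v\|_{i}' := \sup_{g \in \Gamma} \|\rho_{g}^{i}(v)\|_{\sigma_{g}(i)}.$$
Boundedness gives $\|v\|_{i} \leq \|v\|_{i}' \leq M_{i}\|v\|_{i}$, the lower bound coming from the term $g = e$ (where $\rho_{e}^{i} = \text{id}$, $\sigma_{e}(i) = i$), so $\|\cdot\|_{i}'$ is equivalent to $\|\cdot\|_{i}$ and defines a Banach space $V_{i}'$ isomorphic to $V_{i}$ in $\text{Ban}_{k}$ through the identity on underlying spaces. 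The key computation is that the action becomes isometric: for the map induced by $h$,
$$\|\rho_{h}^{i}(v)\|_{\sigma_{h}(i)}' = \sup_{g \in \Gamma} \|\rho_{g}^{\sigma_{h}(i)}\rho_{h}^{i}(v)\| = \sup_{g \in \Gamma} \|\rho_{gh}^{i}(v)\| = \sup_{g' \in \Gamma} \|\rho_{g'}^{i}(v)\| = \|v\|_{i}',$$
where the middle equality is the group-action identity $\rho_{g}^{\sigma_{h}(i)}\rho_{h}^{i} = \rho_{gh}^{i}$ and the third uses that right multiplication by $h$ is a bijection of $\Gamma$.

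It then remains to reassemble the renormed spaces into a presentation of $V$ itself. Since each identity map $V_{i} \rightarrow V_{i}'$ is an isomorphism in $\text{Ban}_{k}$, every transition map of the original diagram stays bounded for the new norms, so $(V_{i}')$ is again a filtered diagram and the identities assemble to an isomorphism of diagrams $(V_{i}) \to (V_{i}')$. Hence $\text{"colim"}_{i} V_{i}' \cong V$ in $\text{IndBan}_{k}$, and this isomorphism is $\Gamma$-equivariant because it is the identity on underlying spaces and intertwines the very same family $\rho_{g}$. This exhibits a presentation of $V$ on which $\Gamma$ acts by isometric embeddings, giving the equivalence.

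The hard part will be the cocycle identity $\rho_{g}^{\sigma_{h}(i)}\rho_{h}^{i} = \rho_{gh}^{i}$ used in the displayed computation. In $\text{IndBan}_{k}$ a $\Gamma$-action is only a compatible family of morphisms of formal colimits, so this equality holds a priori only after composing with some transition map $V_{\sigma_{gh}(i)} \to V_{j}$, and such a discrepancy could corrupt the supremum defining $\|\cdot\|_{i}'$. The technical heart is therefore to first \emph{strictify} the presentation: using that $\Gamma$ is discrete, I would refine the diagram to an equivalent one indexed so that each $\sigma_{g}$ is a genuine self-map of the index set and the $\rho_{g}$ compose on the nose, realising the action by an honest family of Banach-space maps obeying the group law strictly. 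Once this strictification is in place the renorming argument above goes through verbatim.
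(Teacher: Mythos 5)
Your proposal is correct and is essentially the paper's own proof: the paper disposes of the lemma in a single line by replacing the norm on each $V_{i}$ with the equivalent norm $v \mapsto \sup_{g \in \Gamma} \|gv\|$, which is exactly your renorming, and your two-sided bound, isometry computation via the group law, and reassembly of the diagram are precisely the suppressed details. The strictification issue you flag at the end (the cocycle identity $\rho_{g}^{\sigma_{h}(i)}\rho_{h}^{i} = \rho_{gh}^{i}$ holding only up to transition maps of the formal colimit) is a genuine subtlety, but the paper passes over it silently by implicitly treating $\sup_{g}\|gv\|$ as well defined on each $V_{i}$, so your account is if anything more careful than the source.
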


\begin{proof}
We can replace the norms on each $V_{i}$ with the equivalent norm $v \mapsto \text{Sup}_{g \in \Gamma} \|gv\|$.
\end{proof}

The following have proofs analogous to those of \ref{DiscreteGroupsAdjunction} and Proposition \ref{RecoveringBornologicalGroupAlgebra}.

\begin{lem}
The forgetful functor $F$ again has a left adjoint, $G':X \mapsto \coprod_{g \in \Gamma}^{\leq 1} X$, where the action of $\Gamma$ on $G'X$ is defined analogously to that on $GX$ in Lemma 3.10.
\end{lem}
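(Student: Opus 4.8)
The plan is to mimic the proof of Lemma \ref{DiscreteGroupsAdjunction}, replacing the ordinary coproduct by the contracting coproduct $\coprod_{g \in \Gamma}^{\leq 1}$ and using the isometry hypothesis at precisely the point where uniform boundedness is needed. First I would check that $G'X = \coprod_{g \in \Gamma}^{\leq 1} X$, with $\Gamma$ permuting the summands, genuinely lies in $\Gamma\text{-}\text{IndBan}_{k}^{\leq 1}$: since $h \in \Gamma$ sends the copy of $X$ indexed by $g$ isometrically onto the copy indexed by $hg$, the resulting action on $G'X$ is isometric, so $G'$ does land in the correct category.

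The core of the argument is the natural bijection
$$\text{Hom}_{\Gamma}(G'X, Y) \cong \text{Hom}_{\text{IndBan}_{k}}(X, FY)$$
for $X$ in $\text{IndBan}_{k}$ and $Y$ in $\Gamma\text{-}\text{IndBan}_{k}^{\leq 1}$. By the universal property of the contracting coproduct (Lemma \ref{ContractingUniversalProperty}, extended to $\text{IndBan}_{k}$), a bounded map $\phi: \coprod_{g}^{\leq 1} X \to Y$ is the same datum as a \emph{uniformly bounded} family $(\phi_{g})_{g \in \Gamma}$ of maps $\phi_{g} := \phi \circ \iota_{g}: X \to Y$, where $\iota_{g}$ denotes the inclusion of the summand indexed by $g$. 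Unwinding the equivariance condition $\phi(h \cdot v) = h \cdot \phi(v)$ on the summand indexed by $g$ shows it is equivalent to $\phi_{hg} = (h \cdot) \circ \phi_{g}$ for all $g,h \in \Gamma$; taking $g = 1$ this reads $\phi_{h} = (h \cdot) \circ \phi_{1}$, and conversely that relation forces equivariance. Thus an equivariant $\phi$ is determined by $\phi_{1} = \phi \circ \iota_{1} \in \text{Hom}(X, FY)$, giving the assignment $\phi \mapsto \phi_{1}$ with candidate inverse $f \mapsto \bigl((g \cdot) \circ f\bigr)_{g \in \Gamma}$.

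The one step requiring the new hypothesis, and the one I expect to be the main obstacle, is verifying that this candidate inverse actually produces a morphism out of the contracting coproduct, i.e.\ that the family $\bigl((g \cdot) \circ f\bigr)_{g \in \Gamma}$ is uniformly bounded. This is exactly where the isometric (equivalently bounded) action enters: since each $g \cdot$ has operator norm at most $1$, we get $\|(g \cdot) \circ f\| \leq \|f\|$ for every $g$, so Lemma \ref{ContractingUniversalProperty} supplies a unique map $\coprod_{g}^{\leq 1} X \to Y$ of norm at most $\|f\|$. By contrast, for a merely bounded non-isometric action no such uniform bound is available, which is precisely why Lemma \ref{DiscreteGroupsAdjunction} had to use the ordinary coproduct. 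It then remains to check that the two assignments are mutually inverse and natural in $X$ and $Y$, which is routine: starting from $f$ and restricting to the summand indexed by $1$ returns $(1 \cdot) \circ f = f$, while starting from an equivariant $\phi$ the relation $\phi_{g} = (g \cdot) \circ \phi_{1}$ recovers $\phi$ on every summand. This establishes the adjunction $G' \dashv F$.
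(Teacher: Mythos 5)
Your proposal is correct and matches the paper's intent: the paper gives no separate argument, stating only that the proof is analogous to Lemma \ref{DiscreteGroupsAdjunction}, and you carry out exactly that analogy via restriction to the summand indexed by the identity. You also correctly pinpoint the one genuinely new ingredient --- that the isometric (equivalently, uniformly bounded) action makes the family $\bigl((g\cdot)\circ f\bigr)_{g\in\Gamma}$ uniformly bounded, so the universal property of $\coprod^{\leq 1}_{\Gamma}$ applies --- which is precisely why the contracting coproduct works here but not in the unbounded setting.
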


\begin{prop}
\label{BanachGroupAlgebra}
$\Gamma\text{-}\text{IndBan}_{k}^{\leq 1}$ is equivalent to $\mathscr{A}'$ modules for the Banach bialgebra $\mathscr{A}' = \coprod_{g \in \Gamma}^{\leq 1}k$, with bialgebra structure defined similarly to $\mathscr{A}$.
\end{prop}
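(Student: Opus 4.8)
The plan is to follow precisely the template established by Proposition \ref{RecoveringBornologicalGroupAlgebra}, since the statement here is the isometric (contracting) analogue of that unbounded result, and the excerpt explicitly signals that the proofs should be analogous. The key structural ingredients are already in place: the preceding Lemma identifies the left adjoint $G':X \mapsto \coprod_{g \in \Gamma}^{\leq 1}X$ to the forgetful functor $F$, and the Lemma before it shows that the isometric condition on objects is equivalent to a boundedness condition on the action. So the main work is to verify the hypotheses of Theorem \ref{CoIndBanachTannaka}, or rather its module-side counterpart Theorem \ref{IndBanachTannaka}, and then read off the bialgebra structure.

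First I would observe that $\Gamma\text{-}\text{IndBan}_{k}^{\leq 1}$ is a locally presentable, quasi-abelian category enriched over $\text{IndBan}_{k}$, and that $F$ restricted to it is still a fibre functor: it is faithful, strongly exact, reflects strict morphisms, and is bicontinuous, exactly as in the unbounded case. Next I would compute the monad $T=FG'$. Using the adjunction of the preceding Lemma together with Definition \ref{CommutingWithl1}, the composite sends $X$ to $\coprod_{g \in \Gamma}^{\leq 1} X$, and the crucial point is that contracting coproducts commute with $l^{1}$ in $\text{IndBan}_{k}$ (this is where the $\leq 1$ decoration matters, unlike the honest coproduct $\coprod_{g \in \Gamma}$ of Proposition \ref{RecoveringBornologicalGroupAlgebra}). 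Hence $T$ is cocontinuous and commutes with $l^{1}$, so by Lemma \ref{TensorFunctorClassification} we have $T \cong \mathscr{A}' \hat{\otimes} -$ with $\mathscr{A}' = T(k) = \coprod_{g \in \Gamma}^{\leq 1}k$. Applying Theorem \ref{IndBanachTannaka} then yields the equivalence of $\Gamma\text{-}\text{IndBan}_{k}^{\leq 1}$ with the category of left $\mathscr{A}'$ modules in $\text{IndBan}_{k}$.

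Finally I would pin down the bialgebra structure. The algebra (multiplication and unit) structure is forced by the monad structure on $T$, and by the explicit description of the $\Gamma$-action on $G'X$ it is exactly the convolution product sending the tensor of the copies of $k$ indexed by $g$ and $g'$ to the copy indexed by $gg'$, with unit the copy indexed by the identity. The comultiplication arises, as in Remark \ref{IndBanachMonoidalFunctors}, from the fact that $F$ is strong monoidal for the diagonal action: this comonoidal structure on $T$ dualises to the group-like comultiplication $t^{g} \mapsto t^{g} \hat{\otimes} t^{g}$ with counit $t^{g} \mapsto \delta_{g,1}$. Thus $\mathscr{A}'$ is a Banach bialgebra with structure formally identical to that of $\mathscr{A}$.

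The step I expect to be the genuine content, rather than a formal transcription, is verifying that $T=FG'$ commutes with $l^{1}$ and is cocontinuous on $\Gamma\text{-}\text{IndBan}_{k}^{\leq 1}$; the whole point of passing to isometric actions and contracting coproducts is that this now holds, whereas in the non-example of contracting products the analogous functor failed to commute with contracting coproducts. Concretely, one must check that the natural map $\coprod_{S}^{\leq 1} T(k) \to T(l^{1}(S))$ is an isomorphism, which reduces to the associativity-type identity $\coprod_{S}^{\leq 1}\coprod_{g \in \Gamma}^{\leq 1} k \cong \coprod_{g \in \Gamma}^{\leq 1}\coprod_{S}^{\leq 1}k$ for contracting coproducts in $\text{IndBan}_{k}$. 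Everything else is a direct application of the already-established machinery.
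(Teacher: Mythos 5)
Your proposal is correct and takes essentially the same route as the paper, which simply declares the proof analogous to Lemma \ref{DiscreteGroupsAdjunction} and Proposition \ref{RecoveringBornologicalGroupAlgebra}: identify the left adjoint $G':X\mapsto\coprod_{g\in\Gamma}^{\leq 1}X$, note $T=FG'\cong\mathscr{A}'\hat{\otimes}-$ (your reduction of the $l^{1}$-commutation to the Fubini identity $\coprod_{S}^{\leq 1}\coprod_{\Gamma}^{\leq 1}k\cong\coprod_{\Gamma}^{\leq 1}\coprod_{S}^{\leq 1}k$ is exactly the point left implicit there), and apply the module-side Tannaka theorem, where you are also right that Theorem \ref{IndBanachTannaka} rather than Theorem \ref{CoIndBanachTannaka} is the one actually needed. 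One small caveat: the counit of the group-like comultiplication must be $t^{g}\mapsto 1$ (as in Propositions \ref{ZGradedIndBanachSpaces} and \ref{GammaGradedIndBanachSpaces}), not $t^{g}\mapsto\delta_{g,1}$, a slip you have inherited from the paper's remark following Proposition \ref{RecoveringBornologicalGroupAlgebra}.
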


\begin{remark}
$\mathscr{A}'$ is often referred to as the \emph{Banach group algebra}, denoted $l^{1}(\Gamma)$.
\end{remark}

\begin{remark}
Note that the forgetful functors from $\Gamma\text{-}\text{IndBan}_{k}$ and $\Gamma\text{-}\text{IndBan}_{k}^{\leq 1}$ also have right adjoints, $X \mapsto  \prod_{\Gamma}X$ and $X \mapsto \prod_{\Gamma}^{\leq 1}X$, with similar $\Gamma$-actions to $G(X)$ and $G'(X)$. However these functors are not cocontinuous, so our monad is not isomorphic to tensoring with a coalgebra, unless $\Gamma$ is finite. There are still natural morphisms $\coprod_{\Gamma}\prod_{\Gamma} X \rightarrow X$ and $X \rightarrow \prod_{\Gamma} \coprod_{\Gamma} X$, and $\coprod_{\Gamma}^{\leq 1}\prod_{\Gamma}^{\leq 1} X \rightarrow X$ and $X \rightarrow \prod_{\Gamma}^{\leq 1} \coprod_{\Gamma}^{\leq 1} X$, exhibiting an adjunction. If $\Gamma$ is finite then $\mathscr{A}=\mathscr{A}'=l^{1}(\Gamma)$ is dualisable, with dual $l^{\infty}(\Gamma)$.
\end{remark}

\subsection{Representations of topological groups}

\begin{defn}
For a locally compact topological group $H$ and a Banach space $V$ let us denote by $C(H,V)$ the topological vector space of continuous functions $H \rightarrow V$, with the topology of uniform convergence on compact subsets. Let us denote by $C_{b}(H,V)$ the closed subspace of functions which are bounded on $H$, which forms a Banach space with the supremum norm. Note that if $H$ is compact then all continuous functions are bounded, so $C_{b}(H,V)=C(H,V)$. We will use $C_{b}^{\text{lu}}(H,V)$ to denote the closed subspace of \emph{left uniformly continuous} functions. That is, the subspace of functions $f:H \rightarrow V$ such that, for each net $(h_{\lambda})_{\lambda \in \Lambda}$ in $H$ converging to the identity, $\text{Sup}_{x \in H}\|f(h_{\lambda}x)-f(x)\|$ converges to $0$. It was remarked to the author by Anton Lyubinin that if $H$ is compact then all continuous functions must be left uniformly continuous, by a variation of the Heine-Cantor Theorem, in which case $C_{b}^{\text{lu}}(H,V)=C_{b}(H,V)=C(H,V)$.
\end{defn}

Let us fix a locally compact topological group $G$.

\subsubsection{Topological groups with a continuous action by isometries}
\label{TopologicalGroupsExample}

\begin{defn}
\label{GModIso}
Let $G\text{-Mod}^{\text{iso}}$ be the category of strongly continuous IndBanach $G$ modules for which $G$ acts by isometries. That is, the action of $G$ on $V=\text{"colim"}_{i \in I}V_{i}$ is determined by continuous maps $G \rightarrow \text{Hom}(V_{i},V_{i'})$ for each $i \in I$ and for some $i' \in I$ depending on each $i$, where $\text{Hom}(V_{i},V_{i'})$ is given the strong operator topology, whose images lie in the subspace of isometries. The diagonal action of $G$ makes $G\text{-Mod}^{\text{iso}}$ monoidal. We denote by $F$ the forgetful functor to $\text{IndBan}_{k}$.
\end{defn}

\begin{defn}
\label{IndBanachCblu(G,V)}
For a Banach space $V$, let $C_{\text{b}}(G,V)$ be the Banach space of bounded continuous functions from $G$ to $V$, and let $C_{\text{b}}^{\text{lu}}(G,V)$ be the closed subspace of left uniformly continuous functions. For a general IndBanach space $V=\text{"colim"}_{i \in I}V_{i}$ we set $C_{\text{b}}^{\text{lu}}(G,V) = \text{"colim"}_{i \in I}C_{\text{b}}^{\text{lu}}(G,V_{i})$.
\end{defn}

\begin{remark}
In Definition \ref{GModIso}, our representations are in some sense locally Banach. Likewise, our definition of $C_{\text{b}}^{\text{lu}}(G,-)$ in Definition \ref{IndBanachCblu(G,V)} as a functor is in some sense local.
\end{remark}

\begin{lem}[\cite{AFoBC}]
The functor $C_{\text{b}}^{\text{lu}}(G,-)$ is right adjoint to the forgetful functor $F$.
\end{lem}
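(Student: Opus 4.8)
The plan is to establish the adjunction by exhibiting a natural isomorphism
$$\text{Hom}_{\text{IndBan}_{k}}(F(V),W) \cong \text{Hom}_{G\text{-Mod}^{\text{iso}}}(V, C_{\text{b}}^{\text{lu}}(G,W))$$
for $V \in G\text{-Mod}^{\text{iso}}$ and $W \in \text{IndBan}_{k}$. Since everything is defined locally on the Banach pieces of a formal colimit, I would first reduce to the case where $V$ and $W$ are honest Banach spaces, checking that the adjunction isomorphism assembles compatibly over the filtered colimit/limit defining the IndBanach Hom-sets; this is the content of the local definitions flagged in the Remark before the statement. So the heart of the matter is the classical adjunction at the Banach level: given a Banach space $W$ (with trivial $G$-action) and a $G$-Banach space $V$ with isometric action, I must match $G$-equivariant bounded maps $V \to C_{\text{b}}^{\text{lu}}(G,W)$ with bounded linear maps $V \to W$.

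The key construction is the standard currying/uncurrying. Given an equivariant map $\phi : V \to C_{\text{b}}^{\text{lu}}(G,W)$, I would produce a bounded linear map $\hat{\phi}: V \to W$ by evaluating at the identity, $\hat{\phi}(v) := \phi(v)(e)$. Conversely, given a bounded linear $f : V \to W$, I would define the candidate equivariant map $\check{f}: V \to C_{\text{b}}^{\text{lu}}(G,W)$ by $\check{f}(v)(g) := f(g^{-1}\cdot v)$. First I would verify that $\check{f}(v)$ actually lands in the prescribed subspace: boundedness of $g \mapsto f(g^{-1} v)$ follows since the action is by isometries, so $\|f(g^{-1}v)\| \le \|f\|\,\|v\|$ uniformly in $g$; continuity follows from strong continuity of the action, and left uniform continuity follows because $\|\check{f}(v)(h_\lambda g) - \check{f}(v)(g)\| = \|f((h_\lambda g)^{-1} v) - f(g^{-1}v)\|$, which one controls using strong continuity of $g \mapsto g^{-1}v$ together with the uniform bound from isometry. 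Then I would check that $v \mapsto \check{f}(v)$ is itself $G$-equivariant for the isometric $G$-action on $C_{\text{b}}^{\text{lu}}(G,W)$ (translation in the argument), and that $\phi \mapsto \hat{\phi}$ and $f \mapsto \check{f}$ are mutually inverse: the composite recovers $\hat{\check{f}}(v) = \check{f}(v)(e) = f(v)$, and conversely using equivariance of $\phi$ one shows $\check{\hat{\phi}}(v)(g) = \hat{\phi}(g^{-1}v) = \phi(g^{-1}v)(e) = (g^{-1}\cdot \phi(v))(e) = \phi(v)(g)$.

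The main obstacle I expect is not the algebraic bijection, which is formal, but rather the verification that the correspondence respects all the analytic and topological structure: that $\check{f}(v)$ genuinely lies in $C_{\text{b}}^{\text{lu}}$ (the left uniform continuity is exactly what forces the codomain to be the left-uniformly-continuous functions rather than merely bounded continuous ones, and is where the isometry/bounded-action hypothesis is essential), and that the resulting maps are bounded of the correct norm so that the bijection is natural at the level of internal Hom-objects in $\text{IndBan}_{k}$. I would finish by confirming naturality in both $V$ and $W$ and then reassembling over the formal colimits, invoking the local definitions of both $G\text{-Mod}^{\text{iso}}$ and $C_{\text{b}}^{\text{lu}}(G,-)$; since the Hom in $\text{IndBan}_{k}$ is $\text{lim}_i \text{colim}_j \text{Hom}(V_i, W_j)$ and the functor $C_{\text{b}}^{\text{lu}}(G,-)$ was defined to commute with the colimit, the Banach-level isomorphism passes to the IndBanach level by functoriality of limits and filtered colimits. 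Citing \cite{AFoBC} for the underlying Banach-space statement, the novelty here is precisely this compatible extension to the Ind completion.
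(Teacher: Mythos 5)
Your proposal is correct and takes the same route as the paper, whose entire proof is the one-line citation ``this is proved by B\"uhler in \cite{AFoBC} for Banach spaces but follows for IndBanach spaces too'': your currying argument ($\hat{\phi}(v)=\phi(v)(e)$ versus $\check{f}(v)(g)=f(g^{-1}\cdot v)$, with isometry giving the uniform bound that forces left uniform continuity) is exactly the content of the cited Banach-level statement, and your assembly over the formal colimits is the paper's unwritten extension step. The only point worth stating a little more carefully is that in the Ind setting the action sends $V_{i}$ into a possibly different piece $V_{i'}$, so one does not literally reduce to Banach \emph{objects} of $G\text{-Mod}^{\text{iso}}$ but rather checks, as you indicate, that the Banach-level bijection is compatible with the transition maps of the diagrams defining $\text{Hom}$ and $C_{\text{b}}^{\text{lu}}(G,-)$.
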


\begin{proof}
This is proved by Bühler in \cite{AFoBC} for Banach spaces but follows for IndBanach spaces too.
\end{proof}

\begin{prop}
$G\text{-Mod}^{\text{iso}}$ is equivalent to the category of coalgebras over the monoidal comonad $C_{\text{b}}^{\text{lu}}(G,-)$.
\end{prop}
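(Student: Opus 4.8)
The plan is to verify the hypotheses of the dual Barr--Beck theorem (Theorem \ref{DualBarrBeck}) for the forgetful functor $F:G\text{-Mod}^{\text{iso}} \rightarrow \text{IndBan}_{k}$, thereby exhibiting $G\text{-Mod}^{\text{iso}}$ as the category of coalgebras over the comonad $U = F \circ C_{\text{b}}^{\text{lu}}(G,-)$, and then to argue that this comonad carries a monoidal structure. We already know from the preceding Lemma that $F$ has a right adjoint $G := C_{\text{b}}^{\text{lu}}(G,-)$, so condition (i) of Theorem \ref{DualBarrBeck} is immediate. The bulk of the argument is to check that $F$ is a co-fibre functor in the sense of the earlier definition, i.e. that it is cocontinuous, strongly exact, faithful, and reflects strict morphisms; once this is established, the proof that $F$ satisfies conditions (ii) and (iii) of Theorem \ref{DualBarrBeck} is formally identical to the proof of Lemma \ref{CoFibreFunctorsComonadic}, which we may simply invoke.

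First I would check cocontinuity and exactness. Since colimits, kernels, and cokernels in $G\text{-Mod}^{\text{iso}}$ are computed on underlying IndBanach spaces with the induced $G$-action --- the isometric action passing to sub/quotient objects and to contracting colimits without difficulty --- the functor $F$ creates these constructions, so it is cocontinuous and strongly exact. Faithfulness is clear since a $G$-equivariant morphism is determined by its underlying morphism of IndBanach spaces. That $F$ reflects strict morphisms follows because the strictness of a morphism is a condition on the underlying IndBanach spaces, and a $G$-equivariant morphism that is strict on underlying spaces is strict in $G\text{-Mod}^{\text{iso}}$. With these four properties in hand, Lemma \ref{CoFibreFunctorsComonadic} applies verbatim: $F$ is comonadic, and $G\text{-Mod}^{\text{iso}}$ is equivalent to the category of coalgebras over the comonad $U = C_{\text{b}}^{\text{lu}}(G,-)$.

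It remains to promote this to a statement about a \emph{monoidal} comonad. By Definition \ref{GModIso} the diagonal $G$-action makes $G\text{-Mod}^{\text{iso}}$ monoidal and $F$ strong monoidal. By the dual of the correspondence of Moerdijk recalled in Remark \ref{IndBanachMonoidalFunctors}, a monoidal structure on the base together with a strong monoidal right-adjoint forgetful functor endows the comonad $U = FG$ with a canonical monoidal (comonoidal for the comonad, i.e. a lax monoidal structure compatible with the comultiplication) structure; concretely, the lax monoidal constraints on $U$ arise from the unit of the adjunction applied to tensor products, dualising the construction in Remark \ref{IndBanachMonoidalFunctors}. This identifies $U = C_{\text{b}}^{\text{lu}}(G,-)$ as a monoidal comonad and completes the equivalence as an equivalence of monoidal categories.

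The step I expect to be the main obstacle is confirming that the right adjoint $G = C_{\text{b}}^{\text{lu}}(G,-)$ and the associated comonad behave well enough for the monoidal comparison to go through --- in particular that the lax monoidal structure on $U$ induced by the adjunction is coherent (associativity and unit pentagon/triangle diagrams commute). Unlike the module case of Theorem \ref{IndBanachTannaka}, the comonad here is not assumed cocontinuous nor to commute with $l^1$, so we cannot identify $U$ with $\mathscr{B} \hat{\otimes} -$ for a coalgebra $\mathscr{B}$ when $G$ is merely locally compact; the statement is therefore correctly phrased only at the level of the comonad, and the care lies in invoking Moerdijk's theorem at this level of generality rather than overclaiming an IndBanach bialgebra. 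Everything else reduces to the routine verification that the forgetful functor creates limits, colimits, and strictness, which follows from the local (colimit-of-Banach) description of the $G$-action.
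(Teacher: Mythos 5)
Your proposal is correct and follows the same route as the paper, whose entire proof is the single line ``This follows from Lemma \ref{CoFibreFunctorsComonadic}.'' You simply make explicit what that citation leaves implicit --- the verification that the forgetful functor is a co-fibre functor (colimits, kernels, cokernels, and strictness being created on underlying IndBanach spaces, with isometry of the induced actions preserved) and the dual Moerdijk argument endowing the comonad with its monoidal structure --- and your closing caution is exactly right: since $C_{\text{b}}^{\text{lu}}(G,-)$ need not be cocontinuous nor commute with $l^{1}$ for general locally compact $G$, the statement lives at the level of the monoidal comonad, with the bialgebra identification deferred to the compact case as in the paper's subsequent Corollary.
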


\begin{proof}
This follows from Lemma \ref{CoFibreFunctorsComonadic}.
\end{proof}

\begin{corollary}
In the case where $G$ is compact, $G\text{-Mod}^{\text{iso}}$ is equivalent to the category of comodules over the bialgebra $C_{\text{b}}^{\text{lu}}(G,k)$. Here, the multiplication is pointwise, and the comultiplication is given by the composition
$$C_{\text{b}}^{\text{lu}}(G,k) \overset{\Delta}{\longrightarrow} C_{\text{b}}^{\text{lu}}(G,C_{\text{b}}^{\text{lu}}(G,k))  \cong C_{\text{b}}^{\text{lu}}(G,k) \hat{\otimes} C_{\text{b}}^{\text{lu}}(G,k),$$
with $\Delta(f)(g)(g')=f(gg')$.
\end{corollary}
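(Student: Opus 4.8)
The plan is to obtain this Corollary as the compact-group specialisation of Theorem \ref{CoIndBanachTannaka}. The Proposition immediately preceding it already identifies $G\text{-Mod}^{\text{iso}}$ with the category of coalgebras over the comonad $U = C_{\text{b}}^{\text{lu}}(G,-)$, so all that remains is to promote ``coalgebras over a comonad'' to ``comodules over a coalgebra''. By Theorem \ref{CoIndBanachTannaka} this promotion is legitimate precisely when $U$ is cocontinuous and commutes with $l^{1}$, in which case the coalgebra is $\mathscr{B} = U(k) = C_{\text{b}}^{\text{lu}}(G,k)$. Both hypotheses, together with an explicit tensor description of $U$, will follow from a single structural fact about compact $G$.

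That fact is the natural isomorphism $C_{\text{b}}^{\text{lu}}(G,V) \cong C_{\text{b}}^{\text{lu}}(G,k)\,\hat{\otimes}\,V$ for $G$ compact. Compactness enters twice here: first, as noted in the running text, every continuous function on a compact group is bounded and left uniformly continuous, so $C_{\text{b}}^{\text{lu}}(G,V) = C(G,V)$; second, the variation of the Heine--Cantor theorem lets me approximate any continuous $V$-valued function uniformly by finite-rank functions $\sum_{i}\phi_{i}\otimes v_{i}$, using a finite cover of $G$ and a subordinate partition of unity, which exhibits $C(G,V)$ as the completed tensor $C(G,k)\,\hat{\otimes}\,V$ naturally in $V$. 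Given this, $U \cong \mathscr{B}\,\hat{\otimes}\,-$ for $\mathscr{B} = C_{\text{b}}^{\text{lu}}(G,k)$, and the opening paragraph of Lemma \ref{TensorFunctorClassification} shows such a functor is cocontinuous and commutes with $l^{1}$. Theorem \ref{CoIndBanachTannaka} then yields the equivalence of $G\text{-Mod}^{\text{iso}}$ with left $\mathscr{B}$-comodules.

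It remains to pin down the bialgebra structure on $\mathscr{B}$. The coalgebra structure is read off from the comonad: the counit $\varepsilon_{V}\colon C_{\text{b}}^{\text{lu}}(G,V)\to V$ is evaluation at the identity, $f\mapsto f(e)$, while the comultiplication $\Delta\colon U\Rightarrow U\circ U$ is $F$ applied to the orbit-map unit of the adjunction $F\dashv C_{\text{b}}^{\text{lu}}(G,-)$; unwinding at $V=k$ through the isomorphism above gives exactly $\Delta(f)(g)(g')=f(gg')$, and the two counit laws then hold by direct computation. The algebra structure comes from the monoidal structure on $G\text{-Mod}^{\text{iso}}$ given by the diagonal $G$-action, for which $F$ is strong monoidal; by the dual of Remark \ref{IndBanachMonoidalFunctors} (Moerdijk's correspondence) this equips $U$ with a monoidal structure and hence $\mathscr{B}$ with an algebra structure. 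Concretely the multiplication $\mathscr{B}\,\hat{\otimes}\,\mathscr{B}\cong C(G\times G,k)\to C(G,k)=\mathscr{B}$ is restriction to the diagonal, i.e. the pointwise product, with unit the constant function $1$; since both structures arise from the single self-dual functor $U$ they automatically satisfy the bialgebra axioms.

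The main obstacle is the structural isomorphism $C(G,V)\cong C(G,k)\,\hat{\otimes}\,V$, and in particular the verification that $U$ commutes with $l^{1}$: one must argue that a continuous map from the compact group $G$ into $l^{1}(S)=\coprod_{S}^{\leq 1}k$ is the same datum as an $l^{1}$-summable (respectively, in the non-Archimedean case, null) family of scalar functions on $G$, which is exactly the point at which compactness of the image forces the required uniform summability. Everything downstream of this isomorphism is formal, flowing from Theorem \ref{CoIndBanachTannaka}, Lemma \ref{TensorFunctorClassification} and the Moerdijk correspondence already invoked in the module case.
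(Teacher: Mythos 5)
Your overall route is exactly the paper's: the paper's proof consists precisely of the two assertions that for compact $G$ the comonad $C_{\text{b}}^{\text{lu}}(G,-)$ is cocontinuous and commutes with contracting colimits, whence Lemma \ref{TensorFunctorClassification} (via Theorem \ref{CoIndBanachTannaka}) gives $U \cong C_{\text{b}}^{\text{lu}}(G,k)\hat{\otimes}-$, and that the monoidal structure induces the pointwise algebra structure, just as you argue via Moerdijk's correspondence. The difference is that the paper leaves the key analytic fact unproved, while you attempt to prove it --- and your justification has a genuine flaw in the Archimedean case. Your partition-of-unity argument shows that the finite-rank functions $\sum_{i}\phi_{i}\otimes v_{i}$ are dense in $C(G,V)$ \emph{in the supremum norm}; this identifies $C(G,V)$ with the \emph{injective} tensor product $C(G,k)\,\hat{\otimes}_{\varepsilon}\,V$. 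But the monoidal structure $\hat{\otimes}$ on $\text{Ban}_{k}$ used throughout the paper is the \emph{projective} tensor product (it is the one with the internal-Hom adjunction), and density in the weaker sup norm does not yield $C(G,k)\,\hat{\otimes}\,V \cong C(G,V)$: the approximating sequences need not converge in the projective norm.

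Concretely, your claim that ``compactness of the image forces the required uniform summability'' fails over $\mathbb{R}$ or $\mathbb{C}$. Take bump functions $\phi_{n}$ on an infinite compact group $G$ with pairwise disjoint supports and scalars $a_{n}\rightarrow 0$ with $\sum_{n}a_{n}=\infty$, and set $f=\sum_{n}a_{n}\phi_{n}e_{n}:G\rightarrow l^{1}(\mathbb{N})$. Since the tails satisfy $\sum_{n\geq N}|a_{n}\phi_{n}(g)|\leq \sup_{n\geq N}a_{n}\rightarrow 0$ uniformly in $g$, the map $f$ is continuous with compact image; but $\sum_{n}\|a_{n}\phi_{n}\|_{\infty}=\sum_{n}a_{n}=\infty$, so $f$ does not lie in $\coprod_{\mathbb{N}}^{\leq 1}C(G,k)=C(G,k)\hat{\otimes}\, l^{1}(\mathbb{N})$. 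Thus compactness gives only uniform smallness of tails, not $l^{1}$-summability of the component sup norms, and the natural map $\coprod_{S}^{\leq 1}C(G,k)\rightarrow C(G,l^{1}(S))$ is injective with dense image but not surjective. In the non-Archimedean case your argument is sound, because there the contracting coproduct is the $c_{0}$-type sum (Definition \ref{Contracting(Co)Products}) and compact subsets of $\coprod_{S}^{\leq 1}k$ \emph{are} uniformly coordinatewise null, so $C(G,\coprod_{S}^{\leq 1}k)\cong\coprod_{S}^{\leq 1}C(G,k)$ holds isometrically. So your proof establishes the corollary in the non-Archimedean setting, but the Archimedean half of the key step is not proved by your argument (and, by the counterexample above, the hypothesis of Lemma \ref{TensorFunctorClassification} genuinely fails there, a point the paper's two-line proof glosses over as well).
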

\begin{proof}
If $G$ is compact, $C_{\text{b}}^{\text{lu}}(G,-)$ is cocontinuous and commutes with contracting colimits, so is isomorphic to $C_{\text{b}}^{\text{lu}}(G,k) \hat{\otimes} -$ by Lemma \ref{TensorFunctorClassification}, and $G\text{-Mod}^{\text{iso}}$ is equivalent to IndBanach $C_{\text{b}}^{\text{lu}}(G,k)$-comodules. Then the monoidal structure gives $C_{\text{b}}^{\text{lu}}(G,k)$ the usual algebra structure arising from pointwise multiplication.
\end{proof}

\subsubsection{Topological Groups with a continuous action, not necessarily by isometries}
\

We now consider a wider class of representations of a topological group. Suppose, for simplicity, that we can write $G$ as a union of compact open subgroups $G=\bigcup_{i \in I}G_{i}$.

\begin{defn}
\label{StronglyContinuousAction}
Let $G\text{-Mod}$ be the category of $k$-IndBanach spaces $V$ with a \emph{strongly continuous} action of $G$. By this we mean an IndBanach space $V$ such that, for each $i \in I$ there is an inductive system of Banach spaces $(V_{j})_{j \in J}$ and map $J \rightarrow J$, $j \mapsto j'$, such that $V \cong \text{"colim"}_{j \in J}V_{j}$ and the action of $G$ on $V$ is induced by continuous maps $G_{i} \rightarrow \text{Hom}(V_{j},V_{j'})$ where $\text{Hom}(V_{j},V_{j'})$ is given the strong operator topology. We will denote by $F$ the forgetful functor from $G\text{-Mod}$ to the category of IndBanach spaces. The diagonal action of $G$ makes $G\text{-Mod}$ monoidal, with trivial action on the monoidal unit $k$, and $F$ is strong monoidal.
\end{defn}

\begin{remark}
If $V \in G\text{-Mod}$ is a Banach space then this just means that the action by $G$ is strongly continuous in the usual sense.
\end{remark}

\begin{remark}
Note that $G\text{-Mod}^{\text{iso}}$ sits as a full subcategory of $G\text{-Mod}$.
\end{remark}

\begin{defn}
For any $i \in I$ and for any Banach space $V$, $C^{\text{lu}}(G_{i},V)$ is a Banach space.  For a general IndBanach space $V=\text{"colim"}_{j \in J}V_{j}$ we can view $C^{\text{lu}}(G_{i},V)$ as the colimit $\text{"colim"}_{j \in J}C^{\text{lu}}(G_{i},V_{j})$ in $\text{IndBan}_{k}$, and we view $C^{\text{lu}}(G,V)$ as the limit $\text{lim}_{i \in I}C^{\text{lu}}(G_{i},V)$. $C^{\text{lu}}(G,V)$ has a left action of $g \in G$ induced by the right regular actions of $G_{i}$ on $C^{\text{lu}}(G_{i},V_{j})$.
\end{defn}

\begin{lem}
\label{DescriptionOfLUFunctions}
$C^{\text{lu}}(G,V)$ can be expressed as the colimit of spaces
$$ \left\lbrace (f_{i})_{i \in I} \in \prod\nolimits_{i \in I}^{\leq 1} C^{\text{lu}}(G_{i},V_{j_{i}})_{r_{i}} \middle| \substack{ \text{For all } i \leq i' \text{ there exists } j \geq j_{i},j_{i'} \\ \text{with } \phi_{j_{i},j} \circ f_{i}|_{G_{i'}} = \phi_{j_{i'},j} \circ f_{i'}} \right\rbrace $$
indexed over pairs $((j_{i})_{i \in I},(r_{i})_{i \in I})$ where $(j_{i})_{i \in I}$ is a collection of indecies in $J$ and $(r_{i})_{i \in I}$ is a collection of positive real numbers, both indexed over $I$. Here, $\phi_{j,j'}:V_{j} \rightarrow V_{j'}$ are the transition maps in the inductive system $(V_{j})_{j \in J}$.
\end{lem}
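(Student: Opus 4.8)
The plan is to show both sides represent the same functor on compact objects. Since the Banach spaces are precisely the compact objects of $\text{IndBan}_{k}$, every IndBanach space is canonically a filtered colimit of them, and Ind-objects are determined by their restriction to compact objects, it suffices to produce a natural isomorphism $\text{Hom}(U, C^{\text{lu}}(G,V)) \cong \text{Hom}(U, W)$ for every Banach space $U$, where $W$ denotes the proposed colimit and $W_{(\underline{j},\underline{r})}$ its terms.

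First I would unwind the left-hand side. As $\text{Hom}(U,-)$ preserves all limits and $U$ is compact (so $\text{Hom}(U,-)$ commutes with the filtered colimit over $J$),
$$\text{Hom}(U, C^{\text{lu}}(G,V)) = \lim_{i \in I} \text{Hom}(U, C^{\text{lu}}(G_{i}, V)) = \lim_{i \in I} \text{colim}_{j \in J} \text{Hom}(U, C^{\text{lu}}(G_{i}, V_{j})).$$
Thus an element is a compatible family $(a_{i})_{i \in I}$, each $a_{i}$ represented by a bounded map $g_{i}: U \rightarrow C^{\text{lu}}(G_{i}, V_{j_{i}})$ for some $j_{i} \in J$, the compatibility being exactly that for $i \leq i'$ the two maps $g_{i}$ and the restriction of $g_{i'}$ agree after composing with a common transition map $\phi_{-,j}$.

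Next I would unwind the right-hand side. Again by compactness of $U$, $\text{Hom}(U,W) = \text{colim}_{(\underline{j},\underline{r})} \text{Hom}(U, W_{(\underline{j},\underline{r})})$, and by the universal property of the contracting product (Lemma \ref{ContractingUniversalProperty}) a map $U \rightarrow W_{(\underline{j},\underline{r})}$ is precisely a family $g_{i}: U \rightarrow C^{\text{lu}}(G_{i}, V_{j_{i}})$ that is uniformly bounded in the $r_{i}$-rescaled norms and obeys the same compatibility condition. The matching is then immediate: given a compatible family $(a_{i})$, choose representatives $g_{i}$ and scalars $r_{i} \geq \|g_{i}\|$ (with $r_{i} \in |k^{\times}|$), producing a point of some $W_{(\underline{j},\underline{r})}$; conversely every point of every $W_{(\underline{j},\underline{r})}$ restricts to a compatible family. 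I would then check that this assignment is well defined modulo the colimit identifications and natural in $U$, and that the poset of pairs $(\underline{j},\underline{r})$ is filtered (common upper bounds exist because $J$ is filtered and real numbers have maxima), so that $W$ is a genuine Ind-object whose transition maps $W_{(\underline{j},\underline{r})} \rightarrow W_{(\underline{j}',\underline{r}')}$ are the evident bounded maps induced by the $\phi_{j_{i},j_{i}'}$ together with the norm rescalings.

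The main obstacle is the absence of any uniform bound on a compatible family $(a_{i})$ in the limit: when $I$ is infinite the norms $\|g_{i}\|$ may grow without control as $i$ varies, so no single contracting product $\prod_{i}^{\leq 1} C^{\text{lu}}(G_{i}, V_{j_{i}})$ can capture all of $C^{\text{lu}}(G,V)$. This is exactly what the colimit over all scaling data $\underline{r}=(r_{i})$ repairs, and getting this bookkeeping right, so that the scalings simultaneously absorb the sizes of the $g_{i}$ and the operator norms of the transition maps $\phi$ while keeping the index poset filtered, is the delicate point. As an alternative I could feed the cofiltered diagram $i \mapsto C^{\text{lu}}(G_{i}, V)$ directly into the explicit construction of limits in $\text{IndBan}_{k}$ from Section 1.4.1 of \cite{LaACH}; that construction produces precisely such a colimit of rescaled contracting products cut out by compatibility relations, and one would only need to match its indexing with the pairs $(\underline{j},\underline{r})$ above.
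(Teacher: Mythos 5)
Your proposal is correct in substance, but it takes a genuinely different route from the paper. The paper's proof is a two-step reduction: it first observes that $C^{\text{lu}}(G,V)=\text{lim}_{i}C^{\text{lu}}(G_{i},V)$ is the kernel of a map between the two products $\prod_{i\in I}C^{\text{lu}}(G_{i},V)$ and $\prod_{(i\leq i')\in I}C^{\text{lu}}(G_{i'},V)$, then cites the explicit construction of limits in $\text{IndBan}_{k}$ from Section 1.4.1 of \cite{LaACH} to rewrite each product as a formal colimit of rescaled contracting products $\text{"colim"}_{(\underline{j},\underline{r})}\prod\nolimits_{i}^{\leq 1}C^{\text{lu}}(G_{i},V_{j_{i}})_{r_{i}}$, and finally computes the kernel levelwise using the same construction. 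You instead verify the universal property directly: you identify $\text{Hom}(U,-)$ of both sides for compact (i.e.\ Banach) $U$, using that the restricted Yoneda embedding is fully faithful on $\text{Ind}$-objects together with Lemma \ref{ContractingUniversalProperty}. This is more self-contained --- it re-derives by hand exactly the fragment of Meyer's construction that is needed, rather than quoting it --- at the cost of the rescaling and filteredness bookkeeping you correctly flag as the delicate point; and the alternative you sketch in your final sentence is precisely the paper's argument.

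One step deserves more care than your phrase ``obeys the same compatibility condition'' gives it, in the direction from the colimit back to the limit. A morphism $U\rightarrow W_{(\underline{j},\underline{r})}$ landing in the subspace cut out by the stated condition only guarantees, for each element $u\in U$ and each pair $i\leq i'$, a witness $j$ \emph{depending on $u$}; to produce an element of $\text{lim}_{i}\,\text{colim}_{j}\,\text{Hom}(U,C^{\text{lu}}(G_{i},V_{j}))$ you need a single $j$ per pair $(i,i')$ working for all $u$ simultaneously. If $J$ is countable this follows from a Baire category argument (the sets where a fixed witness works are closed subspaces exhausting $U$), but for general $J$ it is not automatic, since a Banach space can be an increasing union of proper closed subspaces. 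The clean repair --- implicit in the paper's kernel computation, where each term of the colimit is the kernel of a map of contracting products at \emph{fixed} levels $(j_{i,i'})_{(i\leq i')}$ --- is to enlarge your indexing data to include the choice of witnesses, so that each term of the colimit imposes the compatibility at a fixed level. With that adjustment your two unwindings match on the nose and the proof goes through.
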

\begin{proof}
Firstly, note that $C^{\text{lu}}(G,V)$ is the kernel of the map
$$\prod_{i \in I} C^{\text{lu}}(G_{i},V) \rightarrow \prod_{(i \leq i') \in I} C^{\text{lu}}(G_{i'},V)$$
defined by $\pi_{i , i'}=\pi_{i'}-\rho_{i,i'} \circ \pi_{i}$ where $\pi_{i , i'}$ and $\pi_{i}$ are the respective projections and $\rho_{i,i'}:C^{\text{lu}}(G_{i'},V) \rightarrow C^{\text{lu}}(G_{i},V)$ is the restriction map. By the explicit description of limits in \cite{LaACH},
$$\prod_{i \in I} C^{\text{lu}}(G_{i},V) = \text{"colim"}_{(j_{i})_{i \in I},(r_{i})_{i \in I}} \prod_{i \in I}^{\leq 1} C^{\text{lu}}(G_{i},V_{j_{i}})_{r_{i}}$$
and likewise
$$\prod_{(i \leq i') \in I} C^{\text{lu}}(G_{i'},V) = \text{"colim"}_{(j_{i,i'})_{(i \leq i') \in I},(r_{i , i'})_{(i \leq i') \in I}} \prod_{(i \leq i') \in I}^{\leq 1} C^{\text{lu}}(G_{i'},V_{j_{i , i'}})_{r_{i , i'}}.$$
The result then follows by direct computation, again using \emph{loc. cit.}, of this kernel.
\end{proof}

\begin{prop}
The action of $G$ on $C^{\text{lu}}(G,V)$ is strongly continuous for any IndBanach space $V$.
\end{prop}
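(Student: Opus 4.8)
The plan is to verify Definition \ref{StronglyContinuousAction} directly, feeding in the explicit presentation of $C^{\text{lu}}(G,V)$ from Lemma \ref{DescriptionOfLUFunctions}. By that definition it suffices to fix a single compact open subgroup $G_{i_0}$ and exhibit $C^{\text{lu}}(G,V)$ as a colimit of Banach spaces, together with a reindexing, on which the restricted action of $G_{i_0}$ is induced by maps $G_{i_0}\to\underline{\text{Hom}}(\,\cdot\,,\,\cdot\,)$ that are continuous for the strong operator topology. Since $I$ is filtered I would pass to the cofinal subset $\{i\geq i_0\}$, so that $g\in G_{i_0}\subseteq G_i$ and right translation by $g$ preserves each $C^{\text{lu}}(G_i,V_{j_i})$; on each component the $G$-action is then the right regular action $(g\cdot f_i)(x)=f_i(xg)$, and the colimit pieces $W_{(j_i),(r_i)}$ of Lemma \ref{DescriptionOfLUFunctions} are preserved set-theoretically.

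The analytic core is the case of a single compact group. For $i\geq i_0$ and a Banach space $W$, compactness of $G_i$ gives $C^{\text{lu}}(G_i,W)=C(G_i,W)$, and by the Heine–Cantor variant invoked earlier in the paper every continuous function on the compact group $G_i$ is automatically uniformly continuous on both sides. Hence for a fixed $f\in C(G_i,W)$ one has $\sup_{x\in G_i}\|f(xg)-f(x)\|\to 0$ as $g\to e$ in $G_{i_0}$, so the right regular representation of $G_{i_0}$ on $C(G_i,W)$ is strongly continuous; it is moreover isometric, hence uniformly bounded in $g$. This is exactly the pointwise-in-$i$ continuity I need, and it is insensitive to the handedness convention for the action, since on the compact subgroups left and right uniform continuity coincide.

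I would then assemble these component-wise estimates through the contracting product and the colimit of Lemma \ref{DescriptionOfLUFunctions}. Fix an element of a Banach piece $W_{(j_i),(r_i)}$; because the defining compatibility condition makes it a single left-uniformly-continuous function $f$ on $G$ with $f|_{G_i}=f_i$, the norms $\|f_i\|$ are uniformly bounded. For the orbit map $g\mapsto g\cdot(f_i)_i$ I would show it is continuous not into $W_{(j_i),(r_i)}$ itself but into a larger piece $W_{(j_i),(r'_i)}$ obtained by enlarging the radii: the rescaling in the target shrinks the contribution of the $i$-th component to the supremum norm as $r'_i$ grows, so the uniformly bounded tail $\sup_i\|g\cdot f_i-f_i\|$ contributes an arbitrarily small amount once the $r'_i$ are chosen large, while the component-wise convergence from the previous step controls the remaining range of $i$. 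Feeding the resulting continuous orbit maps into the universal property of the colimit then produces the continuous maps $G_{i_0}\to\underline{\text{Hom}}(W_{(j_i),(r_i)},W_{(j_i),(r'_i)})$ in the strong operator topology demanded by Definition \ref{StronglyContinuousAction}.

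The main obstacle is precisely this last gluing step: strong continuity on each compact $G_i$ is cheap, but the product norm defining $C^{\text{lu}}(G,V)$ is a supremum over the infinite index set $I$, and the moduli of uniform continuity of the $f_i$ degrade as the $G_i$ exhaust $G$, so no single neighbourhood of $e$ works uniformly in $i$. The resolution is the colimit structure of Lemma \ref{DescriptionOfLUFunctions} itself: the freedom to pass to larger radii $r'_i$ is what lets one absorb the non-uniform tail, which is the structural reason the inverse limit $\text{lim}_{i}C^{\text{lu}}(G_i,V)$ was re-expressed as a filtered colimit in the first place.
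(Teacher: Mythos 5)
Your proposal is correct and takes essentially the same route as the paper: the paper's entire proof is the two-sentence reduction you begin with (fix $i_0$, replace $I$ by the cofinal subset $I_{\geq i_0}$, and observe that $G_{i_0}$ then acts strongly continuously on the Banach pieces of Lemma \ref{DescriptionOfLUFunctions}), and your componentwise Heine--Cantor step plus tail-absorption via rescaled target pieces simply supplies the verification the paper leaves implicit. One caveat on conventions: since the paper's $V_{\lambda}$ carries the norm \emph{multiplied} by $\lambda$, the colimit in Lemma \ref{DescriptionOfLUFunctions} is directed towards \emph{smaller} scaling factors, so your ``enlarging the radii'' should read ``passing to $(r'_i)$ with $r'_i/r_i$ small off a finite set of indices'' --- it is this decay of $r'_i/r_i$ that makes the uniformly bounded tail $\sup_i r'_i\|g\cdot f_i-f_i\|\leq 2\|f\|\,\sup_i (r'_i/r_i)$ contribute arbitrarily little, with equicontinuity handling the remaining finitely many components exactly as you describe.
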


\begin{proof}
Note that, for any fixed $i_{0} \in I$, we may replace $I$ with $I_{\geq i_{0}}$. In which case, $G_{i_{0}}$ has a strongly continuous action on the spaces describes in Lemma \ref{DescriptionOfLUFunctions}.
\end{proof}

\begin{defn}
For $V$ in $G\text{-Mod}$ with action maps $\pi_{V}^{i,j,j'}:G_{i} \rightarrow \text{Hom}(V_{j},V_{j'})$ we get a collection of bounded linear map $V_{j} \rightarrow C^{\text{lu}}(G_{i},V_{j'})$, $v \mapsto \pi_{V}^{i,j,j'}(-)(v)$, where $V=\text{"colim"}_{i \in I}V_{i}$. These then induce morphisms $V \rightarrow C^{\text{lu}}(G_{i},V)$ in $\text{IndBan}_{k}$, inducing in turn a map $\pi_{V}^{\ast}:V \rightarrow C^{\text{lu}}(G,V)$, the \emph{adjoint} of the action.
\end{defn}

\begin{lem}
\label{StronglyContinuousAdjunction}
The forgetful functor $F:G\text{-Mod} \rightarrow \text{IndBan}_{k}$ has a right adjoint $C^{\text{lu}}(G,-)$.
\end{lem}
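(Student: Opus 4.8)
The plan is to establish the adjunction $F \dashv C^{\text{lu}}(G,-)$ by exhibiting a natural bijection
$$\Psi : \text{Hom}_{\text{IndBan}_{k}}(FW,V) \xrightarrow{\sim} \text{Hom}_{G\text{-Mod}}(W,C^{\text{lu}}(G,V)),$$
whose inverse $\Phi$ is post-composition with evaluation at the identity $e \in G$. Concretely, for $f:FW \to V$ I would set $\Psi(f) = C^{\text{lu}}(G,f) \circ \pi_{W}^{*}$, where $\pi_{W}^{*}$ is the adjoint of the action from the preceding definition; unwinding the definitions, $\Psi(f)(w)$ is the matrix coefficient $g \mapsto f(g\cdot w)$. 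The counit candidate $\varepsilon_{V} : C^{\text{lu}}(G,V) \to V$ is evaluation at $e$, which is a contraction and descends compatibly through the colimit--limit description of Lemma \ref{DescriptionOfLUFunctions}, hence is a genuine morphism of $\text{IndBan}_{k}$. Equivalently, $\pi_{W}^{*}$ and $\varepsilon_{V}$ play the roles of unit and counit, and the claim reduces to the two triangle identities.

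With these maps in hand the algebraic identities are short. Since $\pi_{W}^{*}$ is already a morphism of $\text{IndBan}_{k}$, $\Psi(f)$ is automatically a well-defined IndBanach morphism as a composite; it then remains to check it is $G$-equivariant, which follows from a direct computation using that the action on $C^{\text{lu}}(G,V)$ is the left action induced by the right regular representation, $(g\cdot \varphi)(x) = \varphi(xg)$. The relation $\Phi \circ \Psi = \text{id}$ is immediate from $f(e\cdot w)=f(w)$, while $\Psi \circ \Phi = \text{id}$ uses the $G$-equivariance of a given $\phi \in \text{Hom}_{G\text{-Mod}}(W,C^{\text{lu}}(G,V))$ together with the same convention to recover $\phi(w)(g)=(g\cdot\phi(w))(e)$ from $\phi(w)(e)$. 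Naturality in both $W$ and $V$ is then routine, as is the verification that $\pi_{W}^{*}$ itself is $G$-equivariant, so that it serves as the unit of the adjunction.

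The main obstacle is not these formal identities but the well-definedness and left uniform continuity built into $\Psi$ at the level of the inductive--projective systems. Because $G$ is only a union of compact open subgroups, $C^{\text{lu}}(G,V)=\text{lim}_{i}C^{\text{lu}}(G_{i},V)$ is a genuine limit, and combined with $V$ being a filtered colimit it is governed by the intricate description of Lemma \ref{DescriptionOfLUFunctions}; one must confirm that the matrix coefficients $g \mapsto f(g\cdot w)$ assemble into an element of that colimit in a uniformly bounded way that is compatible over $I$. This is exactly where the strong continuity hypothesis on $W$ enters: the defining continuous maps $G_{i} \to \text{Hom}(V_{j},V_{j'})$ in the strong operator topology guarantee that $g \mapsto f(g\cdot w)$ is left uniformly continuous on each $G_{i}$ with values in an appropriate $V_{j}$, and that the radii $r_{i}$ and indices $j_{i}$ appearing in Lemma \ref{DescriptionOfLUFunctions} can be chosen coherently. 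I would isolate this as the technical heart of the argument, modelling it on B\"uhler's proof of the isometric case (the cited adjunction for $C_{\text{b}}^{\text{lu}}(G,-)$) and then passing to the limit over $i \in I$ to accommodate the non-compact group.
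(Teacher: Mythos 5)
Your proposal is correct and follows essentially the same route as the paper: the adjunction map $f \mapsto C^{\text{lu}}(G,f)\circ \pi^{*}$ built from the adjoint of the action, with inverse given by restricting along $C^{\text{lu}}(G,-) \rightarrow C^{\text{lu}}(G_{i},-)$ and evaluating at the identity. The paper verifies the hom-set bijection directly rather than via unit/counit and triangle identities, and it delegates the uniform-continuity issue you rightly flag to the preceding definition of $\pi_{V}^{*}$, but these are only differences of packaging, not of substance.
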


\begin{proof}
For an object $V$ of $G\text{-Mod}$, with underlying IndBanach space $FV$, and an IndBanach space $W$, there is a natural map
$$\text{Hom}_{\text{IndBan}_{k}}(FV,W) \rightarrow \text{Hom}_{G}(V, C^{\text{lu}}(G,W)),$$
taking $f$ to the composite $V \overset{\pi_{V}^{\ast}}{\longrightarrow} C^{\text{lu}}(G,V) \overset{f\circ -}{\longrightarrow} C^{\text{lu}}(G,W)$.  Given $i \in I$, the restriction of the map
$$\text{Hom}_{\text{IndBan}}(V, C^{\text{lu}}(G,W)) \rightarrow \text{Hom}_{\text{IndBan}}(V,C^{\text{lu}}(G_{i},W)) \rightarrow \text{Hom}_{\text{IndBan}}(V,W)$$
to $\text{Hom}_{G}(V,C^{\text{lu}}(G,W))$ provides an inverse where the first arrow is induced by the restriction map $C^{\text{lu}}(G,W) \rightarrow C^{\text{lu}}(G_{i},W)$ and the second arrow is induced by the map $C^{\text{lu}}(G_{i},W) \rightarrow W$ that essentailly evaluates a function at $1 \in G_{i} \subset G$ (coming from the maps $C^{\text{lu}}(G_{i},W_{j}) \rightarrow W_{j}$ for $W=\text{"colim"}_{j \in J}W_{j}$). Hence $\text{Hom}_{\text{IndBan}_{k}}(V,W) \cong \text{Hom}_{G}(V, C^{\text{lu}}(G,W))$.
\end{proof}

The following proposition then follows from Lemma \ref{CoFibreFunctorsComonadic}.

\begin{prop}
$G\text{-Mod}$ is equivalent to the category of IndBanach spaces with a coaction of the comonad $C^{\text{lu}}(G,-)$.
\end{prop}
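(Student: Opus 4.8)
The plan is to verify that the forgetful functor $F : G\text{-Mod} \to \text{IndBan}_k$ satisfies all the hypotheses of Lemma \ref{CoFibreFunctorsComonadic}, and then to read off that the resulting comonad is $C^{\text{lu}}(G,-)$. The bulk of the work is in checking the conditions; once they hold the conclusion is immediate.

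First I would establish the structural hypotheses on the source category, namely that $G\text{-Mod}$ is a locally presentable, quasi-abelian category enriched over $\text{IndBan}_k$. The enrichment is immediate, since $\text{Hom}_G(V,W)$ is the IndBanach subobject of $\underline{\text{Hom}}(FV,FW)$ cut out by the $G$-equivariance conditions. For the quasi-abelian and locally presentable structure the key point is that $F$ creates kernels, cokernels, and all small (co)limits: given a diagram in $G\text{-Mod}$ one forms the corresponding (co)limit of underlying IndBanach spaces and checks that it carries a unique strongly continuous $G$-action making it the (co)limit in $G\text{-Mod}$. This is where the \emph{locally Banach} bookkeeping of Definition \ref{StronglyContinuousAction}, with its inductive systems $(V_j)_{j \in J}$ and the assignments $j \mapsto j'$, must be handled with care, and I expect this to be the main obstacle: one must confirm that the induced action on a kernel or cokernel is again given by continuous maps into the relevant strong-operator-topology hom-spaces, and that the resulting object still admits a presentation of the required local form.

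Next I would verify the four conditions making $F$ a co-fibre functor. Cocontinuity is automatic: by Lemma \ref{StronglyContinuousAdjunction} the functor $F$ admits the right adjoint $C^{\text{lu}}(G,-)$, so $F$ is a left adjoint and preserves all colimits. Faithfulness is clear, since a morphism in $G\text{-Mod}$ is determined by its underlying morphism of IndBanach spaces. Strong exactness and the reflection of strict morphisms then both follow from the fact, established in the previous step, that $F$ creates kernels and cokernels: a morphism in $G\text{-Mod}$ is strict precisely when its underlying morphism is strict, and strictly (co)exact sequences are detected and preserved at the level of underlying IndBanach spaces.

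Finally, with $F$ confirmed to be a co-fibre functor, Lemma \ref{CoFibreFunctorsComonadic} yields an equivalence between $G\text{-Mod}$ and the category of coalgebras over the comonad $U = F \circ C^{\text{lu}}(G,-)$ on $\text{IndBan}_k$. Since $F$ merely forgets the $G$-action, this comonad is, as an endofunctor of $\text{IndBan}_k$, precisely $C^{\text{lu}}(G,-)$, with comultiplication and counit induced from the unit and counit of the adjunction of Lemma \ref{StronglyContinuousAdjunction}. This identifies $G\text{-Mod}$ with the category of IndBanach spaces carrying a coaction of the comonad $C^{\text{lu}}(G,-)$, as claimed.
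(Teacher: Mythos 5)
Your proposal follows exactly the paper's route: the paper derives this proposition as an immediate consequence of Lemma \ref{CoFibreFunctorsComonadic}, with the adjunction $F \dashv C^{\text{lu}}(G,-)$ supplied by Lemma \ref{StronglyContinuousAdjunction}. You correctly identify and fill in the verifications (local presentability, the quasi-abelian structure created by $F$, and the four co-fibre conditions) that the paper leaves implicit, so your argument is the same proof spelled out in more detail.
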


\begin{remark}
Here, the comultiplication $\Delta_{V}:C^{\text{lu}}(G,V) \rightarrow C^{\text{lu}}(G, C^{\text{lu}}(G,V))$ can be thought of as $\Delta(f)(g)(g')=f(gg')$ with counit $f \mapsto f(1)$.
\end{remark}

\begin{corollary}
If $G$ is compact then $G\text{-Mod}$ is equivalent to the monoidal category of IndBanach $C^{\text{lu}}(G,k)$-comodules. Here, the multiplication on $C^{\text{lu}}(G,k)$ is pointwise.
\end{corollary}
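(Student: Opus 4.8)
The plan is to reduce this to Theorem \ref{CoIndBanachTannaka} exactly as in the compact case of $G\text{-Mod}^{\text{iso}}$ treated earlier. Since $G$ is compact we may take the indexing set $I$ of the open cover $G=\bigcup_i G_i$ to consist of the single subgroup $G$ itself; consequently the inverse limit over $I$ in the construction of $C^{\text{lu}}(G,-)$ collapses, and the description of Lemma \ref{DescriptionOfLUFunctions} reduces to the plain formula $C^{\text{lu}}(G,V)\cong \text{"colim"}_{j\in J}C^{\text{lu}}(G,V_j)$ for $V=\text{"colim"}_{j\in J}V_j$. In particular $C^{\text{lu}}(G,-)=C_{\text{b}}^{\text{lu}}(G,-)=C(G,-)$. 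First I would verify that the comonad $U=FG\cong C^{\text{lu}}(G,-)$ is cocontinuous. This is not automatic, since $U$ is built from the right adjoint $G$; but once the limit over $I$ has disappeared, $U$ is computed as a filtered colimit of the Banach-space functors $C(G,-)$, and compactness of $G$ is precisely what makes $C(G,-)$ preserve cokernels and the contracting colimits at issue, after which cocontinuity passes to the Ind-completion.

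The main obstacle is verifying that $U$ commutes with $l^{1}$ in the sense of Definition \ref{CommutingWithl1}, i.e.\ that the natural map $\coprod_S^{\leq 1}C(G,k)\to C(G,\coprod_S^{\leq 1}k)$ is an isomorphism for every set $S$. This is where compactness of $G$ is essential. A continuous function from the compact space $G$ into $\coprod_S^{\leq 1}k$, which is $l^1(S)$ in the Archimedean case and $c_0(S)$ in the non-Archimedean case, has compact and hence uniformly summable image: for each $\varepsilon>0$ there is a finite $S_0\subseteq S$ controlling all tails uniformly in $g\in G$. I would combine this characterization of relatively compact subsets of $l^1(S)$ (resp.\ $c_0(S)$) with the uniform continuity of $f$ on the compact group $G$ (the Heine--Cantor argument already invoked to define left uniform continuity) to show that the coordinate functions $f_s$ assemble into a genuine element of $\coprod_S^{\leq 1}C(G,k)$, and that this assignment is an isomorphism up to the norm rescalings permitted in $\text{IndBan}_k$. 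The same argument with a Banach coefficient $V$ in place of $k$ shows that $U$ commutes with contracting colimits.

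With cocontinuity and commutation with $l^1$ in hand, Lemma \ref{TensorFunctorClassification} gives $U\cong \mathscr{B}\hat{\otimes}-$ for the IndBanach space $\mathscr{B}=U(k)=C^{\text{lu}}(G,k)$, and Theorem \ref{CoIndBanachTannaka} then yields an equivalence between $G\text{-Mod}$ and the category of IndBanach $\mathscr{B}$-comodules. Finally, since $F$ is strong monoidal for the diagonal $G$-action, the monoidal structure transports to $U$ as a monoidal comonad structure, which by Moerdijk's correspondence (the Remark following Theorem \ref{CoIndBanachTannaka}) equips $\mathscr{B}=C^{\text{lu}}(G,k)$ with an algebra structure. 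The monoidal coherence map of $U$ sends a pair of functions to $g\mapsto f_1(g)\otimes f_2(g)$, coming from the diagonal action; tracing this through $k\hat{\otimes}k\cong k$ identifies the induced multiplication on $C^{\text{lu}}(G,k)$ as pointwise multiplication of functions, completing the proof.
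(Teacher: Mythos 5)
Your proposal follows exactly the paper's architecture (the paper's own proof is the one-line assertion that for compact $G$ the comonad is isomorphic to $C^{\text{lu}}(G,k)\hat{\otimes}_{k}-$, implicitly via Lemma \ref{TensorFunctorClassification}, Theorem \ref{CoIndBanachTannaka} and Moerdijk's correspondence), so what you are really doing is supplying the verification the paper omits — and your key analytic step has a genuine gap in the Archimedean case. Relative compactness of the image $f(G)\subset l^{1}(S)$ gives uniform smallness of tails: for every $\varepsilon>0$ there is a finite $S_{0}\subseteq S$ with $\sup_{g\in G}\sum_{s\notin S_{0}}|f_{s}(g)|<\varepsilon$. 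But membership of $(f_{s})_{s\in S}$ in $\coprod\nolimits^{\leq 1}_{S}C(G,k)$ requires the much stronger condition $\sum_{s\in S}\sup_{g\in G}|f_{s}(g)|<\infty$, and the sum-of-sups does not follow from the sup-of-sums. Concretely: take any infinite metrizable compact group $G$, points $g_{n}\to g_{\infty}$ with pairwise disjoint neighbourhoods $U_{n}$ shrinking towards $g_{\infty}$, and continuous $f_{n}$ supported in $U_{n}$ with $\|f_{n}\|_{\infty}=1/n$. Then $g\mapsto(f_{n}(g))_{n}$ is a continuous map $G\to l^{1}(\mathbb{N})$ (at most one coordinate is nonzero at each point, with norm decaying near $g_{\infty}$), yet $\sum_{n}\|f_{n}\|_{\infty}=\sum_{n}1/n=\infty$. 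Hence the canonical map $\coprod\nolimits^{\leq 1}_{S}C(G,k)\to C(G,\coprod\nolimits^{\leq 1}_{S}k)$ is a contraction with dense image — a bimorphism — but not surjective, and your escape clause "up to the norm rescalings permitted in $\text{IndBan}_{k}$" cannot rescue it: both sides are single Banach spaces and $\text{Ban}_{k}$ embeds fully faithfully in $\text{IndBan}_{k}$, so an isomorphism there would have to be a Banach isomorphism. The underlying obstruction is classical: $C(G,V)$ is the \emph{injective} tensor product $C(G,k)\hat{\otimes}_{\varepsilon}V$, whereas the monoidal structure used throughout the paper is the projective one.

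In the non-Archimedean case your argument is correct and complete: there $\coprod\nolimits^{\leq 1}_{S}k=c_{0}(S)$ with the sup norm, and relative compactness of $f(G)$ yields precisely $\sup_{s\notin S_{0}}\sup_{g\in G}|f_{s}(g)|\leq\varepsilon$, i.e.\ $(\|f_{s}\|_{\infty})_{s}\in c_{0}(S)$, giving the isometric isomorphism $C(G,c_{0}(S))\cong\coprod\nolimits^{\leq 1}_{S}C(G,k)$, and the same computation with Banach coefficients handles contracting colimits; your cocontinuity check (levelwise definition of $C^{\text{lu}}(G,-)$ on formal colimits, plus preservation of Banach cokernels) and the final Moerdijk step identifying the pointwise multiplication are fine. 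So your proof establishes the corollary over non-Archimedean base fields, but the Archimedean half of the crucial step fails — and it fails at exactly the point the paper's own one-line proof leaves unargued, namely the claim that $C^{\text{lu}}(G,-)$ commutes with $l^{1}$ when $G$ is compact. To repair the Archimedean case one would need either a different identification of the comonad or a restriction of the ambient monoidal framework, not a refinement of the compactness argument you propose.
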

\begin{proof}
If $G$ is compact, this monad is isomorphic to $C^{\text{lu}}(G,k) \hat{\otimes}_{k} -$.
\end{proof}

\begin{remark}
The above Corollary is not true if $G$ is not assumed to be compact, and $C^{\text{lu}}(G,k)$ is not \emph{a priori} a coalgebra.
\end{remark}

\subsection{Analytic Galois descent}
\

Let $K \subset L$ be two complete valued fields, let $\text{IndBan}_{K}$ and $\text{IndBan}_{L}$ be their respective categories of IndBanach spaces, let $\text{Hom}_{K}(-,-)$ and $\text{Hom}_{L}(-,-)$ be their morphisms, and let $\hat{\otimes}_{K}$ and $\hat{\otimes}_{L}$ be their monoidal structures. We assume throughout that $L$ is flat over $K$, which is automatic if we are working in the non-Archimedean case by Lemma 3.49 of \cite{SDiBAG}.

\begin{defn}
Let $\text{Res}_{K}^{L}:\text{IndBan}_{L} \rightarrow \text{IndBan}_{K}$ be the restriction functor that restricts $L$-IndBanach spaces to $K$-IndBanach spaces, and let   $\text{Ind}_{K}^{L}:\text{IndBan}_{K} \rightarrow \text{IndBan}_{L}$ be the induction functor $X \mapsto L \hat{\otimes}_{K} X$.
\end{defn}

\begin{lem}
\label{GaloisDescentAdjunction}
$\text{Ind}_{K}^{L}$ and $\text{Res}_{K}^{L}$ form an adjunction, $\text{Hom}_{L}(L \hat{\otimes}_{K} X,Y) \cong \text{Hom}_{K}(X,Y)$, for each $K$-IndBanach space $X$ and $L$-IndBanach space $Y$, thought of as also being a $K$-IndBanach space.
\end{lem}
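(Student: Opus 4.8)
The plan is to recognise this as the familiar extension-of-scalars/restriction-of-scalars adjunction and to prove it first at the level of Banach spaces, then bootstrap to the Ind completion using the colimit descriptions of $\text{Hom}$ and of $\hat{\otimes}_{K}$.

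First I would treat the case where $X$ is a $K$-Banach space and $Y$ an $L$-Banach space. Note that $L \hat{\otimes}_{K} X$ is a $K$-Banach space carrying the $L$-action inherited from the left factor, hence an $L$-Banach space. The unit of the would-be adjunction is the $K$-linear map $\iota_{X}: X \to \text{Res}_{K}^{L}(L \hat{\otimes}_{K} X)$, $x \mapsto 1 \otimes x$, of norm at most $1$. To produce the bijection, given a bounded $K$-linear map $f: X \to \text{Res}_{K}^{L} Y$ I define the $K$-bilinear map $L \times X \to Y$, $(\ell, x) \mapsto \ell \cdot f(x)$, using the $L$-module structure of $Y$; since $Y$ is an $L$-Banach space its norm satisfies $\|\ell y\| = |\ell|\,\|y\|$, so this map is bounded of norm $\|f\|$, and by the universal property of $\hat{\otimes}_{K}$ it factors through a unique bounded $K$-linear map $\tilde{f}: L \hat{\otimes}_{K} X \to Y$, which a check on elementary tensors shows to be $L$-linear. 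The assignments $g \mapsto g \circ \iota_{X}$ and $f \mapsto \tilde{f}$ are mutually inverse: composing $\iota_{X}$ with $\tilde{f}$ returns $f$ because $\tilde{f}(1 \otimes x) = f(x)$, while for $L$-linear $g$ the extension of $g \circ \iota_{X}$ sends $\ell \otimes x \mapsto \ell \cdot g(1 \otimes x) = g(\ell \otimes x)$ by $L$-linearity of $g$. These identifications are natural in both variables (and in fact isometric), giving the Banach-level adjunction $\text{Hom}_{L}(L \hat{\otimes}_{K} X, Y) \cong \text{Hom}_{K}(X, \text{Res}_{K}^{L} Y)$.

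Next I would pass to IndBanach spaces. Writing $X = \text{"colim"}_{i \in I} X_{i}$ with $X_{i}$ a $K$-Banach space and $Y = \text{"colim"}_{j \in J} Y_{j}$ with $Y_{j}$ an $L$-Banach space, the monoidal structure on $\text{IndBan}_{K}$ gives $\text{Ind}_{K}^{L} X = L \hat{\otimes}_{K} X = \text{"colim"}_{i \in I}(L \hat{\otimes}_{K} X_{i})$, so $\text{Ind}_{K}^{L}$ commutes with the filtered colimit defining $X$. Applying the formula $\text{Hom}(\text{"colim"}_{i} A_{i}, \text{"colim"}_{j} B_{j}) = \text{lim}_{i}\,\text{colim}_{j}\,\text{Hom}(A_{i}, B_{j})$ on both sides, I compute
\begin{align*}
\text{Hom}_{L}(L \hat{\otimes}_{K} X, Y) &\cong \text{lim}_{i}\,\text{colim}_{j}\,\text{Hom}_{L}(L \hat{\otimes}_{K} X_{i}, Y_{j})\\
&\cong \text{lim}_{i}\,\text{colim}_{j}\,\text{Hom}_{K}(X_{i}, \text{Res}_{K}^{L} Y_{j})\\
&\cong \text{Hom}_{K}(X, \text{Res}_{K}^{L} Y),
\end{align*}
where the middle step is the Banach-level adjunction applied term by term, and its naturality is precisely what allows it to pass through the $\text{lim}\,\text{colim}$.

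The main obstacle, and the only genuinely non-formal point, is the Banach-level identification: one must verify that the extension $\tilde{f}$ is both bounded and $L$-linear on all of the completed tensor product rather than merely on elementary tensors, which is where the compatibility of the $L$-norm with scalar multiplication, the universal property of $\hat{\otimes}_{K}$, and a density-and-continuity argument all enter. I would remark that the flatness hypothesis on $L/K$ is \emph{not} needed for the adjunction itself — the universal property of the completed tensor product does all the work — and is instead reserved for the descent results built on top of this lemma.
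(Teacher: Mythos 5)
Your proposal is correct and follows exactly the paper's (very terse) proof: establish the extension-of-scalars adjunction at the Banach level, then pass to the Ind completion via the $\lim\,\mathrm{colim}$ description of $\mathrm{Hom}$ — you have simply filled in the details the paper declares ``clear,'' and your closing remark that flatness of $L/K$ is not needed for the adjunction itself is also accurate.
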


\begin{proof}
This adjunction is clear when we restrict $X$ and $Y$ to being Banach spaces. Taking colimits then gives the result.
\end{proof}

\begin{remark}
From the above Lemma we obtain a monad $\text{Rest}_{K}^{L}\text{Ind}_{K}^{L} \cong L \hat{\otimes}_{K} -$ on $\text{IndBan}_{K}$, where the resulting $K$-algebra structure on $L$ is the obvious one. It is clear that the restriction functor satisfies the conditions of Barr-Beck, and so, unsurprisingly, $\text{IndBan}_{L}$ is equivalent to the category of $K$-IndBanach spaces with an action of $L$.
\end{remark}

\begin{prop}
$\text{IndBan}_{K}$ is equivalent to objects in $\text{IndBan}_{L}$ with a coaction by $U \cong L \hat{\otimes}_{K} -$ via the functor $X \mapsto L \hat{\otimes}_{K} X$ for $K$-IndBanach spaces $X$.
\end{prop}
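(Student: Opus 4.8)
The plan is to apply the dual Barr--Beck theorem (Theorem \ref{DualBarrBeck}) to the induction functor $F = \text{Ind}_{K}^{L}: \text{IndBan}_{K} \to \text{IndBan}_{L}$, exactly dual to the way the monadicity of $\text{Res}_{K}^{L}$ was observed in the remark preceding this statement. By Lemma \ref{GaloisDescentAdjunction} the functor $F$ has a right adjoint $G = \text{Res}_{K}^{L}$, so condition (i) holds and we obtain a comonad $U = F \circ G$ on $\text{IndBan}_{L}$; unwinding the definitions, $U(Y) = L \hat{\otimes}_{K} \text{Res}_{K}^{L}(Y)$ is precisely $L \hat{\otimes}_{K} -$, and the comparison functor $J_{\mathbb{U}}: \text{IndBan}_{K} \to (\text{IndBan}_{L})_{\mathbb{U}}$ is the functor $X \mapsto (L \hat{\otimes}_{K} X, F(\eta_{X}))$ appearing in the statement. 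It then suffices to check the remaining two hypotheses of Theorem \ref{DualBarrBeck}.

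For condition (iii), I would use that $\text{IndBan}_{K}$ is complete, so the equaliser of any pair $f, g: A \to B$ exists and, the category being additive, is computed as $\text{Ker}(f-g)$. Since $L$ is flat over $K$, the functor $F = L \hat{\otimes}_{K} -$ is exact and hence preserves this kernel, so $F(\text{eq}(f,g)) = \text{eq}(Ff, Fg)$. Whenever $Ff, Fg$ admit a split equaliser $h: H \to FA$, that split equaliser is in particular an equaliser, so by the uniqueness of equalisers it agrees with $F(\text{eq}(f,g))$, yielding $F(E) = H$ and $F(e) = h$ as required.

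The crux of the argument, and the step I expect to be the main obstacle, is condition (ii): that $F = L \hat{\otimes}_{K} -$ reflects isomorphisms. Here I would argue as in the proof of Lemma \ref{FibreFunctorsMonadic}. If $Ff$ is an isomorphism then, by exactness of $F$, we have $L \hat{\otimes}_{K} \text{Ker}(f) = \text{Ker}(Ff) = 0$ and likewise $L \hat{\otimes}_{K} \text{Coker}(f) = 0$; reflecting strictness of $f$ then reduces the problem to showing that $L \hat{\otimes}_{K} Z = 0$ forces $Z = 0$. This is exactly faithful flatness of $L$ over $K$, which is stronger than the flatness used above. Since $K$ is a field and $K \hookrightarrow L$ is an isometric monomorphism, one expects $Z$ to be a retract of $L \hat{\otimes}_{K} Z$ via a bounded $K$-linear splitting of $K \hookrightarrow L$ (which exists by Hahn--Banach in the Archimedean case), forcing $Z = 0$; in the non-Archimedean case the faithful flatness should instead be extracted from Lemma 3.49 of \cite{SDiBAG}. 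Establishing this faithfulness cleanly and uniformly across the Archimedean and non-Archimedean cases is the one genuinely non-formal point.

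With conditions (i)--(iii) verified, Theorem \ref{DualBarrBeck} yields that $J_{\mathbb{U}}$ is an equivalence, identifying $\text{IndBan}_{K}$ with the category of coalgebras over the comonad $U \cong L \hat{\otimes}_{K} -$ via the functor $X \mapsto L \hat{\otimes}_{K} X$, as claimed.
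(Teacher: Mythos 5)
Your proposal is correct and takes essentially the same route as the paper: the paper's proof obtains the comonad $U \cong L \hat{\otimes}_{K} -$ from the adjunction of Lemma \ref{GaloisDescentAdjunction} and then, using flatness of $L$ over $K$, invokes the proof of Lemma \ref{CoFibreFunctorsComonadic}, which is precisely the dual Barr--Beck verification you carry out explicitly. Your unpacking of conditions (ii) and (iii) --- strong exactness from flatness for the split-equaliser condition, and conservativity via faithful flatness (a bounded splitting of $K \hookrightarrow L$ in the Archimedean case, Lemma 3.49 of \cite{SDiBAG} in the non-Archimedean case) --- simply makes explicit what the paper leaves implicit in that citation.
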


\begin{proof}
We obtain a comonad $U=\text{Ind}_{K}^{L}\text{Rest}_{K}^{L} = L \hat{\otimes}_{K}  - $ on $\text{IndBan}_{L}$ from the adjunction in Lemma \ref{GaloisDescentAdjunction}. The comonad structure on $U$ has comultiplication given by the composition $L \hat{\otimes}_{K} Y \cong L \hat{\otimes}_{K} K \hat{\otimes}_{K} Y \rightarrow L \hat{\otimes}_{K} L \hat{\otimes}_{K} Y$, with counit given by scalar multiplication by $L$ on each $L$-IndBanach space $Y$. Since $L$ is assumed to be flat over $K$, the proof of Lemma \ref{CoFibreFunctorsComonadic} then gives our result.
\end{proof}

\begin{remark}
Note that this differs from the general theory outlined previously since $U$ is not $L$-linear, only $K$-linear. Thus we introduce the following framework to deal with this.
\end{remark}

\begin{defn}
For algebras $\mathcal{R}$ and $\mathcal{S}$ in $\text{IndBan}_{K}$, let us denote by $\mathcal{R}\text{-}\mathcal{S}\text{-IndBan}_{K}$ the category of $K$-IndBanach spaces with a left action by $\mathcal{R}$ and right action by $\mathcal{S}$ that are compatible. Then, for $K$-IndBanach algebras $\mathcal{R}$, $\mathcal{S}$, $\mathcal{T}$ and objects $M \in \mathcal{R}\text{-}\mathcal{S}\text{-IndBan}_{K}$ and $N \in \mathcal{S}\text{-}\mathcal{T}\text{-IndBan}_{K}$ we obtain an object $M \hat{\otimes}_{\mathcal{S}} N$ in $\mathcal{R}\text{-}\mathcal{T}\text{-IndBan}_{K}$ as the coequaliser of the two maps $M \hat{\otimes}_{K} \mathcal{S} \hat{\otimes}_{K} N \rightrightarrows M \hat{\otimes}_{K} N$. In particular, this gives $\mathcal{R}\text{-}\mathcal{R}\text{-IndBan}_{K}$ a monoidal structure, $\hat{\otimes}_{\mathcal{R}}$. Suppose now that $\mathcal{R}$ and $\mathcal{S}$ are commutative. For left $\mathcal{R}$ modules (respectively right $\mathcal{S}$ modules) $M$ and $N$ we may view $M \otimes_{K} N$ as a left $\mathcal{R}$ module (resp. right $\mathcal{S}$ module) in two ways depending on whether we act on $M$ or $N$. Thus, for $M, N \in \mathcal{R}\text{-}\mathcal{S}\text{-IndBan}_{K}$, there are four morphisms $\mathcal{R} \hat{\otimes}_{K} (M \hat{\otimes}_{K} N) \hat{\otimes}_{K} \mathcal{S} \rightarrow M \hat{\otimes}_{K} N$. The coequaliser of these four maps, which we denote by $M \hat{\otimes}_{\mathcal{R}\text{-}\mathcal{S}} N$, has a natural left action by $\mathcal{R}$ and right action by $\mathcal{S}$, hence gives an object in $\mathcal{R}\text{-}\mathcal{S}\text{-IndBan}_{K}$. In particular, this gives $\mathcal{R}\text{-}\mathcal{R}\text{-IndBan}_{K}$ a second monoidal structure, which we shall denote by $\hat{\otimes}_{\mathcal{R}\text{-}\mathcal{R}}$.
\end{defn}

\begin{lem}
\label{WeakTensorFunctorClassification}
A functor $\mathscr{V}:\text{IndBan}_{L} \rightarrow \text{IndBan}_{L}$ is isomorphic to one of the form $V \hat{\otimes}_{L} -$ for some $V \in L\text{-}L\text{-IndBan}_{K}$ if and only if cocontinuous functor, enriched over $\text{IndBan}_{K}$, that commutes with $l^{1}$.
\end{lem}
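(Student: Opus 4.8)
The plan is to mirror the proof of Lemma~\ref{TensorFunctorClassification}, but to carefully track two distinct $L$-actions on the object $V := \mathscr{V}(L)$, which together furnish the $L$-$L$-bimodule structure demanded by the statement. First I would dispatch the easy direction: given $V \in L\text{-}L\text{-IndBan}_{K}$, form $V \hat\otimes_{L} -$ by contracting the right $L$-action of $V$ against the $L$-module structure of the input, letting the left $L$-action of $V$ produce the output $L$-module structure. Being a left adjoint (to $\underline{\text{Hom}}_{L}(V,-)$), $V\hat\otimes_{L}-$ is cocontinuous; it is manifestly $K$-linear, hence enriched over $\text{IndBan}_{K}$; and, exactly as in Lemma~\ref{TensorFunctorClassification}, contracting coproducts commute with $\hat\otimes_{L}$, so it commutes with $l^{1}$. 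It is in general only $K$-linear and not $L$-linear, precisely because the left and right actions of $V$ need not agree.

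For the converse, suppose $\mathscr{V}$ is cocontinuous, $K$-enriched and commutes with $l^{1}$, and set $V := \mathscr{V}(L)$. As an object of $\text{IndBan}_{L}$ it carries a left $L$-action. A second, right, action arises functorially: each scalar multiplication $\mu_{\lambda}:L \to L$ is a morphism in $\text{IndBan}_{L}$, so $\mathscr{V}(\mu_{\lambda})$ is a $K$-linear endomorphism of $V$, and $\lambda \mapsto \mathscr{V}(\mu_{\lambda})$ defines a right $L$-action which commutes with the left one by naturality. Thus $V \in L\text{-}L\text{-IndBan}_{K}$.

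Next I would establish the isomorphism on free objects. Writing $l^{1}_{L}(S) := \coprod^{\leq 1}_{S} L$ for the free $L$-Banach space on a set $S$, the hypothesis that $\mathscr{V}$ commutes with $l^{1}$ gives
$$\mathscr{V}(l^{1}_{L}(S)) \cong \coprod\nolimits^{\leq 1}_{S} \mathscr{V}(L) = \coprod\nolimits^{\leq 1}_{S} V \cong V \hat\otimes_{L} l^{1}_{L}(S).$$
By the ($L$-linear analogue of the) presentation in Lemma~A.39 of \cite{NAAGaRAG}, any $L$-Banach space $W$ is the cokernel of a map $f:l^{1}_{L}(S') \to l^{1}_{L}(S)$ between such free objects. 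The crucial step is to verify that, under the isomorphisms above, $\mathscr{V}(f)$ is identified with $V \hat\otimes_{L} f$. As in Lemma~\ref{TensorFunctorClassification}, the $K$-enrichment of $\mathscr{V}$ reduces this to a check on each matrix entry of $f$; the new feature is that these entries are scalars $a_{x,y} \in L$, and since $a_{x,y}\iota_{y} = \iota_{y}\circ\mu_{a_{x,y}}$ we get $\mathscr{V}(a_{x,y}\iota_{y}) = \mathscr{V}(\iota_{y})\circ\mathscr{V}(\mu_{a_{x,y}})$, i.e. right multiplication by $a_{x,y}$ on $V$. The entries are thereby absorbed by exactly the right $L$-action that $\hat\otimes_{L}$ contracts. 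This compatibility is the main obstacle: because $\mathscr{V}$ is only $K$-enriched, the comparison maps produced by enrichment are a priori only $K$-linear, and the content of the step is that the full $L$-structure is correctly recorded by the right action on $V$.

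Finally, applying cocontinuity I would pass to cokernels, obtaining
$$\mathscr{V}(W) \cong \text{Coker}\bigl(\mathscr{V}(l^{1}_{L}(S')) \to \mathscr{V}(l^{1}_{L}(S))\bigr) \cong V \hat\otimes_{L} W$$
for every $L$-Banach space $W$, using that both $\mathscr{V}$ and $V\hat\otimes_{L}-$ preserve cokernels. Since every object of $\text{IndBan}_{L}$ is a colimit of $L$-Banach spaces and both functors are cocontinuous, the natural isomorphism extends to all of $\text{IndBan}_{L}$, yielding $\mathscr{V} \cong V \hat\otimes_{L} -$ with $V = \mathscr{V}(L) \in L\text{-}L\text{-IndBan}_{K}$.
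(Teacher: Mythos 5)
Your proposal is correct and follows essentially the same route as the paper's proof, which simply cites the argument of Lemma \ref{TensorFunctorClassification} and notes exactly the point you elaborate: $\mathscr{V}_{i}(a_{x,y}\iota_{y})$ is no longer $a_{x,y}\mathscr{V}_{i}(\iota_{y})$, so $V=\mathscr{V}(L)$ acquires a second (right) $L$-action via $\lambda \mapsto \mathscr{V}(\lambda\cdot\mathrm{id}_{L})$, with the matrix entries absorbed by that action under $\hat{\otimes}_{L}$. Your write-up is in fact more detailed than the paper's, but the decomposition, the key lemma, and the bimodule bookkeeping are identical.
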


\begin{proof}
This is entirely similar to the proof of Lemma \ref{TensorFunctorClassification}. The main difference is that $\mathscr{V}_{i}(a_{x,y}\iota_{y})$ is not equal to $a_{x,y}\mathscr{V}_{i}(\iota_{y})$ with the usual left $L$ action on $Y_{j_{i}}$. As a result $V=\mathscr{V}(L)$ now has two actions of $L$. On the left, $\lambda \in L$ acts by $\lambda \cdot \text{id}_{\mathscr{V}(L)}$, whilst on the right $\lambda$ acts by $\mathscr{V}(\lambda \cdot \text{id}_{L})$.
\end{proof}

\begin{prop}
\label{GaloisDescent}
$\text{IndBan}_{K}$ is equivalent to the category of left $(L \hat{\otimes}_{K} L)$-comodules in $\text{IndBan}_{L}$ via the induction functor. Here, $(L \hat{\otimes}_{K}L)$ is not a bialgebra in $\text{IndBan}_{L}$ but instead in $L\text{-}L\text{-IndBan}_{K}$ with respect to the monoidal structure $\hat{\otimes}_{L}$. The comultiplication on $(L \hat{\otimes}_{K}L)$ is given by
$$(a \otimes b) \mapsto (a \otimes 1) \otimes (1 \otimes b)$$
and the counit is just multiplication in $L$.
\end{prop}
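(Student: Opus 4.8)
The plan is to build on the earlier proposition realising $\text{IndBan}_K$ as the category of comodules over the comonad $U = \text{Ind}_K^L\,\text{Res}_K^L \cong L \hat{\otimes}_K -$ on $\text{IndBan}_L$, and to upgrade that comonad to an honest corepresenting coalgebra living in $L\text{-}L\text{-IndBan}_K$. The whole difficulty is that $U$ is only $K$-linear, not $L$-linear, so Theorem \ref{CoIndBanachTannaka} does not apply directly; the engine is instead Lemma \ref{WeakTensorFunctorClassification}.

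First I would check that $U$ satisfies the hypotheses of Lemma \ref{WeakTensorFunctorClassification}. It is enriched over $\text{IndBan}_K$ by construction, being assembled from $\hat{\otimes}_K$. It is cocontinuous because $\text{Res}_K^L$ preserves colimits (colimits in $\text{IndBan}_L$ and $\text{IndBan}_K$ are computed compatibly on underlying objects) and $L \hat{\otimes}_K -$ is cocontinuous on $\text{IndBan}_K$, so $U(\text{colim}_i Y_i) \cong L \hat{\otimes}_K \text{colim}_i Y_i \cong \text{colim}_i(L \hat{\otimes}_K Y_i)$. Finally it commutes with $l^1$ since $L \hat{\otimes}_K \coprod_S^{\leq 1} L \cong \coprod_S^{\leq 1}(L \hat{\otimes}_K L)$. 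Lemma \ref{WeakTensorFunctorClassification} then yields $U \cong V \hat{\otimes}_L -$ with $V = U(L) = L \hat{\otimes}_K L$, and the lemma pins down the two $L$-actions: the left action is $\lambda \cdot \mathrm{id}_{U(L)}$, acting on the first tensor factor, while the right action is $U(\lambda \cdot \mathrm{id}_L) = \mathrm{id}_L \hat{\otimes}_K (\lambda \cdot)$, acting on the second. Thus $V = L \hat{\otimes}_K L$ is exactly the $L$-$L$-bimodule indicated in the statement.

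It then remains to transport the comonad structure on $U$ through this isomorphism to a coalgebra structure on $V$ in $(L\text{-}L\text{-IndBan}_K, \hat{\otimes}_L)$. For the counit, the counit $\varepsilon_Y : L \hat{\otimes}_K Y \to Y$ of $U$ is scalar multiplication, so at $Y = L$ it is the multiplication $a \otimes b \mapsto ab$, giving the counit on $V$ as multiplication in $L$. For the comultiplication, I would use the canonical identification $V \hat{\otimes}_L V \cong L \hat{\otimes}_K L \hat{\otimes}_K L$ obtained by collapsing the inner $L \hat{\otimes}_L L \cong L$; under it the comonad comultiplication $\Delta_L : L \hat{\otimes}_K L \to L \hat{\otimes}_K L \hat{\otimes}_K L$, namely $a \otimes b \mapsto a \otimes 1 \otimes b$ (inserting the unit $K \to L$), corresponds precisely to $(a \otimes b) \mapsto (a \otimes 1) \otimes (1 \otimes b)$. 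Checking coassociativity and counitality is then the routine verification that these data are the image of the comonad axioms under the equivalence of Lemma \ref{WeakTensorFunctorClassification}. Since $U$-comodules and $V$-comodules coincide once $U \cong V \hat{\otimes}_L -$ as comonads, the earlier proposition delivers the asserted equivalence, realised by the induction functor.

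The step I expect to be the main obstacle is the careful bookkeeping of the two distinct $L$-actions on $V = L \hat{\otimes}_K L$ together with the verification that the monoidal product $\hat{\otimes}_L$ of $L\text{-}L\text{-IndBan}_K$ genuinely reproduces the composite $U \circ U$; in particular one must confirm that the coequaliser defining $\hat{\otimes}_L$ matches the associativity constraint $L \hat{\otimes}_K (L \hat{\otimes}_K -)$, so that $\delta$ lands in the correct space. Once the two actions are correctly matched, the identification of $\Delta$ and $\varepsilon$ is a direct computation.
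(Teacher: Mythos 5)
Your proposal is correct and follows exactly the route the paper intends: it leaves this proposition without an explicit proof, relying on the immediately preceding proposition (comonadicity of $U=\text{Ind}_{K}^{L}\text{Res}_{K}^{L}\cong L\hat{\otimes}_{K}-$) together with Lemma \ref{WeakTensorFunctorClassification}, which is precisely how you proceed, including the correct identification of the two $L$-actions on $V=U(L)$ and the transport of the comonad comultiplication and counit through $V\hat{\otimes}_{L}V\cong L\hat{\otimes}_{K}L\hat{\otimes}_{K}L$. Your verification of the hypotheses (enrichment over $\text{IndBan}_{K}$, cocontinuity via colimits being computed on underlying $K$-IndBanach spaces, and commutation with $l^{1}$) supplies details the paper leaves implicit, and all of them check out.
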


\begin{remark}
In \cite{CT}, Deligne refers to objects such as $(L \hat{\otimes}_{K} L)$ as \emph{groupoides},  or, in this particular case, \emph{cogebroides}.
\end{remark}

\begin{prop}
With respect to the equivalence in the above Proposition, the monoidal structure of $\text{IndBan}_{K}$ corresponds to the algebra structure on $(L \hat{\otimes}_{K} L)$ given by $(a \otimes b) \cdot (a' \otimes b')=aa' \otimes bb'$, with unit $1 \otimes 1$. Note that this algebra structure is with respect to the tensor product $\hat{\otimes}_{L\text{-}L}$ on $L\text{-}L\text{-IndBan}_{K}$.
\end{prop}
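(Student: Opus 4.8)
The plan is to run the monoidal half of the comodule correspondence, exactly as in the dual of the Remark following Subsection \ref{IndBanachMonoidalFunctors}, and then read off the resulting multiplication. By Moerdijk's theorem (invoked in the Remark after Theorem \ref{CoIndBanachTannaka}), once we know that the equivalence of Proposition \ref{GaloisDescent} is strong monoidal, the monoidal structure $\hat{\otimes}_{K}$ on $\text{IndBan}_{K}$ is the same datum as an algebra structure on $\mathscr{B} = L \hat{\otimes}_{K} L$ compatible with the comultiplication and counit already supplied. So the substance is (i) checking that the induction functor underlying the equivalence is strong monoidal, and (ii) computing the multiplication it forces.

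For (i), I would first observe that the equivalence of Proposition \ref{GaloisDescent} is given by $\text{Ind}_{K}^{L} = L \hat{\otimes}_{K} -$, and that induction is strong monoidal from $(\text{IndBan}_{K}, \hat{\otimes}_{K})$ to $(\text{IndBan}_{L}, \hat{\otimes}_{L})$ via the standard base-change isomorphism
$$(L \hat{\otimes}_{K} X) \hat{\otimes}_{L} (L \hat{\otimes}_{K} Y) \xrightarrow{\sim} L \hat{\otimes}_{K} (X \hat{\otimes}_{K} Y), \quad (\ell \otimes x) \otimes_{L} (\ell' \otimes y) \mapsto \ell\ell' \otimes x \otimes y,$$
together with $\text{Ind}_{K}^{L}(K) = L$. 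Since the forgetful functor from comodules, composed with the equivalence, is precisely $\text{Ind}_{K}^{L}$, this exhibits the forgetful functor as strong monoidal, which is exactly the hypothesis needed for the correspondence.

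For (ii), the multiplication $m : \mathscr{B} \hat{\otimes}_{L\text{-}L} \mathscr{B} \to \mathscr{B}$ is characterised by the requirement that the diagonal coaction on a tensor product of two comodules, which is built from $m$, agrees with the coaction on the induced comodule. Using the explicit coaction $\rho(\ell \otimes x) = (\ell \otimes 1) \otimes_{L} (1 \otimes x)$ coming from Proposition \ref{GaloisDescent}, and comparing $\rho$ on $\text{Ind}(X) \hat{\otimes}_{L} \text{Ind}(Y)$ with $\rho$ on $\text{Ind}(X \hat{\otimes}_{K} Y)$ through the base-change isomorphism above, the $\mathscr{B}$-components force $m\big((\ell \otimes 1) \otimes (\ell' \otimes 1)\big) = \ell\ell' \otimes 1$. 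Extending by the left and right $L$-actions, both of which $m$ must respect as a morphism in $L\text{-}L\text{-IndBan}_{K}$, yields $m\big((a \otimes b) \otimes (a' \otimes b')\big) = aa' \otimes bb'$ with unit $1 \otimes 1$. Conceptually this is the cleanest sanity check: $\mathscr{B} = L \hat{\otimes}_{K} L$ is the monoidal unit of $(L\text{-}L\text{-IndBan}_{K}, \hat{\otimes}_{L\text{-}L})$, the stated multiplication is precisely its canonical isomorphism $\mathscr{B} \hat{\otimes}_{L\text{-}L} \mathscr{B} \cong \mathscr{B}$, and this matches the fact that $K$ is the unit of $(\text{IndBan}_{K}, \hat{\otimes}_{K})$ with $\text{Ind}(K) = L$ the trivial comodule.

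The main obstacle will be the bookkeeping of the two distinct $L$-actions that appear because the comonad $U = L \hat{\otimes}_{K} -$ is only $K$-linear, not $L$-linear, which is the point flagged in the Remark after Proposition \ref{GaloisDescent} and formalised in Lemma \ref{WeakTensorFunctorClassification}. I must verify that the formula $aa' \otimes bb'$ descends to the coequaliser defining $\hat{\otimes}_{L\text{-}L}$, i.e. that it is insensitive to sliding the left $L$-action (acting on $a, a'$) and the right $L$-action (acting on $b, b'$) across the tensor; this is exactly where commutativity of $L$ over $K$ is used. The remaining associativity, unitality, and compatibility with the comultiplication $(a \otimes b) \mapsto (a \otimes 1) \otimes (1 \otimes b)$ and counit then reduce to routine diagram checks.
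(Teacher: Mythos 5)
Your argument is correct and follows precisely the route the paper intends: the paper states this proposition without proof, leaving it as an instance of the Moerdijk correspondence set up in Remark \ref{IndBanachMonoidalFunctors} and its dual, adapted to the bimodule setting via $\hat{\otimes}_{L\text{-}L}$ and Lemma \ref{WeakTensorFunctorClassification}. Your two steps --- strong monoidality of $\text{Ind}_{K}^{L}$ via the base-change isomorphism, then forcing $m\big((\ell \otimes 1) \otimes (\ell' \otimes 1)\big) = \ell\ell' \otimes 1$ by comparing the coaction on $\text{Ind}(X) \hat{\otimes}_{L} \text{Ind}(Y)$ with that on $\text{Ind}(X \hat{\otimes}_{K} Y)$, extending by the two $L$-actions, and verifying descent to the coequaliser defining $\hat{\otimes}_{L\text{-}L}$ (where commutativity of $L$ enters) --- fill in exactly what the paper leaves implicit, and your sanity check that $L \hat{\otimes}_{K} L$ with this multiplication is just the unit object of $(L\text{-}L\text{-IndBan}_{K}, \hat{\otimes}_{L\text{-}L})$ is also accurate.
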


\begin{defn}
Consider $\text{Hom}_{K}(L,L)$ as an object of $L\text{-}L\text{-IndBan}_{K}$ with left action $(\lambda \cdot f)(a)=\lambda f(a)$ and right action $(f \cdot \lambda)(a)=f(\lambda \cdot a)$ for $\lambda,a \in L$, $f \in \text{Hom}_{K}(L,L)$. Then composition gives $\text{Hom}_{K}(L,L)$ an algebra structure with respect to $\hat{\otimes}_{L}$.
\end{defn}

\begin{prop}
We have a non-degenerate pairing
$$\text{Hom}_{K}(L,L) \hat{\otimes}_{L} (L \hat{\otimes}_{K} L) \rightarrow L, \, \, \, \langle f,a \otimes b \rangle = f(a)b,$$
of an algebra with a coalgebra. That is, with the induced pairing between $\text{Hom}_{K}(L,L) \hat{\otimes}_{L} \text{Hom}_{K}(L,L)$ and $(L \hat{\otimes}_{K} L) \hat{\otimes}_{L} (L \hat{\otimes}_{K} L)$ given by
$$\langle f \otimes f', (a \otimes b) \otimes (a' \otimes b')\rangle=\langle f \langle f', a \otimes b \rangle, a' \otimes b' \rangle =\langle f ,\langle f', a \otimes b \rangle a' \otimes b' \rangle,$$
we have that $\langle f \circ g, a \otimes b \rangle =\langle f \otimes g, \Delta(a \otimes b) \rangle$.
\end{prop}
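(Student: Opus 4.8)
The plan is to prove three things in turn: that the bracket $\langle f, a\otimes b\rangle = f(a)b$ descends to a well-defined morphism out of the relative tensor product $\text{Hom}_{K}(L,L)\hat{\otimes}_{L}(L\hat{\otimes}_{K}L)$, that this pairing is non-degenerate, and that it intertwines composition in $\text{Hom}_{K}(L,L)$ with the comultiplication $\Delta$ on $(L\hat{\otimes}_{K}L)$. First I would check that the pairing is $L$-balanced, so that it factors through $\hat{\otimes}_{L}$. The relative tensor product is formed using the right $L$-action $(f\cdot\lambda)(x)=f(\lambda x)$ on the first factor and the left $L$-action $\lambda\cdot(a\otimes b)=(\lambda a)\otimes b$ on the second. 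A direct computation gives $\langle f\cdot\lambda, a\otimes b\rangle = f(\lambda a)b = \langle f,\lambda\cdot(a\otimes b)\rangle$, so the bracket coequalises the two $L$-actions and hence defines a morphism out of $\hat{\otimes}_{L}$. At the level of IndBanach objects this amounts to checking the identity on the constituent Banach spaces of the defining colimits and passing to the (co)limit, which is routine since the actions are continuous.

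Next I would establish non-degeneracy by identifying $\text{Hom}_{K}(L,L)$ with the $L$-linear dual of the coalgebra. For fixed $f$ the assignment $a\otimes b\mapsto f(a)b$ is right $L$-linear, so the pairing induces a map $\text{Hom}_{K}(L,L)\to\underline{\text{Hom}}_{L}((L\hat{\otimes}_{K}L),L)$. Conversely, given a right $L$-linear $\psi\colon L\hat{\otimes}_{K}L\to L$, the assignment $f(a):=\psi(a\otimes 1)$ recovers it, since $\psi(a\otimes b)=\psi((a\otimes 1)\cdot b)=\psi(a\otimes 1)\,b=f(a)b$; this exhibits the induced map as an isomorphism, and in particular injective, giving non-degeneracy on the algebra side. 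For the coalgebra side I would argue that if $a\otimes b$ pairs to zero against every $f$ then $a\otimes b=0$, reducing to elementary tensors and then invoking flatness of $L$ over $K$ (our standing hypothesis) to pass from the evident statement on $L\otimes_{K}L$ to the completed tensor product.

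I would then verify the compatibility identity, which is a direct computation once $\Delta$ and the induced pairing on the tensor squares are unwound. Using $\Delta(a\otimes b)=(a\otimes 1)\otimes(1\otimes b)$ together with the prescribed rule $\langle f\otimes g,(a\otimes 1)\otimes(1\otimes b)\rangle = \langle f,\langle g,a\otimes 1\rangle(1\otimes b)\rangle$, one computes $\langle g,a\otimes 1\rangle=g(a)$, so the right-hand side is $\langle f,g(a)\otimes b\rangle = f(g(a))\,b$, which equals $(f\circ g)(a)\,b=\langle f\circ g,a\otimes b\rangle$ as required. I would also record the unit--counit compatibility, namely $\langle\text{id}_{L},a\otimes b\rangle = a\,b$, which matches the counit $a\otimes b\mapsto ab$ of $(L\hat{\otimes}_{K}L)$ with the multiplicative unit $\text{id}_{L}$ of $\text{Hom}_{K}(L,L)$, completing the sense in which this is a pairing of an algebra with a coalgebra.

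I expect the main obstacle to be the non-degeneracy statement rather than the algebraic identity: the compatibility is essentially a one-line calculation on elementary tensors, whereas making non-degeneracy precise in $\text{IndBan}_{K}$ requires identifying $\text{Hom}_{K}(L,L)$ with the internal $L$-linear dual of $(L\hat{\otimes}_{K}L)$ as genuine IndBanach objects. This means reducing the colimit and limit descriptions of the relevant internal Homs to the Banach level and using flatness of $L$ over $K$ to control the completed tensor product, so that the bijection $f\leftrightarrow\psi$ above upgrades from a statement about underlying maps to an isomorphism of objects compatible with the norms.
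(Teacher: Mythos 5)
Your third paragraph is, word for word, the paper's entire proof: the authors verify only the displayed identity, via the same one-line computation $\langle f\circ g, a\otimes b\rangle = f(g(a))b = \langle f, (g(a)\cdot 1)\,1\otimes b\rangle = \langle f\otimes g, \Delta(a\otimes b)\rangle$ on elementary tensors, with $\Delta(a\otimes b)=(a\otimes 1)\otimes(1\otimes b)$ exactly as you use it. So on the point the paper actually proves, your approach is the same and correct. Your additional checks go beyond the paper: the $L$-balancedness computation is fine, and your algebra-side duality argument (that $\psi\mapsto\psi(\,\cdot\,\otimes 1)$ inverts $f\mapsto\langle f,-\rangle$, using right $L$-linearity and density of elementary tensors) is a correct instance of the tensor--hom adjunction $\underline{\text{Hom}}_{L}(L\hat{\otimes}_{K}L,L)\cong\underline{\text{Hom}}_{K}(L,L)$; the unit--counit compatibility you record also matches the paper's assertion that the counit is multiplication.

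However, your coalgebra-side non-degeneracy argument has a genuine gap: flatness of $L$ over $K$ does not deliver it. What you need is that the maps $\phi\otimes\text{id}_{L}$, for $\phi$ ranging over bounded $K$-linear functionals on $L$ (these arise as $f=\iota\circ\phi\in\text{Hom}_{K}(L,L)$), jointly separate points of the \emph{completed} tensor product $L\hat{\otimes}_{K}L$. Flatness is an exactness statement about the functor $L\hat{\otimes}_{K}-$ and says nothing about whether the $K$-dual of $L$ separates points of $L\hat{\otimes}_{K}L$; for Banach spaces this kind of separation is a delicate issue (in the Archimedean world it is tied to the approximation property, and in the non-Archimedean world over a non-spherically complete field there exist Banach spaces whose duals fail to separate points, e.g.\ quotients of $l^{\infty}$ by $c_{0}$). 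In the Archimedean case the issue is vacuous, since by Ostrowski the only option is $L/K=\mathbb{C}/\mathbb{R}$ and everything is finite-dimensional; but in the non-Archimedean case with $L/K$ infinite you would need an actual input such as $L$ being of countable type over $K$ (which holds, e.g., for $\mathbb{C}_{p}/\mathbb{Q}_{p}$) or spherical completeness of $K$ to guarantee a separating dual. The paper sidesteps this entirely -- its proof establishes only the algebra--coalgebra compatibility and offers no argument for non-degeneracy -- so your plan promises more than the source delivers, but as written the reduction ``elementary tensors plus flatness'' would not survive scrutiny and should either be replaced by a separation-of-points hypothesis or restricted to the cases where it can be verified.
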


\begin{proof}
$\langle f \circ g, a \otimes b \rangle = f(g(a))b = \langle f, (g(a) \cdot 1)1 \otimes b \rangle = \langle f \otimes g, \Delta(a \otimes b) \rangle$.
\end{proof}

\begin{defn}
Let $\Delta:\text{Hom}_{K}(L,L) \rightarrow \text{Hom}_{K}(L \hat{\otimes}_{K} L,L)$ be the $L$-linear bounded map $\Delta(f)(a \otimes b)=f(ab)$. If $L/K$ is finite then $\text{Hom}_{K}(L \hat{\otimes}_{K} L,L) \cong \text{Hom}_{K}(L,L) \hat{\otimes}_{L\text{-}L} \text{Hom}_{K}(L,L)$ and so $\Delta$ can be viewed as a comultiplication.
\end{defn}

\begin{prop}
We can pair $\text{Hom}_{K}(L \hat{\otimes}_{K} L,L)$ with $(L \hat{\otimes}_{K} L) \hat{\otimes}_{L\text{-}L} (L \hat{\otimes}_{K} L)$, $\langle f, (a \otimes b) \otimes (a' \otimes b') \rangle = f(a \otimes a')bb'$, $f \in \text{Hom}_{K}(L \hat{\otimes}_{K}L, L)$, $a,a',b,b' \in L$. In which case $\langle \Delta(f), (a \otimes b) \otimes (a' \otimes b') \rangle = \langle f, (a \otimes b)\cdot (a' \otimes b') \rangle$.
\end{prop}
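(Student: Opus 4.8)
The plan is to establish both halves of the statement — that the displayed expression defines a pairing on $(L \hat{\otimes}_K L) \hat{\otimes}_{L\text{-}L} (L \hat{\otimes}_K L)$, and that this pairing intertwines $\Delta$ with the multiplication of $(L \hat{\otimes}_K L)$ — by unwinding the relevant definitions, in the same spirit as the proof of the preceding pairing proposition. Throughout, $f$ appearing in the identity is taken in $\text{Hom}_K(L,L)$, so that $\Delta(f) \in \text{Hom}_K(L \hat{\otimes}_K L, L)$ is the map $a \otimes a' \mapsto f(aa')$, and the right-hand pairing is the one of $\text{Hom}_K(L,L)$ with $L \hat{\otimes}_K L$ given by $\langle f, c \otimes d \rangle = f(c)d$.

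First I would verify that the assignment $\langle f, (a \otimes b) \otimes (a' \otimes b') \rangle = f(a \otimes a')\,bb'$ descends across the coequaliser defining $\hat{\otimes}_{L\text{-}L}$. With $\mathcal{R} = \mathcal{S} = L$, this quotient allows the left $L$-action (on the first tensorand of each copy of $L \hat{\otimes}_K L$) and the right $L$-action (on the second tensorand) to be slid freely between the two factors. Invariance under sliding the right action is immediate from commutativity of $L$, since $(bs)b' = b(b's)$; invariance under sliding the left action requires $f(sa \otimes a') = f(a \otimes sa')$, which holds for the maps to which the identity is applied, namely those of the form $\Delta(g)$, where $g(saa') = g(asa')$ again by commutativity. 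Checking that one is working with the correct bimodule structure on each factor is the step I expect to demand the most care, and is the only genuine obstacle; the multiplication $(a \otimes b)\cdot(a' \otimes b') = aa' \otimes bb'$ is already known to descend across $\hat{\otimes}_{L\text{-}L}$ from the point at which the algebra structure on $(L \hat{\otimes}_K L)$ was introduced.

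The compatibility identity is then a direct calculation. Expanding the left side with the definition of the pairing and then of $\Delta$ gives
$$\langle \Delta(f), (a \otimes b) \otimes (a' \otimes b') \rangle = \Delta(f)(a \otimes a')\,bb' = f(aa')\,bb',$$
while on the right side the multiplication sends $(a \otimes b)\cdot(a' \otimes b') = aa' \otimes bb'$, so that
$$\langle f, (a \otimes b)\cdot(a' \otimes b') \rangle = \langle f, aa' \otimes bb' \rangle = f(aa')\,bb'.$$
Both sides equal $f(aa')\,bb'$, which is the desired identity.
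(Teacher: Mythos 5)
Your proof is correct, and on the displayed identity it coincides with the argument the paper leaves implicit: the paper states this proposition without proof, the intended justification being the same one-line computation as for the preceding algebra--coalgebra pairing, namely that both sides reduce to $f(aa')bb'$ once one notes $\Delta(f)(a \otimes a') = f(aa')$ and $(a \otimes b)\cdot(a' \otimes b') = aa' \otimes bb'$. Where you go beyond the paper is the well-definedness check, and you have isolated the one genuine subtlety correctly: descending the formula across the $\hat{\otimes}_{L\text{-}L}$ coequaliser is automatic for the right-action relation (commutativity of $L$), but the left-action relation requires $f(\lambda a \otimes a') = f(a \otimes \lambda a')$, which fails for a general $f \in \text{Hom}_{K}(L \hat{\otimes}_{K} L, L)$; via the isomorphism $L \hat{\otimes}_{L} L \cong L$ the balanced functionals are precisely those of the form $\Delta(g)$ with $g \in \text{Hom}_{K}(L,L)$. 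So, read literally, the proposition's opening claim only defines a pairing on representatives, and your restriction of the identity to $f = \Delta(g)$ is not merely cautious but necessary for the second argument to live in $(L \hat{\otimes}_{K} L) \hat{\otimes}_{L\text{-}L} (L \hat{\otimes}_{K} L)$ --- this sharpens the statement rather than deviating from it, and is consistent with the only use the paper makes of the pairing (the duality between $L \hat{\otimes}_{K} L$ and $\text{Hom}_{K}(L,L)$ in the ensuing remark). Your remaining observations --- that the multiplication already descends, as asserted when the algebra structure on $L \hat{\otimes}_{K} L$ was introduced with respect to $\hat{\otimes}_{L\text{-}L}$, and that the right-hand pairing is the earlier one $\langle f, c \otimes d \rangle = f(c)d$ --- are both accurate, so the argument is complete.
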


\begin{remark}
As a bialgebra, $L \hat{\otimes}_{K} L$ can be thought of as dual to $\text{Hom}_{K}(L,L)$. Since the Galois group, $\Gamma = \Gamma_{L/K}$, sits as the group-like elements within $\text{Hom}_{K}(L,L)$, we may think of $L \hat{\otimes}_{K} L$ as functions on the Galois group. We shall make this more precise. Since $\Gamma$ is a profinite, hence compact, topological group, its strongly continuous $L$-IndBanach representations should fit in the framework of Section \ref{TopologicalGroupsExample}. Since $\Gamma$ does not act $L$-linearly, only $K$-linearly, we must modify the example slightly.
\end{remark}

\begin{defn}
Let $\Gamma\text{-Mod}_{L}$ be the category of $L$-IndBanach spaces $V$ with a strongly continuous action on $\text{Res}_{K}^{L}(V)$ as in Definition \ref{StronglyContinuousAction}, given by $\pi_{V,i,i'}:\Gamma \rightarrow \text{Hom}_{K}(V_{i},V_{i'})$ for $V \cong \text{"colim"}_{i \in I}V_{i}$, such that $\pi_{V,i,i'}(\sigma)(\lambda v)=\sigma(\lambda)\pi_{V,i,i'}(v)$ for $\lambda \in L$, $v \in V_{i}$ and $\sigma \in \Gamma$. Let $F$ be the forgetful functor to $\text{IndBan}_{L}$. The diagonal action of $\Gamma$ makes $\Gamma\text{-Mod}_{L}$ monoidal, with $F$ strong monoidal. Let, for a Banach space $W$, $\tilde{C}^{\text{lu}}(\Gamma,W)$ be the $K$-Banach space of left uniformly continuous functions from $\Gamma$ to $W$ extended to an $L$-Banach space with the twisted action $(\lambda \cdot f)(\sigma)=\sigma(\lambda)f(\sigma)$ for $\lambda \in L$ and $f \in \tilde{C}^{\text{lu}}(\Gamma,W)$. For $W=\text{"colim"}_{i \in I}W_{i}$ an IndBanach space we define $\tilde{C}^{\text{lu}}(\Gamma,W)=\text{"colim"}_{i \in I}\tilde{C}^{\text{lu}}(\Gamma,W_{i})$.
\end{defn}

\begin{lem}
\label{WeakStronglyContinuousAdjunction}
The forgetful functor $F$ has a left adjoint $\tilde{C}^{\text{lu}}(\Gamma,-)$.
\end{lem}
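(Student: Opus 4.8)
The plan is to establish a natural isomorphism
$$\text{Hom}_{\Gamma\text{-Mod}_{L}}(\tilde{C}^{\text{lu}}(\Gamma,W),V)\;\cong\;\text{Hom}_{\text{IndBan}_{L}}(W,FV),$$
natural in $W\in\text{IndBan}_{L}$ and $V\in\Gamma\text{-Mod}_{L}$. This is the \emph{opposite} variance to Lemma \ref{StronglyContinuousAdjunction}, and the flip is forced by the semilinearity of the action; the correct template to imitate is therefore the discrete induction adjunction of Lemma \ref{DiscreteGroupsAdjunction}, with $\tilde{C}^{\text{lu}}(\Gamma,-)$ playing the role of a twisted, completed induced module. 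As in Lemmas \ref{GaloisDescentAdjunction} and \ref{StronglyContinuousAdjunction} I would first reduce to the case that $W$ is a single Banach space, writing a general $W=\text{"colim"}_{i}W_{i}$ and passing to the filtered colimit at the end. Since $\Gamma=\Gamma_{L/K}$ is profinite, hence compact, I take the index set of Section \ref{TopologicalGroupsExample} to consist of a single point with $G_{\ast}=\Gamma$, so that $\tilde{C}^{\text{lu}}(\Gamma,W)$ is genuinely the twisted space of continuous functions of Definition \ref{StronglyContinuousAction}.

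The correspondence itself I would build exactly as in Lemma \ref{DiscreteGroupsAdjunction}. In one direction, an $L$-linear map $f\colon W\to FV$ is sent to the map $\Phi_{f}\colon\tilde{C}^{\text{lu}}(\Gamma,W)\to V$ obtained by applying $f$ pointwise and then averaging the result back through the action of $\Gamma$ on $V$, the analogue of ``on the copy indexed by $\sigma$, act by $\sigma$''; in the reverse direction one restricts a $\Gamma$-equivariant $\Psi$ to the component supported at the identity. The essential point, and the reason the variance is reversed relative to the isometric case, is that the twist $(\lambda\cdot\xi)(\sigma)=\sigma(\lambda)\xi(\sigma)$ is exactly compensated by the semilinearity $\pi_{V}(\sigma)(\lambda v)=\sigma(\lambda)\pi_{V}(\sigma)(v)$ of the action on $V$: the two occurrences of $\sigma(\lambda)$ cancel, so that $\Phi_{f}$ is genuinely $L$-linear rather than merely $K$-linear, while $\Gamma$-equivariance follows by reindexing the averaging. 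When $\Gamma$ is finite the two assignments are visibly mutually inverse and no convergence issue arises, which already yields the lemma for a finite extension $L/K$.

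For a general profinite $\Gamma$ I would express $\tilde{C}^{\text{lu}}(\Gamma,W)$ via the explicit colimit of Lemma \ref{DescriptionOfLUFunctions}, i.e. as a completion of the locally constant functions $\text{"colim"}_{N}\,\text{Maps}(\Gamma/N,W)$ over the open normal subgroups $N\trianglelefteq\Gamma$, and attempt to assemble the finite-level correspondences above into the global one, checking compatibility with the transition maps and uniform boundedness of the resulting operators. The existence of a left adjoint in the abstract can be secured from Theorem \ref{AdjointFunctorTheorem}, since $\Gamma\text{-Mod}_{L}$ is complete and $F$ preserves both small limits and small colimits (these being computed on underlying spaces with the induced actions); the real content of the lemma is then the identification of that adjoint with $\tilde{C}^{\text{lu}}(\Gamma,-)$.

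I expect this identification, i.e. the passage to the profinite limit, to be the main obstacle. The averaging map $\Phi_{f}$ realising the left adjoint is, for infinite $\Gamma$, an integral of $\sigma\mapsto\pi_{V}(\sigma)f(\xi(\sigma))$ against the action, and one must show that it converges to a bounded $L$-linear $\Gamma$-equivariant map and that the identity-component restriction inverts it. Equivalently, one must produce a natural unit $W\to\tilde{C}^{\text{lu}}(\Gamma,W)$ replacing the identity-supported evaluation used in the finite case, and it is precisely here that the compactness of $\Gamma$ together with the interaction of the semilinear twist with the completion must be used in an essential way; controlling this averaging, and verifying that the two assignments remain mutually inverse and natural after completing and passing to the colimit in $W$, is the delicate step on which the argument turns.
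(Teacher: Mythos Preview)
The statement as printed contains a typo: ``left'' should read ``right''. The paper's own proof, the subsequent Proposition (which identifies $\Gamma\text{-Mod}_{L}$ with \emph{comodules} via Lemma~\ref{CoFibreFunctorsComonadic}), and the explicit reference to Lemma~\ref{StronglyContinuousAdjunction} all make this clear. You took the word ``left'' at face value and then tried to justify the flip in variance by the semilinearity; this is the source of the trouble.

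The paper's argument is two lines. For $V\in\Gamma\text{-Mod}_{L}$ the $K$-linear adjoint $\pi_{V}^{\ast}\colon V\to C^{\text{lu}}(\Gamma,V)$, $v\mapsto(\sigma\mapsto\sigma\cdot v)$, satisfies $\pi_{V}^{\ast}(\lambda v)(\sigma)=\sigma(\lambda)\,\pi_{V}^{\ast}(v)(\sigma)$ by semilinearity of the action, which is exactly the twisted $L$-structure on $\tilde{C}^{\text{lu}}(\Gamma,V)$; hence $\pi_{V}^{\ast}$ is $L$-linear into $\tilde{C}^{\text{lu}}(\Gamma,V)$. From there the proof of Lemma~\ref{StronglyContinuousAdjunction} carries over verbatim and yields
\[
\text{Hom}_{\text{IndBan}_{L}}(FV,W)\;\cong\;\text{Hom}_{\Gamma\text{-Mod}_{L}}(V,\tilde{C}^{\text{lu}}(\Gamma,W)),
\]
i.e.\ $\tilde{C}^{\text{lu}}(\Gamma,-)$ is \emph{right} adjoint to $F$. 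So the semilinearity does not force a flip; the twist is precisely engineered so that the \emph{same} variance as in Lemma~\ref{StronglyContinuousAdjunction} goes through over $L$.

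Your proposed left-adjoint argument has a genuine gap, not merely a delicate step. The counit you need, the ``averaging'' map $\tilde{C}^{\text{lu}}(\Gamma,FV)\to V$, is an integral over $\Gamma$ that has no reason to exist as a bounded $L$-linear map in this generality (in particular over non-Archimedean $k$), and the candidate unit $W\to\tilde{C}^{\text{lu}}(\Gamma,W)$ given by constants fails to be $L$-linear for the twisted action, since $(\lambda\cdot\text{const}_{w})(\sigma)=\sigma(\lambda)w\neq\lambda w$ in general. For finite $\Gamma$ the two adjoints do coincide, which is why your sanity check there succeeds, but that coincidence does not survive the passage to infinite profinite $\Gamma$; the natural left adjoint in that regime would be a completed group-algebra (measure) construction, not $\tilde{C}^{\text{lu}}(\Gamma,-)$.
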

\begin{proof}
The $K$-linear adjoint map $\pi_{V}^{\ast}:V \rightarrow C^{\text{lu}}(\Gamma,V)$ extends to an $L$-linear map $\pi_{V}^{\ast}:V \rightarrow \tilde{C}^{\text{lu}}(\Gamma,V)$. The rest follows as in the proof of Lemma \ref{StronglyContinuousAdjunction}.
\end{proof}

\begin{prop}
The category $\Gamma\text{-Mod}_{L}$ is equivalent to monoidal category of left $C^{\text{lu}}(\Gamma,L)$-comodules in $\text{IndBan}_{L}$. Here, $C^{\text{lu}}(\Gamma,L)$ is an object of $L\text{-}L\text{-IndBan}_{K}$ with left action by $L$ as described for $\tilde{C}^{\text{lu}}(\Gamma,L)$ and right action by $L$ the usual pointwise action on $C^{\text{lu}}(\Gamma,L)$. The multiplication is pointwise, and with respect to $\hat{\otimes}_{L\text{-}L}$, and comultiplication given by the composition
$$C^{\text{lu}}(\Gamma,L) \overset{\Delta}{\longrightarrow} C^{\text{lu}}(\Gamma,C^{\text{lu}}(\Gamma,L)) \cong C^{\text{lu}}(\Gamma,L) \hat{\otimes}_{L} C^{\text{lu}}(\Gamma,L)$$
where $\Delta(f)(\sigma)(\tau)=f(\tau \sigma)$ for $f \in C^{\text{lu}}(\Gamma,L)$, $\sigma, \tau \in \Gamma$.
\end{prop}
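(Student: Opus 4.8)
The plan is to run the comonadic machinery of Subsection 2.2 in the $L\text{-}L$-bimodule setting, exactly as in Proposition \ref{GaloisDescent}, rather than over $\text{IndBan}_L$ directly, since the twisted $L$-action forces everything to be only $K$-linear. First I would check that $F:\Gamma\text{-Mod}_L \to \text{IndBan}_L$ is a co-fibre functor: cocontinuity, strong exactness, faithfulness and reflection of strict morphisms all follow as in the untwisted topological-group example of Subsection \ref{TopologicalGroupsExample}, the action being carried along colimits and (co)kernels. Combined with the adjoint $\tilde{C}^{\text{lu}}(\Gamma,-)$ furnished by Lemma \ref{WeakStronglyContinuousAdjunction} (playing the role of $C^{\text{lu}}(G,-)$ in Lemma \ref{StronglyContinuousAdjunction}), this produces the comonad $U = F\tilde{C}^{\text{lu}}(\Gamma,-)$ on $\text{IndBan}_L$, and Lemma \ref{CoFibreFunctorsComonadic} identifies $\Gamma\text{-Mod}_L$ with $U$-coalgebras.

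Next I would establish that $U$ is cocontinuous and commutes with $l^1$. Here I would use that $\Gamma = \Gamma_{L/K}$ is profinite, hence compact, so that, exactly as in the compact case of Subsection \ref{TopologicalGroupsExample}, left uniformly continuous functions agree with all continuous functions and $\tilde{C}^{\text{lu}}(\Gamma,-)$ acquires a tensor description; cocontinuity and $l^1$-commutation are then inherited. The essential point is that, because of the twist $(\lambda\cdot f)(\sigma)=\sigma(\lambda)f(\sigma)$, the functor $U$ is enriched only over $\text{IndBan}_K$, not over $\text{IndBan}_L$. This is precisely the hypothesis of the weak tensor-functor classification, Lemma \ref{WeakTensorFunctorClassification}, which then gives $U \cong C^{\text{lu}}(\Gamma,L)\hat{\otimes}_L-$ with $C^{\text{lu}}(\Gamma,L) := U(L) = \tilde{C}^{\text{lu}}(\Gamma,L)$ regarded as an object of $L\text{-}L\text{-IndBan}_K$: the left $L$-action is the twisted one coming from $\lambda\cdot\mathrm{id}$, while the right $L$-action is $\mathscr{V}(\lambda\cdot\mathrm{id}_L)$, which unwinds to the pointwise action. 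The argument of Theorem \ref{CoIndBanachTannaka}, read in the bimodule setting, then yields the equivalence of $\Gamma\text{-Mod}_L$ with left $C^{\text{lu}}(\Gamma,L)$-comodules.

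It remains to identify the (co)algebra structure. The comonad comultiplication $U \Rightarrow UU$, evaluated at $L$ and composed with the isomorphism $\tilde{C}^{\text{lu}}(\Gamma,\tilde{C}^{\text{lu}}(\Gamma,L)) \cong C^{\text{lu}}(\Gamma,L)\hat{\otimes}_L C^{\text{lu}}(\Gamma,L)$ valid for compact $\Gamma$, transports to the stated $\Delta(f)(\sigma)(\tau)=f(\tau\sigma)$, with counit the evaluation $f \mapsto f(1)$; this is a routine unwinding of the unit and counit of the adjunction, parallel to the Remark following Lemma \ref{StronglyContinuousAdjunction}. For the monoidal statement, $F$ is strong monoidal for the diagonal action, so the dual of Moerdijk's correspondence (\cite{MoTC}, as in the final Remark of Subsection \ref{IndBanachMonoidalFunctors}) equips $C^{\text{lu}}(\Gamma,L)$ with an algebra structure with respect to $\hat{\otimes}_{L\text{-}L}$, which I would compute to be pointwise multiplication, so that $C^{\text{lu}}(\Gamma,L)$ becomes a bialgebra and the equivalence is monoidal.

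The main obstacle I expect is the bookkeeping of the two $L$-actions through the weak classification: one must verify that the isomorphism $U(L)\cong C^{\text{lu}}(\Gamma,L)$ is one of $L\text{-}L$-bimodules and, crucially, that the comultiplication genuinely lands in the $\hat{\otimes}_L$-tensor while the multiplication is taken with respect to the different product $\hat{\otimes}_{L\text{-}L}$ — the same phenomenon already seen for $L\hat{\otimes}_K L$ in Proposition \ref{GaloisDescent}. Confirming that these two tensor products interact correctly with the group-theoretic formulas for $\Delta$ and the pointwise product, and that compactness of $\Gamma$ really does supply the isomorphism $\tilde{C}^{\text{lu}}(\Gamma,\tilde{C}^{\text{lu}}(\Gamma,L))\cong C^{\text{lu}}(\Gamma,L)\hat{\otimes}_L C^{\text{lu}}(\Gamma,L)$, is where the real work lies.
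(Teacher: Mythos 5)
Your proposal is correct and takes essentially the same approach as the paper, whose entire proof is to cite Lemma \ref{WeakStronglyContinuousAdjunction}, Lemma \ref{CoFibreFunctorsComonadic} and Lemma \ref{WeakTensorFunctorClassification} in exactly the sequence you execute them. Your additional detail --- the co-fibre functor check, cocontinuity and $l^{1}$-commutation of $U$ via compactness of $\Gamma$, the identification of the twisted left and pointwise right $L$-actions through the bimodule classification, and the Moerdijk-style identification of the $\hat{\otimes}_{L\text{-}L}$-algebra structure --- is precisely what the paper leaves implicit, so no corrections are needed.
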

\begin{proof}
This follows from Lemma \ref{WeakStronglyContinuousAdjunction}, Lemma \ref{CoFibreFunctorsComonadic} and Lemma \ref{WeakTensorFunctorClassification}.
\end{proof}

\begin{lem}
\label{DescentComparisonFunction}
There is a morphism $\phi: L \hat{\otimes}_{K} L \rightarrow C^{\text{lu}}(\Gamma,L)$, given by
$$\phi(a \otimes b)(\sigma) = \sigma(a)b,$$
that is compatible with the multiplication and comultiplication, and has norm $\|\phi\|= 1$.
\end{lem}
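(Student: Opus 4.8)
The plan is to define $\phi$ on elementary tensors by the given formula, check it is a well-defined bounded morphism in $L\text{-}L\text{-IndBan}_{K}$, and then verify the two compatibilities and the norm equality by direct computation, the comultiplication being the delicate point.

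First I would establish well-definedness. The assignment $(a,b) \mapsto [\sigma \mapsto \sigma(a)b]$ is $K$-bilinear, since each $\sigma \in \Gamma$ fixes $K$ and hence acts $K$-linearly, so it factors through a map out of the algebraic tensor product $L \otimes_{K} L$; together with the bound proved below this extends uniquely to the completed tensor product $L \hat{\otimes}_{K} L$. To see that $\phi(a \otimes b)$ genuinely lies in $C^{\text{lu}}(\Gamma,L)$, write $\pi_{L}^{\ast}:L \rightarrow C^{\text{lu}}(\Gamma,L)$, $a \mapsto [\sigma \mapsto \sigma(a)]$, for the adjoint of the strongly continuous isometric Galois action on $L$; then $\phi(a \otimes b) = \pi_{L}^{\ast}(a)\cdot b$ is the product of a left uniformly continuous function with the constant function $b$, hence is itself left uniformly continuous. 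Finally, recalling that the left $L$-action on $C^{\text{lu}}(\Gamma,L)$ is the twisted one $(\lambda \cdot f)(\sigma) = \sigma(\lambda)f(\sigma)$ while the right action is pointwise, both $\phi(\lambda a \otimes b)$ and $\lambda \cdot \phi(a \otimes b)$ send $\sigma$ to $\sigma(\lambda)\sigma(a)b$, and both $\phi(a \otimes b\lambda)$ and $\phi(a \otimes b)\cdot \lambda$ send $\sigma$ to $\sigma(a)b\lambda$; thus $\phi$ is a morphism of $L$-$L$-bimodules.

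For the norm I would use that every $\sigma \in \Gamma$ is an isometry of $L$, as automorphisms over $K$ preserve the valuation. Then for any representative $\sum_{i} a_{i} \otimes b_{i}$ of $\xi \in L \hat{\otimes}_{K} L$ and any $\sigma$, the (strong) triangle inequality together with multiplicativity of the norm gives $\|\phi(\xi)(\sigma)\| = \|\sum_{i}\sigma(a_{i})b_{i}\| \leq \sum_{i}\|a_{i}\|\|b_{i}\|$ (respectively $\max_{i}\|a_{i}\|\|b_{i}\|$ in the non-Archimedean case); taking the supremum over $\sigma$ and the infimum over representatives yields $\|\phi(\xi)\| \leq \|\xi\|$, so $\|\phi\| \leq 1$. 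Since $\phi(1 \otimes 1)$ is the constant function $1$, of norm $1 = \|1 \otimes 1\|$, we get $\|\phi\| = 1$.

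Compatibility with multiplication is immediate on elementary tensors: $\phi((a \otimes b)(a' \otimes b')) = \phi(aa' \otimes bb')$ sends $\sigma$ to $\sigma(a)\sigma(a')bb'$, which is exactly the pointwise product $\phi(a \otimes b)\cdot \phi(a' \otimes b')$, and $\phi(1 \otimes 1) = 1$ matches units, so $\phi$ respects the $\hat{\otimes}_{L\text{-}L}$-algebra structures. The main obstacle is compatibility with comultiplication, where one must track the twisted $L$-action through the identification $C^{\text{lu}}(\Gamma,L) \hat{\otimes}_{L} C^{\text{lu}}(\Gamma,L) \cong C^{\text{lu}}(\Gamma,C^{\text{lu}}(\Gamma,L))$, under which $f \otimes w$ corresponds to $\sigma \mapsto f(\sigma)\cdot w$ with $f(\sigma) \in L$ acting on $w$ by the twisted left action. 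Applying $\phi \hat{\otimes} \phi$ to $\Delta(a \otimes b) = (a \otimes 1)\otimes(1 \otimes b)$ gives $\phi(a \otimes 1)\otimes \phi(1 \otimes b)$, where $\phi(a \otimes 1)(\sigma) = \sigma(a)$ and $\phi(1 \otimes b)$ is the constant function $b$; the identification sends this to $\sigma \mapsto \sigma(a)\cdot b$, whose value at $\tau$ is $\tau(\sigma(a))b = (\tau\sigma)(a)b$. On the other side $\Delta(\phi(a \otimes b))(\sigma)(\tau) = \phi(a \otimes b)(\tau\sigma) = (\tau\sigma)(a)b$, so the two agree. It is precisely the interplay between $\Delta(f)(\sigma)(\tau) = f(\tau\sigma)$ and the twisting that produces the composite $\tau\sigma$ on both sides, and verifying this matching carefully is the crux of the lemma.
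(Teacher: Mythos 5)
Your proposal is correct and follows essentially the same route as the paper's proof: direct verification on elementary tensors of the bimodule property, multiplicativity, and the comultiplication identity $\Delta(\phi(a\otimes b))(\sigma)(\tau)=(\tau\sigma)(a)b$ via the twisted left $L$-action, together with the norm bound $\|\phi\|\leq 1$ from the isometry of Galois automorphisms and equality from unit preservation. Your only deviation is cosmetic and arguably cleaner: you obtain left uniform continuity by factoring $\phi(a\otimes b)=\pi_{L}^{\ast}(a)\cdot b$ through the adjoint of the strongly continuous Galois action, where the paper instead argues directly that $\text{Sup}_{\sigma\in\Gamma}|\phi(a\otimes b)(x_{\lambda}\sigma)-\phi(a\otimes b)(\sigma)|$ is eventually $0$ along a net $x_{\lambda}\to 1$.
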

\begin{proof}
Firstly, the fact that $\phi(a \otimes b)$ is left uniformly continuous is straightforward to prove. In fact, if $(x_{\lambda})_{\lambda \in \Lambda}$ is a net converging to $1 \in \Gamma$ then $\text{Sup}_{\sigma \in \Gamma}|\phi(a \otimes b)(x_{\lambda}\sigma)-\phi(a \otimes b)(\sigma)|$ eventually becomes constant at $0$. Secondly,
$$\phi(\lambda \cdot (a \otimes b) \cdot \mu)(\sigma)=\sigma(\lambda)\sigma(a)b \mu = \lambda \cdot (\phi(a \otimes b)(\sigma)) \cdot \mu,$$
$$\phi((a\otimes b)(a' \otimes b'))(\sigma) = \sigma(a)\sigma(a')bb'= (\phi(a\otimes b)\cdot \phi(a' \otimes b'))(\sigma),$$
and
$$\Delta(\phi(a \otimes b))(\sigma)(\tau)=\tau\sigma(a)b=(\sigma(a)\cdot\phi(1 \otimes b))(\tau)=(\phi(a \otimes 1) \otimes \phi(1 \otimes b))(\sigma)(\tau)$$
for $a,b,a',b'\lambda,\mu \in L$ and $\sigma,\tau \in \Gamma$. Also, in the Archimedean case,
$$|\phi(\sum\nolimits_{i} a_{i} \otimes b_{i})(\sigma)| = |\sum\nolimits_{i} \sigma (a_{i})b_{i}| \leq \sum\nolimits_{i}| \sigma (a_{i})||b_{i}|=\sum\nolimits_{i}  |a_{i}||b_{i}|$$
for all $a_{i},b_{i} \in L$ and $\sigma \in \Gamma$, hence
$$\text{Sup}_{\sigma \in \Gamma}|\phi(\alpha)(\sigma)| \leq \text{Inf}\{\sum\nolimits_{i}  |a_{i}||b_{i}| \mid \alpha = \sum\nolimits_{i} a_{i} \otimes b_{i}\}$$
for all $\alpha \in L \hat{\otimes}_{K}L$. That is, $\|\phi\| \leq 1$. The non-Archimedean case is similar. The fact that $\|\phi\|=1$ follows since $\phi$ preserves the unit, which is of norm 1 in both spaces.
\end{proof}

\begin{lem}
\label{ContractingColimitOfFields}
Let $L/K$ be an extension of complete valued fields such that the algebraic elements are dense in $L$. Then $L \cong \text{colim}^{\leq 1}_{K \subset L' \subset L}L'$, where this is the contracting colimit taken in $\text{Ban}_{K}$ over all finite extensions $K \subset L'$ contained in $L$.
\end{lem}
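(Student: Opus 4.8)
The plan is to identify the contracting filtered colimit $\text{colim}^{\leq 1}_{K \subset L' \subset L} L'$ with the completion of the directed union of the finite subextensions, and then to use the density hypothesis to recognise that completion as $L$ itself.

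First I would record that the diagram is one of isometric inclusions. For a complete valued field $K$, the valuation extends uniquely to any finite extension $L'$, and the restriction to $L'$ of the valuation on $L$ is such an extension; hence it \emph{is} the valuation on $L'$, and each inclusion $L' \hookrightarrow L''$ (for $K \subset L' \subset L'' \subset L$ finite) is isometric. The poset of finite subextensions is filtered, since the compositum $L'L''$ is again a finite subextension, so we are looking at a filtered diagram in $\text{Ban}_K^{\leq 1}$ all of whose transition maps are isometric embeddings.

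Next I would compute this filtered colimit explicitly. Since $\text{Ban}_K^{\leq 1}$ is cocomplete, the colimit exists, and I claim it is the completion of the directed union $\bigcup_{L'} L'$ equipped with the colimit seminorm. The universal property is immediate from that of the colimit: a morphism of norm $\leq 1$ out of $\text{colim}^{\leq 1}_{L'} L'$ into a Banach space $W$ is the same as a compatible family of norm $\leq 1$ maps $\{L' \to W\}$, which, as the $L'$ form a directed union, is precisely a single norm $\leq 1$ map out of $\bigcup_{L'} L'$, and such a map extends uniquely to the completion. The key point is that, because the transition maps are isometric, the colimit seminorm of an element represented by $x \in L'$ equals $\inf_{L'' \supseteq L'}\|x\|_{L''} = \|x\|_L$; in particular it is already a norm, so no quotient by a nontrivial subspace is required and the colimit is simply the completion of $\bigcup_{L'} L'$ with the norm restricted from $L$.

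Finally I would identify $\bigcup_{L'} L'$ with the set of elements of $L$ that are algebraic over $K$: any such $\alpha$ lies in the finite subextension $K(\alpha)$, and conversely every finite subextension consists of algebraic elements. By hypothesis this set is dense in $L$, and $L$ is complete, so its completion (equivalently, its closure in $L$) is all of $L$. The canonical comparison morphism $\text{colim}^{\leq 1}_{L'} L' \to L$ is therefore an isometric isomorphism, yielding $L \cong \text{colim}^{\leq 1}_{K \subset L' \subset L} L'$ as required. The main obstacle is the middle step: one must check that the filtered colimit in $\text{Ban}_K^{\leq 1}$, built abstractly as a coequaliser of contracting coproducts, really is computed as the completion of the directed union, and in particular that the colimit seminorm coincides with the restricted norm rather than being strictly smaller. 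This is exactly where isometry of the transition maps is used; once it is in place, the density hypothesis supplies the rest.
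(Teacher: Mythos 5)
Your proof is correct and takes essentially the same approach as the paper's: both identify $\bigcup_{K \subset L' \subset L} L'$ with the algebraic elements and use density together with completeness of $L$ to extend (uniformly bounded, compatible) families of maps uniquely, which is exactly the universal property of the contracting colimit. The only difference is presentational --- the paper verifies the universal property of $L$ directly rather than first computing the colimit as a completion of the directed union --- and your explicit check that the inclusions $L' \hookrightarrow L$ are isometric, via uniqueness of the extension of the valuation to finite extensions of a complete field, is a point the paper leaves implicit in calling these maps strict monomorphisms.
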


\begin{proof}
We have strict monomorphisms $L' \hookrightarrow L$ for all finite extensions $K \subset L'$ contained in $L$. Suppose we are given a compatible collection of bounded linear maps $\{f_{L'}:L' \rightarrow V\}_{K \subset L' \subset L}$ such that $\{\|f_{L'}\|\}_{K \subset L' \subset L}$ is bounded by some $M>0$. Then we obtain a well defined bounded linear map $f:\bigcup_{K \subset L' \subset L} L' \rightarrow V$ defined on each $L'$ by $f_{L'}$. The compatibility of the collection $\{f_{L'}\}_{K \subset L' \subset L}$ ensures that this is well defined. By assumption, $\bigcup_{K \subset L' \subset L} L'$ is dense in $L$, hence we may extend $f$ to a unique map $L \rightarrow V$ such that $f_{L'}$ is the composition $L' \hookrightarrow L \rightarrow V$. Clearly $\|f\| \leq M$.
\end{proof}

\begin{lem}
\label{ContractingColimitOfTensors}
For an extension of complete valued fields, $L/K$, such that the algebraic elements are dense in $L$, there is an isomorphism $L \hat{\otimes}_{K} L \cong \text{colim}^{\leq 1}_{K \subset L' \subset L}L' \hat{\otimes}_{K} L'$.
\end{lem}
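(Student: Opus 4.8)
The plan is to bootstrap the claim from Lemma \ref{ContractingColimitOfFields}, which already expresses $L$ as the contracting colimit $\text{colim}^{\leq 1}_{K \subset L' \subset L} L'$ over the directed poset $P$ of finite subextensions $K \subset L' \subset L$, together with the fact that the completed tensor product is cocontinuous in each variable. Recall from the proof of Lemma \ref{TensorFunctorClassification} that, for any (Ind)Banach space $V$, the functor $V \hat{\otimes}_{K} -$ is a left adjoint on $\text{Ban}_{K}^{\leq 1}$ and hence commutes with contracting colimits. First I would apply Lemma \ref{ContractingColimitOfFields} to both tensor factors, writing
$$L \hat{\otimes}_{K} L \cong \left(\text{colim}^{\leq 1}_{L' \in P} L'\right) \hat{\otimes}_{K} \left(\text{colim}^{\leq 1}_{L'' \in P} L''\right),$$
and then pull both contracting colimits out of $\hat{\otimes}_{K}$ using cocontinuity in each slot, obtaining the double contracting colimit $\text{colim}^{\leq 1}_{(L',L'') \in P \times P}(L' \hat{\otimes}_{K} L'')$ indexed by the product poset.

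The next step is a cofinality argument. The diagonal embedding $\Delta: P \to P \times P$, $L' \mapsto (L', L')$, is final: given any $(L', L'')$, the compositum $L' L''$ is again a finite subextension of $L$ and dominates both factors, so $(L'L'', L'L'') \geq (L', L'')$; moreover $P$ is directed, so the relevant comma categories are nonempty and connected. Hence the double contracting colimit restricts along the diagonal to $\text{colim}^{\leq 1}_{L' \in P}(L' \hat{\otimes}_{K} L')$, which is exactly the right-hand side of the claim. Note that each $L' \hat{\otimes}_{K} L'$ is finite-dimensional over $K$, so the completed and algebraic tensor products agree on the indexing objects and no convergence subtleties arise there.

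The point requiring the most care is justifying that the contracting colimits commute with $\hat{\otimes}_{K}$ in the stated sense. Unlike ordinary colimits, contracting colimits are colimits computed in $\text{Ban}_{K}^{\leq 1}$ producing an object of $\text{Ban}_{K}$, and (as noted in the Remark that filtered contracting colimits are not left exact) one cannot invoke abstract cocontinuity blindly. The safe route is to use that $V \hat{\otimes}_{K} -$ is genuinely a left adjoint on the contracting category, its right adjoint being the internal hom, so it preserves all colimits computed there, including the filtered contracting colimits over $P$; cocontinuity in the first variable follows symmetrically. With this in hand, the only remaining verification is that the canonical comparison map from the diagonal colimit to $L \hat{\otimes}_{K} L$ is the evident one induced by the inclusions $L' \hookrightarrow L$, which is immediate from naturality. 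I expect the bulk of the genuine work to lie in this commuting step, while the cofinality reduction and the identification of the comparison map are routine.
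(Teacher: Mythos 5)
Your proposal is correct and is essentially the expansion of the paper's own (one-line) proof, which simply cites Lemma \ref{ContractingColimitOfFields}: the intended argument is exactly your bootstrap, using that $V \hat{\otimes}_{K} -$ is a left adjoint on $\text{Ban}_{K}^{\leq 1}$ (a fact the paper records in the proof of Lemma \ref{TensorFunctorClassification}) to pull the contracting colimits through the tensor product, followed by the routine finality of the diagonal in the directed product poset. You have correctly identified the one point needing care --- that commutation with contracting colimits must be justified via the closed monoidal structure of $\text{Ban}_{K}^{\leq 1}$ rather than cocontinuity on $\text{Ban}_{K}$ --- so nothing is missing.
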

\begin{proof}
This follows from Lemma \ref{ContractingColimitOfFields}.
\end{proof}

\begin{lem}
\label{LocallyConstantsDense}
For $G$ a profinite group and $V$ a Banach space, the subspace of locally constant functions is dense in $C^{\text{lu}}(G,V)$.
\end{lem}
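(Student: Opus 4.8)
The plan is to exploit the defining property of left uniform continuity together with the fact that a profinite group admits a neighbourhood basis of the identity consisting of open normal subgroups. First I would reformulate the net-based definition of $C^{\text{lu}}(G,V)$ into a uniform $\varepsilon$-neighbourhood statement: for every $f \in C^{\text{lu}}(G,V)$ and every $\varepsilon > 0$ there is an open neighbourhood $U$ of the identity with $\text{Sup}_{x \in G}\|f(hx)-f(x)\| < \varepsilon$ for all $h \in U$. This follows by contraposition. If no such $U$ existed for some $\varepsilon$, then choosing for each neighbourhood $U \ni e$ an element $h_{U} \in U$ witnessing $\text{Sup}_{x}\|f(h_{U}x)-f(x)\| \geq \varepsilon$ produces a net $(h_{U})$, indexed by the neighbourhood filter of $e$ ordered by reverse inclusion, converging to the identity along which the supremum fails to tend to $0$, contradicting the definition.

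Second, since $G$ is profinite the open normal subgroups form a neighbourhood basis at $e$, so the $U$ above may be shrunk to an open normal subgroup $N \trianglelefteq G$, which is necessarily of finite index, with $\text{Sup}_{x}\|f(hx)-f(x)\| < \varepsilon$ for all $h \in N$. In particular $f$ varies by less than $\varepsilon$ on each right coset $Nx$, since $hx$ ranges over $Nx$ as $h$ ranges over $N$.

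Third, I would build an explicit locally constant approximant. Choose a finite set of representatives $x_{1},\dots,x_{m}$ for the right cosets $N\backslash G$ and define $g:G \to V$ by $g|_{Nx_{i}} \equiv f(x_{i})$. As $N$ is open, each coset $Nx_{i}$ is open, so $g$ factors through the finite discrete quotient $N\backslash G$ and is therefore locally constant. For any $y \in G$, writing $y = hx_{i}$ with $h \in N$, we obtain $\|f(y)-g(y)\| = \|f(hx_{i})-f(x_{i})\| < \varepsilon$, whence $\|f-g\|_{\infty} \leq \varepsilon$. Letting $\varepsilon \to 0$ shows that $f$ lies in the closure of the locally constant functions.

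The only genuinely non-formal step is the first reduction, and even there the work is packaged by the net-to-neighbourhood contraposition above; the remainder is the standard observation that left uniform continuity with respect to an open subgroup forces approximate constancy on its cosets. I would expect the point requiring the most care to be keeping the handedness consistent: left uniform continuity controls $f(hx)$, so it is the right cosets $Nx$ on which $f$ is nearly constant, and the coset representatives must be chosen accordingly rather than from the left coset space.
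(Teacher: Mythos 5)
Your proof is correct, and it takes a genuinely different (and slightly cleaner) route than the paper's. You use left uniform continuity \emph{globally}: the net-to-neighbourhood contraposition produces, for each $\varepsilon>0$, a single open normal subgroup $N \trianglelefteq G$, automatically of finite index by compactness, with $\text{Sup}_{x \in G}\|f(hx)-f(x)\|<\varepsilon$ for all $h \in N$; the finitely many cosets $Nx_{1},\ldots,Nx_{m}$ then partition $G$ and the approximant is defined in one stroke, with no covering argument needed. The paper instead argues \emph{pointwise}: at each $g_{0}$ it finds a coset neighbourhood $g_{0}N_{g_{0}}$ on which $f$ stays within $\varepsilon$ of $f(g_{0})$ (a step for which continuity at $g_{0}$ already suffices), then invokes compactness of $G$ to extract a finite subcover of compact open sets, refines it to a pairwise disjoint one, and forms $\sum_{U}\lambda_{U}\chi_{U}$. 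The two arguments apportion the hypotheses differently: yours spends the uniform continuity hypothesis where it is stated, using compactness only to guarantee finite index (so that your $g$ factors through a finite quotient, matching the paper's identification of locally constant functions with the union of the $H$-invariant subspaces for finite-index $H$), while the paper's effectively re-derives the needed uniformity via the finite subcover, in line with its earlier remark that on a compact group continuity already implies left uniform continuity. A side benefit of your version is that it silently handles the disjointification the paper asserts without detail, since cosets of a single $N$ are disjoint by fiat; and your closing caution about handedness is well placed, though here it is harmless, as normality of $N$ gives $Nx=xN$.
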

\begin{proof}
Let $f:G \rightarrow V$ be a left uniformly continuous function. For a fixed $g_{0} \in \Gamma$, suppose for a contradiction that the net
$$\left(\text{Sup}_{g \in g_{0}N} \|f(g)-f(g_{0})\|\right)_{\substack{N \trianglelefteq G \\ [G;N]<\infty}}$$
does not converge to $0$. Hence there is a sequence $(g_{N})_{N \trianglelefteq G}$ converging to $g_{0}$ such that $\|f(g_{N})-f(g_{0})\|$ does not converge to $0$, which contradicts left uniform continuity of $f$. Thus for all $\varepsilon >0$ there exists $N_{g_{0}} \trianglelefteq G$ such that $\text{Sup}_{g \in g_{0}N_{g_{0}}}\|f(g)-f(g_{0}\|)<\varepsilon$. This means that, by looking at $\{N_{g_{0}} \mid g_{0} \in G\}$ and $f(g_{0}) \in V$, for each $\varepsilon >0$ there exists a cover $\mathcal{U}_{\varepsilon}$ of compact open subsets which has the property that each $U \in \mathcal{U}$ has some $\lambda_{U} \in V$ for which $\text{Sup}_{g \in U} \|f(g)-\lambda_{U}\| <\varepsilon$. By compactness of $G$ we may assume that $\mathcal{U}_{\varepsilon}$ is finite, and furthermore we can take the sets in $\mathcal{U}_{\varepsilon}$ to be pairwise disjoint. We then have that the locally constant function $\sum_{U \in \mathcal{U}}\lambda_{U}\chi_{U}$ approximates $f$, $\|f-\sum_{U \in \mathcal{U}}\lambda_{U}\chi_{U}\| \leq \varepsilon$, in $C^{\text{lu}}(G,V)$.
\end{proof}

\begin{lem}
\label{WeakContractingColimitOfFunctions}
Let $L/K$ be an extension of complete valued fields such that the algebraic elements are dense in $L$ and form a Galois extension over $K$. Then there is an isomorphism $\text{colim}^{\leq 1}_{H \trianglelefteq \Gamma} C^{\text{lu}}(\Gamma/H,L) \overset{\sim}{\longrightarrow} C^{\text{lu}}(\Gamma,L)$, where this is the contracting colimit taken in $\text{Ban}_{K}$ over all finite index normal subgroups $H \trianglelefteq \Gamma$.
\end{lem}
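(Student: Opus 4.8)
The plan is to identify both sides explicitly and then reduce the statement to the density result of Lemma \ref{LocallyConstantsDense}.

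First I would unwind the objects in the colimit. For a finite-index normal subgroup $H \trianglelefteq \Gamma$ the quotient $\Gamma/H$ is a finite discrete group, so every function $\Gamma/H \to L$ is automatically continuous, bounded and (vacuously) left uniformly continuous; hence $C^{\text{lu}}(\Gamma/H,L)$ is the Banach space of all $L$-valued functions on the finite set $\Gamma/H$ with the supremum norm. For $H' \subseteq H$ the canonical surjection $\Gamma/H' \twoheadrightarrow \Gamma/H$ induces, by precomposition, the transition map $C^{\text{lu}}(\Gamma/H,L) \to C^{\text{lu}}(\Gamma/H',L)$; since this map of finite sets is surjective, the pullback preserves suprema, so each transition map is an isometric injection (and the system is filtered, as intersections of finite-index normal subgroups are again finite-index normal). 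Likewise the canonical surjection $\Gamma \twoheadrightarrow \Gamma/H$ sends a function on $\Gamma/H$ to a locally constant, hence continuous, bounded and left uniformly continuous, function on $\Gamma$, and this assignment is again an isometry compatible with the transition maps. By the universal property of the contracting colimit (cf.\ Lemma \ref{ContractingUniversalProperty}) these assemble into a morphism $\Phi:\text{colim}^{\leq 1}_{H} C^{\text{lu}}(\Gamma/H,L) \to C^{\text{lu}}(\Gamma,L)$ of norm at most $1$.

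Next I would compute the image of $\Phi$. A function $\Gamma \to L$ lies in the image precisely when it factors through some finite quotient $\Gamma/H$ by an open normal subgroup of finite index, that is, precisely when it is locally constant: any locally constant function on the profinite group $\Gamma$ is constant on the cosets of some open subgroup $U$, and replacing $U$ by its normal core $\bigcap_{g \in \Gamma} gUg^{-1}$ (a finite intersection, hence an open normal subgroup of finite index) shows it factors through a finite quotient by a normal subgroup. Thus the image of $\Phi$ is exactly the subspace of locally constant functions in $C^{\text{lu}}(\Gamma,L)$.

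I would then conclude using completeness and density. Because the transition maps are isometric, the contracting colimit is the completion of the algebraic filtered union $\bigcup_{H} C^{\text{lu}}(\Gamma/H,L)$, on which $\Phi$ restricts to the isometric inclusion into $C^{\text{lu}}(\Gamma,L)$; since $\Phi$ is continuous it is therefore an isometry. Its image, the locally constant functions, is dense in $C^{\text{lu}}(\Gamma,L)$ by Lemma \ref{LocallyConstantsDense} applied to the profinite group $\Gamma$ and the Banach space $L$. An isometry has complete, hence closed, image, so this dense image is all of $C^{\text{lu}}(\Gamma,L)$, and $\Phi$ is a surjective isometry, hence an isomorphism in $\text{Ban}_{K}$. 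The main obstacle I anticipate is the justification that the contracting colimit of a filtered system of isometric inclusions is literally the completion of the union, so that $\Phi$ can be taken isometric rather than merely norm-decreasing; this requires unwinding the cokernel-of-coproducts construction of the contracting colimit and checking the resulting colimit seminorm restricts to the original norm on each term. Both the Archimedean and non-Archimedean definitions of the contracting coproduct must be handled here, but in each case the isometry of the transition maps forces the colimit seminorm to agree with the common norm, which is what makes the argument go through uniformly.
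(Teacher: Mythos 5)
Your proof is correct and follows essentially the same route as the paper's: both identify the contracting colimit with the closure in $C^{\text{lu}}(\Gamma,L)$ of the union of the subspaces of functions factoring through finite quotients (equivalently, the locally constant functions), and then conclude by the density result of Lemma \ref{LocallyConstantsDense}. The one step you flag as an obstacle --- that the contracting colimit of isometric inclusions is the completion of the union --- is exactly what the paper disposes of by invoking ``a proof similar to that of Lemma \ref{ContractingColimitOfFields}'', i.e.\ a direct verification of the universal property for uniformly bounded compatible families, and your sketched justification amounts to the same verification.
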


\begin{proof}
A proof similar to that of Lemma \ref{ContractingColimitOfFields} shows that the Banach space $\text{colim}^{\leq 1}_{H \trianglelefteq \Gamma} C^{\text{lu}}(\Gamma/H,L)$ is isomorphic to the closure of $\bigcup_{H \trianglelefteq \Gamma} C^{\text{lu}}(\Gamma,L)^{H}$, since the image of $C^{\text{lu}}(\Gamma/H,L)$ in $C^{\text{lu}}(\Gamma,L)$ is just the $H$ invariant subspace. It follows from the definition of the profinite topology on $\Gamma$ that a function is locally constant if and only if it lies in one of these invariant subspaces. By Lemma \ref{LocallyConstantsDense} this subspace is dense.
\end{proof}

\begin{lem}
\label{ContractingColimitOfFunctions}
For an extension of complete valued fields, $L/K$, such that the algebraic elements are dense in $L$ and form a Galois extension over $K$, there is an isomorphism $\text{colim}^{\leq 1}_{H \trianglelefteq \Gamma} C^{\text{lu}}(\Gamma/H,L^{H}) \overset{\sim}{\longrightarrow} C^{\text{lu}}(\Gamma,L)$, where the contracting colimit is taken in $\text{Ban}_{K}$ over all finite index normal subgroups $H \trianglelefteq \Gamma$.
\end{lem}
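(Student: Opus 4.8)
The plan is to realise the contracting colimit concretely as a closure inside $C^{\text{lu}}(\Gamma,L)$, exactly as in the proofs of Lemma \ref{ContractingColimitOfFields} and Lemma \ref{WeakContractingColimitOfFunctions}, but now keeping track of the field in which the values of the functions lie. First I would observe that, for finite-index normal $H' \trianglelefteq H$, the transition map
$$C^{\text{lu}}(\Gamma/H,L^{H}) \longrightarrow C^{\text{lu}}(\Gamma/H',L^{H'})$$
is the composite of pullback along the surjection $\Gamma/H' \twoheadrightarrow \Gamma/H$ with the isometric inclusion $L^{H} \hookrightarrow L^{H'}$, and hence is itself an isometric inclusion; likewise each structural map into $C^{\text{lu}}(\Gamma,L)$ is an isometric inclusion, identifying $C^{\text{lu}}(\Gamma/H,L^{H})$ with the subspace $S_{H} \subseteq C^{\text{lu}}(\Gamma,L)$ of $H$-invariant functions taking values in $L^{H}$. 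As in \emph{loc.\ cit.}, a filtered contracting colimit of isometric inclusions is computed as the closure of the union of the images, so the map in question is the inclusion of $\overline{\bigcup_{H} S_{H}}$ into $C^{\text{lu}}(\Gamma,L)$, and it suffices to show this closure is all of $C^{\text{lu}}(\Gamma,L)$.

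By Lemma \ref{LocallyConstantsDense} the locally constant functions are dense in $C^{\text{lu}}(\Gamma,L)$, so it is enough to approximate an arbitrary locally constant $f:\Gamma \to L$ by elements of $\bigcup_{H} S_{H}$. Such an $f$ factors through $\Gamma/H_{0}$ for some finite-index normal $H_{0}$ and takes finitely many values $\lambda_{1},\ldots,\lambda_{n} \in L$. Since the algebraic elements are dense in $L$, for any $\varepsilon > 0$ I would choose algebraic $\lambda_{i}'$ with $\|\lambda_{i}-\lambda_{i}'\| < \varepsilon$; these finitely many $\lambda_{i}'$ generate a finite subextension of $K$, whose Galois closure corresponds under the Galois correspondence to a finite-index normal subgroup $H_{1} \trianglelefteq \Gamma$ with $\lambda_{i}' \in L^{H_{1}}$ for every $i$.

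Setting $H := H_{0} \cap H_{1}$, the function $f' := \sum_{i} \lambda_{i}' \chi_{f^{-1}(\lambda_{i})}$ is again $H_{0}$-invariant, hence $H$-invariant, and takes values in $L^{H_{1}} \subseteq L^{H}$, so $f' \in S_{H}$; moreover $\|f-f'\| = \max_{i}\|\lambda_{i}-\lambda_{i}'\| < \varepsilon$. Thus every locally constant function lies in $\overline{\bigcup_{H} S_{H}}$, and by density this closure is all of $C^{\text{lu}}(\Gamma,L)$, yielding the desired isomorphism.

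The one point requiring genuine care is the coordination of the two subgroups: the level $H_{0}$ controlling where $f$ is constant must be intersected with the subgroup $H_{1}$ controlling the field in which the approximate values live, and one must verify, via passage to a Galois closure, that finitely many algebraic elements can indeed be placed inside a single fixed field $L^{H_{1}}$ for some finite-index \emph{normal} $H_{1}$. Beyond this bookkeeping the argument is a direct refinement of Lemma \ref{WeakContractingColimitOfFunctions}, combining its use of density of locally constant functions with the density of algebraic elements exploited in Lemma \ref{ContractingColimitOfFields}.
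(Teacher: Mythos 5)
Your proof is correct, but it takes a more hands-on route than the paper. The paper's proof is a two-line formal argument: it cites Lemma \ref{WeakContractingColimitOfFunctions} to get $\text{colim}^{\leq 1}_{H} C^{\text{lu}}(\Gamma/H,L) \cong C^{\text{lu}}(\Gamma,L)$, then uses the fact that $C^{\text{lu}}(G,-)$ commutes with contracting colimits when $G$ is finite discrete, together with Lemma \ref{ContractingColimitOfFields} (so that $C^{\text{lu}}(\Gamma/H,L) \cong \text{colim}^{\leq 1}_{H'} C^{\text{lu}}(\Gamma/H,L^{H'})$), and the Galois correspondence identifying finite Galois subextensions with the $L^{H}$; the conclusion is then a double-colimit interchange along the cofinal diagonal $H \cap H'$. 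You instead bypass the commutation fact entirely and argue at the level of elements: identifying each $C^{\text{lu}}(\Gamma/H,L^{H})$ isometrically with the subspace $S_{H}$ of $H$-invariant $L^{H}$-valued functions, realising the contracting colimit as $\overline{\bigcup_{H} S_{H}}$ (the same closure-of-union technique the paper uses in Lemmas \ref{ContractingColimitOfFields} and \ref{WeakContractingColimitOfFunctions}), and then running a single $\varepsilon$-approximation that simultaneously refines the level of local constancy ($H_{0}$) and the field of values ($H_{1}$, obtained via a Galois closure). What the paper's version buys is brevity and a reusable structural fact (commutation of $C^{\text{lu}}(\text{finite},-)$ with contracting colimits); what yours buys is self-containedness and explicitness — in particular you make visible two points the paper leaves implicit, namely the cofinality bookkeeping $H = H_{0} \cap H_{1}$ and the verification that finitely many algebraic elements land in a single $L^{H_{1}}$ with $H_{1}$ \emph{normal} of finite index. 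Your appeal to Lemma \ref{LocallyConstantsDense} and to density of the algebraic elements means both proofs ultimately rest on the same two density inputs, so the arguments are equivalent in substance even though yours re-derives the approximation directly rather than quoting the weaker colimit lemma and upgrading it formally.
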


\begin{proof}
This follows from Lemma \ref{ContractingColimitOfFields}, Lemma \ref{WeakContractingColimitOfFunctions}, the fact that $C^{\text{lu}}(G,-)$ commutes with contracting colimits for finite discrete groups $G$, and the fact that all finite Galois extensions over $K$ in $L$ are of the form $L^{H}$ for $H \trianglelefteq \Gamma$ of finite index.
\end{proof}

\begin{lem}
\label{DescentComparisonInFiniteDimensions}
If $L/K$ is a finite Galois extension then the morphism $\phi$ in Lemma \ref{DescentComparisonFunction} is an isomorphism.
\end{lem}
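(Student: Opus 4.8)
The plan is to reduce the statement to a purely algebraic, finite-dimensional linear-algebra fact and then invoke the uniqueness of the norm topology in finite dimensions. Since $L/K$ is finite Galois, the Galois group $\Gamma$ is a finite discrete group, so $C^{\text{lu}}(\Gamma,L)$ is simply the space of all functions $\Gamma \to L$, a finite-dimensional $K$-vector space of dimension $|\Gamma| \cdot [L:K] = [L:K]^2$. On the other side, $L \hat{\otimes}_{K} L$ also has $K$-dimension $[L:K]^2$. Over a complete valued field any two norms on a finite-dimensional space are equivalent, and every linear map between finite-dimensional spaces is bounded with bounded inverse; hence $\phi$ (already shown to be a bounded morphism of norm $1$ in Lemma \ref{DescentComparisonFunction}) will be an isomorphism in $\text{Ban}_{K}$ as soon as it is a bijection of underlying vector spaces. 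By the equality of dimensions it therefore suffices to prove that $\phi$ is injective.

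For injectivity I would fix a $K$-basis $e_{1},\ldots,e_{n}$ of $L$, with $n = [L:K] = |\Gamma|$, and use that $L \hat{\otimes}_{K} L$ is free of rank $n$ as a right $L$-module on the elements $e_{j} \otimes 1$. Writing a general element as $x = \sum_{j} e_{j} \otimes \mu_{j}$ with $\mu_{j} \in L$, the defining formula gives $\phi(x)(\sigma) = \sum_{j}\sigma(e_{j})\mu_{j}$. Thus $\phi(x)=0$ is exactly the linear system $M\vec{\mu}=0$ for the $n \times n$ matrix $M = (\sigma(e_{j}))_{\sigma \in \Gamma,\, 1 \leq j \leq n}$ over $L$. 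The key point is that $M$ is invertible: one computes $\det(M)^{2} = \det\bigl(\mathrm{Tr}_{L/K}(e_{j}e_{k})\bigr)$, the discriminant of the basis, which is nonzero precisely because $L/K$ is separable (automatic for a Galois extension). Equivalently, the invertibility of $M$ is Dedekind's lemma on the $L$-linear independence of the distinct automorphisms in $\Gamma$. Either way $\vec{\mu}=0$, so $x=0$ and $\phi$ is injective.

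The main obstacle—if one can call it that—is isolating the correct classical input: the isomorphism being asserted is the familiar Galois-descent identity $L \otimes_{K} L \cong \prod_{\sigma \in \Gamma} L$, and the only substantive ingredient is the nondegeneracy of the trace form, that is, the linear independence of characters. Everything else is bookkeeping: the dimension count, and the observation that in finite dimensions boundedness of $\phi^{-1}$ is automatic, so that the algebraic isomorphism upgrades for free to an isomorphism of (Ind)Banach spaces in both the Archimedean and non-Archimedean cases. I would also note that the compatibility of $\phi$ with the multiplication and comultiplication was already recorded in Lemma \ref{DescentComparisonFunction}, so no further structure needs to be verified here.
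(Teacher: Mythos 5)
Your proof is correct, but it takes a genuinely different route from the paper's. The paper also reduces the Lemma to showing that $\phi$ is an algebraic bijection, but it does so via the open mapping theorem combined with the bound $\|\phi\|=1$ from Lemma \ref{DescentComparisonFunction}, whereas you get the bounded inverse for free from the equivalence of norms on finite-dimensional spaces over a complete valued field; both are legitimate, and yours trades one standard Banach-space theorem for another. For the bijection itself, the paper invokes the Normal Basis Theorem: choosing a normal basis $B$ of $L$ over $K$, it expresses $\phi$ with respect to the right $L$-module bases $\{b \otimes 1 \mid b \in B\}$ and $\{\sigma \mapsto \delta_{\sigma,\tau} \mid \tau \in \Gamma\}$ as the matrix $(\tau(b))_{(b,\tau) \in B \times \Gamma}$, and deduces invertibility from $\Gamma$ permuting $B$ simply transitively --- an argument which, unwound, is again Dedekind's independence of automorphisms. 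You instead work with an arbitrary $K$-basis $e_{1},\ldots,e_{n}$ of $L$ and prove invertibility of $M=(\sigma(e_{j}))$ directly, either via $\det(M)^{2}=\det\bigl(\mathrm{Tr}_{L/K}(e_{j}e_{k})\bigr)$ being the discriminant of the basis (nonzero by separability) or by citing Dedekind's lemma outright; this avoids the Normal Basis Theorem entirely, which is a strictly heavier input than anything needed here, and makes visible that the content is the classical descent identity $L \otimes_{K} L \cong \prod_{\sigma \in \Gamma} L$. Two small points you should make explicit: your identification of $C^{\text{lu}}(\Gamma,L)$ with all functions $\Gamma \to L$ uses that $\Gamma$ is finite and discrete (where every function is automatically bounded and left uniformly continuous), and your dimension count for $L \hat{\otimes}_{K} L$ tacitly uses that the completed tensor product coincides with the algebraic one in finite dimensions, since finite-dimensional normed spaces over a complete nontrivially valued field are complete --- true, but worth a sentence.
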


\begin{proof}
By the open mapping theorem and Lemma \ref{DescentComparisonFunction}, it is enough to show that $\phi$ is a bijection. First, by the Normal Basis Theorem, we may take be a normal basis $B$ of $L$ over $K$. That is, $B$ is a basis of $L$ over $K$ comprised of a single orbit of the Galois group $\Gamma$. Taking a basis $\{b \otimes 1 \mid b \in B\}$ of $L \hat{\otimes}_{K}L$ over $L$ (with its right action) and the basis $\{\sigma \mapsto \delta_{\sigma,\tau} \mid \tau \in \Gamma\}$ of $C^{\text{lu}}(\Gamma,L)$ over $L$ (with its right action) we see that $\phi$ is given by the matrix with entries $(\tau(b))_{(b,\tau) \in B \times \Gamma}$ indexed over $B \times \Gamma$. The columns of this matrix are all linearly independent since $\Gamma$ permutes $B$ simply transitively, hence it is invertible and so is $\phi$.
\end{proof}

\begin{remark}
It is not clear whether $\phi$ is an isometry in the above finite dimensional case. This means that the norm of $\phi^{-1}$ might become arbitrarily large as we range over an infinite collection of such extensions. Hence $\phi$ may not remain an isomorphism after taking contracting colimits over infinitely many of these finite extensions (using Lemmas \ref{ContractingColimitOfTensors} and \ref{ContractingColimitOfFunctions}). We do, however, have the following result.
\end{remark}

\begin{prop}
\label{DescentComparisonBijectiveOnDenseSubspace}
Let $L/K$ be an extension of complete valued fields such that the algebraic elements, $L^{a}$, are dense in $L$ and form a Galois extension over $K$. Then $\phi$ restricts to a continuous bijection between the dense subspaces $L^{a} \otimes_{K} L \subset L \hat{\otimes}_{K}L$ (the algebraic tensor product of $L^{a}$ with $L$) and the subspace of locally constant functions in $C^{\text{lu}}(\Gamma,L)$.
\end{prop}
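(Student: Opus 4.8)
The plan is to reduce to the finite-dimensional situation of Lemma \ref{DescentComparisonInFiniteDimensions}, exploiting that both of the dense subspaces in question are filtered unions indexed by the finite Galois subextensions of $L^{a}/K$. Since $\phi$ is already known to be continuous of norm $1$ by Lemma \ref{DescentComparisonFunction}, the only content is to show that $\phi$ restricts to a \emph{bijection} between $L^{a} \otimes_{K} L$ and the space of locally constant functions in $C^{\text{lu}}(\Gamma,L)$.

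First I would check that $\phi$ maps $L^{a} \otimes_{K} L$ into the locally constant functions. An element $a \in L^{a}$ is algebraic, hence lies in some finite Galois subextension $M/K$ of $L^{a}/K$; writing $H = \text{Gal}(L^{a}/M) \trianglelefteq \Gamma$ for the corresponding open normal subgroup, the value $\sigma(a)$ depends only on the coset $\sigma H$, so $\sigma \mapsto \sigma(a)$ is constant on cosets of the open subgroup $H$ and thus locally constant. As $\phi(a \otimes b)(\sigma) = \sigma(a)b$, linearity gives $\phi(L^{a} \otimes_{K} L) \subseteq \{\text{locally constant functions}\}$. Conversely, any locally constant $f \colon \Gamma \to L$ takes finitely many values with open level sets, and since $\Gamma$ is profinite these may be refined to cosets of a single open normal subgroup $H$; thus $f$ factors through the finite quotient $\Gamma/H = \text{Gal}(M/K)$, i.e. $f \in C^{\text{lu}}(\Gamma/H, L)$ (all functions $\Gamma/H \to L$), where $M = L^{H}$.

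The heart of the argument is then, for each such finite Galois $M = L^{H}$, to show that $\phi$ restricts to a bijection
\[
\phi_{M} \colon M \otimes_{K} L \xrightarrow{\;\sim\;} C^{\text{lu}}(\Gamma/H, L).
\]
This is exactly the finite-dimensional computation of Lemma \ref{DescentComparisonInFiniteDimensions} carried out with coefficients extended to $L$ on the right: choosing a normal basis $B = \{\tau(c) : \tau \in \text{Gal}(M/K)\}$ of $M/K$ gives a right $L$-basis $\{b \otimes 1 : b \in B\}$ of $M \otimes_{K} L$, while the indicator functions $\{\delta_{\bar\tau} : \bar\tau \in \Gamma/H\}$ form a right $L$-basis of $C^{\text{lu}}(\Gamma/H, L)$; in these bases $\phi_{M}$ is the matrix $(\bar\tau(b))_{b \in B,\, \bar\tau \in \Gamma/H}$ with entries in $M \subseteq L$. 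Simple transitivity of the $\Gamma/H$-action on $B$ makes its columns linearly independent (equivalently, its determinant is the nonvanishing group determinant attached to the normal basis), so the matrix is invertible over $L$ and $\phi_{M}$ is bijective.

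Finally I would assemble the pieces. Both $L^{a} \otimes_{K} L = \bigcup_{M} (M \otimes_{K} L)$ and the space of locally constant functions $= \bigcup_{M} C^{\text{lu}}(\Gamma/H, L)$ are increasing unions over the finite Galois subextensions $M = L^{H}$, and the maps $\phi_{M}$ are compatible with the inclusions (being restrictions of the single globally defined $\phi$); since each $\phi_{M}$ is a bijection, so is $\phi|_{L^{a} \otimes_{K} L}$. The main obstacle, beyond this bookkeeping, is verifying that the Normal Basis Theorem argument survives the extension of scalars on the right from $M$ to $L$ — but this is harmless, since the relevant Galois matrix has entries in $M$ and its invertibility is a statement internal to $M$ that persists in any field extension. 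Note, in keeping with the preceding Remark, that no claim is made about $\phi^{-1}$ being bounded; only the set-theoretic bijection together with the already-established continuity of $\phi$ is asserted.
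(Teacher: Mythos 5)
Your proposal is correct and follows essentially the same route as the paper: identify $L^{a}\otimes_{K}L$ and the locally constant functions as filtered unions over the finite Galois subextensions $L'=L^{H}$ (the paper does this via Lemmas \ref{ContractingColimitOfFields} and \ref{WeakContractingColimitOfFunctions}), then apply the normal-basis matrix argument of Lemma \ref{DescentComparisonInFiniteDimensions}, extended in scalars on the right from $L'$ to $L$, at each finite level. Your explicit verifications — that $\sigma\mapsto\sigma(a)$ is locally constant for algebraic $a$, that a locally constant function factors through a single finite quotient $\Gamma/H$, and that invertibility of the Galois matrix persists under field extension — are exactly the details the paper leaves implicit.
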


\begin{proof}
By Lemma \ref{ContractingColimitOfFields}, there is an isomorphism
$$\text{colim}_{K \subset L' \subset L}^{\leq 1} L' \hat{\otimes}_{K} L \cong L \hat{\otimes}_{K}L$$
in $\text{IndBan}_{L}$ under which the algebraic tensor product $L^{a} \otimes_{K}L$ is the union of the images of $L' \hat{\otimes}_{K} L = L' \otimes_{K} L$. By Lemma \ref{WeakContractingColimitOfFunctions} there is an isomorphism
$$\text{colim}_{K \subset L' \subset L}^{\leq 1} C^{\text{lu}}(\Gamma_{L'/K},L)=\text{colim}_{H \trianglelefteq \Gamma}^{\leq 1} C^{\text{lu}}(\Gamma/H,L) \cong C^{\text{lu}}(\Gamma,L)$$
under which the union of the images of $C^{\text{lu}}(\Gamma_{L'/K},L)$ is the subspace of locally constant functions. The result then follows since $\phi$ restricts to the extension of the continuous bijection in Lemma \ref{DescentComparisonInFiniteDimensions} from each $L' \otimes_{K} L$ to the corresponding $C^{\text{lu}}(\Gamma_{L'/K},L)$.
\end{proof}

\begin{remark}
The above proposition says precisely that $L \hat{\otimes}_{K}L$ is a completion of the space of locally constant functions with respect to a stronger topology than that inherited from $C^{\text{lu}}(\Gamma,L)$. It is in this way that we may think of $L \hat{\otimes}_{K} L$ as functions on the Galois group $\Gamma$.
\end{remark}

\begin{defn}
Let $L/K$ be an extension of complete valued fields such that the algebraic elements, $L^{a}$, are dense in $L$ and form a Galois extension over $K$ with Galois group $\Gamma$.  We think of $L^{a}$ as a formal colimit over finite extensions of $K$ in $L$ in $\text{IndBan}_{K}$, hence as a $K$-IndBanach algebra. We define the IndBanach (or Bornological, following the equivalence in \cite{DGaBAG}) space of locally constant $L$-valued functions on $G$, $C^{\text{lc}}(\Gamma,L)$, to be the colimit
$$C^{\text{lc}}(\Gamma,L)=\text{"colim"}_{N \trianglelefteq \Gamma}C^{\text{lu}}(\Gamma/N,L)$$
taken over finite index normal subgroups of $\Gamma$. Similarly we define the IndBanach (or Bornological) algebraic tensor product, $L^{a} \otimes L$, to be the colimit
$$L^{a} \otimes_{K} L=\text{"colim"}_{K \subset L' \subset L}L' \hat{\otimes}_{K} L$$
taken over finite extensions $L'$ of $K$ in $L$. We may also define
$$C^{\text{lc}}(\Gamma,L^{a})=\text{"colim"}_{N \trianglelefteq \Gamma}C^{\text{lu}}(\Gamma/N,L^{N})=\text{"colim"}_{\substack{N \trianglelefteq \Gamma \\ K \subset L' \subset L}}C^{\text{lu}}(\Gamma/N,L')$$
and
$$L^{a} \otimes_{K} L^{a}=\text{"colim"}_{K \subset L' \subset L}L' \hat{\otimes}_{K} L'=\text{"colim"}_{\substack{K \subset L' \subset L \\ K \subset L'' \subset L}}L' \hat{\otimes}_{K} L''$$
in a similar way.
\end{defn}

We may then rephrase Proposition \ref{DescentComparisonBijectiveOnDenseSubspace} as the following.

\begin{prop}
There is a commutative diagram
\begin{center}
\begin{tikzpicture}[node distance=6cm, auto]
  \node (A) {$C^{\text{la}}(\Gamma,L)$};
  \node (B) [below=0.3cm of A] {$C^{\text{lc}}(\Gamma,L)$};
  \node (C) [right=1.3cm of A] {$L \hat{\otimes}_{K} L$};
  \node (D) [below=0.4cm of C] {$L^{a} \otimes_{K} L$};
  \node (E) [below=0.3cm of B] {$C^{\text{lc}}(\Gamma,L^{a})$};
  \node (F) [below=0.4cm of D] {$L^{a} \otimes_{K} L^{a}$};
  \draw[->] (B) to node [swap]{} (A);
  \draw[->] (A) to node {$\phi$} (C);
  \draw[->] (B) to node {$\sim$} (D);
  \draw[->] (D) to node {} (C);
  \draw[->] (E) to node [swap]{} (B);
  \draw[->] (E) to node {$\sim$} (F);
  \draw[->] (F) to node {} (D);
\end{tikzpicture}
\end{center}
whose vertical arrows are bimorphisms.
\end{prop}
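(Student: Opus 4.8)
The plan is to treat this as a bookkeeping exercise built on Proposition \ref{DescentComparisonBijectiveOnDenseSubspace}, reading every arrow off the definitions of the six objects as formal colimits. Recall that $C^{\text{lc}}(\Gamma,L)$, $L^{a}\otimes_{K}L$, $C^{\text{lc}}(\Gamma,L^{a})$ and $L^{a}\otimes_{K}L^{a}$ are respectively the formal colimits $\text{"colim"}_{N}C^{\text{lu}}(\Gamma/N,L)$, $\text{"colim"}_{L'}L'\hat{\otimes}_{K}L$, $\text{"colim"}_{N}C^{\text{lu}}(\Gamma/N,L^{N})$ and $\text{"colim"}_{L'}L'\hat{\otimes}_{K}L'$, all indexed over the cofinal system of finite index normal subgroups $N\trianglelefteq\Gamma$ together with their fixed fields $L'=L^{N}$, whereas the top objects $C^{\text{lu}}(\Gamma,L)$ and $L\hat{\otimes}_{K}L$ are the corresponding contracting colimits (completions) by Lemmas \ref{WeakContractingColimitOfFunctions} and \ref{ContractingColimitOfFields}. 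First I would exhibit the two horizontal isomorphisms $C^{\text{lc}}(\Gamma,L)\xrightarrow{\sim}L^{a}\otimes_{K}L$ and $C^{\text{lc}}(\Gamma,L^{a})\xrightarrow{\sim}L^{a}\otimes_{K}L^{a}$ as the colimits of the finite-level comparison maps: at the stage $N$ (with $L'=L^{N}$) these are the isomorphisms supplied by Lemma \ref{DescentComparisonInFiniteDimensions}, namely $L'\hat{\otimes}_{K}L'\xrightarrow{\sim}C^{\text{lu}}(\Gamma/N,L^{N})$ in the $L^{a}$-coefficient row and its extension of scalars along $L^{N}\hookrightarrow L$ in the $L$-coefficient row. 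That these assemble into isomorphisms of Ind-objects, rather than merely into the non-extendable map $\phi$ of Lemma \ref{DescentComparisonFunction}, is exactly the formal-colimit content of Proposition \ref{DescentComparisonBijectiveOnDenseSubspace}.

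Next I would check that the two squares commute. Since every object is a formal colimit over the same index system and every arrow is the colimit of its finite-level constituents, commutativity reduces to a single stage $N$. There it is immediate from the explicit formula $\phi(a\otimes b)(\sigma)=\sigma(a)b$ of Lemma \ref{DescentComparisonFunction}, which is visibly natural both in the coefficient field, so that it intertwines the inclusions $L^{N}\hookrightarrow L$, and in the surjection $\Gamma\twoheadrightarrow\Gamma/N$, so that it intertwines the passage from a formal colimit to its completion. Thus the lower square encodes the compatibility of the finite Galois isomorphisms with enlarging the coefficients, and the upper square their compatibility with completion; in both cases $\phi$ is merely being transported along maps that leave its defining formula unchanged.

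It then remains to verify that the four vertical arrows are bimorphisms. The maps $C^{\text{lc}}(\Gamma,L)\to C^{\text{lu}}(\Gamma,L)$ and $L^{a}\otimes_{K}L\to L\hat{\otimes}_{K}L$ are the canonical maps from a formal colimit to its contracting colimit; each is a monomorphism because it is a filtered colimit of isometric inclusions, and an epimorphism because its image is dense — density of the locally constant functions being Lemma \ref{LocallyConstantsDense} and density of $L^{a}\otimes_{K}L$ following from Lemma \ref{ContractingColimitOfFields} — so in the quasi-abelian category $\text{IndBan}_{k}$ each is a bimorphism. The inner vertical arrows $C^{\text{lc}}(\Gamma,L^{a})\to C^{\text{lc}}(\Gamma,L)$ and $L^{a}\otimes_{K}L^{a}\to L^{a}\otimes_{K}L$, induced stage-by-stage by $L^{N}\hookrightarrow L$, are again levelwise monomorphisms; for these I would derive the epimorphism half, and hence the bimorphism property, from the fact that after passing to contracting colimits the $L^{a}$-coefficient system also completes to $C^{\text{lu}}(\Gamma,L)$ by Lemma \ref{ContractingColimitOfFunctions}, so that the two dense subspaces share a common completion and the density arguments above apply to them as well.

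The hard part, and the reason the statement is phrased with bimorphisms rather than isomorphisms, is precisely the tension flagged in the Remark preceding Proposition \ref{DescentComparisonBijectiveOnDenseSubspace}: the finite comparison isomorphisms of Lemma \ref{DescentComparisonInFiniteDimensions} need not be isometric, so $\phi$ does not extend to an isomorphism of the completions $C^{\text{lu}}(\Gamma,L)$ and $L\hat{\otimes}_{K}L$. One must therefore keep the formal colimits — the Ind-objects $C^{\text{lc}}(\Gamma,L)$, $L^{a}\otimes_{K}L$, and so on — rigorously distinct from the contracting colimits — the single Banach spaces $C^{\text{lu}}(\Gamma,L)$ and $L\hat{\otimes}_{K}L$ — and argue the epimorphism half of each bimorphism through the appropriate density statement rather than through any global inverse of $\phi$. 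Establishing the epimorphism property of the two inner vertical arrows, where one is comparing two distinct dense subspaces rather than a subspace with its own completion, is the one genuinely delicate verification; the commutativity of the squares and the horizontal isomorphisms are then formal consequences of the constructions above.
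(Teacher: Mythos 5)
Your architecture is the one the paper intends: the paper gives this proposition no proof of its own, presenting it as a rephrasing of Proposition \ref{DescentComparisonBijectiveOnDenseSubspace}, and your assembly — stage-$N$ isomorphisms from Lemma \ref{DescentComparisonInFiniteDimensions}, base-changed along $L^{N}\hookrightarrow L$ for the middle row; commutativity read off the formula $\phi(a\otimes b)(\sigma)=\sigma(a)b$; density inputs from Lemmas \ref{LocallyConstantsDense}, \ref{ContractingColimitOfFields} and \ref{WeakContractingColimitOfFunctions} — is exactly the intended unpacking. But the inference you rest the epimorphism half on, namely that injectivity plus dense image makes these arrows bimorphisms ``in the quasi-abelian category $\text{IndBan}_{k}$'', fails as literally stated. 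Dense image implies epi in $\text{Ban}_{k}$, but not for a morphism whose \emph{source} is a genuine formal colimit. Concretely, the cokernel of $u:C^{\text{lc}}(\Gamma,L)=\text{"colim"}_{N}C^{\text{lu}}(\Gamma/N,L)\to C^{\text{lu}}(\Gamma,L)$ is computed levelwise (cokernels being finite colimits, preserved into the Ind-completion) as
$$\text{Coker}(u)\cong\text{"colim"}_{N}\;C^{\text{lu}}(\Gamma,L)/C^{\text{lu}}(\Gamma/N,L),$$
an Ind-object whose transition maps are surjections onto nonzero spaces when $\Gamma$ is infinite, since no single level $C^{\text{lu}}(\Gamma/N,L)$ is dense. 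This Ind-object is not zero, so $u$ is not an epimorphism in $\text{IndBan}_{k}$; equivalently, the class $g$ of a quotient map $C^{\text{lu}}(\Gamma,L)\to C^{\text{lu}}(\Gamma,L)/C^{\text{lu}}(\Gamma/N_{0},L)$ in $\text{Hom}(C^{\text{lu}}(\Gamma,L),\text{Coker}(u))$ is nonzero yet satisfies $g\circ u=0$. The same phenomenon affects all four vertical arrows, so your density argument, as phrased, proves monicity but not epimorphy.

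The repair is the distinction you yourself flag but then do not apply at this step: all six objects in the diagram are \emph{essentially monomorphic} Ind-objects (injective transition maps), and the bimorphism claim should be read in that subcategory — equivalently in complete bornological spaces, the reading the paper signals with its parenthetical ``(or Bornological, following the equivalence in \cite{DGaBAG})''. Against an essentially monomorphic target $Z=\text{"colim"}_{j}Z_{j}$, any pair $g,h:C^{\text{lu}}(\Gamma,L)\to Z$ factors through a single Banach $Z_{j}$; the injectivity of the transitions lets you descend the stagewise equalities $g|_{C^{\text{lu}}(\Gamma/N,L)}=h|_{C^{\text{lu}}(\Gamma/N,L)}$ back to $Z_{j}$, and norm-density of $\bigcup_{N}C^{\text{lu}}(\Gamma/N,L)$ (Lemma \ref{LocallyConstantsDense}) then gives $g=h$; so there your density inputs do yield epimorphy, and likewise for $L^{a}\otimes_{K}L\to L\hat{\otimes}_{K}L$. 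For the two inner vertical arrows the ``common completion'' detour through Lemma \ref{ContractingColimitOfFunctions} is also unnecessary: since each $\Gamma/N$ is finite and $\bigcup_{L'}L'$ is dense in $L$, the functions valued in finite extensions are already norm-dense in each level $C^{\text{lu}}(\Gamma/N,L)$, which is precisely the density the levelwise epi argument consumes. With the category of test objects corrected, your proof is the paper's rephrasing made explicit; without it, the epimorphism step is genuinely false in $\text{IndBan}_{k}$.
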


\begin{defn}
Let $\text{Ind}_{\phi}$ be the induction functor
$$\text{Ind}_{\phi}:(L \hat{\otimes}_{K}L)\text{-Comod} \rightarrow C^{\text{lu}}(G,k)\text{-Comod} \cong \Gamma\text{-Mod}_{L}, \, \, \, M \mapsto \text{Ind}_{\phi}M,$$
from the category of $L \hat{\otimes}_{K}L$ comodules in $\text{IndBan}_{L}$ to $\Gamma\text{-Mod}_{L}$, where $\text{Ind}_{\phi}M$ has the same underlying IndBanach space as $M$ but with the coaction
$$M \rightarrow (L \hat{\otimes}_{K}L) \hat{\otimes}_{L} M \overset{\phi \otimes \text{id}_{M}}{\longrightarrow} C^{\text{lu}}(G,k) \hat{\otimes}_{L}M.$$
\end{defn}

\begin{lem}
\label{ExactFaithfulInduction}
The induction functor $\text{Ind}_{\phi}$ is exact and faithful. If we are working in the non-Archimedean case, $\text{Ind}_{\phi}$ is also full.
\end{lem}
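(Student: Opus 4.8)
The plan is to exploit the comparison morphism $\phi: L \hat{\otimes}_{K} L \to C^{\text{lu}}(\Gamma,L)$ from Lemma \ref{DescentComparisonFunction} and the fact that $\text{Ind}_{\phi}$ is simply a reinterpretation of the same underlying coaction through $\phi$. First I would observe that $\text{Ind}_{\phi}$ does not alter the underlying IndBanach space of a comodule $M$, nor does it alter the underlying morphism of a comodule map; it only reinterprets the coaction. Consequently, the forgetful functor to $\text{IndBan}_{L}$ factors through $\text{Ind}_{\phi}$ in both categories, and since that forgetful functor is exact and faithful (it is a co-fibre functor, being cocontinuous, strongly exact and faithful by the setup of Section \ref{TopologicalGroupsExample}), both exactness and faithfulness of $\text{Ind}_{\phi}$ follow immediately. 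The point is that a sequence of comodules is strictly (co)exact precisely when its underlying sequence of IndBanach spaces is, and $\text{Ind}_{\phi}$ is the identity on underlying spaces; similarly a comodule morphism is zero iff its underlying map is zero.

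For exactness more carefully, I would note that both $(L \hat{\otimes}_{K} L)\text{-Comod}$ and $C^{\text{lu}}(\Gamma,L)\text{-Comod} \cong \Gamma\text{-Mod}_{L}$ are quasi-abelian, with strict (co)kernels computed on the underlying IndBanach spaces and then equipped with the induced coaction. Since $\text{Ind}_{\phi}$ preserves underlying spaces and is compatible with $\phi$, it sends the induced coaction on a (co)kernel in the source to the induced coaction on the same (co)kernel in the target; hence it preserves kernels, cokernels, and strictness, so it is exact. Faithfulness is then the observation that two parallel comodule maps agreeing after $\text{Ind}_{\phi}$ have the same underlying IndBanach map, hence are equal.

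The genuinely new content is fullness in the non-Archimedean case, and this is where I expect the main obstacle. A map in $\Gamma\text{-Mod}_{L}$ between $\text{Ind}_{\phi}M$ and $\text{Ind}_{\phi}N$ is an underlying $L$-linear map $f$ commuting with the $C^{\text{lu}}(\Gamma,L)$-coactions; to lift it to a map of $L \hat{\otimes}_{K} L$-comodules I must show $f$ already commutes with the $(L \hat{\otimes}_{K} L)$-coactions. The two coaction-compatibility conditions differ by postcomposition with $\phi \hat{\otimes} \text{id}$, so the obstruction to fullness lives in the difference between the $(L \hat{\otimes}_{K} L)$-coactions and their pushforwards along $\phi$. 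The strategy is to use that $\phi$, by Proposition \ref{DescentComparisonBijectiveOnDenseSubspace}, restricts to a bijection on dense subspaces (the locally constant functions, i.e.\ the algebraic tensor product), so the $(L \hat{\otimes}_{K} L)$-coaction is determined by the $C^{\text{lu}}(\Gamma,L)$-coaction on a dense subspace. The reason the non-Archimedean hypothesis is essential is that there $\phi$ is an \emph{isometry} onto its image (the norm estimate in Lemma \ref{DescentComparisonFunction} is sharp in the ultrametric case because $|\sum_i \sigma(a_i)b_i| \leq \max_i |a_i||b_i|$ with equality controllable), so that $\phi$ is a strict monomorphism and the coaction through $\phi$ recovers the coaction through $L \hat{\otimes}_{K} L$ after passing to the completion; continuity of $f$ then propagates the compatibility from the dense subspace to all of $M$. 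In the Archimedean case the norm of $\phi^{-1}$ can blow up (as flagged in the Remark after Lemma \ref{DescentComparisonInFiniteDimensions}), so this density-and-continuity argument breaks down and fullness need not hold.
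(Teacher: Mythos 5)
Your treatment of exactness and faithfulness is correct and is the same as the paper's: $\text{Ind}_{\phi}$ is the identity on underlying IndBanach spaces, it commutes with the forgetful functors, and those functors are faithful and reflect strict exactness, so both properties follow at once. You also set up fullness correctly: a morphism $f:\text{Ind}_{\phi}M \rightarrow \text{Ind}_{\phi}N$ of $C^{\text{lu}}(\Gamma,L)$-comodules satisfies
$$(\phi \otimes \text{Id})\circ \Delta_{N} \circ f = (\phi \otimes \text{Id})\circ (f \otimes \text{Id})\circ \Delta_{M},$$
and the whole problem is to cancel $\phi \otimes \text{Id}$, i.e.\ to show it is a monomorphism.

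It is in justifying that cancellation that your argument has a genuine gap. You claim that in the non-Archimedean case $\phi$ is an isometry onto its image (asserting the ultrametric estimate is sharp, ``with equality controllable''), hence a strict monomorphism, and then propagate the coaction compatibility from the dense subspace of Proposition \ref{DescentComparisonBijectiveOnDenseSubspace} by continuity. But the paper explicitly states, in the remark following Lemma \ref{DescentComparisonInFiniteDimensions}, that it is \emph{not} known whether $\phi$ is an isometry even for a single finite extension --- and that remark is not confined to the Archimedean case, so you have misread it in attributing the blow-up of $\|\phi^{-1}\|$ only to Archimedean fields. Without strictness, bijectivity of $\phi$ on a dense subspace gives you nothing: completing along inequivalent norms can create kernel, which is precisely the failure mode that remark warns about, and in any case injectivity of $\phi$ alone would not give injectivity of $\phi \otimes \text{Id}_{N}$ on a completed tensor product. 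The actual role of the non-Archimedean hypothesis is different and simpler: by Lemma 3.49 of \cite{SDiBAG}, IndBanach spaces over a non-Archimedean field are flat, so tensoring preserves monomorphisms and $\phi \otimes \text{Id}$ is monic. That is the paper's one-line argument, and it bypasses isometry and density considerations entirely; as written, your proof of fullness does not go through.
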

\begin{proof}
Exactness and faithfulness follows from the fact that the forgetful functors from these categories are faithful and reflect exactness, and that composition of $\text{Ind}_{\phi}$ with the forgetful functor from $\Gamma\text{-Mod}_{L}$ gives the forgetful functor from $(L \hat{\otimes}_{K}L)\text{-Comod}$. If $f:\text{Ind}_{\phi}M \rightarrow \text{Ind}_{\phi}N$ is a morphism of $C^{\text{lu}}(G,k)$ comodules, where $M$ and $N$ are $(L \hat{\otimes}_{K}L)$ comodules with respective coactions $\Delta_{M}$ and $\Delta_{N}$ then
$$(\phi \otimes \text{Id})\circ \Delta_{N} \circ f = (f \otimes \text{Id})\circ (\phi \otimes \text{Id})\circ \Delta_{M}=(\phi \otimes \text{Id})\circ (f \otimes \text{Id})\circ \Delta_{M}.$$
By Lemma 3.49 of \cite{SDiBAG}, assuming we are working in the non-Archimedean case, $C^{\text{lu}}(G,k)$ is a flat IndBanach space and so $\phi \otimes \text{Id}$ is monic. Hence $f:M \rightarrow N$ is a morphism of $(L \hat{\otimes}_{K}L)$ comodules.
\end{proof}

\begin{defn}
Let $\Gamma\text{-Mod}_{L}^{\text{sm}}$ denote the essential image of $\text{Ind}_{\phi}$ in $\Gamma\text{-Mod}_{L}$, the category of \emph{smooth} representations of $\Gamma$.
\end{defn}

\begin{prop}
The category of smooth representations, $\Gamma\text{-Mod}_{L}^{\text{sm}}$, is equivalent to $\text{IndBan}_{K}$ as monoidal categories via the induction functor
$$\text{IndBan}_{K} \rightarrow \Gamma\text{-Mod}_{L}^{\text{sm}}, \, \, \, V \mapsto L \hat{\otimes}_{K} V.$$
\end{prop}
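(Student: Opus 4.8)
The plan is to obtain the equivalence by composing two monoidal equivalences already available in this subsection and then identifying the composite with the stated induction functor. First I would invoke Proposition \ref{GaloisDescent} together with the proposition following it, which jointly identify $\text{IndBan}_{K}$ with the category of left $(L \hat{\otimes}_{K}L)$-comodules in $\text{IndBan}_{L}$ via $V \mapsto L \hat{\otimes}_{K}V$, and establish this as an equivalence of \emph{monoidal} categories, the monoidal product of $\text{IndBan}_{K}$ corresponding to the algebra structure $(a \otimes b)\cdot(a' \otimes b') = aa' \otimes bb'$ on $L \hat{\otimes}_{K}L$ taken with respect to $\hat{\otimes}_{L\text{-}L}$.

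Next I would study the functor $\text{Ind}_{\phi}:(L \hat{\otimes}_{K}L)\text{-Comod} \rightarrow \Gamma\text{-Mod}_{L}$. By Lemma \ref{ExactFaithfulInduction} it is exact and faithful, and in the non-Archimedean case it is also full, the fullness resting on the flatness from Lemma 3.49 of \cite{SDiBAG} that makes $\phi \otimes \text{Id}$ monic. Since $\Gamma\text{-Mod}_{L}^{\text{sm}}$ is by definition the essential image of $\text{Ind}_{\phi}$, the functor is essentially surjective onto it by construction. As a fully faithful functor is an equivalence onto its essential image, $\text{Ind}_{\phi}$ restricts to an equivalence $(L \hat{\otimes}_{K}L)\text{-Comod} \cong \Gamma\text{-Mod}_{L}^{\text{sm}}$.

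I would then upgrade this second equivalence to a monoidal one using Lemma \ref{DescentComparisonFunction}, which records that $\phi$ is compatible with both comultiplication and multiplication and preserves the unit. Compatibility with comultiplication is what makes $\text{Ind}_{\phi}$ act coherently on coactions, while compatibility with multiplication and the unit guarantees that the induced $C^{\text{lu}}(\Gamma,L)$-coactions respect tensor products and units, so that $\text{Ind}_{\phi}$ intertwines the $\hat{\otimes}_{L\text{-}L}$-algebra structure on $L \hat{\otimes}_{K}L$ with the diagonal monoidal structure on $\Gamma\text{-Mod}_{L}$. Composing the two monoidal equivalences yields a monoidal equivalence $\text{IndBan}_{K} \cong \Gamma\text{-Mod}_{L}^{\text{sm}}$, and tracing an object $V$ through both functors---first to $L \hat{\otimes}_{K}V$ as an $(L \hat{\otimes}_{K}L)$-comodule, then to the same underlying space equipped with the $\Gamma$-action $\sigma \cdot (a \otimes v) = \sigma(a) \otimes v$ coming from $\phi$---confirms that the composite is precisely $V \mapsto L \hat{\otimes}_{K}V$.

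The main obstacle is the fullness of $\text{Ind}_{\phi}$: faithfulness together with essential surjectivity does not by itself yield an equivalence, and it is exactly fullness that promotes $\text{Ind}_{\phi}$ to an equivalence onto its essential image. This is where the non-Archimedean hypothesis is indispensable, via the flatness of $C^{\text{lu}}(\Gamma,L)$; in the Archimedean case one would need a separate argument, since a priori the essential image could carry strictly more morphisms than $\text{IndBan}_{K}$ supplies. A subsidiary point demanding care is the bookkeeping of the two distinct $L$-actions (scalar multiplication on the left versus the Galois-twisted action on the right) on both $L \hat{\otimes}_{K}L$ and $C^{\text{lu}}(\Gamma,L)$ within the $L\text{-}L\text{-IndBan}_{K}$ formalism, so as to ensure that every monoidal structure being compared is taken consistently with respect to $\hat{\otimes}_{L\text{-}L}$.
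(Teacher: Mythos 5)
Your proposal is correct and follows essentially the same route as the paper, whose entire proof is the one-line citation of Proposition \ref{GaloisDescent} and Lemma \ref{ExactFaithfulInduction}; you have simply unpacked that composition, including the identification of the composite with $V \mapsto L \hat{\otimes}_{K} V$ and the monoidal bookkeeping over $\hat{\otimes}_{L\text{-}L}$. Your observation that fullness of $\text{Ind}_{\phi}$ is only secured in the non-Archimedean case is in fact a point the paper's terse proof glosses over, so your treatment is, if anything, more careful than the original.
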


\begin{proof}
This follows from Proposition \ref{GaloisDescent} and Lemma \ref{ExactFaithfulInduction}.
\end{proof}

\begin{defn}
Let $G$ be a profinite group, $k$ be a complete valued field and $A$ be an IndBanach algebra over $k$. We define the IndBanach (or Bornological) Iwasawa algebra, $\Lambda_{A}^{\text{Born}}(G)$, to be the limit
$$\Lambda_{A}^{\text{Born}}(G)=\text{lim}_{N \trianglelefteq G}A[G/N]$$
in $\text{IndBan}_{k}$ taken over all open normal subgroups of $G$, where $A[G/N]$ is the Banach group algebra $\coprod^{\leq 1}_{G/N}A$ over $A$ defined similarly to the algebra in Proposition \ref{BanachGroupAlgebra}. If $A$ is a Banach algebra then we define the Banach Iwasawa algebra, $\Lambda_{k}^{\text{Ban}}(G)$, as the contracting limit
$$\Lambda_{A}^{\text{Ban}}(G)=\text{lim}^{\leq 1}_{N \trianglelefteq G}A[G/N]$$
in $\text{Ban}_{k}$.
\end{defn}

\begin{prop}
Let $L/K$ be an extension of complete valued fields such that the algebraic elements, $L^{a}$, are dense in $L$ and form a Galois extension over $K$ with Galois group $\Gamma$. Then, as IndBanach spaces over $L$, $\Lambda_{L}^{\text{Ban}}(\Gamma)$ is dual to $C^{\text{lu}}(\Gamma,L)$, and, as $L^{a}$ modules in $\text{IndBan}_{K}$, $\Lambda_{L^{a}}^{\text{Born}}(\Gamma)$ is dual to $C^{\text{lc}}(\Gamma,L^{a}) \cong L^{a} \otimes_{K} L^{a}$.
\end{prop}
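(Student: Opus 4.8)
The plan is to realise both statements as the image under internal duality of the identification, at each finite quotient, of the group algebra $\coprod^{\leq 1}_{Q}$ with the function space $\prod^{\leq 1}_{Q}$. Recall from the discussion following Proposition \ref{BanachGroupAlgebra} that for a finite group $Q$ the Banach group algebra $l^{1}(Q)=\coprod^{\leq 1}_{Q}k$ is dualisable with dual $l^{\infty}(Q)=\prod^{\leq 1}_{Q}k$; over any coefficient field $F$ this becomes a perfect pairing $\coprod^{\leq 1}_{Q}F \times \prod^{\leq 1}_{Q}F \to F$, $\langle \mu,f\rangle=\sum_{q\in Q}\mu_{q}f(q)$, realising $C(Q,F)=\prod^{\leq 1}_{Q}F$ as the $F$-linear dual of $F[Q]=\coprod^{\leq 1}_{Q}F$ and vice versa. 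Since $\Gamma/N$ is finite for every open normal $N$, this is the engine for both claims, which differ only in which (co)limit we dualise and over which coefficients.

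For the first claim I would argue as follows. By Lemma \ref{WeakContractingColimitOfFunctions}, $C^{\text{lu}}(\Gamma,L)\cong\text{colim}^{\leq 1}_{N}C^{\text{lu}}(\Gamma/N,L)$, and each term is $C(\Gamma/N,L)=\prod^{\leq 1}_{\Gamma/N}L$. The finite-level duality identifies the $L$-linear dual of $C^{\text{lu}}(\Gamma/N,L)$ with $L[\Gamma/N]=\coprod^{\leq 1}_{\Gamma/N}L$, and under this identification the transition maps of the colimit (pullback along $\Gamma/N'\twoheadrightarrow\Gamma/N$, composed with inclusions) are dual to the projections $L[\Gamma/N']\to L[\Gamma/N]$ defining the inverse system for $\Lambda_{L}^{\text{Ban}}(\Gamma)$. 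The functor $\underline{\text{Hom}}_{L}(-,L)$ sends contracting coproducts to contracting products (Lemma \ref{ContractingUniversalProperty}) and coequalisers to equalisers, hence sends the contracting colimit above to the corresponding contracting limit. Therefore
$$\underline{\text{Hom}}_{L}(C^{\text{lu}}(\Gamma,L),L)\cong \text{lim}^{\leq 1}_{N}L[\Gamma/N]=\Lambda_{L}^{\text{Ban}}(\Gamma),$$
exhibiting $\Lambda_{L}^{\text{Ban}}(\Gamma)$ as the $L$-linear dual of $C^{\text{lu}}(\Gamma,L)$.

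The second claim is the same argument carried out in $\text{IndBan}_{K}$ over the IndBanach algebra $L^{a}$, using the presentation $C^{\text{lc}}(\Gamma,L^{a})=\text{"colim"}_{N}C^{\text{lu}}(\Gamma/N,L^{N})\cong L^{a}\otimes_{K}L^{a}$ supplied by Lemma \ref{DescentComparisonInFiniteDimensions} and the diagram preceding this statement. Here the coefficient field at level $N$ is the finite extension $L^{N}$, while the Iwasawa algebra carries the full coefficients $L^{a}$; the reconciliation is that the integration pairing $\langle\mu,f\rangle=\sum_{q}\mu_{q}f(q)$ with $\mu\in L^{a}[\Gamma/N]$ and $f\in C^{\text{lu}}(\Gamma/N,L^{N})$ exhibits $L^{a}[\Gamma/N]$ as the \emph{right} $L^{N}$-linear dual, valued in $L^{a}$, of $C^{\text{lu}}(\Gamma/N,L^{N})$ (the right action being the pointwise one, under which the pairing is right-linear in $f$ and $L^{a}$-linear in $\mu$). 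Dualising the defining Ind-colimit turns it into the defining Ind-limit, and one checks, using the explicit description of limits in $\text{IndBan}_{K}$ from \cite{LaACH}, that the transition maps again match, giving
$$\underline{\text{Hom}}_{L^{a}}(C^{\text{lc}}(\Gamma,L^{a}),L^{a})\cong\text{lim}_{N}L^{a}[\Gamma/N]=\Lambda_{L^{a}}^{\text{Born}}(\Gamma).$$
This is consistent with the direct computation $\underline{\text{Hom}}_{L^{a}}(L^{a}\otimes_{K}L^{a},L^{a})\cong\text{Hom}_{K}(L^{a},L^{a})$ and the cogebroid duality between $L\hat{\otimes}_{K}L$ and $\text{Hom}_{K}(L,L)$ noted in the earlier Remarks.

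The main obstacle is the bookkeeping in the second part. Two points need care: first, the coefficient field $L^{N}$ varies with the level and is strictly smaller than $L^{a}$, so one must work with one-sided ($L^{N}$- resp. $L^{a}$-) linear functionals valued in $L^{a}$ and verify that the finite-level pairing remains perfect; second, $C^{\text{lc}}(\Gamma,L^{a})\cong L^{a}\otimes_{K}L^{a}$ carries two distinct $L^{a}$-actions — the twisted Galois left action transported through $\phi$ and the pointwise right action — and one must dualise on the correct (pointwise) side for the central $L^{a}$-structure of $\Lambda_{L^{a}}^{\text{Born}}(\Gamma)$ to emerge. Commuting $\underline{\text{Hom}}_{L^{a}}(-,L^{a})$ past the Ind-colimit, and guaranteeing non-degeneracy of the resulting pairing, is where flatness of $L^{a}$ over $K$ (Lemma 3.49 of \cite{SDiBAG}) enters, exactly as in Lemma \ref{ExactFaithfulInduction}.
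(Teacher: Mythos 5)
Your proof is correct, and for the first claim it is essentially the paper's own argument: both of you dualise the contracting colimit of Lemma \ref{WeakContractingColimitOfFunctions} term by term, using the finite-level duality between $C^{\text{lu}}(\Gamma/N,L)$ and $\coprod\nolimits^{\leq 1}_{\Gamma/N}L$ together with the fact that $\underline{\text{Hom}}_{L}(-,L)$ turns contracting colimits into contracting limits; your explicit check that pullbacks along $\Gamma/N' \twoheadrightarrow \Gamma/N$ dualise to the Iwasawa projections is a detail the paper leaves implicit (and your dualising $\prod\nolimits^{\leq 1}_{\Gamma/N}L$ directly is in fact cleaner than the paper's intermediate identification of $C^{\text{lu}}(\Gamma/N,L)$ with $\coprod\nolimits^{\leq 1}_{\Gamma/N}L$, which is isometric only in the non-Archimedean case). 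For the second claim your bookkeeping genuinely differs from the paper's: you dualise $\text{"colim"}_{N}C^{\text{lu}}(\Gamma/N,L^{N})$ directly, with the coefficient field $L^{N}$ varying with the level and one-sided $L^{N}$-linear pairings valued in $L^{a}$, whereas the paper first writes $C^{\text{lc}}(\Gamma,L^{a}) \cong L^{a}\hat{\otimes}_{K}C^{\text{lc}}(\Gamma,K)$ and applies the induction--restriction adjunction $\text{Hom}_{L^{a}}(L^{a}\hat{\otimes}_{K}X,L^{a}) \cong \text{Hom}_{K}(X,L^{a})$ to reduce at one stroke to fixed coefficients $K$, after which the first argument repeats verbatim. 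Both routes succeed, but the paper's base change buys precisely the simplification you single out as ``the main obstacle'': with $K$-coefficients at every level there are no varying fields, no one-sided linearity, and the finite-level pairing is the plain $\ell^{1}$--$\ell^{\infty}$ duality over a finite set. One correction to your closing remark: flatness of $L^{a}$ over $K$ (your appeal to Lemma 3.49 of \cite{SDiBAG}, as in Lemma \ref{ExactFaithfulInduction}) plays no role here --- the internal Hom out of a formal colimit is a limit of internal Homs by the very construction of $\text{IndBan}_{K}$, and non-degeneracy at each finite level is elementary --- and indeed the paper's proof invokes no flatness.
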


\begin{proof}
The first statement follows from the isomorphisms
$$
\begin{array}{rcl}
\text{Hom}_{L}(C^{\text{lu}}(\Gamma,L),L) &=& \text{Hom}_{L}(\text{colim}^{\leq 1}_{N \trianglelefteq \Gamma}C^{\text{lu}}(\Gamma/N,L),L)\\
&\cong& \text{lim}^{\leq 1}_{N \trianglelefteq \Gamma}\text{Hom}_{L}(C^{\text{lu}}(\Gamma/N,L),L)\\
&\cong& \text{lim}^{\leq 1}_{N \trianglelefteq \Gamma}\text{Hom}_{L}(\coprod\nolimits_{\Gamma/N}^{\leq 1}L,L)\\
&\cong& \text{lim}^{\leq 1}_{N \trianglelefteq \Gamma}\coprod\nolimits_{\Gamma/N}^{\leq 1}\text{Hom}_{L}(L,L)\\
&\cong& \text{lim}^{\leq 1}_{N \trianglelefteq \Gamma}\coprod\nolimits_{\Gamma/N}^{\leq 1}L = \Lambda_{L}^{\text{Ban}}(\Gamma).
\end{array}$$
The second follows from
$$
\begin{array}{rcl}
\text{Hom}_{L^{a}}(C^{\text{lu}}(\Gamma,L^{a}),L^{a}) &=& \text{Hom}_{L^{a}}(L^{a} \hat{\otimes} C^{\text{lu}}(\Gamma/N,K),L^{a})\\
&\cong& \text{Hom}_{K}(C^{\text{lu}}(\Gamma,K),L^{a})
\end{array}$$
and a similar argument to the above.
\end{proof}

\begin{remark}
The above isomorphisms are not isomorphisms of algebras. The multiplications on $\Lambda_{L}^{\text{Ban}}(\Gamma)$ and $\Lambda_{L^{a}}^{\text{Born}}(\Gamma)$ induced by the respective comultiplications on $C^{\text{lu}}(\Gamma,L)$ and $C^{\text{lc}}(\Gamma,L^{a})$ are twisted by the actions of $\Gamma$ on $L$ and $L^{a}$. Since there is a faithful embedding of $\Gamma\text{-Mod}_{L}$, viewed as $C^{\text{lu}}(\Gamma,L)$-comodules, into modules over the twisted Iwasawa algebra $\Lambda_{L}^{\text{Ban}}(\Gamma)$ we may alternatively take this action as our descent data to recover a $K$-IndBanach space $V$ from the induced $L$-IndBanach space $L\hat{\otimes}_{K}V$.
\end{remark}

\end{document}